\def\hat{\widehat}
\newtheorem{lemma}{Lemma}[section]
\newtheorem{proposition}{Proposition}[section]
\newtheorem{thm}{Theorem}[section]
\newtheorem{corollary}{Corollary}[section]
\newtheorem{remark}{Remark}[section]
\DeclareRobustCommand\widecheck[1]{{\mathpalette\@widecheck{#1}}}
\def\@widecheck#1#2{%
    \setbox\z@\hbox{\m@th$#1#2$}%
    \setbox\tw@\hbox{\m@th$#1%
       \widehat{%
          \vrule\@width\z@\@height\ht\z@
          \vrule\@height\z@\@width\wd\z@}$}%
    \dp\tw@-\ht\z@
    \@tempdima\ht\z@ \advance\@tempdima2\ht\tw@ \divide\@tempdima\thr@@
    \setbox\tw@\hbox{%
       \raise\@tempdima\hbox{\scalebox{1}[-1]{\lower\@tempdima\box
\tw@}}}%
    {\ooalign{\box\tw@ \cr \box\z@}}}
\let\check\widecheck
\def\text#1{\mbox{\rm #1}}
\DeclarePairedDelimiter{\ceil}{\lceil}{\rceil}
\DeclarePairedDelimiter{\floor}{\lfloor}{\rfloor}
\newcommand{\argmax}{\mathop{\rm argmax}}
\newcommand{\indc}[1]{\mathbb{I}\left\{#1\right\}}
\newcommand{\wt}{\widetilde}
\newcommand{\Norm}[1]{\|{#1} \|}
\newcommand{\Prob}{\mathbb{P}}
\newcommand{\Expect}{\mathbb{E}}
\newcommand{\pth}[1]{\left( #1 \right)}
\newcommand{\TV}{{\sf TV}}
\newcommand{\iidsim}{\stackrel{\mathrm{iid}}{\sim}}
\newcommand*{\rom}[1]{\uppercase\expandafter{\romannumeral #1\relax}}
\newtheorem*{m2'}{Condition M2'}
\newtheorem{condition}{Condition}
\def\tPi{\wt \Pi}
\newcommand{\eps}{\varepsilon}
\newcommand{\bp}[1]{\left(#1\right)}
\newcommand{\cG}{\mathcal{G}}
\newcommand{\cT}{\mathcal{T}}
\newcommand{\TVdiff}[2]{\Norm{#1-#2}_{\TV}}
\newcommand{\cB}{\mathcal{B}}
\newcommand{\cA}{\mathcal{A}}
\newcommand{\cN}{\mathcal{N}}
\newcommand{\bb}[1]{\left\{#1\right\}}
\newcommand{\KL}[2]{D \left( #1 \middle\| #2 \right)}
\newcommand{\dO}{\bm{\mathit{\Delta}} O}
\newcommand{\dn}{\bm{\mathit{\Delta}} n}
\newcommand{\dtO}{\bm{\mathit{\Delta}}\wt O}
\newcommand{\dtn}{\bm{\mathit{\Delta}}\wt n}
\newcommand{\abs}[1]{\left|#1\right|}
\newcommand{\ignore}[1]{}
\newcommand{\goto}{\rightarrow}
\newcommand{\PP}[1]{\mathbb{P}\left\{{#1}\right\}} 
\newcommand{\EE}[1]{\mathbb{E}\left[{#1}\right]} 
\newcommand{\PPst}[2]{\mathbb{P}\left\{{#1}\ \middle| \ {#2}\right\}}
\newcommand{\beps}{\bar\eps}
\DeclareMathOperator{\var}{Var}
\newcommand{\VV}[1]{\var\left({#1}\right)} 
\newcommand{\tP}{\wt P}
\newcommand{\tB}{\wt B}
\renewcommand{\O}[1]{\mathcal{O}\left({#1}\right)}
\newcommand{\cM}{{\mathcal M}}
\newcommand{\cl}{{C_\lambda}}
\newcommand{\tR}{\wt R}
\newcommand{\cE}{{\mathcal E}}
\title{Mixing Time of Metropolis-Hastings for Bayesian Community Detection
}
\author{Bumeng Zhuo}
\author{Chao Gao}
\affil{University of Chicago
}
\begin{document}
\maketitle

\begin{abstract}
We study the computational complexity of a Metropolis-Hastings algorithm for Bayesian community detection. We first establish a posterior strong consistency result for a natural prior distribution on stochastic block models under the optimal signal-to-noise ratio condition in the literature. We then give a set of conditions that guarantee rapid mixing of a simple Metropolis-Hastings algorithm. The mixing time analysis is based on a careful study of posterior ratios and a canonical path argument to control the spectral gap of the Markov chain.
\smallskip

\end{abstract}


\section{Introduction}

Markov Chain Monte Carlo (MCMC) is a popular sampling technique, in which the equilibrium distribution of the Markov chain matches the target distribution. Most attention to date has been focused on Bayesian applications in order to sample from posterior distributions. Despite its popularity in Bayesian statistics and many other areas, its theoretical properties are not well understood, not to mention the limited theory for computational efficiency of MCMC algorithms, where the pivotal interest lies in the analysis of mixing time. The mixing time of a Markov chain is the number of iterations required to get close enough to the target distribution, in the sense that the total variation distance is bounded above by some small constant $\varepsilon$. We call the Markov chain rapidly mixing (resp. slowly mixing) if the mixing time grows at most polynomially (resp. exponentially) with respect to the sample size of the problem. One central research interest is to determine whether a designed Markov chain is rapidly mixing or slowly mixing.

Though slow mixing is the central hurdle of Markov chains, to the best of our knowledge, there has been little work on the theoretical analysis of mixing time. A series of studies have made efforts to design efficient Markov chains \cite{thawornwattana2018designing,wu2004efficient,moller2006efficient,hutter2014efficient,tu2002image,wu2004efficient} without providing theoretical guarantees. Over the past fifteen years, a surge of research has led to breakthroughs in the understandings of geometric ergodicity of Markov chains \cite{roberts1997geometric,meyn1994computable,diaconis1991geometric,sinclair1992improved}, and several elegant techniques were developed to characterize the mixing property \cite{bubley1997path,guruswami2016rapidly,diaconis1991geometric,sinclair1992improved,meyn2012markov,randall2006rapidly,levin2017markov}. Canonical path is one main tool to show rapid mixing of Markov chains, and the idea is to design a set of paths between all pairs of points such that no edge is ``overloaded'' (congested). The method of canonical paths heavily relies on the graph structure, and the design of low congestion canonical paths remains a highly non-trivial artwork especially for exponentially large state space of Markov chains, which limits the range of applications. However, in some statistical problems, the construction of canonical paths may take advantage of the underlying model, and quantitative bounds for the convergence rate and mixing time of Markov chains can be obtained under some general conditions. Yang et al. \cite{yang2016computational} was one of the first to apply the canonical paths idea to a Bayesian variable selection problem, and obtained an explicit upper bound for the mixing time under some mild conditions. Inheriting their ideas, this paper applies the same technique to a Bayesian community detection problem.

Motivated by the computational advantages of Gibbs sampling, a Bayesian point of view of community detection was first suggested in \cite{snijders1997estimation} with only two communities. The approach was further extended in \cite{nowicki2001estimation,hofman2008bayesian} to incorporate adjusted priors on community proportions as well as edge probabilities and allow for the case of more than two communities. There has been little theoretical analysis of Bayesian community detection until very recently, when the consistency results of posterior distribution were obtained by \cite{van2017bayesian}. However, they required the expected degree of a node to be at least of order $\log^2 n$ to ensure the strong consistency of Bayesian posterior mode, which is a suboptimal condition for stro{}ng consistency \cite{abbe2016community,abbe2014exact,zhang2016minimax}. Compared with the work on statistical performance, little work has been done on the computational efficiency to sample from the posterior distribution, and it was once suggested that the mixing time for high dimensional Bayesian community detection should scale exponentially, because the Markov chain must eventually go over all possible states.

\ignore{\textbf{Goal of this paper}} This paper considers both the statistical and computational performances of the Metropolis-Hastings algorithm for community detection. The primary goal is to study the computational complexity for recovering the community memberships in a social network using a simple Markov chain. To be concrete, we construct a Bayesian model with some specific prior, analyze the performance of the corresponding posterior distribution, and provide a rapidly mixing time bound for the induced Metropolis-Hastings algorithm. In particular, we introduce a uniform prior for the community label assignments and a Beta prior for the connectivity probabilities. In order to sample from the induced posterior distribution, we adopt a ``tampered'' Metropolis-Hastings algorithm that takes a ``single flip'' procedure as the proposal distribution and accepts the update with a scaled posterior ratio with respect to some temperature parameter.

\ignore{\textbf{The result and contribution.}} The Bayesian model considered in this paper yields great statistical performance, in the sense that the posterior strong consistency holds when the expected degree is of order $\log n$, compared with the previous expected degree condition $\log ^2n$ in \cite{van2017bayesian}. In particular, it is shown in \cite{abbe2016community,abbe2014exact,zhang2016minimax} that this condition for strong consistency is optimal. A simple Metropolis-Hastings algorithm is adopted to sample from the posterior distribution with temperature parameter adjusted. We are able to show that the Markov chains induced by the algorithm is rapidly mixing. To be specific, when the expected degree is of order $\log n$, the mixing time of the Markov chain scales as $O(n^3\log n)$. It is worth noting that the analysis of this paper is based on a frequentist point of view that the social network data are generated according to an underlying true model.

\textbf{Organization.} The rest of the paper is organized as follows. Section \ref{sec:BayComDetect} formally sets up the community detection problem and presents the posterior strong consistency property. Section \ref{sec:rapidmixingalg} introduces a Metropolis-Hastings algorithm and provides an explicit mixing time bound, followed by some numerical results demonstrating its competitive performance on simulated datasets in Section \ref{sec:NumResults}. Section \ref{sec:mainProofs} is devoted to the proofs of the technical results of the paper.

\textbf{Notations.} We close this section by introducing some notations. For an integer $d$, we use $[d]$ to denote $\{1,2,\ldots,d\}$. For a set $S$, we write $\indc{S}$ as its indicator function and $|S|$ as its cardinality. For a vector $v\in \mathbb{R}^d$, its norms are defined by $\|v\|_1=\sum_{i=1}^d |v_i|$, $\Norm{v}^2 = \sum_{i=1}^d v_i^2$, and $\Norm{v}_\infty = \max_{1\leq i\leq d}|v_i|$. The Hamming error of two binary vectors $v_1,v_2\in \{0,1\}^d$ is defined by $H(v_1,v_2) = \sum_{i=1}^d \indc{v_1(i)\neq v_2(i)}.$ For a matrix $A\in \mathbb{R}^{K\times K}$, its norms are defined by $\Norm{A}_{\infty} = \max_{i,j\in[K]}\abs{A_{ij}}$, and $\Norm{A}_{1} = \sum_{i,j\in[K]}\abs{A_{ij}}$. The notation $\Prob$ and $\Expect$ are generic probability and expectation operators whose distribution is determined from the context. We use $o(1)$ to denote any positive sequence tending to 0. Throughout the paper, unless otherwise noticed, we use $C$, $c$ and their variants to denote absolute constants, and the values may vary from line to line. For any two distributions $P$ and $Q$, the total variation distance is defined by $\left\| P-Q \right\|_{\TV} = \frac{1}{2}\int \abs{dP-dQ} $, and the KL divergence is defined by $\KL{P}{Q}=\int dP\log\frac{dP}{dQ}$. For simplicity, we write $\KL{p}{q}$ to denote $\KL{\text{Bernoulli}(p)}{\text{Bernoulli}(q)}$ for $p,q\in[0,1]$. For any two numbers $a$ and $b$, we use $a\wedge b$ and $a\vee b$ to denote $\min\{a,b\}$ and $\max\{a,b\}$ respectively.

\section{Bayesian Community Detection}\label{sec:BayComDetect}

Networks have arisen in various areas of applications and have attracted a surge of research interests in fields such as physics, computer science, social sciences, biology, and statistics \cite{goldenberg2010survey,newman2010networks,wasserman1994social,fortunato2010community,chen2006detecting}. In the realm of network analysis, community detection has emerged as a fundamental task that provides insights of the underlying structure. Great advances have been made on community detection recently with a remarkable diversity of models and algorithms developed in different areas \cite{girvan2002community,newman2007mixture,handcock2007model}. Among various statistical models, the stochastic block model (SBM), first proposed in \cite{holland1983stochastic}, is one of the most prominent generative model that depicts the network topologies and incorporates the community structure. It is arguably the simplest model of a graph with communities and has been widely applied in social, biological and communication networks. Much effort has been devoted to SBM-based methods and their asymptotic properties have also been studied recently \cite{bickel2009nonparametric,celisse2012consistency,bickel2013asymptotic}.

In this section, we give a precise formulation of the community detection problem and introduce a Bayesian approach. Then, we present the posterior strong consistency result.

\subsection{Problem formulation}

Consider an unweighted and undirected network with $n$ nodes and $K$ communities. The adjacency matrix is denoted by $A\in \{0,1\}^{n\times n}$, $A=A^T$, and $A_{ii}=0$, for all $i\in[n]$. The edges are independently generated as Bernoulli variable with $\Expect{A_{ij}} = P_{ij}$, for all $i<j$. Here, $P_{ij}$ denotes the connectivity probability for nodes $i$ and $j$, and depends on the communities that the two nodes are assigned to. In this paper, we focus on a homogeneous SBM and assume $P_{ij} = p$ if two nodes are from the same community and $P_{ij} = q$ otherwise. We call $p$ (resp. $q$) as the within-community (resp. between-community) connectivity probability and assume $p>q$ to satisfy the ``assortative'' property. Extensions to heterogenous SBMs are straightforward, but will not be considered in the paper for the sake of the presentation.

Let $Z\in [K]^n$ denote a label assignment vector, where $Z_i$ is the community label for the $i$th node. Let $B\in[0,1]^{K\times K}$ be a symmetric connectivity probability matrix and thus $P_{ij} = B_{Z_i Z_j}$ with $B_{aa}=p$ for all $a\in [K]$, and $B_{ab} = q$ for all $a\neq b$. According to the description of the model, the likelihood formula can be written as
\begin{align}\label{eq:like}
p(A|{Z,B}) = \prod_{i<j}B_{Z_iZ_j}^{A_{ij}}\pth{1-B_{Z_iZ_j}}^{1-A_{ij}}.
\end{align}

We use $Z^*$ to denote the underlying true label assignment vector, and further assume that 
\begin{equation}\label{eq:beta}
	\frac{n}{\beta K}\leq \sum_{i=1}^n \indc{Z^*_i = k}\leq\frac{\beta n}{K},~\text{for all~} k\in [K],
\end{equation}
where $\beta\geq 1$ is an absolute constant. It indicates that the all community sizes are of the same order. When $\beta = 1+o(1)$, all communities have almost the same sizes. Furthermore, we assume $K$ is a known constant, $p,q\rightarrow 0$ and $p\asymp q$ throughout the paper. To conclude, this paper focuses on a sparse homogeneous SBM with a finite number of communities.

Note that community detection is a clustering problem, and thus any label assignment gives an equivalent result after a label permutation. To be specific, let 
\begin{equation}\label{eq:GammaZ}
\Gamma(Z) = \{\sigma\circ Z: \sigma\in \mathcal{P}_K\},    
\end{equation}
where $\mathcal{P}_K$ stands for the set of all permutations on $[K]$, and then any $Z'\in \Gamma(Z)$ leads to an equivalent clustering structure. Hence, with the identifiability issue, our ultimate goal is to reconstruct the community structure, or equivalently, to recover the community label assignment $Z^*$ up to a label permutation.

\subsection{A Bayesian model for community detection}
In addition to the likelihood formula of the adjacency matrix $A$ given in \prettyref{eq:like}, we put a uniform prior on $Z$ over a set $S_\alpha$, where $S_\alpha$ is the set of all feasible label assignments depending on a hyperparameter $\alpha$. The connectivity probabilities $B_{ab}$ for $1\leq a\leq b\leq K$ receive independent Beta priors. More precisely, the Bayesian model is given by

\begin{align*}
&\text{stochastic block model:} && p(A|Z,B) = \prod_{i<j} B_{Z_i Z_j}^{A_{ij}}\pth{1-B_{Z_i Z_j}}^{1-A_{ij}},\\[5pt]
&\text{label assignment prior:} && \pi(Z)\propto \indc{Z\in S_\alpha},\\[5pt]
&\text{connectivity probability prior:} && B_{ab}\iidsim \text{Beta}(\kappa_1,\kappa_2), ~~~ 1\leq a\leq b\leq K,
\end{align*}
where $\kappa_1,\kappa_2>0$ measure the prior information of the connectivity probabilities and have negligible effects on the results when the sample size is large enough. This is essentially the same set-up in \cite{van2017bayesian}, except that we introduce a uniform prior over set $S_\alpha$. The key set $S_\alpha$ is defined by
\begin{equation}\label{eq:S_alpha}
	S_\alpha = \left\{Z:\sum_{i=1}^n \indc{Z_i=k}\in \left[\frac{n}{\alpha K},\frac{\alpha n}{K}\right],\text{ for all } k \in [K] \right\},
\end{equation}
where the hyperparameter $\alpha$ controls the size of the feasible set $S_\alpha$, which rules out those models whose group sizes differ too much. We require $\alpha>\beta$ so that $Z^*\in S_\alpha$. \ignore{Note that the similar trick is also used in \cite{yang2016computational}, where a restricted set was introduced to eliminate models in extreme cases.} As will be clarified in Section \ref{sec:NumResults}, this additional constraint seems to be necessary for the rapidly mixing according to our practical experiments.

The induced posterior distribution can be expressed as
\begin{align*}
\Pi(Z|A)&\propto \int_{[0,1]^{K(K+1)/2}}\prod_{a\leq b} B_{ab}^{ O_{ab}(Z)}(1-B_{ab})^{ n_{ab}(Z)- O_{ab}(Z)}d\Pi(B)\\
&\propto \prod_{a\leq b}\text{Beta}( O_{ab}(Z)+\kappa_1, n_{ab}(Z)- O_{ab}(Z)+\kappa_2), & \text{for $Z\in S_\alpha$},
\end{align*}
and it follows that for $Z\in S_\alpha$, 
\begin{align}\label{eq:postunknown}
\log\Pi(Z|A) &= \sum_{a\leq b}\log \text{Beta}( O_{ab}(Z)+\kappa_1, n_{ab}(Z)- O_{ab}(Z)+\kappa_2)+Const,
\end{align}
where $n_{ab}(Z) = n_a(Z)n_b(Z)$, $n_{aa}(Z)=n_a(Z)(n_a(Z)-1)/2$ for all $a\neq b\in[K]$. We use $n_a(Z)$ to denote the size of community $a$, i.e., $n_a(Z) = |\{i:Z_i=a\}|$. We use $O_{ab}(Z)$ to denote the number of connected edges between communities $a$ and $b$, which takes the formula $O_{ab}(Z)=\sum_{i,j}A_{ij}\indc{Z_i=a, Z_j=b}$ and $O_{aa}(Z) = \sum_{i<j}A_{ij}\indc{Z_i=Z_j=a}$ for all $a\neq b\in[K]$. Note that the posterior distribution is permutation symmetric, i.e., 
\begin{equation}
    \Pi(Z|A) = \Pi(Z'|A), ~~~\text{for all }Z'\in \Gamma(Z). \label{eq:per-symm}
\end{equation}

\subsection{Posterior strong consistency}\label{sec:postStrongConsis}

Before stating the theoretical properties of the proposed Bayesian model, we introduce some useful quantities. The first quantity $I$ plays a crucial part in the minimax theory \cite{zhang2016minimax},
\[
	I = -2\log (\sqrt{pq}+\sqrt{(1-p)(1-q)}),
\]
which is the R\'enyi divergence of order 1/2 between Bernoulli($p$) and Bernoulli($q$). It can be shown that when $p,q\rightarrow 0$,
\[
	I = (1+o(1)) (\sqrt{p}-\sqrt{q})^2.
\]
Then, we introduce an effective sample size to simplify the presentation of the results. As mentioned in \cite{zhang2016minimax}, the minimax misclassification error rate is determined by that of classifying two communities of the smallest sizes. When $K=2$, the hardest case is when one has two communities of the same size $n/2$. When $K>2$, the hardest case is when one has two communities of sizes $n/K\beta$. Thus, we define 
\begin{equation}
	\bar n  = \begin{dcases}	
		\frac{n}{2}, &\text{for~} K=2,\\[5pt]
		\frac{n}{K\beta}, &\text{for~}K>2,
			\end{dcases}	\displaystyle
\end{equation}
as the effective sample size of the problem. The following result characterizes the statistical performance of the posterior distribution $\Pi(Z|A)$ under mild conditions. 

\begin{thm}[Posterior strong consistency]\label{thm:postcontract}
	Recall that $\Gamma(Z) = \{\sigma\circ Z: \sigma\in \mathcal{P}_K\}$, where $\mathcal{P}_K$ stands for the set of all permutations on $[K]$. Suppose that
	\begin{equation}
		\liminf_{n\rightarrow \infty} \frac{\bar nI}{\log n}>1,
	\end{equation}
	and the feasible set $S_\alpha$ satisfies that $\alpha-\beta$ is a positive constant. Then, we have that
	\[
		\Expect{[\Pi(Z\in \Gamma(Z^*)|A)]} \geq 1-n\exp(-(1-\eta_n)\bar nI)=1-o(1)
	\]
	for a large $n$ and some positive sequence $\eta_n$ tending to 0 as $n\goto \infty$, and the expectation is with respect to the data-generating process.
\end{thm}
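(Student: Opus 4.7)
The plan is to control
\[
\Pi(Z \notin \Gamma(Z^*) \mid A) \;\le\; \sum_{Z \notin \Gamma(Z^*)} \frac{\Pi(Z \mid A)}{\Pi(Z^* \mid A)}
\]
in expectation and organize the sum by the minimal Hamming distance $\ell(Z) = \min_{\sigma \in \mathcal{P}_K} H(\sigma \circ Z, Z^*)$ to the orbit $\Gamma(Z^*)$. Since $\alpha > \beta$, both $Z$ and $Z^*$ lie in $S_\alpha$, so the uniform prior cancels in every ratio, and at most $\binom{n}{\ell}(K-1)^\ell \le (nK)^\ell$ configurations have $\ell(Z) = \ell$. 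From \eqref{eq:postunknown} the log-posterior ratio is a sum of log-Beta ratios in the blockwise counts $(O_{ab}(Z), n_{ab}(Z))$; Stirling's formula (valid since $Z \in S_\alpha$ forces $n_{ab}(Z) \gtrsim \bar n^{\,2}$, and $\bar n p \gtrsim \log n$ follows from $\bar n I \gtrsim \log n$ together with $p \asymp q$) then reduces it, up to an $O(K^2 \log n)$ correction absorbing the Beta normalizers and the fixed prior hyperparameters $\kappa_1, \kappa_2$, to the profile log-likelihood ratio
\[
-\sum_{a \le b}\bb{n_{ab}(Z)\, H\pth{\tfrac{O_{ab}(Z)}{n_{ab}(Z)}} - n_{ab}(Z^*)\, H\pth{\tfrac{O_{ab}(Z^*)}{n_{ab}(Z^*)}}}, \qquad H(x) = -x\log x - (1-x)\log(1-x).
\]

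The central probabilistic step is to show that, for each fixed $Z$ with $\ell(Z) = \ell$,
\[
\PP{\log\frac{\Pi(Z \mid A)}{\Pi(Z^* \mid A)} \;\ge\; -(1 - \eta_n)\ell\, \bar n I} \;\le\; \exp\bb{-(1 - \eta_n)\ell\, \bar n I}
\]
for some $\eta_n = o(1)$. Flipping $\ell$ node labels affects $\Theta(\ell\bar n)$ pairs whose true connectivity toggles between $p$ and $q$, so the expected profile LLR equals $-\ell \bar n I (1 + o(1))$, each flipped pair contributing exactly R\'enyi divergence $I$. A Chernoff bound at the exponential tilt $t = 1/2$, applied to the sum of the independent Bernoulli log-ratio terms $A_{ij}\log(q/p) + (1 - A_{ij})\log((1-q)/(1-p))$, yields the sharp tail. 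Combined with the count,
\[
\EE{\Pi(Z \notin \Gamma(Z^*) \mid A)} \;\le\; \sum_{\ell = 1}^n (nK)^\ell \exp\bb{-(1 - \eta_n)\ell\, \bar n I}
\]
is a geometric series whose ratio is $o(1)$ under $\bar n I > (1 + \delta)\log n$, and is therefore dominated by the first term $n\exp(-(1 - \eta_n)\bar n I)$, which is the claimed bound.

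The hard part will be obtaining the tight constant $1 - \eta_n$ in the exponent. A crude Bernstein or Gaussian bound on the profile LLR delivers only a factor of $1/2$ there, forcing the suboptimal condition $\bar n I > 2\log n$; matching the minimax-optimal threshold of \cite{zhang2016minimax} requires exponential tilting at $t = 1/2$, which is precisely what surfaces the R\'enyi divergence $I$ as the per-flip, per-pair rate, together with careful bookkeeping of how the $\Theta(\ell \bar n)$ corrupted pairs distribute across the $K(K+1)/2$ blocks (some flips swap a $p$-pair to a $q$-pair and vice versa, and the two directions must be combined cleanly into a single R\'enyi exponent). A secondary nuisance is the uniform validity of the Stirling reduction when some blocks have few observed edges; this is absorbed into the $O(K^2 \log n)$ correction, which is negligible against the target $\bar n I \gtrsim \log n$.
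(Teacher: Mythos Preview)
Your high-level strategy---Stirling reduction to the profile log-likelihood, then a Chernoff argument at tilt $t=1/2$ to surface the R\'enyi divergence $I$---is the right one and matches the paper's. But there is a real gap between the two steps that you have glossed over.

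The profile log-likelihood ratio $\sum_{a\le b}\{n_{ab}(Z)H(\hat B_{ab}(Z))-n_{ab}(Z^*)H(\hat B_{ab}(Z^*))\}$ is \emph{not} the sum of independent Bernoulli log-ratio terms $A_{ij}\log(q/p)+(1-A_{ij})\log((1-q)/(1-p))$. The latter is $\log\Pi_0(Z|A)/\Pi_0(Z^*|A)$, the posterior ratio with \emph{known} $(p,q)$, and it is only this linear functional that the $t=1/2$ tilt handles cleanly. The difference between the two is not an $O(K^2\log n)$ Stirling remainder; writing it out (as the paper does in \eqref{eq:ZZstarsmall}) gives a nonnegative KL term plus a random ``Error'' term of the form $\sum_{a\le b}\dtO_{ab}\log(\hat B_{ab}/B_{ab})+\cdots$, which depends on how close the empirical block frequencies $\hat B_{ab}(Z)$ are to the true $B_{ab}$. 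This is a data-dependent quantity of order $m n I$ in the worst case, not $\log n$. The paper controls it by a separate concentration argument (the events $\mathcal E_1,\dots,\mathcal E_4$ in \eqref{eq:allevents} and Lemma~\ref{lm:bound_ec}), showing it is $o(m\bar nI)$ on a high-probability event; this is the content of Lemma~\ref{lm:postlargemistakes}, and it requires splitting into small-mistake ($m\le\gamma n$) and large-mistake ($m>\gamma n$) regimes because the argument is different in each. Your ``careful bookkeeping'' remark does not cover this.

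A secondary issue: your central probabilistic step is a tail bound $\PP{\log(\Pi(Z|A)/\Pi(Z^*|A))\ge -(1-\eta_n)\ell\bar nI}\le\exp(-(1-\eta_n)\ell\bar nI)$, but the quantity you need to control is $\EE{\Pi(Z|A)/\Pi(Z^*|A)}$ (or more precisely $\EE{\Pi(Z|A)}$), and the ratio is unbounded above. The paper closes this with the change-of-measure decomposition via the events $G_Z=\{\Pi_0(Z|A)>\Pi_0(Z^*|A)\}$: on $G_Z^c$ the ratio $P_Z/P_{Z^*}$ integrated under $P_{Z^*}$ becomes $P_Z(G_Z^c)$, which by a symmetry argument equals $P_{Z^*}(G_Z)$, so both pieces reduce to the tail probability. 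You should make this step explicit.
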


We defer the proof of the theorem to Section \ref{sec:mainProofs}. It is worth noting that the condition required in \prettyref{thm:postcontract} is identical to the fundamental limits required for exact label recovery \cite{abbe2016community,abbe2014exact,zhang2016minimax}. In the special case of two communities of equal sizes, we require $nI>2\log n$ to guarantee the strong consistency result. Hence, \prettyref{thm:postcontract} implies that under our Bayesian framework, posterior strong consistency holds under the optimal condition.

We can also compare the statistical performance of our model with other Bayesian community detection approaches. The first Bayesian SBM was suggested by \cite{snijders1997estimation}, who considered two communities and proposed a uniform prior for both community proportions and the connectivity probabilities. It was further extended for more communities with Dirichlet priors on community proportions and Beta priors on the connectivity probabilities. However, the field of Bayesian SBM grows in a slow pace due to lack of theoretical analysis in terms of statistical consistency. Recently, van der Pas and van der Vaart \cite{van2017bayesian} proved that the strong consistency result holds under a condition where the expected degree satisfies $\lambda_n\gg\log^2 n$. In contrast, our model introduces a feasible set $S_\alpha$ and proposes a uniform prior for label assignment $Z$ on set $S_\alpha$. It results in the strong consistency of posterior distribution under the condition that ${n(p-q)^2}/{p}\gtrsim \log n$, much weaker than the condition required in \cite{van2017bayesian}. 

\ignore{
\textbf{review strong consistency results in literature} \url{http://www.jmlr.org/papers/volume18/16-245/16-245.pdf} Page 2
\url{https://arxiv.org/pdf/1608.04242.pdf}}

\section{Rapidly mixing of a Metropolis-Hastings algorithm}\label{sec:rapidmixingalg}

In this section, we propose a modified Metropolis-Hastings random walk, and analyze its statistical performance as well as the computational complexity. Due to the identifiability of the problem, the rapidly mixing property is analyzed in clustering space that will be defined in the sequel.

\subsection{A Metropolis-Hastings algorithm}\label{sec:alg}

A general Metropolis-Hastings algorithm is an iterative procedure consisting of two steps:

\begin{description}[leftmargin = 0cm,labelsep = 0.5cm]
	\item[Step 1] For the current state $X_t$, generate an $X' \sim q(x|X_t)$, where $q(x|X_t)$ is the proposal distribution defined on the same state space.
	\item[Step 2] Move to the new state $X'$ with acceptance probability $\rho(X_t,X')$, and stay in the original state $X_t$ with probability $1-\rho(X_t,X')$, where the acceptance probability is given by
	\[
		\rho(X_t,X') = \min \left\{1,\frac{p(X')q(X_{t}|X')}{p(X_{t})q(X'|X_{t})}\right\},
	\]
    where $p(\cdot)$ is the target distribution.
\end{description}
In this paper, we are sampling the community label assignment $Z\in [K]^n$. In particular, we take the \textit{single flip update} as the proposal distribution, which is to choose an index $j\in[n]$ uniformly at random, and then randomly choose $c\in [K]\setminus \{Z_{t}(j)\}$ to assign a new label. The whole algorithm is presented in \prettyref{algo:Ag1}.

\bigskip
\begin{algorithm}[H]
	\label{algo:Ag1}
	\SetArgSty{textnormal}
	\SetKwInOut{Input}{Input}
	\caption{A Metropolis-Hastings algorithm for Bayesian community detection}
	\Input{Adjacency matrix $A\in\{0,1\}^{n\times n}$,\\
	number of communities $K$,\\
initial community assignment $Z_0$,\\
inverse temperature parameter $\xi$,\\
maximum number of iterations $T$.}
\KwOut{Community label assignment $Z_{T}$.}
\For{ each $t\in \{0,1,2,\ldots,T\}$ }{
		Choose an index $j\in[n]$ uniformly at random\;
		Randomly assign a new label for index $j$ from the set $[K]\setminus \{Z_{t}(j)\}$ to get a new assignment $Z'$\;
		$Z_{t+1} = Z'$ with probability
		\[\rho(Z_t,Z') = \min\left\{1,\frac{\Pi^\xi(Z'|A)}{\Pi^\xi(Z_t|A)}\right\},\]
		otherwise set $Z_{t+1} = Z_{t}$.
}

\end{algorithm}
\bigskip

The Markov chain induced by Algorithm \ref{algo:Ag1} is characterized by the transition matrix, which takes the form as
\begin{equation}\label{eq:tran_mat}
    P(Z,Z') = \begin{dcases}
\frac{1}{n(K-1)}\min \left\{1,\frac{\Pi^\xi(Z'|A)}{\Pi^\xi(Z|A)}\right\}, &\text{if~} H(Z,Z') = 1,\\
1-\sum_{Z'\neq Z}{P(Z,Z')},&\text{if~} Z' = Z,\\
0, &\text{if~} H(Z, Z')>1,
\end{dcases}
\end{equation}
where $H(Z,Z')$ is the Hamming error between the two label assignments $Z,Z'$. The inverse temperature parameter $\xi$ satisfies that $\xi\geq 1$. The algorithm is sampling from the scaled distribution $\tPi(Z|A)$, where $\tPi(Z|A)\propto\Pi^\xi(Z|A)$ for any $Z\in [K]^n$. As $\xi\goto \infty$, the probability mass of $\tPi(\cdot|A)$ concentrates on the global maximum of $\Pi(\cdot|A)$, in which case the algorithm is deterministic and reduces to a label switching algorithm, as discussed in \cite{bickel2009nonparametric}. When $\xi=1$, asymptotically the algorithm is sampling from the true posterior distribution. The possible choices of $\xi$ will be discussed in the sequel. 

The parameter $T$ in \prettyref{algo:Ag1} is the total number of iterations required. As long as the Markov chain mixes after $T$, according to Theorem \ref{thm:postcontract}, $Z_T$ recovers the true community label assignment up to a label permutation with high probability, i.e., $Z_T\in \Gamma(Z^*)$ where $\Gamma(Z^*)$ is as defined in \eqref{eq:GammaZ}. Even though Theorem \ref{thm:postcontract} is only stated for $\xi=1$, it is easy to see that its conclusion also holds for $\wt{\Pi}(\cdot|A)$ for a general $\xi\geq 1$.

Due to the identifiability issue, the theoretical analysis of mixing time will be performed in the clustering space $\{\Gamma(Z): Z\in S_\alpha\}$, where $\Gamma(Z)$ is defined in \eqref{eq:GammaZ}. We denote the state in the clustering space at time $t$ as $\Gamma_t=\Gamma(Z_t)$, where $Z_t$ is generated from Algorithm \ref{algo:Ag1}. The graphical model of the sequence $\{\Gamma_t\}_{t\geq 0}$ is illustrated by Figure \ref{fig:update_clustering_space}.

\begin{figure}[!htb]
\begin{center}
\tikzset{
  main/.style={circle, minimum size = 5mm, thick, draw =black!80, node distance = 10mm},
  connect/.style={-latex, thick},
  box/.style={rectangle, draw=black!100}
}
\begin{tikzpicture}
  \node[box,draw=white!100] (Latent) {\textbf{State $Z_t$}};
  \node[main] (L1) [right=of Latent] {$Z_0$};
  \node[main] (L2) [right=of L1] {$Z_1$};
  \node[main] (L3) [right=of L2] {$Z_2$};
  \node[main] (Lt) [right=of L3] {$Z_t$};
  \node[main, scale=0.75] (Ltp1) [right=of Lt] {$Z_{t+1}$};

  \node[main,fill=black!10] (O1) [below=of L1] {$\Gamma_0$};
  \node[main,fill=black!10] (O2) [below=of L2] {$\Gamma_1$};
  \node[main,fill=black!10] (O3) [below=of L3] {$\Gamma_2$};
  \node[main,fill=black!10] (Ot) [below=of Lt] {$\Gamma_t$};
  \node[main,fill=black!10, scale=0.75] (Otp1) [below=of Ltp1] {$\Gamma_{t+1}$};

  \node[box,draw=white!100,left=of O1] (Observed) {\textbf{State $\Gamma_t$}};
  
  \path (L3) -- node[auto=false]{\ldots} (Lt);
  \path (L1) edge [connect] (L2)
        (L2) edge [connect] (L3)
        (Lt) edge [connect] (Ltp1)
        (L3) -- node[auto=false]{\ldots} (Lt);
  \path (L1) edge [connect] (O1);
  \path (L2) edge [connect] (O2);
  \path (L3) edge [connect] (O3);
  \path (Lt) edge [connect] (Ot);
  \path (Ltp1) edge [connect] (Otp1);
\end{tikzpicture}
\end{center}
\caption{Updating process of $\Gamma(Z_t)$.}
\label{fig:update_clustering_space}
\end{figure}
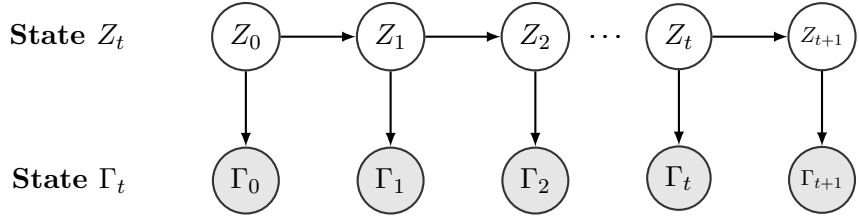

\begin{proposition}
The sequence $\{\Gamma_t\}_{t\geq 0}$ induced by Algorithm \ref{algo:Ag1} is a Markov chain.
\end{proposition}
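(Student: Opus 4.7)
The plan is to invoke the Kemeny--Snell lumpability criterion: for a Markov chain $\{Z_t\}$ on a state space $\mathcal{S}$ equipped with a partition $\{C_\gamma\}$, the projection $Z_t \mapsto \gamma(Z_t)$ is itself a Markov chain provided that for every pair of classes $C_\gamma, C_{\gamma'}$ the aggregated transition probability $\sum_{Z'\in C_{\gamma'}} P(Z,Z')$ depends on $Z$ only through its class. In our setting $\mathcal{S} = S_\alpha$, and the partition consists of the orbits $\{\Gamma(Z): Z \in S_\alpha\}$ of the $\mathcal{P}_K$-action on $S_\alpha$.

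The first and key step is to establish that the transition kernel $P$ in \eqref{eq:tran_mat} is equivariant under the diagonal action of $\mathcal{P}_K$, namely $P(\sigma \circ Z, \sigma \circ Z') = P(Z, Z')$ for every $\sigma \in \mathcal{P}_K$. Two short observations give this. First, $H(\sigma \circ Z, \sigma \circ Z') = H(Z, Z')$, since applying the same permutation to both labelings preserves the set of coordinates on which they disagree. Second, the posterior symmetry \eqref{eq:per-symm} yields $\Pi(\sigma \circ Z \mid A) = \Pi(Z \mid A)$, so the acceptance ratio $\Pi^\xi(Z'|A)/\Pi^\xi(Z|A)$ in \eqref{eq:tran_mat} is preserved; the diagonal entries $P(Z,Z)$ then inherit equivariance by complementation.

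Given equivariance, the lumpability check reduces to a short reindexing. For any target orbit $\Gamma(W)$ and any $\tilde Z = \tau \circ Z \in \Gamma(Z)$, I would write
\[
\sum_{Z' \in \Gamma(W)} P(\tilde Z, Z') \;=\; \sum_{Z' \in \Gamma(W)} P(\tau \circ Z, Z') \;=\; \sum_{Z' \in \Gamma(W)} P(Z, \tau^{-1} \circ Z') \;=\; \sum_{Z'' \in \Gamma(W)} P(Z, Z''),
\]
where the middle equality applies equivariance with $\sigma = \tau^{-1}$ to both arguments, and the last uses that the map $Z' \mapsto \tau^{-1} \circ Z'$ is a bijection of the orbit $\Gamma(W)$ onto itself. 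This is exactly the Kemeny--Snell condition, and combined with the Markov property of the original chain $\{Z_t\}$ it gives the claim.

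There is no substantial obstacle here; the only point requiring care is that the sum is taken over the distinct elements of the \emph{set} $\Gamma(W)$, whose cardinality may be strictly less than $|\mathcal{P}_K| = K!$ when $W$ has a nontrivial stabilizer (for instance, a trivial clustering in which only a single community is used). Since $\tau^{-1}$ nevertheless acts as a set-bijection on $\Gamma(W)$, the reindexing above is always valid and no multiplicity correction is needed.
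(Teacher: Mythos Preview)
Your proof is correct and shares the same underlying idea as the paper's---the permutation symmetry \eqref{eq:per-symm} forces the transition probabilities out of any $Z$ into a given orbit $\Gamma'$ to depend only on $\Gamma(Z)$---but the packaging is genuinely different. The paper argues directly: it conditions on $Z_t$, observes that when $\check H(\Gamma_t,\Gamma_{t+1})\le 1$ there is a \emph{unique} $Z'\in\Gamma_{t+1}$ with $H(Z,Z')\le 1$ (this uses that each community in $S_\alpha$ has size at least $n/(\alpha K)>1$, so nontrivial permutations move many coordinates), and then notes that $P(Z,Z')$ depends only on the posterior ratio $\Pi(\Gamma_{t+1}|A)/\Pi(\Gamma_t|A)$. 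Your route instead establishes full $\mathcal P_K$-equivariance of $P$ and invokes the Kemeny--Snell lumpability criterion via the orbit-reindexing computation. Your argument is cleaner in that it avoids the case split on $\check H$ and the uniqueness claim (and would apply verbatim to any $\mathcal P_K$-equivariant kernel on $S_\alpha$); the paper's argument is more self-contained, requiring no appeal to an external lumpability theorem, and along the way isolates the explicit formula \eqref{eq:tran_mat_gamma} for the lumped kernel $\check P$.
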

\begin{proof}
The proof relies on the permutation symmetry of the posterior distribution given by (\ref{eq:per-symm}). We first introduce a distance between two clustering structures $\Gamma$ and $\Gamma'$, defined by
\begin{equation}\label{eq:hammingGamma}
    \check{H}(\Gamma,\Gamma') = \min_{Z\in \Gamma, Z'\in \Gamma'}H(Z,Z').
\end{equation}
When $\check{H}(\Gamma_{t+1},\Gamma_{t})\leq 1$, we have
\begin{align}\label{eq:MCproof}
    &\PPst{\Gamma_{t+1}}{\Gamma_s,~s\leq t}= \sum_{Z\in \Gamma_t}\PPst{\Gamma_{t+1}}{Z_t=Z}\cdot \PPst{Z_t=Z}{\Gamma_s,~s\leq t}.
\end{align}
The equality holds since given $Z_t$, $\Gamma_{t+1}$ and $\{\Gamma_s:s\leq t\}$ are independent. We proceed to calculate $\PPst{\Gamma_{t+1}}{Z_t=Z}$. In the case of $\check{H}(\Gamma_{t+1},\Gamma_{t})\leq 1$, it is obvious that for any $Z\in \Gamma_t$, there exists a unique $Z'\in \Gamma_{t+1}$ such that $H(Z,Z')\leq 1$. Thus, we have that
\begin{equation}\label{eq:tran_mat_proof}
    \PPst{\Gamma_{t+1}}{Z_t=Z} = \sum_{\wt Z\in \Gamma_{t+1}}P(Z,\wt Z) = P(Z,Z').
\end{equation}
By \eqref{eq:tran_mat}, the transition probability $P(Z,Z')$ only depends on the ratio of $\Pi(Z'|A)$ and $\Pi(Z|A)$, and 
\begin{equation}
    \frac{\Pi(Z'|A)}{\Pi(Z|A)}=  \frac{\Pi(\Gamma(Z')|A)}{\Pi(\Gamma(Z)|A)} = \frac{\Pi(\Gamma_{t+1}|A)}{\Pi(\Gamma_t|A)},
\end{equation}
which only depends on $\Gamma_t$ and $\Gamma_{t+1}$. It follows that
\[
    \PPst{\Gamma_{t+1}}{Z_t} = \PPst{\Gamma_{t+1}}{\Gamma_t}.
\]
Hence, plug the above identity into \eqref{eq:MCproof}, and we have
\begin{align*}
    \PPst{\Gamma_{t+1}}{\Gamma_s,~s\leq t} &= \sum_{Z\in \Gamma_t}\PPst{\Gamma_{t+1}}{\Gamma_t} \cdot \PPst{Z_t=Z}{\Gamma_s,~s\leq t}\\
    & = \PPst{\Gamma_{t+1}}{\Gamma_t} \cdot\sum_{Z\in \Gamma_t} \PPst{Z_t=Z}{\Gamma_s,~s\leq t}\\
    & = \PPst{\Gamma_{t+1}}{\Gamma_t}.
\end{align*}
When $\check{H}(\Gamma_{t+1}, \Gamma_t)>1$, it is obvious that
\[
    \PPst{\Gamma_{t+1}}{\Gamma_s,~s\leq t} = 0 = \PPst{\Gamma_{t+1}}{\Gamma_t}.
\]
Therefore, $\{\Gamma_t\}_{t\geq 0}$ is a Markov chain by combining the conclusions of the two cases. 
\end{proof}

According to the above proposition and its proof, we can define the transition matrix $\check P$ from state $\Gamma$ to $\Gamma'$ as
\begin{equation}\label{eq:tran_mat_gamma}
\begin{split}
    \check P(\Gamma,\Gamma') = \begin{dcases}
    \frac{1}{n(K-1)}\min \left\{1,\left[{\frac{\Pi(\Gamma'|A)}{\Pi(\Gamma|A)}}\right]^\xi \right\}, &\text{if~} \check H(\Gamma,\Gamma') = 1,\\
1-\sum_{\Gamma'\neq \Gamma}{\check P(\Gamma,\Gamma')},&\text{if~} \Gamma' = \Gamma,\\
0, &\text{if~} \check H(\Gamma, \Gamma')>1.
    \end{dcases}
\end{split}
\end{equation}
We perform the analysis of mixing time for the Markov chain $\{\Gamma_t\}_{t\geq 0}$. Write $\check S_\alpha = \{\Gamma(Z):Z\in S_\alpha\}$ for simplicity, and we define the target distribution in the clustering space as $\check{\Pi}(\Gamma|A)=\sum_{Z\in \Gamma}\tPi(Z|A)$ for any $\Gamma\in \check S_\alpha$. We show in the next section that $\{\Gamma_t\}_{t\geq 0}$ is rapidly mixing to the target distribution $\check{\Pi}(\cdot |A)$.

\subsection{Main results}
Before stating the main theorem, we first review the definition of $\eps$-mixing time, as well as the loss function that we need for the community detection problem.

\begin{description}[leftmargin = 0cm,labelsep = 0.5cm]
	\item[$\eps$-mixing time.] Let $\Gamma_0=\Gamma(Z_0)$ be the initial state of the chain. The total variation distance to the stationary distribution after $t$ iterations is
\[
	\Delta_{Z_0}(t) = \left\| \check{P}^t(\Gamma_0,\cdot )-\check\Pi(\cdot|A) \right\|_{\TV} = \frac{1}{2}\sum_{\Gamma\in\{\Gamma(Z):Z\in S_\alpha\}} \abs{\check{P}^t(\Gamma_0,\Gamma)-\check\Pi(\Gamma|A)},
\]
where $\check{P}^t(\Gamma_0,\cdot)$ and $\check{\Pi}(\cdot|A)$ are both distributions defined in the clustering space. The $\eps$-mixing time for Algorithm \ref{algo:Ag1} starting at $Z_0$ is defined by
\begin{equation}\label{eq:mixingFormulaEps}
    \tau_\eps(Z_0) = \min \left\{t\in \mathbb N: \Delta_{Z_0}(t')\leq \eps ~\text{for all } ~t'\geq t\right\}.
\end{equation}
It is the minimum number of iterations required to ensure the total variation distance to the stationary distribution is less than some tolerance threshold $\eps$.
	\item[Loss function.] We introduce the misclassification proportion as a loss function, which is defined by
	\begin{equation}
		\ell(Z,Z^*) = \frac{1}{n}\check H(\Gamma(Z), \Gamma(Z^*)),
	\end{equation}
	where $\check H(\cdot, \cdot)$ in defined in \eqref{eq:hammingGamma}.
\end{description}
To this end, let us show that the proposed modified Metropolis-Hastings algorithm in \prettyref{sec:alg} gives a rapidly mixing Markov chain $\{\Gamma_t\}_{t\geq 0}$ under the following conditions.

\begin{condition}\label{con:Z0}
There exist some positive sequences $\eta=\eta(n)$ and $\gamma_0=\gamma_0(n)$ such that 
\[
    \inf_{B,Z^*}\PP{\ell(Z_0,Z^*)\leq \gamma_0} \geq 1-\eta.
\]
\end{condition}
We proceed to state the conditions for $\gamma_0$.
\begin{condition}\label{con:gamma0}
Suppose the sequence $\gamma_0$ in Condition \ref{con:Z0} satisfies one of the following cases:
\begin{itemize}
    \item Case 1: there are only two communities, i.e., $K=2$, and
\begin{equation}
    (1-K\gamma_0)^4 nI\goto \infty, ~~(1-K\gamma_0)(1-K\beta\gamma_0) n\goto \infty,
\end{equation}
where $\beta\geq 1$ is defined in \prettyref{eq:beta}.
\item Case 2: there are more than 2 communities, i.e., $K\geq 3$, and
\begin{equation}
    \gamma_0=o(1).
\end{equation}
\end{itemize}
\end{condition}

Condition \ref{con:Z0} and Condition \ref{con:gamma0} require that the misclassification number of the initial label assignment is less than the minimum community size $n/K\beta$ with high probability. Consider the special situation where $K=2$ and $\beta =1+o(1)$, i.e., the underlying two community share the same size asymptotically, Condition \ref{con:gamma0} is satisfied when the initial misclassification proportion is $1/2-\eps$ for some sequence $\eps\goto 0$. When $K\geq 3$, we require a stronger condition that the initial label assignment need to be weakly consistent, i.e., the initial misclassification error goes to $0$ as $n\goto \infty$. The initial condition can be easily satisfied by algorithms such as spectral clustering \cite{mcsherry2001spectral,rohe2011spectral,coja2010graph,fishkind2013consistent}.

\begin{condition}\label{con:xi}
Suppose $\limsup_{n\goto \infty}{\log n}/{\bar nI}=1-\eps_0$. With the hyperparameter $\xi$ defined in \prettyref{algo:Ag1}, one of the following cases holds:
\begin{itemize}
	\item Case 1: there are only two communities, i.e., $K=2$, and
	\begin{equation}
		\xi >(1-\eps_0)\left\{\frac{1}{2\eps_0}\vee \frac{\alpha^2}{(1-K\gamma_0)^4}\right\},
	\end{equation}
	where $\alpha$ is defined in \eqref{eq:S_alpha}, and $\gamma_0$ is defined in Condition \ref{con:Z0}.
	\item Case 2: there are more than 2 communities, i.e., $K\geq 3$, and
	\begin{equation}
		\xi > \frac{1-\eps_0}{2\eps_0}.
	\end{equation}
\end{itemize}
\end{condition}
Note that the condition for the inverse temperature hyperparameter $\xi$ also depends on the signal condition ($\eps_0$) and initialization condition ($\gamma_0$). The condition of $\xi$ is provided to ensure the strong rapidly mixing property in the worst scenario. Note that with stronger initialization condition for the case of $K\geq 3$, the condition of $\xi$ is slightly weaker than the case of $K=2$.

Here are some intuitive understandings of Condition \ref{con:xi}. Theorem \ref{thm:postcontract} shows that posterior strong consistency holds under the condition $\liminf_{n\goto \infty}\bar nI/\log n>1$. Suppose for two label assignments $Z_1,Z_2$ that $\Pi(Z_1|A)>\Pi(Z_2|A)$, with hyperparameter $\xi\geq 1$, the posterior ratio $\Pi^\xi(Z_1|A)/\Pi^\xi(Z_2|A)$ gets enlarged, and the Markov chain is more certain to move towards the maximum point. However, the value of $\xi$ is also constrained by the initialization $Z_0$. With a larger $\xi$, $\Pi^\xi(Z_0|A)$ is smaller and it takes longer for the Markov chain $\{\Gamma_t\}_{t\geq 0}$ to get mixed. The special case is that when the initialization $Z_0$ is weakly consistent, or equivalently, $\ell(Z_0,Z^*)\goto 0$ as $n\goto \infty$, then the value of $\xi$ only depends on $\eps_0$. It gives the following alternative condition that can replace Condition \ref{con:gamma0} and Condition \ref{con:xi}.

\begin{condition}\label{con:gamma0xi_small}
Denote $\limsup_{n\goto\infty}\log n/\bar nI=1-\eps_0$. The positive sequence $\gamma_0$ defined in Condition \ref{con:Z0} and the hyperparameter $\xi$ satisfy that
\begin{equation}
    \gamma_0=o(1),~~    \xi\geq \frac{1-\eps_0}{2\eps_0}.
\end{equation}
\end{condition}

\begin{thm}[Rapidly mixing]\label{thm:mixing}
	The initial label assignment is denoted by $Z_0$. Suppose Conditions (\ref{con:Z0}, \ref{con:gamma0}, \ref{con:xi}) or Conditions (\ref{con:Z0}, \ref{con:gamma0xi_small}) are satisfied. Then, the $\eps$-mixing time of the modified Metropolis-Hastings algorithm is upper bounded by
	\begin{equation}\label{eq:mixing}
		\tau_\eps(Z_0)\leq 4Kn^2\max\left\{\gamma_0,n^{-\tau}\right\}\cdot \left(\xi\log \left(\Pi(Z_0|A)^{-1}\right)+\log(\eps^{-1})\right)
	\end{equation}
	with probability at least $1-C_1n^{-C_2}-\eta$ for some constant $C_1,C_2>0$, where $\tau$ is a sufficiently small constant, and $\eta$ is defined in Condition \ref{con:Z0}.
\end{thm}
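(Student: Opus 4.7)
The plan is to apply the canonical-path method of Jerrum--Sinclair in the clustering space $\check S_\alpha$ to bound the spectral gap of $\check P$, then convert this into a mixing-time bound via the standard inequality
\[
\tau_\varepsilon(Z_0) \leq \text{gap}(\check P)^{-1}\left(\log \check\Pi(\Gamma_0|A)^{-1} + \log \varepsilon^{-1}\right).
\]
Since Theorem~\ref{thm:postcontract} already shows that $\check\Pi(\cdot|A)$ puts essentially all its mass on $\Gamma^*:=\Gamma(Z^*)$, the argument really only needs to bound the congestion on canonical paths involving $\Gamma^*$.

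For the paths, given $(\Gamma,\Gamma') \in \check S_\alpha^2$ I would pick representatives $Z \in \Gamma, Z' \in \Gamma'$ with $H(Z,Z')=\check H(\Gamma,\Gamma')$ and flip the differing coordinates one at a time in an order that keeps every intermediate assignment inside $S_\alpha$ (possible because $\alpha>\beta$). Along such a path the central lemma bounds the single-flip ratio $\Pi(Z^{(i)}|A)/\Pi(Z|A)$ via the Beta-integral form of $\Pi$, using the same large-deviation control of the sufficient statistics $O_{ab}(Z), n_{ab}(Z)$ that underlies Theorem~\ref{thm:postcontract}: on a data event $\mathcal{E}$ of probability at least $1-C_1 n^{-C_2}$, single-flip ratios for moves toward $Z^*$ are at worst polynomially small in $n$, while those for moves away from $Z^*$ shrink at rate $e^{-c \bar n I}$. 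Condition~\ref{con:xi} is calibrated so that after raising to the power $\xi$, the latter still dominates when summed along the longest paths we need.

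Combining these ingredients, the Jerrum--Sinclair congestion parameter satisfies
\[
\rho = \max_{(U,V):\,\check H(U,V)=1} \frac{1}{\check\Pi(U|A)\check P(U,V)} \sum_{\gamma_{\Gamma\Gamma'}\ni (U,V)} \check\Pi(\Gamma|A)\check\Pi(\Gamma'|A)\,|\gamma_{\Gamma\Gamma'}| \leq 4Kn^2\max\{\gamma_0, n^{-\tau}\}
\]
on $\mathcal{E}$. The idea is that concentration of $\check\Pi$ on $\Gamma^*$ forces the dominant pairs in the sum to have one endpoint at $\Gamma^*$; for such pairs, the ratio $\check\Pi(\Gamma|A)\check\Pi(\Gamma'|A)/\check\Pi(U|A)$ telescopes along the sub-path from $U$ to $\Gamma$ using the single-flip ratios from the previous step, producing a controlled geometric sum. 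Under Condition~\ref{con:Z0} the relevant paths lie in an $O(\gamma_0 n)$ neighborhood of $\Gamma^*$, so their lengths are $O(\gamma_0 n)$ and the combinatorial count of paths through a fixed edge is $O(Kn)$. Plugging $\text{gap}(\check P)\geq 1/\rho$ and $\check\Pi(\Gamma_0|A)\gtrsim \Pi(Z_0|A)^\xi$ (up to a lower-order normalization factor) into the mixing-time inequality yields \eqref{eq:mixing}, and a union bound over $\mathcal{E}$ and the initialization event from Condition~\ref{con:Z0} gives the claimed failure probability $C_1 n^{-C_2}+\eta$.

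The main obstacle is the telescoping bound on posterior ratios inside the congestion estimate. The difficulty is twofold: $\check\Pi$ has exponentially small but nonzero mass on clusterings far from $\Gamma^*$, making naive canonical paths from such states long, and along such a path the posterior ratio can transiently increase when a flip grows a community beyond its true size. The two regimes of Condition~\ref{con:xi} reflect exactly how these long paths are tamed: for $K\geq 3$, the weak-consistency initialization confines the relevant support to a small neighborhood of $\Gamma^*$ and $\xi>(1-\varepsilon_0)/(2\varepsilon_0)$ suffices; for $K=2$, the extra factor $\alpha^2/(1-K\gamma_0)^4$ is needed to control the worst case in which one community temporarily inflates to size $\alpha n/K$.
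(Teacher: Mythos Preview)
Your general framework is right (canonical paths plus the spectral-gap inequality), but there is a genuine gap: you try to bound the congestion of $\check P$ on the \emph{full} clustering space $\check S_\alpha$, and argue that posterior concentration on $\Gamma^*$ makes the far-away states harmless. That does not work, and the paper does something substantially different.

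The congestion parameter is a maximum over \emph{all} edges $(U,V)$, so you must control $\check\Pi(\Lambda(U)|A)/Q(U,V)$ even when $U$ is far from $\Gamma^*$. Your telescoping argument would require the single-flip posterior ratio toward $Z^*$ to satisfy $\Pi(Z|A)/\Pi(g(Z)|A)\leq e^{-c\bar n I}$ uniformly in $Z\in S_\alpha$. The paper only establishes this on the good region $\cG(\gamma_0)=\{Z:\ell(Z,Z^*)\leq \max\{\gamma_0,n^{-\tau}\}+o(1)\}$ (Lemma~\ref{lm:PathIncrease}); outside $\cG(\gamma_0)$ the ratio can increase along a correcting step, which is exactly the obstacle you flag in your last paragraph but do not resolve. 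The posterior being tiny at such $U$ does not save you, because $Q(U,V)$ is equally tiny and the ratio need not be controlled.

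The paper's fix is to \emph{restrict the chain to the good region first}. It proves a separate ``stay-in-good-region'' lemma (Lemma~\ref{lm:stayingood}) via a one-dimensional reflected random walk comparison, showing that from $Z_0$ the chain does not leave $\cG(\gamma_0)$ in polynomial time with high probability. One then analyzes the modified chain $\check P_g$ with stationary measure $\check\Pi_g$ supported on $\check\cG(\gamma_0)$, and bounds $\|\check P^T(\Gamma_0,\cdot)-\check\Pi\|_{\TV}$ by the triangle inequality
\[
\|\check P^T-\check\Pi\|_{\TV}\leq \|\check P^T-\check P_g^T\|_{\TV}+\|\check P_g^T-\check\Pi_g\|_{\TV}+\|\check\Pi_g-\check\Pi\|_{\TV}.
\]
The first term vanishes on the stay-in-good-region event, the third is handled by posterior strong consistency, and only the middle term uses canonical paths, now entirely inside $\check\cG(\gamma_0)$ where Lemma~\ref{lm:PathIncrease} applies and the longest path has length at most $2n\max\{\gamma_0,n^{-\tau}\}+O(\log^2 n)$. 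This is how $\gamma_0$ enters the bound, not through any a priori restriction of the support of $\check\Pi$. Two smaller points: the Sinclair bound is $\mathrm{Gap}\geq 1/(\rho(\cT)\ell(\cT))$, not $1/\rho$; and because the stay-in-good-region lemma only covers polynomially many steps, the paper needs an additional coupling argument to extend the mixing bound to all $t\geq T$.
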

\begin{remark}
It is classical to perform theoretical analysis on a lazy version of Markov chain, which has probability 1/2 of staying unchanged, and the other probability 1/2 of updating the state. Theorem \ref{thm:mixing} is proved for the lazy Markov chain induced by Algorithm \ref{algo:Ag1}, i.e., the corresponding transition matrix is $(\check P+I)/2$. The same tricks are widely used in \cite{yang2016computational,dabbs2009markov,berestycki2016mixing,montenegro2006mathematical}. It is worth noting that this is only for the proof, and in practice, we still use the original transition matrix in Algorithm \ref{algo:Ag1}.
\end{remark}
Theorem \ref{thm:mixing} implies the mixing time depends on the initialization $Z_0$ and the choice of $\xi$. In order to show that the mixing time is at most a polynomial of $n$, we still need the following lemma to lower bound the initial posterior value $\Pi(Z_0|A)$.

\begin{lemma}\label{lm:minpost}
Under the conditions of Theorem \ref{thm:postcontract}, we have
\begin{equation}
	\log\Pi(Z_0|A)\geq -C_3n^2I\cdot \ell(Z_0,Z^*),
\end{equation}
with probability at least $1-C_4n^{-C_5}$ for some positive constants $C_3,C_4,C_5$.
\end{lemma}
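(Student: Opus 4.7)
The plan is to compare $\log\Pi(Z_0|A)$ against $\log\Pi(Z^*|A)$ and then reduce the posterior ratio to a tractable log-likelihood ratio. By \eqref{eq:per-symm} and Theorem \ref{thm:postcontract}, with probability $1-o(1)$ one has $\Pi(Z^*|A) = \Pi(\Gamma(Z^*)|A)/K! = \Omega(1)$, so $\log\Pi(Z^*|A) = -O(1)$; it thus suffices to show $\log\Pi(Z_0|A) - \log\Pi(Z^*|A) \geq -C_3 n^2 I\,\ell(Z_0,Z^*)$, up to a harmless additive $O(1)$ that gets absorbed into $C_3$ as soon as $\ell(Z_0,Z^*)\geq 1/n$ (where $n^2 I\ell \geq nI \gtrsim \log n$), with the $\ell=0$ case trivially handled by symmetry and strong consistency.

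First I would apply Stirling's approximation to each Beta factor in \eqref{eq:postunknown} to obtain the Laplace-type identity $\log\Pi(Z|A) = \max_B \log p(A|Z,B) + O(K^2\log n) + C(A)$ for any $Z\in S_\alpha$, where $C(A)$ is independent of $Z$. Next, $\max_B \log p(A|Z_0,B) \geq \log p(A|Z_0,B^*)$, and a standard Bernstein bound on the block MLEs $\hat B_{ab}(Z^*) = O_{ab}(Z^*)/n_{ab}(Z^*)$ about $B^*_{ab}$ yields $\max_B \log p(A|Z^*,B) - \log p(A|Z^*,B^*) = \sum_{a\leq b} n_{ab}(Z^*)\,\KL{\hat B_{ab}(Z^*)}{B^*_{ab}} = O(K^2\log n)$ with high probability. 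Combining these bounds gives
$\log\Pi(Z_0|A) - \log\Pi(Z^*|A) \geq \log p(A|Z_0,B^*) - \log p(A|Z^*,B^*) - O(K^2\log n).$

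The core step is to bound $\log p(A|Z^*,B^*) - \log p(A|Z_0,B^*) = O(n^2 I\,\ell)$ uniformly in $Z_0$ with high probability. Fixing the permutation that realizes $\check H$, write $m = n\ell(Z_0,Z^*)$ and let $M$ denote the misclassified set of size $m$. Only the $\leq mn$ edges incident to $M$ contribute; each contributing summand is independent, with mean equal to $\KL{p}{q}$ or $\KL{q}{p}$---both of order $\Theta(I)$ under $p\asymp q\to 0$---and individual variance also of order $\Theta(I)$ (since $p(1-p)[\log(p/q) - \log((1-p)/(1-q))]^2 \asymp I$ when $p\asymp q$). The total mean is therefore $\Theta(mnI)$ and total variance $O(mnI)$, so Bernstein's inequality gives a deviation on scale $\sqrt{mnI\cdot t} + t$ with failure probability $e^{-t}$. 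Taking $t \asymp m\log(en/m)$ absorbs the union bound over the $\leq \exp(m\log(Ken/m))$ possible $Z_0$ at misclassification level $m$, and the deviation is $O(mnI)$ whenever $nI \gtrsim \log(en/m)$, which follows from the signal-to-noise assumption $nI \gtrsim \log n$. Summing over $m\in[1,n]$ then delivers $\log\Pi(Z_0|A) \geq -C_3 n^2 I\,\ell(Z_0,Z^*)$ with probability at least $1 - C_4 n^{-C_5}$. The main delicate point is the Bernstein step: the per-edge variance is $\Theta(I)$ precisely because $p\asymp q$ (otherwise it could be as large as $O(\log^2(p/q))$), and the union-bound penalty must be dominated by $mnI$ for every $m\geq 1$, which is exactly what the strong-consistency assumption $nI \gtrsim \log n$ guarantees.
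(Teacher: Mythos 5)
Your argument is correct, and it takes a genuinely different and more direct route than the paper's. Both reduce the Bayesian marginal to the profiled likelihood $Q_{LM}(Z,A)=\max_B\log p(A\mid Z,B)$ up to lower-order terms (you via Stirling on the Beta integrals, the paper via Lemmas \ref{lm:boundprior} and \ref{lm:prooflikemode}), but then diverge. The paper lower-bounds $Q_{LM}(Z,A)-Q_{LM}(Z^*,A)$ by a small-vs-large-mistake case split: for $m\leq\gamma n$ it uses the decomposition \eqref{eq:ZZstarsmall} with the KL and error terms controlled by Lemmas \ref{boundmsmallunknown} and \ref{lm:bound_ec}, and for $m>\gamma n$ it passes through the $G,\Delta$ decomposition of \eqref{eq:pqunknownLargeZZstar} using Lemmas \ref{minPiR} and \ref{lm:boundDeltaec}. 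You instead pinch $Q_{LM}(Z_0,A)-Q_{LM}(Z^*,A)$ between the single fixed-$B^*$ ratio $\log p(A\mid Z_0,B^*)-\log p(A\mid Z^*,B^*)$ and an $O(K^2\log n)$ MLE-concentration error at $Z^*$, then control that ratio directly by Bernstein plus a union bound over the $O(mn)$ independent contributing edges; your observations that the per-edge mean and variance are both $\Theta(I)$ under $p\asymp q$, and that the union-bound penalty $m\log(en/m)$ is dominated by $mnI$ thanks to $nI\gtrsim\log n$, are the crux and are correct. This bypasses the small/large split and much of the paper's event machinery, at the modest price of a cruder $O(K^2\log n)$ Stirling error instead of the paper's $O(1)$, which is harmless once $m\geq 1$. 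A small caveat shared by both proofs: the lemma as literally stated fails at $\ell(Z_0,Z^*)=0$ (it would force $\Pi(Z_0|A)\geq 1$, impossible under permutation symmetry); the paper's proof tacitly assumes $m\geq 1$ so the $O(1)$ contribution from $\log\Pi(Z^*|A)$ is absorbed, and your parenthetical about the $\ell=0$ case does not actually rescue that edge case --- but this is an artifact of how the bound is stated, not a defect in either argument.
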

Theorem \ref{thm:mixing} and Lemma \ref{lm:minpost} jointly imply that $\tau_\eps(Z_0)\lesssim n^2(n^2I+\log(\eps^{-1}))$ with high probability, which demonstrates that the Markov chain of Metropolis-Hastings algorithm is rapidly mixing. To the best of our knowledge, \eqref{eq:mixing} is the first explicit upper bound on the mixing time of the Markov chain for Bayesian community detection. 

Note that the target distribution of Algorithm \ref{algo:Ag1} is $\tPi(\cdot|A)\propto \Pi^\xi(\cdot |A)$. Since $\xi\geq 1$ and the posterior strong consistency property still holds for $\tPi(\cdot|A)$, Theorem \ref{thm:mixing} shows that Algorithm \ref{algo:Ag1} will find the maximum a posteriori in polynomial time with high probability.

\begin{corollary}\label{cor:opt}
Under the condition of Theorem \ref{thm:mixing}, for any iteration number $T$ such that $T\geq C_6n^2(n^2I+\log (\eps^{-1}))$ for some constant $C_6$, the output $Z_T$ of the Algorithm \ref{algo:Ag1} satisfies that $Z_T\in \Gamma(Z^*)$ with high probability, or equivalently, $\ell(Z_T,Z^*)=0$.
\end{corollary}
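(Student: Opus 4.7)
The plan is to combine three ingredients already established: the explicit mixing-time bound of Theorem~\ref{thm:mixing}, the posterior lower bound of Lemma~\ref{lm:minpost}, and the posterior strong consistency of Theorem~\ref{thm:postcontract} (extended, as noted in the text, to the tempered posterior $\wt\Pi(\cdot|A)\propto\Pi^\xi(\cdot|A)$ for any fixed $\xi\geq 1$). The argument is a straightforward triangle inequality between the time-$T$ law of the chain, the stationary distribution $\check\Pi(\cdot|A)$, and the indicator of $\Gamma(Z^*)$.

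First I would simplify the right-hand side of \eqref{eq:mixing}. By Condition~\ref{con:Z0}, $\ell(Z_0,Z^*)\leq\gamma_0$ with probability at least $1-\eta$, so Lemma~\ref{lm:minpost} gives
\[
\xi\log(\Pi(Z_0|A)^{-1})\;\leq\;\xi C_3 n^2 I\gamma_0
\]
on an event of probability at least $1-C_4 n^{-C_5}-\eta$. Since $\xi$, $K$ and $\max\{\gamma_0,n^{-\tau}\}$ are all bounded by constants (and in fact $\gamma_0=o(1)$), substituting into \eqref{eq:mixing} yields
\[
\tau_\eps(Z_0)\;\leq\; C_6 n^2\bigl(n^2 I+\log(\eps^{-1})\bigr)
\]
for a sufficiently large absolute constant $C_6$. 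Hence for any $T\geq C_6 n^2(n^2I+\log(\eps^{-1}))$ we have, by the definition of $\eps$-mixing time,
\[
\bigl\|\check P^T(\Gamma_0,\cdot)-\check\Pi(\cdot|A)\bigr\|_{\TV}\;\leq\;\eps.
\]

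Next I would invoke posterior strong consistency for $\wt\Pi$: under the conditions of Theorem~\ref{thm:mixing} (in particular $\liminf \bar n I/\log n>1$), the same argument as in Theorem~\ref{thm:postcontract} applied to $\wt\Pi$ yields
\[
\check\Pi(\Gamma(Z^*)|A)\;\geq\;1-o(1)
\]
on an event of probability $1-o(1)$ under the data-generating process; this follows from Markov's inequality together with $\Expect[\check\Pi(\Gamma(Z^*)|A)]\geq 1-o(1)$. Combining with the mixing bound,
\[
\Prob(Z_T\in\Gamma(Z^*)\mid A)\;=\;\check P^T(\Gamma_0,\Gamma(Z^*))\;\geq\;\check\Pi(\Gamma(Z^*)|A)-\eps\;\geq\;1-o(1)-\eps.
\]
Picking $\eps$ to be any vanishing sequence (e.g., $\eps=n^{-1}$, which only affects $T$ by a $\log n$ factor absorbed into $C_6$) makes the right-hand side $1-o(1)$, and $\ell(Z_T,Z^*)=0$ is just the event $Z_T\in\Gamma(Z^*)$ by definition of $\ell$ and $\check H$.

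I do not expect a main obstacle beyond bookkeeping: all the nontrivial probabilistic work has already been done in Theorem~\ref{thm:postcontract}, Theorem~\ref{thm:mixing}, and Lemma~\ref{lm:minpost}. The only subtle point is verifying that Theorem~\ref{thm:postcontract} transfers to $\wt\Pi$, which the paper already asserts after Algorithm~\ref{algo:Ag1}; one checks that raising the posterior to a fixed power $\xi\geq 1$ and renormalizing can only concentrate the mass further on the modes, so the strong-consistency statement is preserved (with possibly enlarged constants). Collecting the three high-probability events via a union bound gives the claimed conclusion with high probability.
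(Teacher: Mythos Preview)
Your proposal is correct and follows exactly the paper's intended argument: the paper does not prove the corollary explicitly but presents it as an immediate consequence of combining Theorem~\ref{thm:mixing}, Lemma~\ref{lm:minpost}, and the strong consistency of $\wt\Pi$, which is precisely what you do. One minor inaccuracy: $\gamma_0=o(1)$ need not hold under Case~1 of Condition~\ref{con:gamma0} (there $K=2$ and $\gamma_0$ may be a constant strictly below $1/2$), but your argument only actually uses that $\max\{\gamma_0,n^{-\tau}\}$, $\gamma_0$, and $\xi$ are bounded, so the bound $\tau_\eps(Z_0)\leq C_6 n^2(n^2I+\log(\eps^{-1}))$ and the rest of the proof go through unchanged.
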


The following corollary focuses on the case of $\xi=1$, and gives explicit conditions for the Markov chain to converge to the posterior distribution $\Pi(\cdot|A)$.

\begin{corollary}
When $nI/\log n\goto \infty$, suppose Condition \ref{con:gamma0} holds, and we can take $\xi =1$ in Algorithm \ref{algo:Ag1}, which reduces to the standard Metropolis-Hastings algorithm sampling from $\Pi(\cdot|A)$. We have that the $\eps$-mixing time of the Markov chain is upper bounded by $O(n^2(n^2I+\log (\eps^{-1})))$ with high probability.
\end{corollary}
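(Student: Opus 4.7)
The strategy is to show that under the strengthened signal condition $nI/\log n \to \infty$, the choice $\xi=1$ automatically satisfies Condition \ref{con:xi} of Theorem \ref{thm:mixing}, so the conclusion follows from that theorem combined with Lemma \ref{lm:minpost}.

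First I would unpack the signal-to-noise notation. Since $K$ and $\beta$ are constants, $\bar n \asymp n$, so the hypothesis $nI/\log n \to \infty$ implies $\bar n I/\log n \to \infty$, whence
\[
\limsup_{n\to\infty} \frac{\log n}{\bar n I} = 0,
\]
which in the language of Condition \ref{con:xi} reads $\varepsilon_0 = 1$. In Case 1 ($K=2$), the threshold $(1-\varepsilon_0)\{(2\varepsilon_0)^{-1} \vee \alpha^2(1-K\gamma_0)^{-4}\}$ collapses to $0$, so any $\xi \geq 1$ (in particular $\xi=1$) is admissible. In Case 2 ($K\geq 3$), the threshold $(1-\varepsilon_0)/(2\varepsilon_0)$ is also $0$, so again $\xi=1$ works. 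Moreover the assumed Condition \ref{con:gamma0} together with the hypothesis $nI/\log n \to \infty$ ensures the signal condition of Theorem \ref{thm:postcontract} (used in Lemma \ref{lm:minpost}) is trivially met, so both Theorem \ref{thm:mixing} and Lemma \ref{lm:minpost} are applicable.

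Next I would plug $\xi=1$ into the mixing time bound \eqref{eq:mixing} of Theorem \ref{thm:mixing}:
\[
\tau_\varepsilon(Z_0) \leq 4Kn^2 \max\{\gamma_0, n^{-\tau}\}\bp{\log\Pi(Z_0|A)^{-1} + \log(\varepsilon^{-1})}.
\]
Applying Lemma \ref{lm:minpost} gives $\log \Pi(Z_0|A)^{-1} \leq C_3 n^2 I \cdot \ell(Z_0,Z^*) \leq C_3 n^2 I \cdot \gamma_0$ on the event in Condition \ref{con:Z0}. Since $\gamma_0 \leq 1$ and $\max\{\gamma_0, n^{-\tau}\} \leq 1$, this yields
\[
\tau_\varepsilon(Z_0) \leq 4K n^2 \bp{C_3 n^2 I + \log(\varepsilon^{-1})} = O\bp{n^2(n^2 I + \log(\varepsilon^{-1}))},
\]
with the high-probability guarantee inherited from Theorem \ref{thm:mixing} and Lemma \ref{lm:minpost} via a union bound.

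The only step that requires any care is verifying that the threshold for $\xi$ in Condition \ref{con:xi} indeed degenerates to $0$ when $\varepsilon_0=1$, in particular that the factor $\alpha^2/(1-K\gamma_0)^4$ in Case 1 is multiplied by $(1-\varepsilon_0)$ and therefore vanishes even if $\gamma_0$ is bounded away from $1/K$; once this observation is made, the proof is a direct substitution and the remaining work is bookkeeping. There is no new probabilistic content beyond what Theorem \ref{thm:mixing} and Lemma \ref{lm:minpost} already provide.
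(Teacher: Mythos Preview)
Your proposal is correct and matches the paper's implicit argument: the corollary is stated without proof in the paper precisely because it follows immediately from Theorem~\ref{thm:mixing} and Lemma~\ref{lm:minpost} once one observes that $nI/\log n\to\infty$ forces $\varepsilon_0=1$, which makes the right-hand side of Condition~\ref{con:xi} equal to zero so that $\xi=1$ is admissible. Your bookkeeping for the final $O(n^2(n^2I+\log(\varepsilon^{-1})))$ bound is exactly the computation the paper sketches in the paragraph following Lemma~\ref{lm:minpost}.
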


The conditions of the above results can be weakened in the case where the connectivity probability matrix $B$ is known. When $B$ is known, there is no need to put a prior on $B$. Thus, the posterior distribution can be simplified as
\begin{equation}\label{eq:postknown}
\begin{split}
  \log \Pi(Z|A) = &\log\frac{p(1-q)}{q(1-p)}\sum_{i<j}A_{ij}\indc{Z_i=Z_j} - \\
  &\log \frac{1-q}{1-p}\sum_{i<j}\indc{Z_i=Z_j}+ Const,  ~~~~~\text{for~}Z\in S_\alpha.
\end{split}
\end{equation}
The posterior formula is essentially the same as likelihood, while we restrict $Z$ inside the feasible set $S_\alpha$. It can be shown that the posterior strong consistency property still holds in this case. 

\begin{thm}[posterior strong consistency]\label{thm:postknown}
Suppose that $\limsup_{n\goto\infty}\bar nI/\log n>1$, and the feasible set $S_\alpha$ satisfies that $\alpha-\beta$ is a positive constant, then it follows that
\[
    \Expect\left[\Pi(Z\in \Gamma(Z^*)|A)\right] \geq 1-o(1),
\]
with high probability, and the expectation is with respect to the data-generating process.
\end{thm}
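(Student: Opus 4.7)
The plan is to run the same strategy as in the proof of Theorem~\ref{thm:postcontract}, simplified by the fact that $B$ is known so no Beta integration intervenes. First, by keeping only the summand at $Z^*$ in the partition function,
\[
    \Pi(Z \notin \Gamma(Z^*) \mid A) \;\leq\; \sum_{Z \in S_\alpha \setminus \Gamma(Z^*)} \frac{\Pi(Z\mid A)}{\Pi(Z^* \mid A)}.
\]
Using \eqref{eq:postknown}, each log-ratio is supported on the pairs $(i,j)$ where the co-membership indicator disagrees between $Z$ and $Z^*$. I would partition these into
\[
    S_+(Z) = \{(i,j): Z_i = Z_j,\; Z^*_i \neq Z^*_j\}, \qquad S_-(Z) = \{(i,j): Z_i \neq Z_j,\; Z^*_i = Z^*_j\},
\]
so that the log-ratio is a sum of $|S_+(Z)| + |S_-(Z)|$ independent Bernoulli-driven terms.

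The key analytic step is a Chernoff bound with exponent $s = 1/2$. For each $(i,j)\in S_+(Z)$ the edge is Bernoulli$(q)$ under $Z^*$, and a direct calculation gives
\[
    \Expect\!\left[\left(\frac{p^{A_{ij}}(1-p)^{1-A_{ij}}}{q^{A_{ij}}(1-q)^{1-A_{ij}}}\right)^{\!1/2}\right] = \sqrt{pq}+\sqrt{(1-p)(1-q)} = e^{-I/2},
\]
and the analogous identity holds on $S_-(Z)$ by symmetry. Multiplying across independent edges yields $\Expect\bigl[\sqrt{\Pi(Z\mid A)/\Pi(Z^*\mid A)}\bigr] = \exp\bigl(-\tfrac{1}{2}(|S_+(Z)|+|S_-(Z)|)I\bigr)$, and Markov's inequality then shows that for any $\delta \in (0,1)$,
\[
    \Prob\!\left[\tfrac{\Pi(Z\mid A)}{\Pi(Z^*\mid A)}\geq e^{-(1-\delta)(|S_+(Z)|+|S_-(Z)|)I}\right]\leq e^{-\delta(|S_+(Z)|+|S_-(Z)|)I/2}.
\]

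Next, a combinatorial step lower bounds $|S_+(Z)|+|S_-(Z)|$ by a multiple of $r\bar n$, where $r := \check H(\Gamma(Z),\Gamma(Z^*))$. Because both $Z$ and $Z^*$ lie in $S_\alpha$, each community has size of order $\bar n$; hence each of the $r$ mis-labeled vertices produces at least $c\bar n$ discordant pairs with the remaining vertices, after subtracting $O(r^2)$ over-counting among the mis-labeled vertices themselves. Combined with the crude count $|\{Z:H(Z,Z^*)=r\}|\leq\binom{n}{r}(K-1)^r\leq(eKn/r)^r$, the signal hypothesis $\bar n I>(1+\eps)\log n$ makes the union bound
\[
    \sum_{r=1}^{n}(eKn/r)^r\exp\!\bigl(-c(1-\delta)r\bar n I\bigr)\;=\;o(1),
\]
so the bad event in which some $Z$ violates the Chernoff estimate has probability $o(1)$. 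On its complement, the same per-$Z$ bound applied inside the ratio sum yields $\Pi(Z\notin\Gamma(Z^*)\mid A)=o(1)$, and the theorem follows by taking expectation against the data-generating distribution.

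The main obstacle is the uniform combinatorial bound $|S_+(Z)|+|S_-(Z)|\gtrsim r\bar n$. The balanced-community restriction $S_\alpha$ (with $\alpha-\beta$ a positive constant) is essential here to rule out configurations in which many vertices are simultaneously relabeled within a single cluster pair, which would collapse the per-vertex gain. This is precisely the same geometric lemma that underlies Theorem~\ref{thm:postcontract}, so I expect it to carry over verbatim; everything else in the present argument is strictly simpler than the unknown-$B$ case, since no Beta-integration bookkeeping is required.
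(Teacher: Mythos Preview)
Your approach has a genuine quantitative gap that prevents it from reaching the sharp threshold $\bar nI>\log n$. The issue is the $\delta$ versus $1-\delta$ trade-off inherent in the ``Markov for the tail, deterministic bound on the complement'' strategy. Concretely, with $|S_+(Z)|+|S_-(Z)|\geq 2\bar nm-O(m^2)$ (this is the correct count, and the constant matters), your Markov step gives tail probability $\exp(-\delta\bar nmI(1-o(1)))$, while on the complement each ratio is bounded by $\exp(-2(1-\delta)\bar nmI(1-o(1)))$. Summing either of these against the $\sim(nK)^m$ configurations at Hamming distance $m$ requires, respectively, $\delta\bar nI>\log n$ and $2(1-\delta)\bar nI>\log n$. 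A compatible $\delta\in(0,1)$ exists only when $\bar nI>\tfrac{3}{2}\log n$, so your argument fails in the regime $1<\bar nI/\log n\leq 3/2$. (There is also a slip in your write-up: the union-bound exponent should be $\delta$, not $c(1-\delta)$, but fixing this does not remove the obstruction.)

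The paper sidesteps this trade-off by a change-of-measure identity rather than a deterministic complement bound. Writing $G_Z=\{P_Z>P_{Z^*}\}$, it uses $\Pi_0\leq 1$ on $G=\bigcup_Z G_Z$ and, on $G_Z^c$, the exact relation
\[
\Expect_{Z^*}\!\left[\frac{P_Z}{P_{Z^*}}\,\indc{G_Z^c}\right]=P_Z(G_Z^c),
\]
so that $\Expect[\Pi_0(Z\notin\Gamma(Z^*)\mid A)]\leq \sum_Z\bigl(P_Z(G_Z^c)+P_{Z^*}(G_Z)\bigr)$. Both summands are then controlled by the same Bhattacharyya bound (Lemma~\ref{lm:postcon}), yielding $\sum_m\binom{n}{m}K^m\exp(-(\bar nm-m^2)I)=o(1)$ under $\bar nI>\log n$ with no $\delta$-splitting. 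This change-of-measure step is precisely what is missing from your plan; everything else you wrote (the Bhattacharyya computation, the combinatorial count, the role of $S_\alpha$) is correct and matches the paper.
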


\begin{condition}\label{con:pqknowncon}
    Suppose $\limsup_{n\goto \infty}\log n/\bar nI=1-\eps_0$. Assume the positive sequence $\gamma_0$ defined in Condition \ref{con:Z0} and the hyperparameter $\xi$ satisfy one of the following conditions:
\begin{itemize}
    \item Case 1: 
\begin{equation}
    \gamma_0=o(1),~~    \xi\geq \frac{1-\eps_0}{2\eps_0}.
\end{equation}

\item Case 2:
\begin{equation}
    (1-K\alpha\gamma_0)^2nI\goto\infty,~~ 
    \xi > \begin{dcases}
    (1-\eps_0)\left(\frac{1}{2\eps_0}\vee \frac{\alpha}{4(1-K\alpha\gamma_0)}\right), &\text{for~} K=2,\\
    (1-\eps_0)\left(\frac{1}{2\eps_0}\vee \frac{\alpha}{4\beta(1-K\alpha\gamma_0)}\right), &\text{for~} K\geq 3.
    \end{dcases}
\end{equation}
\end{itemize}
\end{condition}

Condition \ref{con:pqknowncon} yields the rapidly mixing property when the connectivity matrix $B$ is known.

\begin{thm}[Rapidly mixing]\label{thm:mixingknown}
Suppose we start the algorithm at $Z_0$, and Conditions (\ref{con:Z0}, \ref{con:pqknowncon}) hold. Then, the $\eps$-mixing time of the Metropolis-Hastings algorithm induced by Equation \eqref{eq:postknown} is upper bounded by
\[
	\tau_\eps(Z_0)\leq 4Kn^2 \max\left\{\gamma_0,n^{-\tau}\right\}\cdot \left(\xi \log \Pi(Z_0|A)^{-1}+\log (\eps^{-1})\right),
\]
with probability at least $1-C_7n^{-C_8}-\eta$ for some constants $C_7,C_8$, where $\tau$ is a sufficiently small constant, and $\eta$ is as defined in Condition \ref{con:Z0}.
\end{thm}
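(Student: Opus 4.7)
The plan is to mirror the canonical path argument that underlies Theorem~\ref{thm:mixing}, exploiting the simpler form of the posterior in \eqref{eq:postknown}, which (with $B$ known) is linear in the sufficient statistics $\sum_{i<j}A_{ij}\indc{Z_i=Z_j}$ and $\sum_{i<j}\indc{Z_i=Z_j}$ rather than a sum of log Beta functions. Since the target bound is structurally identical to the one in Theorem~\ref{thm:mixing}, the overall strategy is the same: obtain sharp control of single-flip posterior ratios, design canonical paths between all pairs of clusterings, bound the resulting congestion, and conclude via the standard spectral-gap-to-mixing-time inequality applied to the lazy chain $(\check P+I)/2$.

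First I would derive a closed-form expression for the ratio $\tPi(Z'|A)/\tPi(Z|A)$ when $Z'$ differs from $Z$ at a single coordinate $j$ with $Z_j=a$ and $Z'_j=b$. From \eqref{eq:postknown} this log-ratio depends only on the difference of node $j$'s connection counts to communities $a$ and $b$ and on the difference of community sizes, all scaled by $\xi$. Standard Bernstein/Chernoff concentration for Bernoulli sums then yields two complementary estimates: whenever the single flip moves $Z$ closer to $Z^*$ along a canonical path, the posterior ratio is bounded below by a constant with high probability; whenever it moves $Z$ further from $Z^*$, the ratio is at most $\exp(-c\,\xi\bar nI)$ for a state-dependent constant~$c$. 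The two cases of Condition~\ref{con:pqknowncon} are calibrated precisely so that these estimates remain quantitatively compatible after the $\xi$-th power, with Case~1 requiring a weakly consistent start and Case~2 trading that for a larger $\xi$.

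Next I would construct canonical paths. For each ordered pair $(\Gamma_1,\Gamma_2)\in \check S_\alpha\times \check S_\alpha$, fix representatives $Z_1\in\Gamma_1$ and $Z_2\in \Gamma_2$ achieving $\check H(\Gamma_1,\Gamma_2)=H(Z_1,Z_2)$, then flip the mismatched coordinates one at a time in an order that first corrects positions where $Z_1$ disagrees with $Z^*$ but $Z_2$ agrees (``uphill'' moves toward the MAP), and only then introduces the positions where $Z_2$ disagrees with $Z^*$ (``downhill'' moves). For each directed edge $(Z,Z')$ of the flip graph, the congestion $\sum_{(\Gamma_1,\Gamma_2)\ni(Z,Z')}\check\Pi(\Gamma_1|A)\check\Pi(\Gamma_2|A)/\check\Pi(\Gamma|A)$ is bounded by combining the ratio estimates above---so that any intermediate state $\Gamma$ with misclassification exceeding $\max\{\gamma_0,n^{-\tau}\}$ contributes negligibly because its $\check\Pi$-mass is exponentially small---with the crude count that at most $|\check S_\alpha|\cdot 4Kn^2\max\{\gamma_0,n^{-\tau}\}$ source/destination pairs can route through any one edge. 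The canonical path inequality then delivers an inverse-polynomial lower bound on the spectral gap of $(\check P+I)/2$, and the standard conversion $\tau_\eps(Z_0)\leq \mathrm{gap}^{-1}\bigl(\log \check\Pi(\Gamma(Z_0)|A)^{-1}+\log\eps^{-1}\bigr)$ yields the stated bound.

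The main obstacle is again the congestion estimate. One must verify that the chosen flip ordering keeps $\check\Pi$ nondecreasing (up to the high-probability event) along the uphill segment, so that every path edge $(Z,Z')$ satisfies $\check\Pi(\Gamma|A)\gtrsim \check\Pi(\Gamma_1|A)\wedge \check\Pi(\Gamma_2|A)$ modulo a manageable multiplicative factor. This is exactly where Condition~\ref{con:pqknowncon} enters: its $\xi$-lower bound ensures that the inflation/deflation factor per flip is of the right order to absorb the volume of pairs routed through a single edge, with the $K=2$ versus $K\geq 3$ split reflecting the different geometry of the flip graph in the two regimes. Intersecting the resulting spectral-gap estimate with Lemma~\ref{lm:minpost} (which lower-bounds $\Pi(Z_0|A)$) and Condition~\ref{con:Z0} (which supplies the high-probability initialization event) then yields the advertised probability $1-C_7n^{-C_8}-\eta$.
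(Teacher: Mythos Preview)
Your overall plan—canonical paths routed through $\Gamma^*$, single-flip posterior ratio control, spectral-gap-to-mixing—matches the paper's architecture. But two technical steps as you describe them would not close.

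First, the congestion estimate. You propose to bound the number of source/destination pairs routed through a given edge by $|\check S_\alpha|\cdot 4Kn^2\max\{\gamma_0,n^{-\tau}\}$. Since $|\check S_\alpha|$ is exponential in $n$, this count is useless on its own, and the fact that bad intermediate states have exponentially small $\check\Pi$-mass does not rescue it: the congestion at an edge $(\Gamma,\Gamma')$ is $\sum_{x,y}\check\Pi(x)\check\Pi(y)/Q(\Gamma,\Gamma')$, and if $\Gamma$ itself is a bad state then $Q(\Gamma,\Gamma')\asymp \check\Pi(\Gamma|A)/n$ is also exponentially small, so the ratio need not be small. The paper instead first \emph{restricts the chain to a good region} $\check\cG(\gamma_0)=\{\Gamma:\ell(Z,Z^*)\leq \max\{\gamma_0,n^{-\tau}\}+(\log n)^2/n\}$ by proving (via a one-dimensional random-walk/supermartingale comparison, Lemma~\ref{lm:stayingood} and Lemma~\ref{lm:randomwalk_probratio}) that the chain started at $Z_0$ never leaves $\check\cG$ in polynomial time; only then does it run the canonical-path argument on the restricted chain $\check P_g$ with stationary measure $\check\Pi_g$. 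Paths are defined only within $\check\cG$, the path length is $O(n\max\{\gamma_0,n^{-\tau}\})$, and the precedent-mass ratio $\check\Pi(\Lambda(\Gamma)|A)/\check\Pi(\Gamma|A)$ is bounded by summing a geometric series using the key estimate (Lemma~\ref{lm:knownPincrease} in the known-$B$ case) that $\tPi_0(Z|A)/\tPi_0(g(Z)|A)\leq \exp(-C\bar nI)$ with $C>1-\eps_0$ uniformly over $\cG$; no volume count is ever needed. This is also where the factor $\max\{\gamma_0,n^{-\tau}\}$ in the final bound actually comes from (path length), not from an edge-load count.

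Second, you omit the coupling step. The staying-in-$\check\cG$ guarantee only holds for polynomially many iterations, so the decomposition $\|\check P^T-\check\Pi\|_{\TV}\leq \|\check P^T-\check P_g^T\|_{\TV}+\|\check P_g^T-\check\Pi_g\|_{\TV}+\|\check\Pi_g-\check\Pi\|_{\TV}$ only delivers the bound for $T$ polynomial in $n$. The paper then uses a simple coupling of $\check P^t(\Gamma_0,\cdot)$ with a stationary copy, together with strong consistency $\check\Pi(\Gamma^*|A)=1-o(1)$, to extend the total-variation bound to all $t\geq T$, which is what the definition of $\tau_\eps$ in \eqref{eq:mixingFormulaEps} requires.
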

Compare the result with Theorem \ref{thm:mixing}, and we can see that Theorem \ref{thm:mixingknown} obtains the same upper bound with slightly weaker conditions for the initialization $Z_0$ and hyperparameter $\xi$.

\section{Numerical Results}\label{sec:NumResults}

In this section, we study the numerical performance of the Metropolis-Hastings algorithm.

\begin{description}[leftmargin = 0cm,labelsep = 0.5cm]
\item[Balanced networks.]  In this setting, we generate networks with 2500 nodes, and 5 communities, each of which consists of 500 nodes. Figure \ref{fig:general} shows the trajectories of the Markov chains (each denoted by a black line). By posterior strong consistency, the true label assignment receives the highest posterior probability (denoted by the red line), and the Markov chains converge rapidly to the stationarity (within $40n$ iterations), demonstrating the rapidly mixing property.

\begin{figure}[H]
\begin{center}
\begin{subfigure}[b]{0.48\textwidth}
    \includegraphics[width=\textwidth]{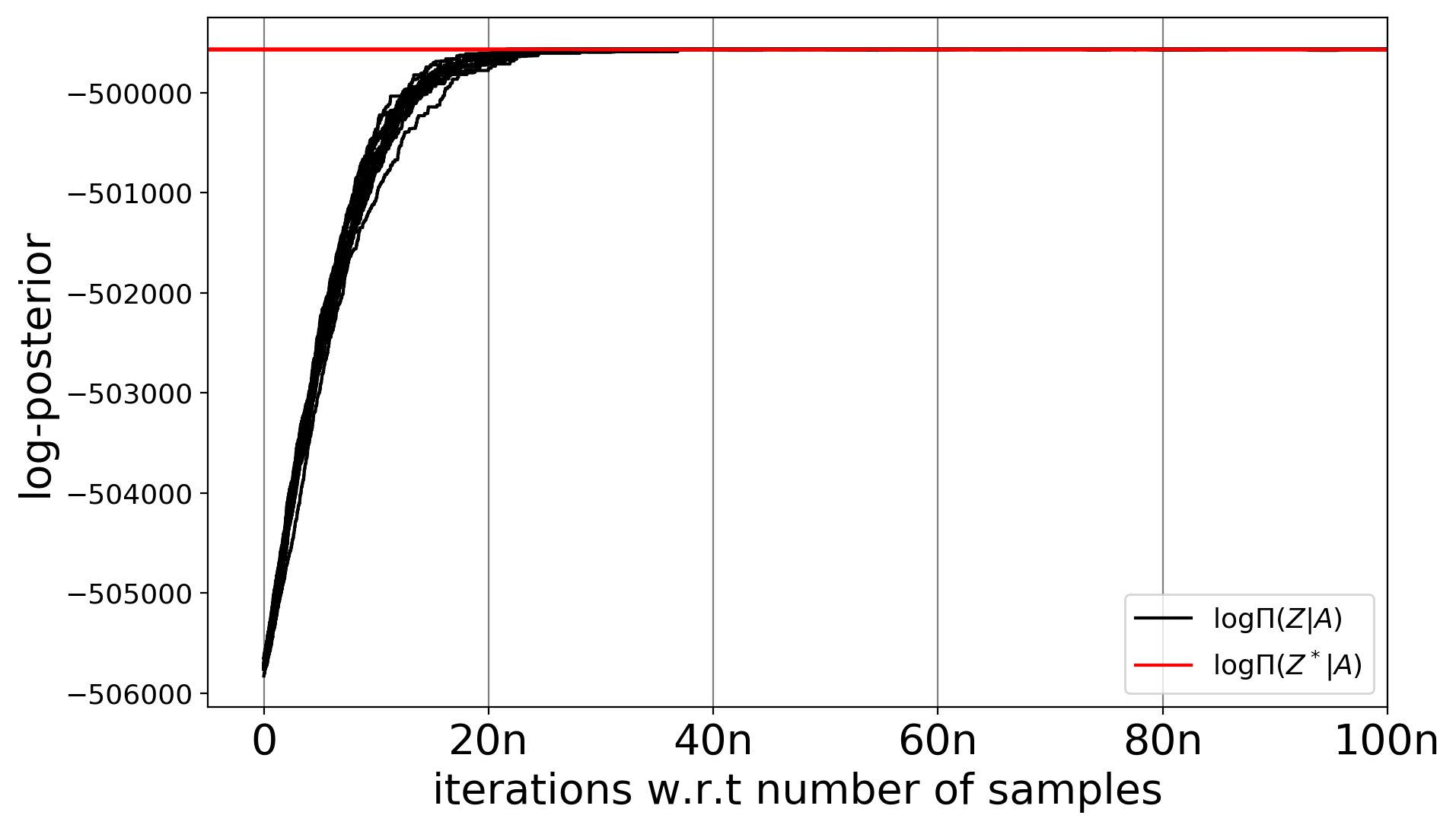}
    \caption{}
\end{subfigure}
\begin{subfigure}[b]{0.48\textwidth}
    \includegraphics[width=\textwidth]{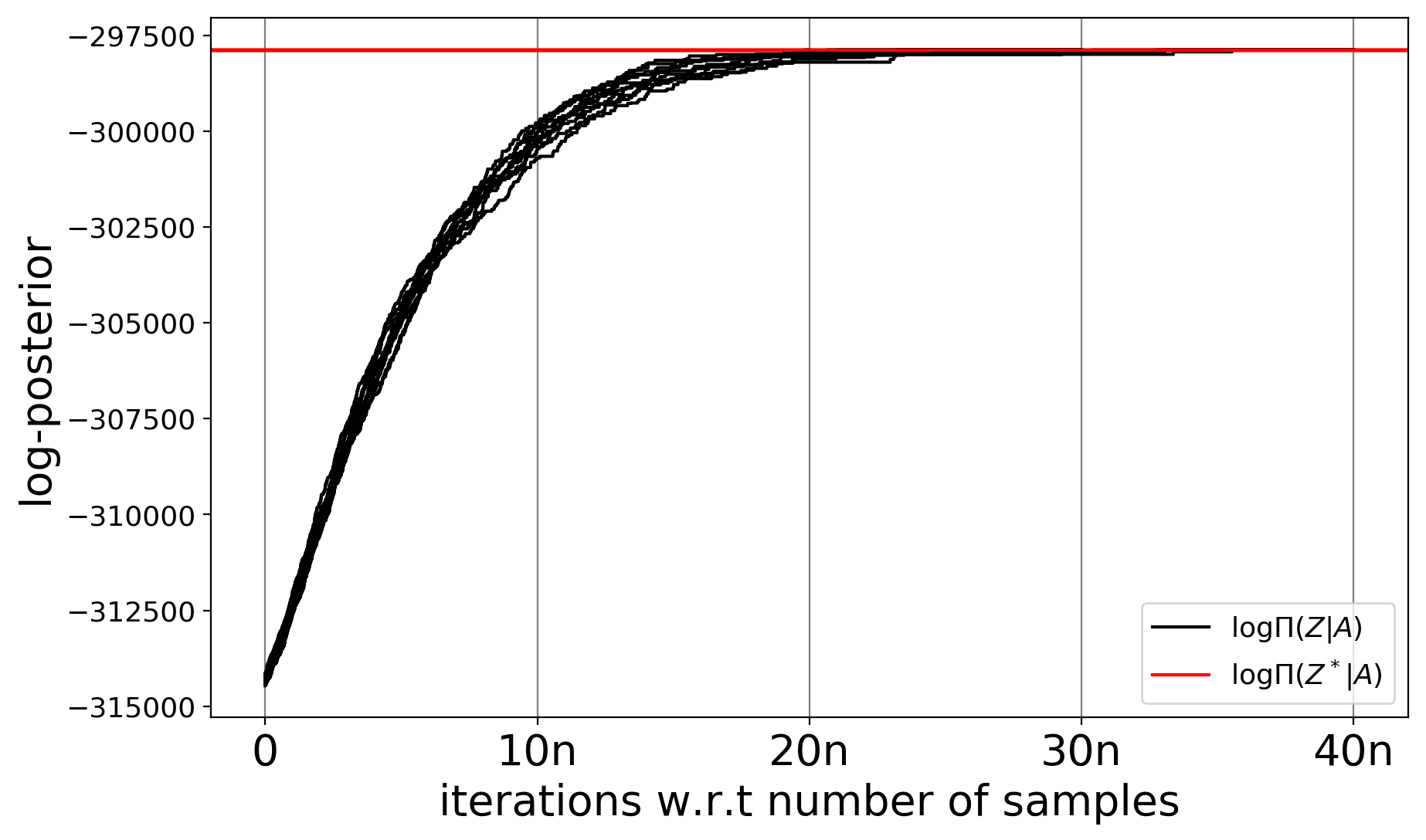}
    \caption{}
\end{subfigure}
\end{center}
\caption{Log-posterior probability versus the number of iterations. Each black curve corresponds to a trajectory of the chain (20 chains in total), and the red horizontal line represents the log-posterior probability at the true label assignment. (a) A network with $p=0.48$ and $q=0.32$. (b) A network with $p=0.3$ and $q=0.1$.}\label{fig:general}
\end{figure}

\item[Heterogeneous networks.] In this setting, we generate networks with 2000 nodes and 4 communities of sizes 200, 400, 600, and 800, respectively. The connectivity matrix is set as
\[
    B = \left(\begin{matrix}
        0.50 & 0.29 & 0.35 & 0.25\\
        0.29 & 0.45 & 0.25 & 0.30\\
        0.35 & 0.25 & 0.50 & 0.35\\
        0.25 & 0.30 & 0.35 & 0.45
    \end{matrix}\right).
\] 
The algorithm still performs well. As shown in Figure \ref{fig:hetero}, the posterior strong consistency still hold, and the Markov chains rapidly converge to the stationarity.

\begin{figure}[H]
\begin{center}
\includegraphics[width=0.6\textwidth]{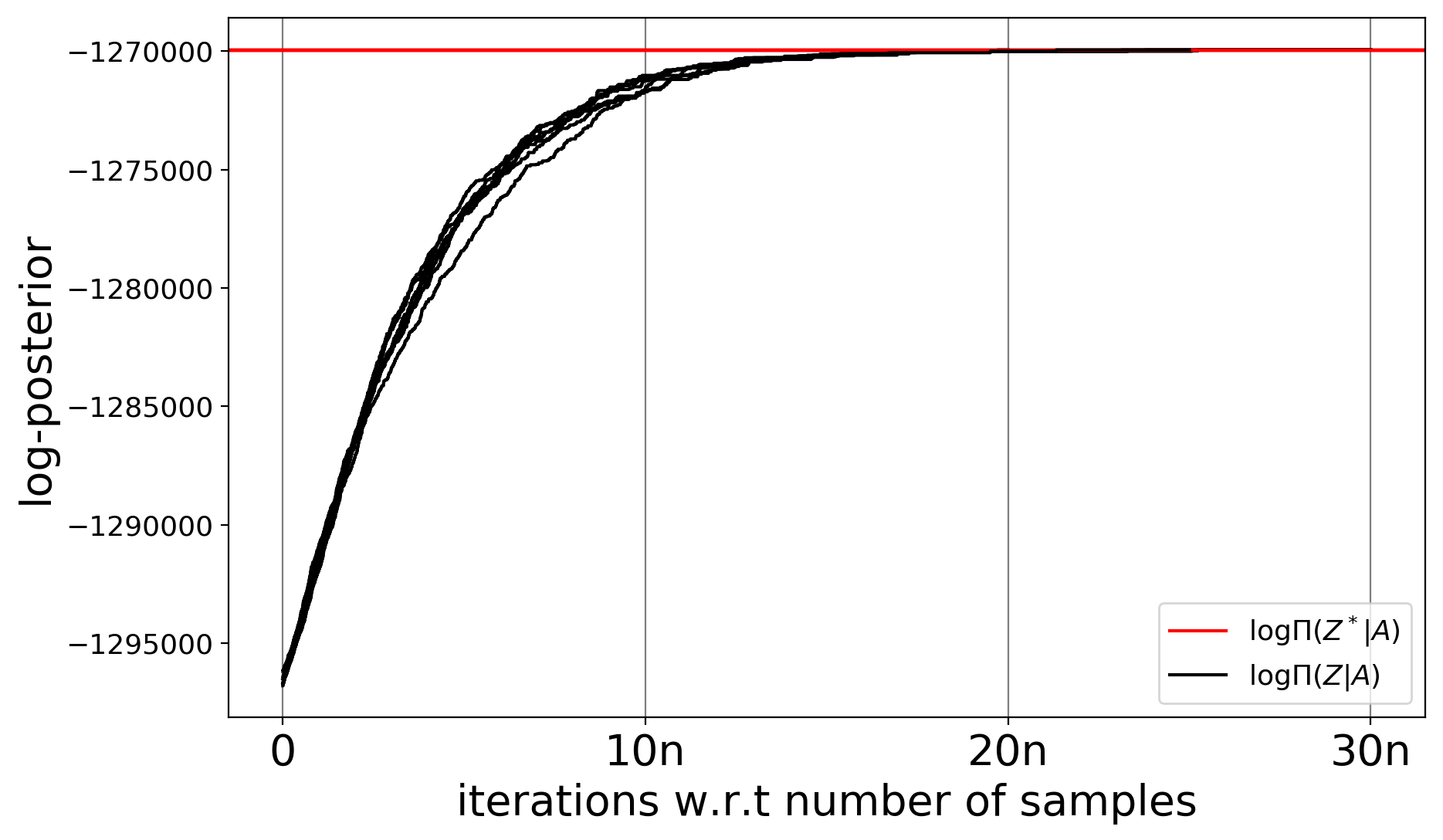}
\caption{Log-posterior probability versus the number of iterations. Each black curve corresponds to a trajectory of the chain (20 chains in total), and the red horizontal line represents the log-posterior probability at the true label assignment.}
\label{fig:hetero}
\end{center}
\end{figure}

\item[Necessity of the initialization condition.] We show that the initialization condition required by our main theorems is necessary by numerical experiments. Consider the network with two communities of size 270 and 460, and the connectivity probabilities are set to be $p=10^{-1}$, $q=10^{-8}$. The initial label assignment $Z_0$ satisfies $\ell(Z_0,Z^*) = (1-\eps)/2\alpha$, and then Condition \ref{con:pqknowncon} is equivalent to $\eps>0$ and $\eps^2nI\goto\infty$. In simulations, we run experiments for $\eps=0.2,0.1,-0.1,-0.2$, and the results are shown as below.

\begin{figure}[H]
\begin{center}
\begin{subfigure}[b]{0.48\textwidth}
    \includegraphics[width=\textwidth]{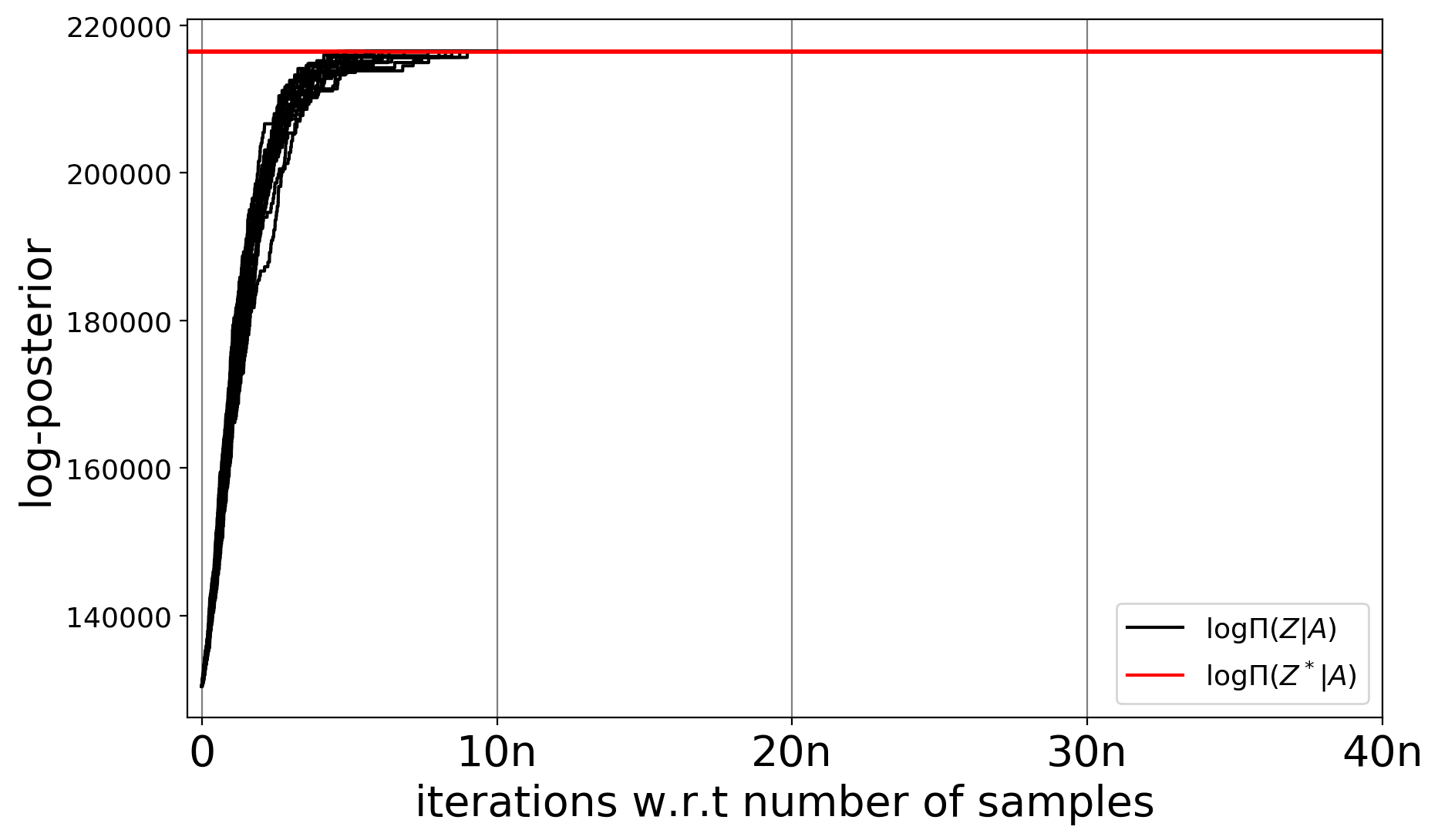}
    \caption{$\eps=0.2$}
\end{subfigure}
~
\begin{subfigure}[b]{0.48\textwidth}
    \includegraphics[width=\textwidth]{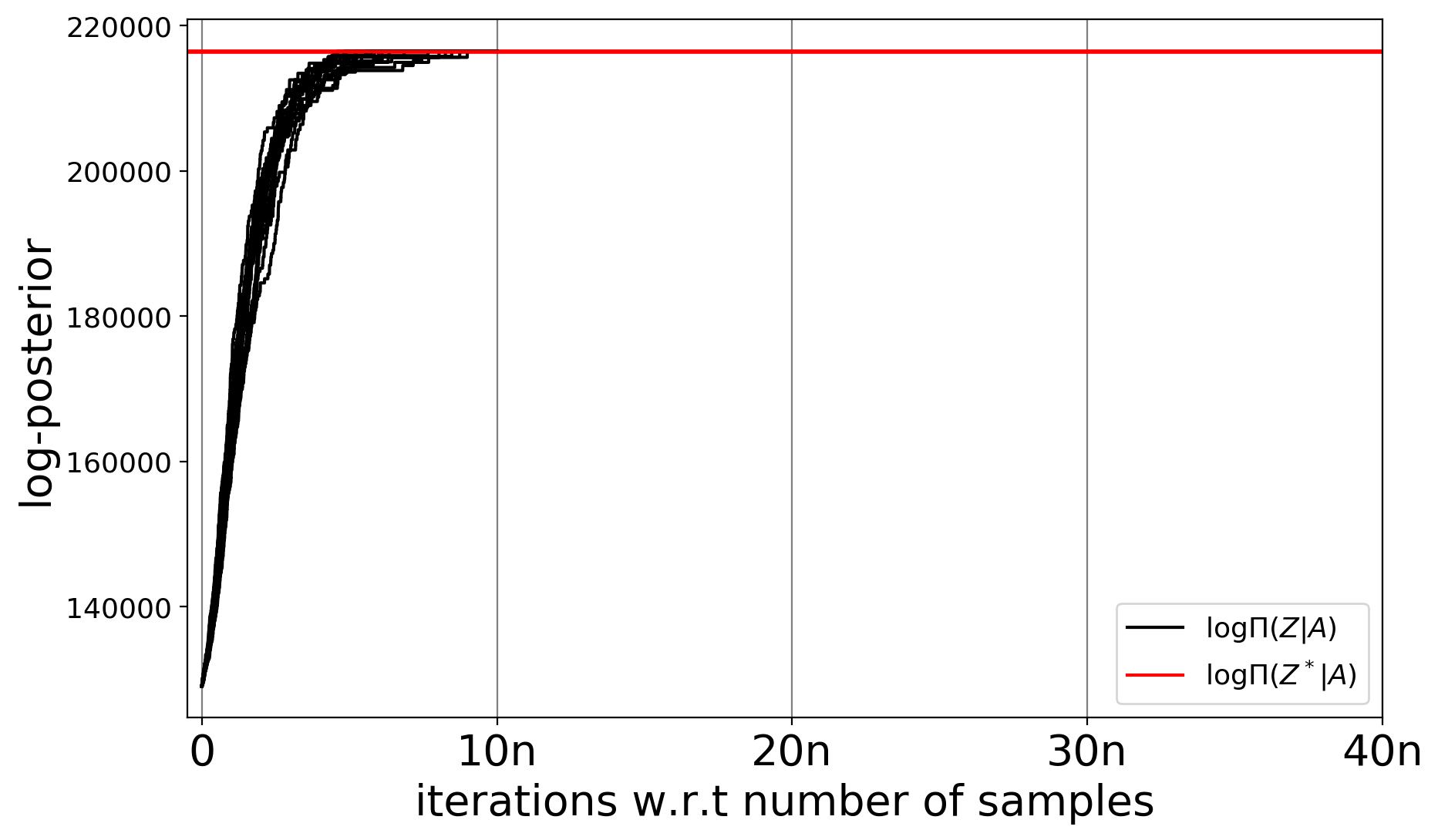}
    \caption{$\eps=0.1$}
\end{subfigure}
\\
\begin{subfigure}[b]{0.48\textwidth}
    \includegraphics[width=\textwidth]{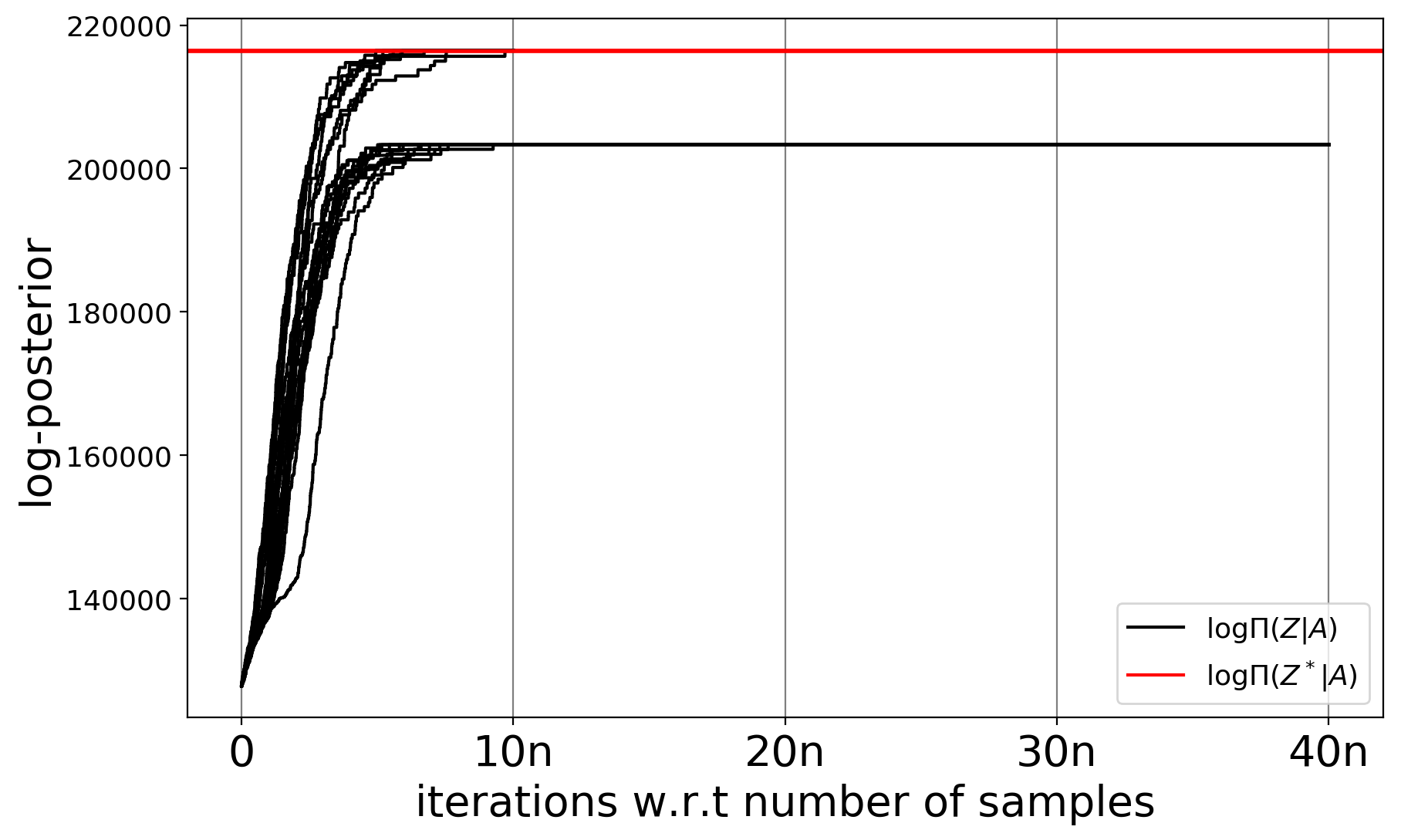}
    \caption{$\eps=-0.1$}
\end{subfigure}
~
\begin{subfigure}[b]{0.48\textwidth}
    \includegraphics[width=\textwidth]{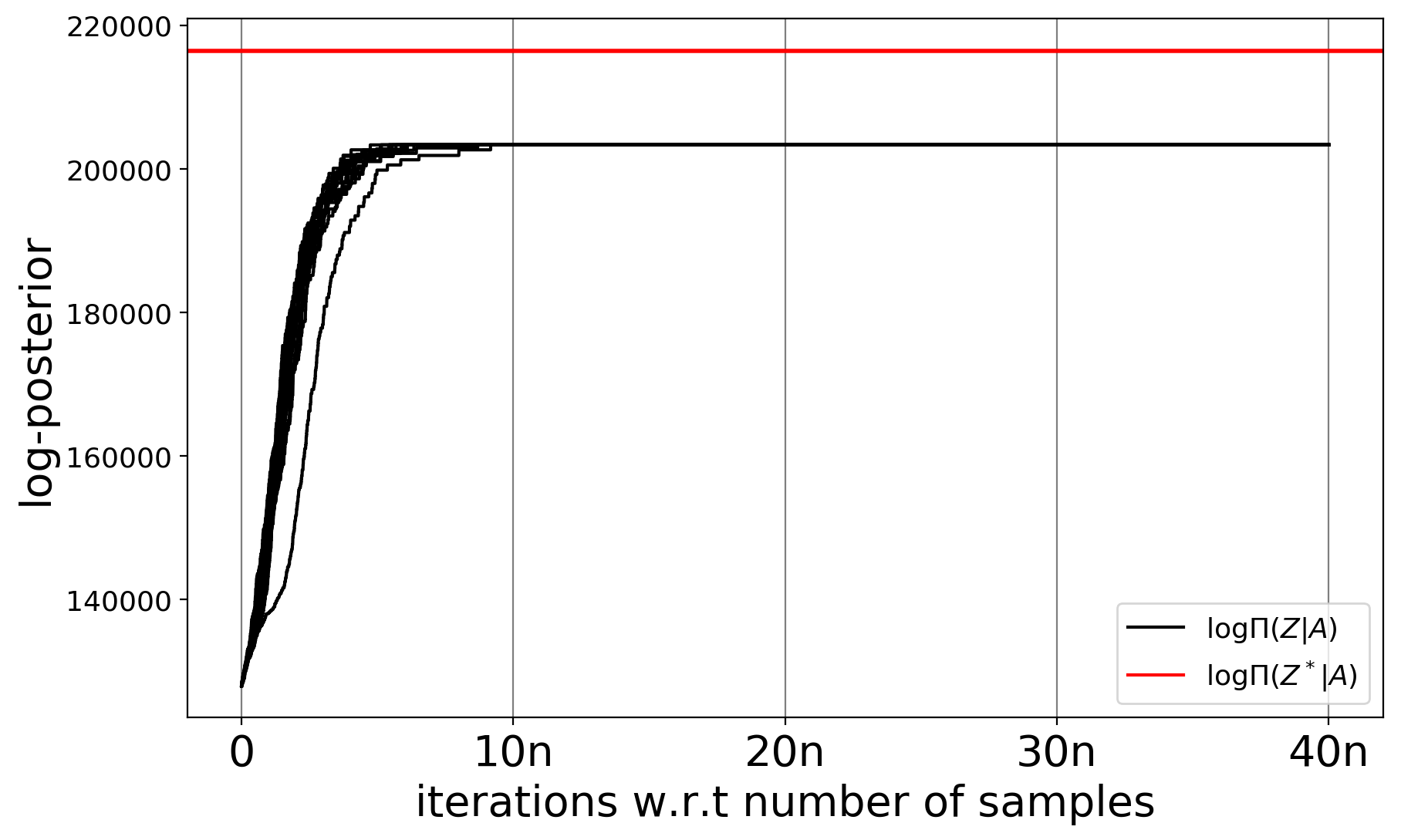}
    \caption{$\eps=-0.2$}
\end{subfigure}
\end{center}
\caption{Log-posterior probability versus the number of iterations. The initial label assignment $Z_0$ is constructed so that the labels of the community of size $270$ are all correct, and there are $n(1-\epsilon)/2\alpha$ labels in the community of size $460$ are incorrect. Each black curve corresponds to a trajectory of the chain (20 chains in total), and the red horizontal line represents the log-posterior probability at the true label assignment.}
\label{fig:bad}
\end{figure}
Figure \ref{fig:bad} shows that when $\eps<0$, it is very likely for the algorithm to get stuck at some local maximum, and does not converge to the stationary distribution.

\item[Fundamental limit of the signal condition.] We check that the fundamental limit of the signal condition can be achieved by the Metropolis-Hastings algorithm. We generate homogeneous networks with 1000 nodes and 2000 nodes, and each has two communities of equal sizes. Figure \ref{fig:limit} is the heatmap of the number of misclassified samples, where every rectangular block represents one setting with different values of $p$ and $q$. In each setting, we run 20 experiments with independent initializations and adjacency matrices, and the value of each block is the average number of misclassified samples in the 20 experiments. Figure \ref{fig:limit} shows that when $nI>2\log n$, we are able to exactly recover the underlying true label assignment, and the result of simulation coincides with the posterior strong consistency property in Section \ref{sec:postStrongConsis}.

\begin{figure}[H]
\begin{subfigure}[b]{0.48\textwidth}
    \includegraphics[width=\textwidth]{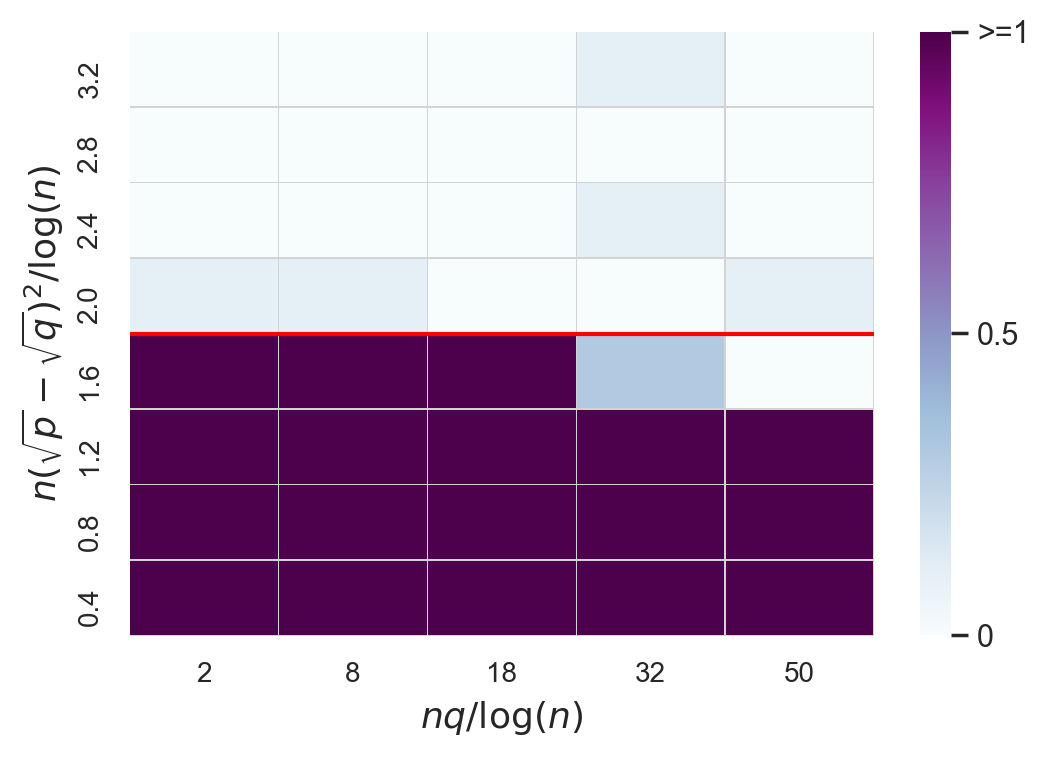}
    \caption{Network with 1000 nodes}
\end{subfigure}
\begin{subfigure}[b]{0.48\textwidth}
    \includegraphics[width=\textwidth]{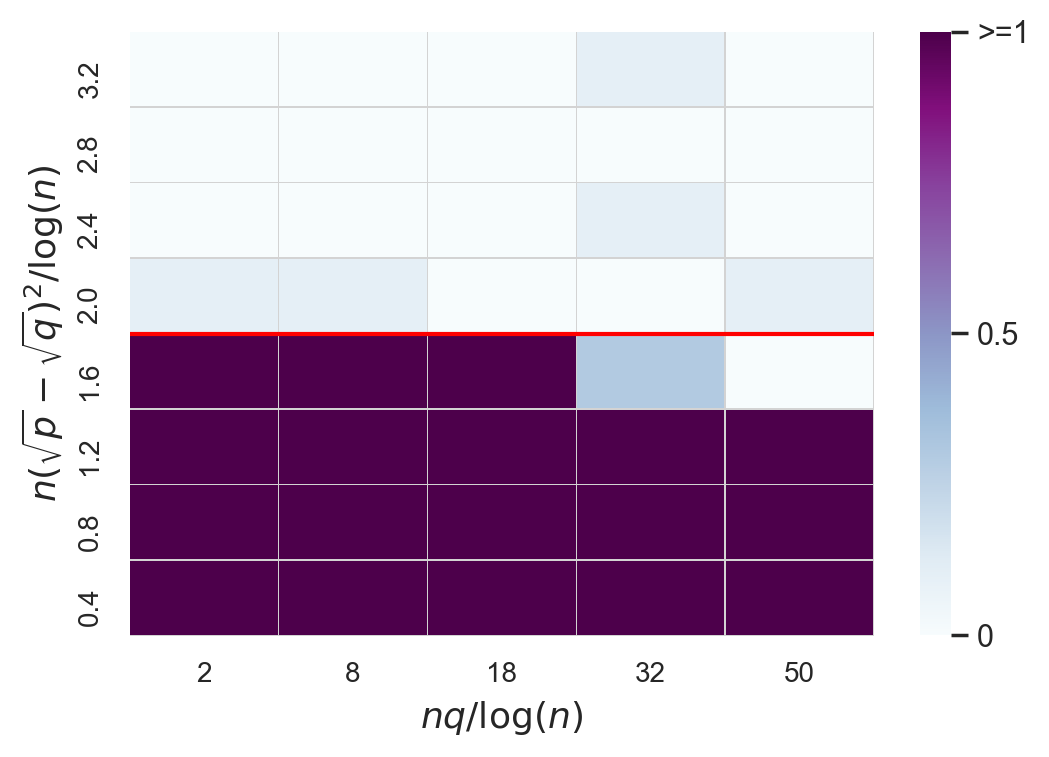}
    \caption{Network with 2000 nodes}
\end{subfigure}
\caption{The heatmap of the number of misclassified samples. The red line in each plot represents the fundamental limit with $K=2$.}
\label{fig:limit}
\end{figure}

\end{description}

\section{Proofs}\label{sec:mainProofs}

The posterior strong consistency property, Theorem \ref{thm:postcontract} and Theorem \ref{thm:postknown}, is proved in Section \ref{sec:proof_contract}. The main result of the paper, Theorem \ref{thm:mixing}, is proved in Section \ref{sec:proof_mixing}.

\subsection{Proof of posterior strong consistency}\label{sec:proof_contract}

We first state the proof in the case where the connectivity probability matrix $B$ is known (Theorem \ref{thm:mixingknown}). Then, by similar techniques, we have the result of Theorem \ref{thm:postcontract}. To distinguish the two cases, we denote the posterior distribution as $\Pi_0(\cdot|A)$ with a known connectivity probability matrix. In this section, we use $d(Z,Z^*)= n \ell(Z,Z^*) = m(Z)$ to denote the number of mistakes for the label assignment $Z$. For simplicity, we also write $m$ for $m(Z)$ with a slight abuse of notation.

\subsubsection{Proof of Theorem \ref{thm:postknown}}

We first state a lemma in order to prove the theorem.

\begin{lemma}[Lemma 5.4 in \cite{zhang2016minimax}]\label{lm:postcon}
	For any constants $\alpha>\beta\geq 1$, let $Z\in S_\alpha$ be an arbitrary assignment satisfying that $d(Z,Z^*)= m$ with $0<m<n$. Then, for the $\Pi_0(Z|A)=\Pi(Z|A)$ defined in \eqref{eq:postknown}, we have
	\[\Prob\left\{\Pi_0(Z|A)>\Pi_0(Z^*|A)\right\}\leq\begin{dcases}
	\exp\bp{-(\bar nm -m^2)I}, & m\leq \frac{n}{2K},\\∫
	\exp\bp{-d_{\alpha,\beta}\frac{nmI}{K}}, & m>\frac{n}{2K},
	\end{dcases}\]
	where $d_{\alpha,\beta}$ is some positive constant that only depends on $\alpha,\beta$.
\end{lemma}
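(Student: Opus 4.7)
My plan is to reduce the event $\{\Pi_0(Z|A)>\Pi_0(Z^*|A)\}$ to a likelihood-ratio exceedance for independent Bernoullis, bound its probability by a Bhattacharyya/Chernoff inequality at the symmetric point $s=1/2$, and then lower bound the number of contributing pairs by a combinatorial argument that splits on the size of $m$. The starting point is that, from the explicit form in \eqref{eq:postknown}, the log posterior difference $\log\Pi_0(Z|A)-\log\Pi_0(Z^*|A)$ is exactly the log-likelihood ratio of the SBM at $Z$ versus $Z^*$, so only pairs $(i,j)$ at which $Z$ and $Z^*$ disagree about ``same community'' can contribute. I would split these into
\[
\mathcal{E}_+=\bigl\{(i,j):Z_i=Z_j,\ Z_i^*\ne Z_j^*\bigr\},\qquad \mathcal{E}_-=\bigl\{(i,j):Z_i\ne Z_j,\ Z_i^*=Z_j^*\bigr\},
\]
and set $\mathcal{E}=\mathcal{E}_+\cup\mathcal{E}_-$; under the truth, $A_{ij}\sim \text{Bernoulli}(q)$ on $\mathcal{E}_+$ and $A_{ij}\sim \text{Bernoulli}(p)$ on $\mathcal{E}_-$, while $Z$'s model reverses these roles.

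Applying Markov to the likelihood ratio raised to the power $1/2$ and using independence of the $A_{ij}$'s, the moment generating function factorizes and each pair contributes the Bhattacharyya coefficient $\sqrt{pq}+\sqrt{(1-p)(1-q)}=e^{-I/2}$; this yields
\[
\Prob\{\Pi_0(Z|A)\ge \Pi_0(Z^*|A)\}\ \le\ \exp\bigl(-|\mathcal{E}|\,I/2\bigr).
\]
All remaining work is a deterministic lower bound on $|\mathcal{E}|$ as a function of $m$.

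For this I would fix a permutation $\sigma\in\mathcal{P}_K$ that realizes $m=d(Z,Z^*)$ and let $M=\{i:Z_i\ne Z_i^*\}$, so $|M|=m$. Given $i\in M$ with $Z_i^*=b$, $Z_i=a$, every correctly labelled $j\notin M$ with $Z_j^*=a$ gives a pair in $\mathcal{E}_+$, and every such $j$ with $Z_j^*=b$ gives a pair in $\mathcal{E}_-$. Because every class has at least $\bar n$ members under $Z^*$, this produces at least $2(\bar n-m)$ elements of $\mathcal{E}$ per mistake (and no double-counting, since one of the two endpoints is always in $M$ and the other is not), so $|\mathcal{E}|\ge 2m(\bar n-m)$. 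Plugging this into the Chernoff bound yields the first branch $\exp(-(\bar n m-m^2)I)$, which is informative exactly in the regime $m\le n/(2K)$.

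The main obstacle is the second branch, $m>n/(2K)$, where the factor $\bar n-m$ has the wrong sign and the na\"ive count breaks down. Here the constraint $Z\in S_\alpha$ must be used essentially: it forces $n_a(Z)\in[n/(\alpha K),\alpha n/K]$ for every $a$, which together with \eqref{eq:beta} keeps the $K\times K$ confusion matrix between $Z$ and $Z^*$ from concentrating in a single entry. Tracking off-diagonal mass of the confusion matrix and counting ``same class under $Z$, different under $Z^*$'' pairs row by row should yield a bound of the form $|\mathcal{E}|\ge 2 d_{\alpha,\beta}\, nm/K$ for an explicit $d_{\alpha,\beta}>0$ depending only on $\alpha,\beta,K$, which gives the second branch. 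The conceptual point is that the strict inequality $\alpha>\beta$ is exactly what rules out the degenerate confusion patterns that would make $|\mathcal{E}|$ too small, so the constant $d_{\alpha,\beta}$ should degenerate as $\alpha\downarrow\beta$; the remaining difficulty is purely case-analytic bookkeeping on the confusion matrix.
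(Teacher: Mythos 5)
The paper itself cites this lemma from Zhang and Zhou \cite{zhang2016minimax} without reproducing the proof, so there is no ``paper's own proof'' to compare against directly; your route (Chernoff/Bhattacharyya at $s=1/2$ applied to the likelihood ratio, then a combinatorial lower bound on the number of contributing pairs) is the standard one and is the approach in the cited reference. The reduction is correct: each pair $(i,j)\in\mathcal{E}$ contributes a factor $\sqrt{pq}+\sqrt{(1-p)(1-q)}=e^{-I/2}$ to the square-root moment of the likelihood ratio, and Markov then gives $\Prob\{\Pi_0(Z|A)>\Pi_0(Z^*|A)\}\le e^{-|\mathcal{E}|I/2}$, so the whole lemma reduces to lower bounding $|\mathcal{E}|$.

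Your count for the small-$m$ branch reaches the right answer but your stated justification, that ``every class has at least $\bar n$ members under $Z^*$,'' is only true when $K>2$ (where $\bar n = n/(K\beta)$). For $K=2$, $\bar n = n/2$ but a class may have as few as $n/(2\beta)$ members. The bound still holds in that case because for $K=2$ the two classes touched by a given mistake $i$ (namely $a=Z_i$ and $b=Z_i^*$, with $\{a,b\}=\{1,2\}$) have sizes summing exactly to $n$, so the per-mistake count is $(n_a^*-m)+(n_b^*-m)=n-2m=2(\bar n-m)$. It is the sum over the two communities, not an individual lower bound, that gives $2(\bar n-m)$ when $K=2$. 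Your no-double-counting observation (every pair you construct has exactly one endpoint in $M$) is correct and is what makes the count $|\mathcal{E}|\ge 2m(\bar n-m)$ legitimate.

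For the large-$m$ branch you explicitly defer the confusion-matrix bookkeeping, which is fine given the paper also defers to \cite{zhang2016minimax}; a complete argument would need to show $|\mathcal{E}|\gtrsim nm/K$ under the $S_\alpha$ constraint. One conceptual point you should correct, though: the strict inequality $\alpha>\beta$ is not there to rule out degenerate confusion patterns, and you should not expect $d_{\alpha,\beta}$ to degenerate as $\alpha\downarrow\beta$. The paper requires $\alpha>\beta$ so that $Z^*\in S_\alpha$ (otherwise the posterior is not even supported at the truth). As $\alpha\downarrow\beta$ the set $S_\alpha$ shrinks, the admissible confusion matrices become \emph{more} balanced, and the constant should only improve; the regime in which $d_{\alpha,\beta}$ is at risk of degenerating is $\alpha\to\infty$, where nearly unbalanced community sizes are allowed.
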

\begin{proof}[Proof of Theorem \ref{thm:postknown}]
Recall that for any $Z'\in \Gamma(Z)$, we have $\Pi_0(Z'|A)=\Pi_0(Z|A)$.\ignore{To avoid ambiguity, we condense all state space into small subset ignoring label permutation, simply write $\Pi(\Gamma(Z)|A)$ as $\Pi(Z|A)$ for clearer presentation, and denote $\Gamma(Z)\neq \Gamma(Z^*)$ by $Z\neq Z^*$. This trick is widely used in this paper.} For each $Z\in S_\alpha$, let $G_Z = \left\{\Pi_0(Z|A)>\Pi_0(Z^*|A)\right\}$, and define $G = \cup_{Z\in S_\alpha} G_Z$.  Let $P_Z$ denote the likelihood function for the assignment $Z$. With the uniform prior on $S_{\alpha}$, we have
\begin{align*}
\Expect{\Pi_0(Z\not\in\Gamma(Z^*)|A)} =& P_{Z^*} \Pi_0(Z\not\in\Gamma(Z^*)|A) \indc{G^c} + P_{Z^*} \Pi_0(Z\not\in\Gamma(Z^*)|A) \indc{G}&\\[7pt]
\leq &P_{Z^*} \sum_{Z\not\in\Gamma(Z^*)} \frac{P_Z}{P_{Z^*}+\sum_{Z} P_Z} \indc{G_Z^c}+ \sum_{Z\not\in\Gamma(Z^*)} P_{Z^*}(G_Z)&\\[7pt]
\leq &P_{Z^*} \sum_{Z\not\in\Gamma(Z^*)} \frac{P_Z}{P_{Z^*}} \indc{G_Z^c}+ \sum_{Z\not\in\Gamma(Z^*)} P_{Z^*}(G_Z)&\\[7pt]
= & \sum_{Z\not\in\Gamma(Z^*)} P_Z(G_Z^c) + P_{Z^*}(G_Z) &\\[7pt]
= & 2\sum_{Z\not\in\Gamma(Z^*)} \Prob\left\{\Pi_0(Z|A)>\Pi_0(Z^*|A)\right\},
\end{align*}
where the last inequality is due to symmetry. We also have
\[
	\left|{\{\Gamma: \exists Z \in \Gamma, s.t.~d(Z,Z^*)=m\}}\right|\leq  {n\choose m} (K-1)^m\leq \min\left\{\bp{\frac{enK}{m}}^m,K^n\right\}.
\]

Note that $\{Z:Z\not\in \Gamma(Z^*)\}$ is equivalent to set $\{Z:m(Z)\geq 1\}$. With the condition that $\bar nI>\log n$, it follows by Lemma \ref{lm:postcon} that
\begin{align}
\Expect{\Pi_0(Z\not \in\Gamma(Z^*)|A)}&\leq 2\sum_{1\leq m\leq n/2K}{n\choose m}K^m\exp\bp{-(\bar n m-m^2)I} + 2\sum_{m>n/2K}K^n\exp\bp{-d_{\alpha,\beta} mnI/K} \nonumber\\
&\leq 2\sum_{1\leq m\leq n/2K}{n\choose m}K^m\exp\bp{-(\bar n m-m^2)I} + 2nK^n\exp\bp{-Cn^2I}\label{eq:postconsisknown}
\end{align}
for some constant $C$. We proceed to upper bound the first term in \eqref{eq:postconsisknown}. It follows that
\begin{align*}
\sum_{1\leq m\leq {n}/{2K}}{n\choose m}K^m \exp\bp{-(\bar nm-m^2)I}\leq \sum_{1\leq m\leq {n}/{2K}}\bp{{enK}}^m\exp\bp{-(\bar nm-m^2)I} = \sum_{m}P_m
\end{align*}
where $P_m = \bp{enK}^m\exp\bp{-(\bar nm-m^2)I}$. The ratio of $P_m$ and $P_1$ is calculated as
\begin{align*}
\frac{P_m}{P_1} &= {(enK)^{m-1}}\exp(-\bar nI(m-1)+(m^2-1)I) = \bp{enK\exp\bp{-\bar nI+(m+1)I}}^{m-1}.
\end{align*}
Define $m' = \eps' n$ for some positive sequence $\eps'=\eps_n'$ with $\eps'\goto 0$ and $\eps'nI\goto \infty$. Then, $\sum_{1\leq m\leq n/2K}P_m$ can be split into summation of $\sum_{1\leq m< m'}P_m$ and $\sum_{m'\leq m\leq n/2K}P_m$, where
\begin{align*}
\sum_{m=1}^{m'-1}P_m &= P_1\sum_{m=1}^{m'-1}\frac{P_m}{P_1}\leq P_1\sum_{m=1}^{m'-1}(enK\exp\bp{-\bar nI+m'I})^{m-1}\\
&\leq enK\exp\bp{-\bar nI+I}\cdot(1+2enK\exp(-\bar nI+\eps'nI)),
\end{align*}
and there exists some constant $C$ such that
\begin{align*}
\sum_{m'<m\leq n/2K}P_m \leq nK^n\exp(-\eps'(C-\eps')n^2I)\leq \exp(-n).
\end{align*}
Hence, by combining all parts and based on the condition that $\bar nI>\log n$, we have $\Pi_0(Z\not \in \Gamma (Z^*)|A)\leq Cn\exp(-\bar nI)$ for some constant $C$ and for a large $n$.

\end{proof}

\subsubsection{Proof of Theorem \ref{thm:postcontract}}
\begin{lemma}\label{lm:postlargemistakes}
	Let $Z\in S_\alpha$ be an arbitrary assignment with $d(Z,Z^*)=m>0$. If $p,q\goto 0$ and $p\asymp q$, there exists some positive sequence $\gamma =\gamma_n$ with $\gamma\goto 0$ and $\gamma^2 nI\goto \infty$, such that for the $\Pi(Z|A)$ defined in \eqref{eq:postunknown}, we have
	\begin{align*}
	&\Prob\left\{{\max_{Z\in S_\alpha:m>\gamma n}\log\frac{\Pi(Z|A)}{\Pi(Z^*|A)}\geq -C_1\gamma n^2I}\right\}\leq 4\exp(-n),
	\end{align*}
and
	\begin{align*}
	&\Prob\left\{{\max_{Z\in S_\alpha:m\leq\gamma n}\log\frac{\Pi(Z|A)}{\Pi(Z^*|A)}-\log \frac{\Pi_0(Z|A)}{\Pi_0(Z^*|A)}-C_2\gamma mnI>0}\right\}\leq n\exp\bp{-(1-o(1))\bar n I},
	\end{align*}
for some constants $C_1,C_2$. Here, $\Pi_0(Z|A)$ is the posterior probability with known connectivity probabilities.
\ignore{Here, $C_1 = \frac{\alpha^4}{16K^4\beta^2}\bp{\gamma^2\wedge 1/\beta^2}$, $C_2 = 4c_0\alpha K^2(\alpha+\beta+2K)$, and $(\sqrt{p}+\sqrt{q})^2/q\leq c_0$.}
\end{lemma}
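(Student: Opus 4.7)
The overall strategy is to decompose $\log\Pi(Z|A) = \log\Pi_0(Z|A) + \Delta(Z)$, where $\Delta(Z)$ captures the effect of integrating out $B$ under the Beta prior, and then to control both pieces. I would begin by applying Stirling's formula $\log\Gamma(x) = (x-\tfrac12)\log x - x + O(1)$ to each Beta function in \eqref{eq:postunknown}, obtaining
\begin{equation*}
\log\text{Beta}(O_{ab}(Z)+\kappa_1,\, n_{ab}(Z)-O_{ab}(Z)+\kappa_2) \;=\; -\,n_{ab}(Z)\, h\!\left(\hat{B}_{ab}(Z)\right) + O(\log n),
\end{equation*}
where $\hat{B}_{ab}(Z) := O_{ab}(Z)/n_{ab}(Z)$ and $h(x) := x\log x + (1-x)\log(1-x)$. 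Combining with the closed form \eqref{eq:postknown} of $\log\Pi_0(Z|A)$ yields the key identity
\begin{equation*}
\Delta(Z) \;=\; \log\Pi(Z|A) - \log\Pi_0(Z|A) \;=\; -\sum_{a\leq b} n_{ab}(Z)\,\KL{\hat{B}_{ab}(Z)}{B_{ab}} + O(K^2\log n),
\end{equation*}
uniformly in $Z$, where $B_{ab}\in\{p,q\}$ is the true connectivity. A Bernstein bound on each $O_{ab}(Z^*)$ gives $|\hat{B}_{ab}(Z^*)-B_{ab}| \lesssim \sqrt{B_{ab}\log n/n_{ab}(Z^*)}$ with probability $\geq 1-n^{-C}$, hence $|\Delta(Z^*)| = O(K^2\log n)$ on a good event $E_0$.

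For Part 2 (small mistakes, $m\leq\gamma n$), since $\Delta(Z)\leq 0 + O(K^2\log n)$ while $\Delta(Z^*) = O(K^2\log n)$ on $E_0$, it suffices to bound $\sum_{a\leq b}n_{ab}(Z)\KL{\hat{B}_{ab}(Z)}{B_{ab}}$ by $C_2\gamma m n I$. Here I would use $Z$ differs from $Z^*$ on only $m$ nodes, so that $|O_{ab}(Z)-O_{ab}(Z^*)| \leq C mn$ and $|n_{ab}(Z)-n_{ab}(Z^*)| \leq C mn$; under the data-generating measure the edge counts concentrate, so $|\hat{B}_{ab}(Z)-\hat{B}_{ab}(Z^*)| \lesssim (Km/n)\cdot p$ on a further good event $E_1$. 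The quadratic Taylor expansion of Bernoulli KL (valid since $p,q\to 0$, $p\asymp q$) gives $\KL{\hat{B}_{ab}(Z)}{B_{ab}} \lesssim (\hat{B}_{ab}(Z)-B_{ab})^2/B_{ab}$. Splitting into the $Z^*$-centered piece and the perturbation, summing over $a\leq b$, and using $I\asymp p$ yields a bound of order $K^4 m^2 p + K^2\log n \lesssim \gamma mnI$ when $m\leq\gamma n$. A union bound over the $\binom{n}{m}K^m \leq (enK/m)^m$ admissible $Z$'s, together with the concentration probabilities for $E_0,E_1$ on the order of $\exp(-(1-o(1))\bar nI)$ per $Z$, produces the claimed tail $n\exp(-(1-o(1))\bar nI)$.

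For Part 1 (large mistakes, $m>\gamma n$), I would combine the Part-2 framework with a refined version of \prettyref{lm:postcon}. The identity $\Delta(Z) - \Delta(Z^*) \leq -\Delta(Z^*) = O(K^2\log n)$ reduces the statement to showing $\log\Pi_0(Z|A)/\Pi_0(Z^*|A) \leq -C_1\gamma n^2 I$ uniformly over $Z$ with $m\geq\gamma n$, with exceptional probability $\exp(-n)$. Starting from the decomposition in \eqref{eq:postknown}, this log-ratio is a signed sum of $\binom{n}{2}$ independent terms whose conditional mean, computed under $P_{Z^*}$, equals $-\sum_{a\leq b}n_{ab}(Z)\KL{\tilde B_{ab}(Z)}{B_{ab}}$ for some effective block proportions $\tilde B_{ab}(Z)$ that deviate from $B_{ab}$ by $\Omega(p)$ whenever $m\geq \gamma n$. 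This mean is $\lesssim -C\gamma n^2 I$. A Bernstein inequality on the signed sum yields fluctuations of order $\sqrt{n^2 p\cdot n} = n^{3/2}\sqrt p$, which is dominated by $\gamma n^2 I \asymp \gamma n^2 p$ once $\gamma^2 n I\to\infty$; combined with the $K^n$-size union bound and the $\exp(-n)$-rate tail, this gives the desired $4\exp(-n)$ failure probability after absorbing the good events $E_0,E_1$.

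The main obstacle is Part 1: achieving an $\exp(-n)$ tail uniformly over $\O{K^n}$ assignments requires that the Bernstein/Hoeffding fluctuations of the log-ratio be tightly dominated by the expected signal $\gamma n^2 I$. This is precisely the regime where the requirement $\gamma^2 n I\to\infty$ (imposed in the lemma's hypotheses) becomes essential, and verifying that the Stirling error $O(K^2\log n)$ and the Taylor remainder in the KL expansion are both absorbed into a universal constant $C_1$ is the delicate accounting step.
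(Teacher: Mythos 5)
Your overall strategy --- expressing $\log\Pi(Z|A) - \log\Pi_0(Z|A)$ as a KL-type discrepancy and handling the small- and large-mistake regimes separately --- is aligned in spirit with the paper's route through the likelihood modularity $Q_{LM}$. But the central step of your Part 1 is wrong. You claim that the conditional mean of $\log\frac{\Pi_0(Z|A)}{\Pi_0(Z^*|A)}$ under $P_{Z^*}$ equals $-\sum_{a\leq b}n_{ab}(Z)\KL{\tilde B_{ab}(Z)}{B_{ab}}$. The actual mean is $-\sum_{i<j}\KL{P^*_{ij}}{P_{ij}(Z)}$, where $P^*_{ij}=B_{Z^*_iZ^*_j}$ and $P_{ij}(Z)=B_{Z_iZ_j}$ both take values in $\{p,q\}$; by Jensen, your block-averaged expression is only a \emph{lower} bound on this, and the Jensen gap costs a factor of order $n/m$. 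Concretely, your expression is of size $\asymp m^2 I$, whereas the true signal is $\asymp mnI$. For $m$ just above $\gamma n$ your estimate yields only $-\gamma^2 n^2 I$, short of the required $-C_1\gamma n^2 I$, and this deficit cannot be made up once you union-bound over $K^n$ assignments against Bernstein fluctuations. The paper's Lemmas~\ref{lm:post_boundGe_1} and~\ref{lm:post_boundGe_2} control the cross-sum $\sum_{a,b,k,l}R_{ak}R_{bl}\KL{B_{kl}}{\tilde B_{ab}}$ precisely to avoid this collapse. Relatedly, your claims that ``$\tilde B_{ab}$ deviates from $B_{ab}$ by $\Omega(p)$'' and that ``$I\asymp p$'' both require $p-q\asymp p$, which is not assumed; in general $\tilde B_{ab}\in[q,p]$ so the deviation is at most $p-q$, and $I\asymp(p-q)^2/p$ can be $\ll p$.

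There are two further issues. The Stirling sign is reversed: $\log\text{Beta}(O+\kappa_1,N-O+\kappa_2) = +N\,h(\hat p) + O(\log N)$ (a Beta function with large arguments is tiny, so its log is negative, consistent with $h<0$), so $\Delta(Z) = +\sum_{a\leq b}n_{ab}(Z)\KL{\hat B_{ab}(Z)}{B_{ab}}\geq 0$. With the correct sign you cannot dismiss $\Delta(Z)$ as $\leq O(\log n)$; you must genuinely upper bound this KL sum over all $Z$, which is the real content of Part 2. Finally, the $O(K^2\log n)$ Stirling remainder is too coarse for Part 2 at the bottom of the range $m=1$: the target $C_2\gamma m n I = \gamma nI$ can be $\ll\log n$ when $nI\asymp\log n$. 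The paper's Lemma~\ref{lm:likemode} (via Lemma~\ref{lm:boundprior}) obtains an $O(1)$ error by exploiting cancellation in the \emph{ratio} $\Pi(Z|A)/\Pi(Z^*|A)$, which is needed to close the small-$m$ regime.
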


The proof of Lemma \ref{lm:postlargemistakes} is deferred to Section \ref{sec:postContrSec}. We now state another lemma that is based on Proposition 5.1 in \cite{zhang2016minimax}.
\begin{lemma}[Proposition 5.1 in \cite{zhang2016minimax}]\label{lm:exp}
	For any $Z\in S_\alpha$ where $d(Z,Z^*)=m<{n}/{2K}$,
	\begin{align*}
	\Expect{\sqrt{\frac{\Pi_0(Z|A)}{\Pi_0(Z^*|A)}}}\leq \exp\bp{-\bar nmI+m^2I}.
	\end{align*}
\end{lemma}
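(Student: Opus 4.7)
The plan is to reduce the expectation to a product over independent edges, identify the ``discrepant'' edges via the Bhattacharyya coefficient, and then lower bound the number of such edges by a combinatorial argument. Because the connectivity matrix $B$ is known, the prior on $Z$ is uniform on $S_\alpha$, and both $Z,Z^*\in S_\alpha$, the normalizing constants cancel so that $\Pi_0(Z|A)/\Pi_0(Z^*|A)=P_Z(A)/P_{Z^*}(A)$, where $P_Z$ is the likelihood from \eqref{eq:like}. Independence of $\{A_{ij}\}_{i<j}$ under the SBM factorizes the expected square-root ratio as $\prod_{i<j}\Expect\sqrt{P_Z(A_{ij})/P_{Z^*}(A_{ij})}$.

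The next step is a per-edge calculation. When $B_{Z_iZ_j}=B_{Z^*_iZ^*_j}$ (both $p$ or both $q$), the factor equals $1$. When the two disagree---which happens precisely on pairs $(i,j)$ with $\indc{Z_i=Z_j}\neq \indc{Z^*_i=Z^*_j}$---one Bernoulli has parameter $p$ and the other $q$, and the factor equals $\sqrt{pq}+\sqrt{(1-p)(1-q)}=e^{-I/2}$ by the definition of $I$. Writing $D(Z,Z^*)$ for the number of such ``discrepant'' pairs, this yields
\[
\Expect\sqrt{\Pi_0(Z|A)/\Pi_0(Z^*|A)} \;=\; \exp\bigl(-D(Z,Z^*)\,I/2\bigr),
\]
so the lemma reduces to proving $D(Z,Z^*)\geq 2\bar n m - 2m^2$.

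This combinatorial lower bound is the main obstacle, and I would address it as follows. By permutation symmetry I may assume that the minimum in $d(Z,Z^*)$ is attained at the identity, so the misclassified set $M=\{i:Z_i\neq Z^*_i\}$ has size $m$. Let $m_k=|\{i\in M:Z^*_i=k\}|$ and $\tilde m_\ell=|\{i\in M:Z_i=\ell\}|$, both summing to $m$. I will produce two disjoint families of discrepant pairs. The ``split'' family collects $\{i,j\}$ with $Z^*_i=Z^*_j=k$, $i\in M$, $j\notin M$; for each such $i$ there are $n^*_k-m_k$ correctly-labeled partners in community $k$, giving $\sum_k m_k(n^*_k-m_k)$ pairs. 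The ``merge'' family collects $\{i,j\}$ with $Z_i=Z_j=\ell$, $i\in M$, $j\notin M$, which forces $Z^*_j=\ell\neq Z^*_i$; for each $i\in M$ with $Z_i=\ell$ there are $n^*_\ell-m_\ell$ such partners, giving $\sum_\ell \tilde m_\ell(n^*_\ell-m_\ell)$ pairs. The two families are disjoint because one has $Z^*_i=Z^*_j$ and the other $Z^*_i\neq Z^*_j$.

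For $K\geq 3$, the community-size assumption \prettyref{eq:beta} gives $n^*_k\geq \bar n$, hence $n^*_k-m_k\geq \bar n-m$, and summing the two families yields $D\geq 2m(\bar n-m)=2\bar n m-2m^2$. For $K=2$, the identities $n^*_1+n^*_2=n$ and $\tilde m_2=m_1,\ \tilde m_1=m_2$ (each misclassified node in community $1$ must go to community $2$ and vice versa) produce $D\geq m_1(n^*_1-m_1)+m_2(n^*_2-m_2)+m_1(n^*_2-m_2)+m_2(n^*_1-m_1)=m(n-m)=2\bar n m-m^2\geq 2\bar n m-2m^2$. Plugging into the exponential identity above finishes the proof. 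The subtlety worth flagging is that the ``split''-only count is insufficient when $K=2$, so the two-family decomposition is essential there; for $K\geq 3$, the condition $m<n/(2K)$ ensures $\bar n-m>0$ and the bound is nontrivial, while it is vacuous otherwise.
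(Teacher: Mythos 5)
Your proof is correct and, in substance, reproduces the standard argument: the paper itself does not supply a proof of this lemma but defers to Proposition~5.1 of \cite{zhang2016minimax}, and your reduction to the R\'enyi-$1/2$ affinity on discrepant pairs followed by a split/merge count is the natural (and, as far as I can tell, the intended) route. The factorization $\Expect\sqrt{\Pi_0(Z|A)/\Pi_0(Z^*|A)}=\exp(-D(Z,Z^*)I/2)$ is exact here because the homogeneous $B$ makes $B_{Z_iZ_j}\neq B_{Z^*_iZ^*_j}$ equivalent to $\indc{Z_i=Z_j}\neq\indc{Z^*_i=Z^*_j}$, and your two disjoint families (split: $Z^*_i=Z^*_j$, $i\in M$, $j\notin M$; merge: $Z_i=Z_j$, $i\in M$, $j\notin M$) together give $D\geq \sum_k m_k(n^*_k-m_k)+\sum_\ell \tilde m_\ell(n^*_\ell-m_\ell)\geq 2m(\bar n-m)$ using $n^*_k\geq\bar n$ and $m_k\leq m$, which is exactly $2\bar n m - 2m^2$. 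The $K=2$ computation $D\geq m(n-m)=2\bar n m - m^2$ is also right and even slightly stronger. One small imprecision in your closing remark: for $K\geq 3$ the hypothesis $m<n/(2K)$ guarantees $m<\bar n=n/(K\beta)$ only when $\beta\leq 2$; for larger $\beta$ there is a window $m\in(\bar n, n/(2K))$ where $\bar n-m<0$. This does not damage the proof, since your family counts are nonnegative so $D\geq 2m(\bar n-m)$ still holds trivially and the asserted bound $\exp(-\bar n m I + m^2 I)>1$ is vacuous there, but the wording ``ensures $\bar n-m>0$'' overstates what the hypothesis gives you.
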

\begin{proof}[Proof of Theorem \ref{thm:postcontract}]

With Lemma \ref{lm:postlargemistakes} and Lemma \ref{lm:exp}, we divide $S_\alpha$ into a large mistake region and a small mistake region according to whether $m>\gamma n$, where $\gamma$ is a positive sequence defined in Lemma \ref{lm:postlargemistakes}.

\begin{description}[leftmargin = 0cm,labelsep = 0.5cm]
\item[Large mistake region.] For $m>\gamma n$, by Lemma \ref{lm:postlargemistakes}, with probability at least $1-4\exp(-n)$,
\begin{align*}
\sum_{Z\in S_\alpha:m>\gamma n}\frac{\Pi(Z|A)}{\Pi(Z^*|A)}\leq nK^n\exp(-C_1\gamma n^2I)\leq \exp(-n).
\end{align*}
Denote $\cE = \{\sum_{Z\in S_\alpha:m>\gamma n}\frac{\Pi(Z|A)}{\Pi(Z^*|A)}\leq\exp(-n)\}$, and it follows directly that
\begin{align*}
\Expect\left[\sum_{Z\in S_\alpha :m>\gamma n}\Pi(Z|A)\right] \leq \Expect\left[\Pi(Z:m>\gamma n|A)\indc{\cE}\right] + \PP{\cE^c} \leq 5\exp(-n).
\end{align*}
\item[Small mistake region.] For $m\leq \gamma n$, let $G_Z = \{\Pi_0(Z|A)>\Pi_0(Z^*|A)\}$. Let $\theta$ denote all unknown parameters and $\theta_0$ denote the underlying true parameters respectively. \ignore{Let $\tP_e$ be the likelihood that integrates out the parameters for given label assignment $e$.}Define ${\cal F}=\{\max_{Z\in S_\alpha:m\leq \gamma n}\log\frac{\Pi(Z|A)}{\Pi(Z^*|A)}-\log \frac{\Pi_0(Z|A)}{\Pi_0(Z^*|A)}-C_2\gamma mnI>0\}$ as in Lemma \ref{lm:postlargemistakes}. Then, we have
\begin{align*}
&\Expect{\Pi(Z: 1\leq m\leq \gamma n|A)} \\
\leq &P_{Z^*,\theta_0}\Pi(Z: 1\leq m\leq \gamma n|A) \indc{{\cal F}^c}+\Prob\left\{{\cal F}\right\}\\[7pt]
\leq &\sum_{Z:1\leq m\leq \gamma n} P_{Z^*,\theta_0}\frac{\Pi(Z|A)}{\Pi(Z^*|A)}\indc{G_Z^c,{\cal F}^c} + \sum_{Z:1\leq m\leq \gamma n}P_{Z^*,\theta_0}\indc{G_Z} +\Prob\left\{{\cal F}\right\}\\[7pt]
\leq &\sum_{Z:1\leq m\leq \gamma n} P_{Z^*,\theta_0}\frac{\Pi_0(Z|A)}{\Pi_0(Z^*|A)}\indc{G_Z^c}\exp\bp{C_2\gamma mnI} + \sum_{Z:1\leq m\leq \gamma n}P_{Z^*,\theta_0}\indc{G_Z} +\Prob\left\{{\cal F}\right\}\\[7pt]
\leq &\sum_{Z:1\leq m\leq \gamma n}\exp\bp{C_2\gamma mnI}P_{Z,\theta_0}\indc{G_Z^c}+\sum_{Z:1\leq m\leq \gamma n}P_{Z^*,\theta_0}\indc{G_Z} +\Prob\left\{{\cal F}\right\}\\[7pt]
\leq & n\exp\bp{-(1-o(1))\bar nI)}.
\end{align*}
Recall that $\Pi_0(\cdot|A)$ denotes the posterior distribution with knowledge of the connectivity probabilities. The second inequality is due to $\Pi(Z^*|A)\leq 1$. The third inequality is due to the definition of the event $\cal F$. The last two inequalities hold by Lemma \ref{lm:postcon} and symmetry.
\end{description}
Combine the two regions, and then
\[\Expect{\Pi(Z\not\in \Gamma(Z^*)|A)}\leq n\exp(-(1-o(1))\bar nI).\]
The proof is complete.
\end{proof}

\subsection{Proofs of Theorem \ref{thm:mixing} and Theorem \ref{thm:mixingknown}}\label{sec:proof_mixing}

\subsubsection{Backgrounds on mixing time} 
Consider a reversible, irreducible, and aperiodic Markov chain on a discrete space $\Omega$ that is completely specified by a transition matrix $P\in [0,1]^{|\Omega|\times|\Omega|}$ with stationary distribution $\Pi$. Let $\omega\in \Omega$ be the initial state of the chain, and then the total variation distance to the stationary distribution after $t$ iterations is 
\begin{align*}
\Delta_\omega (t) = \TVdiff{P^t(\omega, \cdot)}{\Pi},
\end{align*}
where $P^t(\omega,\cdot)$ is the distribution of the chain after $t$ iterations. The $\eps$-mixing time starting at $\omega$ is given by 
\begin{align*}
\tau_\eps(\omega) = \min\left\{t\in {\mathbb N}: \Delta_\omega(t')\leq \eps \text{~for all $t'\geq t$}
\right\}.
\end{align*}
With this notation, we say a Markov chain is rapidly mixing if $\tau_\eps(\omega)$ is $O\bp{\text{poly}(\log(|\Omega|/\eps))}$ in the case where $|\Omega|$ scales exponentially to the problem size $n$. This means we only need to update the Markov chain for poly$(n)$ steps in order to obtain \textit{good} samples from the stationary distribution. The explicit bound for the mixing time through the spectral gap is
\begin{align}\label{eq:boundmixingfomula}
\tau_\eps(\omega )\leq \frac{-\log{\Pi(\omega)+\log(1/\eps)}}{Gap(P)},
\end{align}
where $Gap(P)$ represents the spectral gap of the transition matrix $P$, defined by $Gap(P) = 1-\max\{|\lambda_{2}(P)|,|\lambda_{\min}(P)|\}$, where $\lambda_2(P)$, $\lambda_{\min}(P)$ are the second largest and the smallest eigenvalues of the transition matrix $P$. See the paper \cite{woodard2013convergence} for this bound.

\subsubsection{Preparation}
Suppose $P(\cdot,\cdot)$ in \eqref{eq:tran_mat} is the transition matrix introduced in Algorithm \ref{algo:Ag1} defined in the label assignment space $S_\alpha$, and $\check{P}(\cdot,\cdot)$ in \eqref{eq:tran_mat_gamma} is the transition matrix of $\{\Gamma_t\}_{t\geq 0}$ defined in the clustering space $\check{S}_{\alpha}=\{\Gamma(Z):Z\in S_\alpha\}$.
 The stationary distribution for $P$ and $\check{P}$ are denoted as $\tPi$ and $\check{\Pi}$ respectively. We require a good initializer, and use the following lemma to guarantee that all possible states visited by the algorithm remain in a good region with high probability.

\begin{lemma}\label{lm:stayingood}
	Suppose we start at a fixed initializer $Z_0$ with $\ell(Z_0,Z^*)\leq \gamma_0$ where $\gamma_0$ satisfies Condition \ref{con:gamma0}, \ref{con:gamma0xi_small}, or \ref{con:pqknowncon}. Then, the number of misclassified nodes in any polynomial running time can be upper bounded by 
	\begin{equation}\label{eq:stayingood}
		m = n\cdot\ell(Z,Z^*)\leq n\max\left\{\gamma_0,n^{-\tau}\right\} +\log^2n,
	\end{equation}
	with probability at least $1-\exp(-\log^2 n)$, where $\tau>0$ is a sufficiently small constant.
\end{lemma}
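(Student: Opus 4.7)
The plan is to exploit reversibility of the lazy chain with respect to its stationary distribution $\tPi \propto \Pi^\xi(\cdot\mid A)$, combined with the posterior-ratio estimates in Lemma~\ref{lm:postlargemistakes} and Lemma~\ref{lm:minpost}, to show that the bad set $B = \{Z\in S_\alpha : m(Z) > M\}$ with $M = n\max\{\gamma_0,n^{-\tau}\} + \log^2 n$ has stationary mass exponentially smaller than $\tPi(Z_0)$, so that the chain cannot visit $B$ within any polynomial horizon.

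First I would carry out the reversibility reduction. Because the lazy chain is stationary with respect to $\tPi$, stationarity gives $\tPi(Z) = \sum_{Z'} \tPi(Z')\Prob_{Z'}(Z_t = Z) \geq \tPi(Z_0)\Prob_{Z_0}(Z_t = Z)$, hence $\Prob_{Z_0}(Z_t = Z) \leq \tPi(Z)/\tPi(Z_0)$ for every $Z$ and every $t\geq 1$. Summing over $Z \in B$ and union-bounding over $t \leq T$, where $T = n^C$ is any fixed polynomial in $n$, yields
\[
\Prob_{Z_0}\bigl(\exists\, t\leq T:\, m(Z_t) > M\bigr) \;\leq\; T\cdot \frac{\tPi(B)}{\tPi(Z_0)}.
\]

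Second, I would bound $\tPi(B)/\tPi(Z_0) = \sum_{Z\in B}(\Pi(Z\mid A)/\Pi(Z_0\mid A))^\xi$ by factoring
\[
\frac{\Pi(Z\mid A)}{\Pi(Z_0\mid A)} \;=\; \frac{\Pi(Z\mid A)}{\Pi(Z^*\mid A)}\cdot \frac{\Pi(Z^*\mid A)}{\Pi(Z_0\mid A)}.
\]
For the first factor, I would apply Lemma~\ref{lm:postlargemistakes} on a polynomial grid of thresholds $\gamma_k$ (each application fails with probability $\exp(-n)$, so the grid union bound is still negligible), yielding the per-$m$ tail $\Pi(Z\mid A)/\Pi(Z^*\mid A) \leq \exp(-c\,m(Z)\,nI)$ for every $Z\in B$. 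For the second factor, Lemma~\ref{lm:minpost} combined with $\Pi(Z^*\mid A) \leq 1$ gives $\Pi(Z^*\mid A)/\Pi(Z_0\mid A) \leq \exp(C_3\,n^2 I\,\gamma_0)$. Combining with the combinatorial count $|\{Z : m(Z) = m\}| \leq \binom{n}{m} K^m$, and using that $\xi c\,nI \gg \log(enK)$ under the signal condition $nI\gtrsim \log n$, the sum over $B$ collapses to a geometric series dominated by its first term, giving
\[
\frac{\tPi(B)}{\tPi(Z_0)} \;\lesssim\; \exp\bigl(-\xi\,nI\,(c\,M - C_3\,n\gamma_0)\bigr).
\]

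Third, I would verify that the $\log^2 n$ excess built into $M$ furnishes the required slack. Since $M - n\gamma_0 \geq \log^2 n$, the exponent contains at least $\xi c\,nI\,\log^2 n \gtrsim \log^3 n$, which comfortably dominates both the polynomial factor $T = n^C$ and the target failure probability $\exp(-\log^2 n)$. The residual cross term $(c - C_3)\,n\gamma_0\,\xi nI$ is handled by choosing $\tau$ small enough to either absorb it into the $c M\,nI$ leading term in the regime $\gamma_0 \geq n^{-\tau}$, or to make the initialization penalty $C_3 n\gamma_0\,\xi nI$ negligible compared to $c\,\xi n^{1-\tau}\,nI$ in the regime $\gamma_0 < n^{-\tau}$.

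The main obstacle is the per-$m$ upgrade of Lemma~\ref{lm:postlargemistakes}: the stated bound $-C_1\gamma n^2 I$ is per-threshold rather than per-scale, while the combinatorial factor $\binom{n}{m} K^m$ at each level demands the sharper $\exp(-c\,m\,nI)$ tail to keep the series summable. This can be obtained either by a union bound over a dyadic grid of $\gamma$ values or by reinspecting the proof of Lemma~\ref{lm:postlargemistakes} directly. Once this sharper per-level tail is in hand, the rest is careful bookkeeping: the $\log^2 n$ cushion in $M$ is precisely what absorbs any constant gap between the bound in Lemma~\ref{lm:postlargemistakes} and the initialization lower bound in Lemma~\ref{lm:minpost}, uniformly across the two regimes for $\gamma_0$.
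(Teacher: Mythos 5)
Your approach---bound $P^t(Z_0,Z)\le\tPi(Z)/\tPi(Z_0)$ via stationarity and then show $\tPi(B)\ll\tPi(Z_0)$ for the bad set $B=\{m(Z)\ge M\}$---is a genuinely different argument from the paper's. The paper instead proves a \emph{local drift} bound (Lemma~\ref{lm:randomwalk_probratio}): at every state $Z$ in the annulus $\{n^{1-\tau}\lesssim m(Z)\lesssim M\}$, the probability ratio $p_m(Z)/q_m(Z)$ of moving up versus down is bounded by a constant $\eta<1$ with high probability, and then couples the mistake process $n\ell(Z_t,Z^*)$ to a one-dimensional reflected random walk with downward drift; the $\log^2 n$ cushion then forces $\ge\log^2 n$ consecutive ``uphill'' steps, whose probability is $\eta^{\log^2 n}$, and a union bound over $\mathrm{poly}(n)$ time steps finishes. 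Crucially, that argument never compares absolute posterior values at distant states.

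The gap in your proposal is the constant bookkeeping in the global comparison, and it is not repairable by the $\log^2 n$ cushion. Writing $M=m_0+\log^2 n$ with $m_0=n\gamma_0$, you need the exponent $cM-C_3 m_0 = c\log^2 n - (C_3-c)m_0$ to be positive, where $c$ is the constant in the max-posterior tail $\max_{Z: m(Z)=M}\log\frac{\Pi(Z|A)}{\Pi(Z^*|A)}\lesssim -cMnI$ and $C_3$ is the constant in Lemma~\ref{lm:minpost}. But these two constants are structurally different: the upper tail is obtained by a union bound over $\binom{n}{m}K^m$ states via $\mathbb{E}\sqrt{\Pi_0(Z|A)/\Pi_0(Z^*|A)}\le e^{-\bar n m I}$ (Lemma~\ref{lm:exp}), giving $c\approx 1$ for $K=2$; whereas the lower bound on $\Pi(Z_0|A)$ concentrates around the \emph{mean} of the log-ratio, which is $\approx -4 m_0 nI$ (see the estimate $-C'nmI$ in Section~5.8.1), so $C_3\ge 4$, and the explicit constant in the proof is larger still, roughly $4(\alpha+\beta)(1+1/K)$. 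Thus $(C_3-c)m_0 \gg c\log^2 n$ whenever $m_0\gg\log^2 n$, which is exactly the regime $\gamma_0$ of constant order where the lemma is needed. The cushion cannot save you because a state $Z\in B$ with $m(Z)\approx M$ is not a one-coordinate neighbour of $Z_0$ with $\log^2 n$ extra flips: its mistake set can be entirely disjoint from $Z_0$'s, so $\Pi(Z|A)/\Pi(Z_0|A)$ is a ratio of two globally different posterior values, and the worst-case lower bound on $\Pi(Z_0|A)$ is simply too weak to win against the worst-case upper bound on $\Pi(Z|A)$. The paper's local argument sidesteps this entirely because the chain changes only one coordinate per step, so the $\log^2 n$ cushion really does translate into $\log^2 n$ unlikely moves against the drift.

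A secondary issue you already flagged: Lemma~\ref{lm:postlargemistakes} gives a uniform bound $-C_1\gamma n^2 I$ over $\{m>\gamma n\}$ rather than a per-$m$ tail, so one would need to reprove a scale-by-scale version; but even granting the sharpest per-$m$ tail $-cmnI$, the constant mismatch above already blocks the argument.
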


The proof of Lemma \ref{lm:stayingood} is deferred to Section \ref{sec:stayingood}. Note that Lemma \ref{lm:stayingood} is stated conditioning on a fixed initial label assignment $Z_0$ with $\ell(Z_0,Z^*)\leq \gamma_0$. This is slightly different from the original initialization conditions where we use $Z_0$ dependent on data. A simple union bound will lead to the final conclusion. Lemma \ref{lm:stayingood} quantifies the maximum possible number of classification mistakes when starting at a good initializer. Here, $(\log n)^2$ is chosen for simplicity and can be replaced by any sequence $\nu_n \gg \log n$.

Let $\cG(\gamma_0)$ denote a good region with respect to the initial misclassification proportion, defined by
\begin{equation}\label{eq:defineGoodRegion}
    \cG(\gamma_0) = \{Z\in S_\alpha: m\leq n\max\{\gamma_0,n^{-\tau}\}+\log^2 n\},
\end{equation}
where $\tau$ is a sufficiently small constant. Accordingly, we can define a good region in the clustering space as
\[
    \check{\cG}(\gamma_0) = \{\Gamma(Z):Z\in \cG(\gamma_0)\},
\]
and Lemma \ref{lm:stayingood} ensures that for any $T$ that is a polynomial of $n$, $\{\Gamma_t\}_{0\leq t\leq T}$ stays inside $\check\cG(\gamma_0)$ with high probability. Sometimes we write $\cG(\gamma_0)$ and $\check{\cG}(\gamma_0)$ as $\cG$ and $\check{\cG}$ for simplicity. Then, we modify the distributions and transition matrices according to the regions $\cG$ and $\check\cG$. Denote the modified distributions as $\tPi_g(Z|A)\propto \Pi^\xi(Z|A)\indc{Z\in\cG}$ for all $Z\in S_\alpha$, and $\check\Pi_g(\Gamma|A)\propto \check\Pi(\Gamma|A)\indc{\Gamma\in\check\cG}$ for all $\Gamma\in \check S_\alpha$. Define in the label assignment space the new transition matrix $P_g(\cdot,\cdot)$ corresponding to $\tPi_g(\cdot |A)$, by replacing $\Pi^\xi(\cdot |A)$ with $\tPi_g(\cdot |A)$ in \eqref{eq:tran_mat}. Define in the clustering space the new transition matrix $\check P_g(\cdot,\cdot)$ corresponding to $\check \Pi_g(\cdot|A)$, by replacing $\Pi(\cdot|A)$ with $\Pi(\cdot|A)\indc{\cdot\in \check\cG}$ in \eqref{eq:tran_mat_gamma}.

With these notations, we proceed to bound the total variation error between the distribution of $\Gamma_T$ and $\check\Pi(\cdot|A)$ after $T$ steps for some $T$ that is a polynomial of $n$.

\begin{lemma}[TV difference]\label{lm:TV}
	\[\mathbb{E}{\TVdiff{\check\Pi_g(\cdot |A)}{\check\Pi(\cdot |A)}}\leq n\exp(-(1-o(1))\bar nI).\]
\end{lemma}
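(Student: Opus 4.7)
The key observation is that $\check\Pi_g(\cdot|A)$ is, by its very definition, nothing but the restriction of $\check\Pi(\cdot|A)$ to the subset $\check\cG \subseteq \check S_\alpha$, renormalized to be a probability distribution. So my plan is to first establish the general identity: for any distribution $\mu$ on a discrete space and any subset $G$ with $\mu(G) > 0$, the restriction $\mu_g(\cdot) \propto \mu(\cdot)\indc{\cdot \in G}$ satisfies
\[
\TVdiff{\mu_g}{\mu} = \mu(G^c).
\]
This follows from a one-line computation splitting $\tfrac{1}{2}\sum_\omega |\mu_g(\omega)-\mu(\omega)|$ into contributions from $G$ (each summand equals $\mu(\omega)\mu(G^c)/\mu(G)$) and from $G^c$ (each summand equals $\mu(\omega)$). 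Applying the identity with $\mu = \check\Pi(\cdot|A)$ and $G = \check\cG$ gives
\[
\TVdiff{\check\Pi_g(\cdot|A)}{\check\Pi(\cdot|A)} = \check\Pi(\check\cG^c|A).
\]

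Next, I would argue that $\check\cG^c$ is contained in the complement of the true clustering. Since $Z^*$ has zero misclassification, it trivially belongs to $\cG(\gamma_0)$, so $\Gamma(Z^*) \in \check\cG$, yielding the containment $\check\cG^c \subseteq \{\Gamma : \Gamma \neq \Gamma(Z^*)\}$. Unfolding the definition of $\check\Pi$ as the pushforward of $\tPi$ under $Z \mapsto \Gamma(Z)$, this gives
\[
\check\Pi(\check\cG^c|A) \;\leq\; \check\Pi(\Gamma \neq \Gamma(Z^*)|A) \;=\; \tPi(Z \not\in \Gamma(Z^*)|A).
\]

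Finally, I would take expectations and invoke the posterior strong consistency result (Theorem~\ref{thm:postcontract}), extended from $\Pi$ to the tempered posterior $\tPi \propto \Pi^\xi$ for $\xi \geq 1$. This extension is essentially bookkeeping: the numerator $\sum_{Z\not\in \Gamma(Z^*)}\Pi^\xi(Z|A) \leq \sum_{Z\not\in \Gamma(Z^*)}\Pi(Z|A)$ since $\Pi \leq 1$ and $\xi \geq 1$, while on the high-probability event of Theorem~\ref{thm:postcontract} one has $\Pi(\Gamma(Z^*)|A) \geq 1 - o(1)$, giving a lower bound on the denominator $\sum_Z \Pi^\xi(Z|A)$ depending only on $K$ and $\xi$ (both treated as fixed). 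The loss from this renormalization is absorbed into the $(1-o(1))$ factor in the exponent, so we conclude $\Expect[\tPi(Z\not\in \Gamma(Z^*)|A)] \leq n\exp(-(1-o(1))\bar nI)$, which is the claimed bound. The main (minor) technical obstacle is this last renormalization step for $\tPi$ versus $\Pi$; everything else is essentially a restatement of the definitions together with the TV-restriction identity.
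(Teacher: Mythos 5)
Your proposal is correct and follows essentially the same route as the paper: bound $\TVdiff{\check\Pi_g}{\check\Pi}$ by the mass that $\check\Pi$ places outside $\check\cG$, observe that $\check\cG^c$ is contained in $\{\Gamma\neq\Gamma(Z^*)\}$, and then invoke Theorem~\ref{thm:postcontract} together with the $\xi\geq 1$ tempering argument. The only cosmetic difference is that you derive the exact identity $\TVdiff{\mu_g}{\mu}=\mu(G^c)$ where the paper's Lemma~\ref{supp-lm:TV} settles for the looser bound $\leq 2\mu(G^c)$; both are absorbed by the $(1-o(1))$ in the exponent, and your elaboration of the renormalization step for $\tPi\propto\Pi^\xi$ spells out what the paper leaves implicit.
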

\begin{proof} We have
	\begin{align*}
	\mathbb{E}{\TVdiff{\check\Pi_g(\cdot |A)}{\check\Pi(\cdot |A)}}\leq 2\mathbb{E}\check\Pi(\Gamma\not\in\check \cG|A)\leq 2\mathbb{E}{\check\Pi(\Gamma\neq \Gamma(Z^*)|A)}
	\leq n\exp(-(1-o(1))\bar nI),
	\end{align*}
where the first inequality is due to Lemma \ref{supp-lm:TV}. The second inequality is due to the definition of $\check\cG$. The last inequality directly follows by Theorem \ref{thm:postcontract} or Theorem \ref{thm:mixingknown}, and the condition that $\xi\geq 1$.
\end{proof}
Thus, by triangle inequality, we can decompose the total variation bound at time $T$ as
\begin{equation}\label{eq:TV}
\begin{split}
    &\TVdiff{\check P^T(\Gamma_0,\cdot)}{\check\Pi(\cdot|A)}\\
    \leq &\TVdiff{\check P^T(\Gamma_0,\cdot)}{\check P_g^T(\Gamma_0,\cdot)}+\TVdiff{\check P_g^T(\Gamma_0,\cdot)}{\check \Pi_g(\cdot|A)}+\TVdiff{\check\Pi_g}{\check \Pi(\cdot |A)},
\end{split}
\end{equation}
where $T$ is the number of iterations. Lemma \ref{lm:stayingood} implies that the first term is 0 with high probability for $T\leq \text{poly}(n)$, since the algorithm stays in the region $\cG(\gamma_0)$. The third term can be upper bounded by Lemma \ref{lm:TV}. Therefore, the remaining proof is to adopt the canonical path approach to bound the second term in \eqref{eq:TV}.

For the purpose of the proof, we replace the transition matrix $\check P_g$ by its lazy version, which has a probability of 1/2 at staying at its current state, and another probability of 1/2 at updating the state. The same technique can be also found in \cite{yang2016computational,dabbs2009markov,berestycki2016mixing,montenegro2006mathematical}. It is worth noting that this technique is only for the proof.

\subsubsection{Canonical path}
Given an ergodic Markov chain $\cal C$ induced by the lazy transition matrix $\check P_g$ in the discrete state space $\check \cG$, we define a weighted directed graph $G(\mathcal{C}) = (V,E)$, where the vertex set $V=\check \cG$ and an edge between an ordered pair $(\Gamma,\Gamma')$ is included in $E$ with weight $Q(\Gamma,\Gamma') = \check\Pi_g(\Gamma)\check P_g(\Gamma,\Gamma')$ whenever $\check P_g(\Gamma,\Gamma')>0$. A canonical path ensemble $\cT$ is a collection of simple paths $\{\cT_{x,y}\}$ in the graph $G(\mathcal{C})$, one between each ordered pair $(x,y)$ of distinct vertices. As shown in \cite{sinclair1992improved}, for any choice of canonical path $\cT$, the spectral gap of the transition matrix $\check P_g$ can be lower bounded by
\begin{align*}
Gap(\check P_g)\geq \frac{1}{\rho(\cT)\ell(\cT)},
\end{align*}
where $\ell(\cT)$ is the length of the longest path in $\cT$, and $\rho(\cT)$ is the \textit{path congestion} parameter defined by $\rho(\cT) = \max_{(\Gamma,\Gamma')\in E(\mathcal{C})}\frac{1}{Q(\Gamma,\Gamma')}\sum_{\cT_{x,y}\ni (\Gamma,\Gamma')}\check \Pi_g(x)\check \Pi_g(y)$.

In order to apply the approach, we need to construct an appropriate canonical path ensemble $\cT$ in the discrete state space $\check\cG$. First, we construct a unique canonical path from any clustering $\Gamma$ to the underlying true clustering $\Gamma^*$, where $\Gamma^*=\Gamma(Z^*)$. Suppose for any label assignment $Z$, we define a function $g:S_\alpha\goto S_\alpha$ such that
\begin{equation}\label{eq:defGZ}
g(Z)= \begin{dcases}
\argmax_{Z'\in \cB(Z)\cap S_\alpha}\Pi(Z'|A), & ~\text{if~}Z\not \in \Gamma(Z^*),\\
Z, &~\text{if }Z\in \Gamma(Z^*),
\end{dcases}
\end{equation}
where
\begin{align*}
\cB(Z) &= \{Z': d(Z',Z^*) = d(Z,Z^*)-1, H(Z,Z')=1\}.
\end{align*}
We use $\cB(Z)\cap S_\alpha$ to denote the set of available states that have fewer mistakes than the current state $Z$. By Lemma \ref{lm:size}, $\cB(Z)\cap S_\alpha$ is always non-empty for $Z\not \in \Gamma(Z^*)$. Here, $g(Z)$ is the \textit{optimal} state in $\cB(Z)$ in the sense that $g(Z)$ maximizes the posterior distribution. Then, for any current state $\Gamma\in \check S_\alpha$, we define the next state $\check g(\Gamma)$ to be
\[
    \check g(\Gamma) = \{g(Z):Z\in \Gamma\}
\]
Since for any $Z\in \Gamma$, $g(Z)$ gives the equivalent result, and thus $\check g(\Gamma)\in \check S_\alpha$ is well defined. Hence, the canonical path from any current state $\Gamma\neq\Gamma(Z^*)$ is a greedy path, and the number of mistakes keeps decreasing along the canonical path.

Second, we construct a unique canonical path between any two states $\Gamma$ and $\wt \Gamma$, defined by $\cT_{\Gamma,\wt \Gamma} = \cT_{\Gamma,\Gamma^*}-\cT_{\wt \Gamma,\Gamma^*}$. The operations on simple paths are the same as defined in \cite{abbe2016community}. It is worth noting that the construction of the canonical path is data dependent, i.e., for different adjacency matrix $A$, the construction of the canonical path might be different.

Let $\Lambda(\Gamma) = \left\{\wt \Gamma\in \check\cG:\Gamma\in \cT_{\wt \Gamma,\Gamma^*}\right\}$ denote the set of all precedents states before $\Gamma$ along the canonical path. Let $\mathcal E = \left\{(\Gamma,\Gamma')\in E({\cal C}): \Gamma\in \Lambda(\Gamma') \right\}$ denote the ordered adjacent pairs along the canonical path. It follows that
\begin{align*}
\rho(\cT) &= \max_{(\Gamma,\Gamma')\in E(\mathcal{C})}\frac{1}{Q(\Gamma,\Gamma')}\sum_{\cT_{x,y}\ni (\Gamma,\Gamma')}\check\Pi_g(x)\check\Pi_g(y)\\
&\leq \max_{(\Gamma,\Gamma')\in \mathcal{E}} \frac{1}{Q(\Gamma,\Gamma')} \sum_{x\in \Lambda(\Gamma),y\in \check\cG}\check\Pi_g(x)\check\Pi_g(y)\\
&= \max_{(\Gamma,\Gamma')\in \mathcal{E}} \frac{\check\Pi_g(\Lambda(\Gamma))}{Q(\Gamma,\Gamma')} = \max_{\Gamma\in \check\cG}\frac{\check\Pi_g(\Lambda(\Gamma))}{Q(\Gamma,\check g(\Gamma))},
\end{align*}
where we simply take maximum only over all states in the discrete space $\check\cG$. By the definition of Algorithm \ref{algo:Ag1} and the lazy transition matrix, $Q(\Gamma,\Gamma')$ can be expressed as
\begin{align*}
Q(\Gamma,\check g(\Gamma)) = \check\Pi_g(\Gamma)\check P_g(\Gamma,\check g(\Gamma)) = \frac{1}{2(K-1)n}\min \left\{\check\Pi_g(\Gamma),\check\Pi_g(\check g(\Gamma))\right\}.
\end{align*}
It leads to the bound for the \textit{congestion parameter} as
\begin{align*}
\rho(\cT)&\leq 2(K-1)n \max_{\Gamma\in \check\cG}\frac{\check\Pi_g(\Lambda(\Gamma))}{\min\left\{\check\Pi_g(\Gamma),\check\Pi_g(\check g(\Gamma))\right\}}\\
& = 2(K-1)n \max_{\Gamma\in \check\cG}\frac{\check\Pi(\Lambda(\Gamma)|A)}{\min\left\{\check\Pi(\Gamma|A),\check\Pi(\check g(\Gamma)|A)\right\}}\\
& = 2(K-1)n \max_{\Gamma\in \check\cG}\left\{\frac{\check\Pi(\Lambda(\Gamma)|A)}{\check\Pi(\Gamma|A)}{\cdot\max\left\{1,\frac{\check\Pi(\Gamma|A)}{\check\Pi(\check g(\Gamma)|A)}\right\}}\right\},
\end{align*}
where $\check\Pi(\Gamma|A)=\sum_{Z\in \Gamma}\tPi(Z|A)$ for all $\Gamma\in \check S_\alpha$.

\begin{lemma}\label{lm:PathIncrease}
	Recall that $g(Z)$ is the next optimal state of $Z$ defined in \eqref{eq:defGZ}, and $\cG(\gamma_0)$ is defined in \eqref{eq:defineGoodRegion}. Suppose $Z_0$ is given with $\ell(Z_0,Z^*)\leq \gamma_0$ where $\gamma_0$ satisfies Condition \ref{con:gamma0}, \ref{con:gamma0xi_small}, or \ref{con:pqknowncon}. Suppose $\xi$ satisfies Condition \ref{con:xi}, \ref{con:gamma0xi_small} or \ref{con:pqknowncon}. Then, we have
	\begin{align*}
		\max_{Z\in \cG(\gamma_0)}\frac{\tPi(Z|A)}{\tPi(g(Z)|A)}\leq \exp(-C\bar nI)
	\end{align*}
	for some constant $C>1-\eps_0$ with probability at least $1-C_1n^{-C_2}$.
\end{lemma}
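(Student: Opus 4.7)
The plan is to reduce the lemma to showing that for every $Z\in \cG(\gamma_0)$ with $Z\notin \Gamma(Z^*)$ one can exhibit an explicit neighbor $\tilde Z\in \cB(Z)\cap S_\alpha$ satisfying $\log[\Pi(\tilde Z|A)/\Pi(Z|A)]\geq (1-o(1))\bar nI$. Because $g(Z)$ is the $\Pi(\cdot|A)$-maximizer on $\cB(Z)\cap S_\alpha$, one has $\Pi(g(Z)|A)\geq \Pi(\tilde Z|A)$, so
\begin{equation*}
\frac{\tPi(Z|A)}{\tPi(g(Z)|A)} = \pth{\frac{\Pi(Z|A)}{\Pi(g(Z)|A)}}^\xi \leq \pth{\frac{\Pi(Z|A)}{\Pi(\tilde Z|A)}}^\xi \leq \exp\bp{-(1-o(1))\xi\bar nI},
\end{equation*}
and since $\xi\geq 1$ with the slack provided by Conditions \ref{con:xi}, \ref{con:gamma0xi_small}, and \ref{con:pqknowncon}, the exponent beats $-(1-\eps_0)\bar nI$, which yields $C>1-\eps_0$.

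The candidate $\tilde Z$ is produced by fixing an optimal permutation $\sigma$ aligning $Z$ with $Z^*$, picking a mislabeled node $i$ with $\sigma(Z_i)\neq Z^*_i$, and setting $\tilde Z_i = \sigma^{-1}(Z^*_i)$ with all other coordinates unchanged. Existence of a choice of $i$ keeping $\tilde Z\in S_\alpha$ follows from the slack $\alpha-\beta>0$ combined with the good-region bound $m(Z)\leq n\max\{\gamma_0,n^{-\tau}\}+\log^2 n$ supplied by Lemma \ref{lm:stayingood}: some target community must still lie below its upper cap $\alpha n/K$, and steering the correction into that community keeps $\tilde Z$ feasible. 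This is the combinatorial content of Lemma \ref{lm:size} applied inside $\cG(\gamma_0)$.

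Next I would quantify the log-posterior gain. Lemma \ref{lm:postlargemistakes} lets me replace the Beta-integrated $\Pi$ by the known-parameter $\Pi_0$ with an error $O(\gamma m(Z) n I)=o(\bar nI)$ on $\cG(\gamma_0)$ under the stated conditions. Formula \eqref{eq:postknown} then reduces the $\Pi_0$-ratio for a one-coordinate flip to
\begin{equation*}
\log\frac{\Pi_0(\tilde Z|A)}{\Pi_0(Z|A)} = \lambda\sum_{j\neq i}A_{ij}\bp{\indc{Z_j=b}-\indc{Z_j=a}} - \mu\bp{n_b(Z)-n_a(Z)+1},
\end{equation*}
with $\lambda=\log\tfrac{p(1-q)}{q(1-p)}$, $\mu=\log\tfrac{1-q}{1-p}$, $a=Z_i$, and $b=\tilde Z_i$ the true label of $i$. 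Under the true SBM the edges $A_{ij}$ are independent Bernoullis of mean $p$ or $q$ according to whether $Z^*_j=b$, so the expectation equals $(1+o(1))\bar nI$ at leading order by the Rényi/KL identity underpinning Lemma \ref{lm:exp}. A Bernstein/Chernoff-type tail bound then controls deviations at the $o(\bar nI)$ scale with failure probability $\exp(-c\bar nI)$ for each fixed $(Z,\tilde Z)$, and a union bound over $Z\in \cG(\gamma_0)$ is affordable because $\log|\cG(\gamma_0)|\leq m^*\log(nK/m^*)$ is dominated by $\bar nI$ in each of the three regimes of the hypothesis.

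The main obstacle, in my view, will be pinning down the leading constant: to push $C$ strictly above $1-\eps_0$ one has to track the exact coefficient of $\bar nI$ both in the expectation of the display above and in the deterministic $\Pi\to\Pi_0$ correction, and the answer depends on the particular mislabeled node chosen. For $K\geq 3$ the balanced-community slack makes this automatic once $\gamma_0=o(1)$, and $\bar n=n/K\beta$ is captured cleanly; for $K=2$ the more delicate $(1-K\gamma_0)^{-4}$ factor in Condition \ref{con:xi} is precisely what guarantees the required slack when $Z$ sits near the boundary of the feasibility set, and it is there that the argument has to be pushed hardest. A secondary subtlety is handling the possibility that no single mislabeled coordinate carries the full $\bar nI$ signal, which is dealt with by exploiting the freedom to choose $i$ adversarially in favor of the dominant community contribution.
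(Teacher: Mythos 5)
The central step of your argument fails at the union bound. You exhibit, for each $Z$, a single deterministic candidate $\tilde Z\in\cB(Z)\cap S_\alpha$ and control $\Delta_n(Z,\tilde Z)$ by a Chernoff bound with tail probability roughly $\exp\bigl(-(1-\eps/2)\bar nI\bigr)$, then claim a union bound over $\cG(\gamma_0)$ is ``affordable because $\log|\cG(\gamma_0)|\leq m^*\log(nK/m^*)$ is dominated by $\bar nI$.'' This last assertion is false: with $m^*=\max\{n\gamma_0,n^{-\tau}\cdot n\}+\log^2 n\geq n^{1-\tau}$ and $\bar nI\asymp\log n$, one has $m^*\log(nK/m^*)\gtrsim n^{1-\tau}\log n\gg\bar nI$. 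Concretely, the number of $Z$ with $m(Z)=m$ is of order $\binom{n}{m}(K-1)^m$, so for $m$ even moderately large the per-pair failure probability $\exp\bigl(-(1-\eps/2)\bar nI\bigr)\asymp n^{-c}$ is swamped. The paper sidesteps this by bounding $\min_{Z'\in\cB(Z)}\Delta_n(Z,Z')$ through the \emph{average} $\tfrac{1}{m}\sum_{Z'\in\cB(Z)}\Delta_n(Z,Z')$, then decomposing $\sum_{Z'\in\cB(Z)}\Delta_n(Z,Z')=\sum_k(A_k+B_k)$ where the $A_k$ are independent and each contributes an MGF factor $\exp(-(1-o(1))\bar nI)$. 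This produces a failure probability $\exp\bigl(-(1-\eps/2)m\bar nI(1-o(1))\bigr)$ that decays \emph{geometrically in $m$}, which is exactly what is needed to absorb the $\binom{n}{m}(K-1)^m$ union bound term by term. Your single-candidate scheme cannot produce that extra factor of $m$ in the exponent, so the argument breaks down for all but $m=O(1)$.

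There are additional, secondary issues. First, you invoke \prettyref{lmm}{lm:postlargemistakes} to transfer from $\Pi$ to $\Pi_0$, but that lemma controls $\log\frac{\Pi(Z|A)}{\Pi(Z^*|A)}-\log\frac{\Pi_0(Z|A)}{\Pi_0(Z^*|A)}$ up to $O(\gamma m nI)$; applying it twice (once for $Z$ and once for $\tilde Z$) still leaves an error of order $\gamma m nI$, which need not be $o(\bar nI)$ for $m$ near the top of $\cG(\gamma_0)$. The paper instead uses the tighter one-step comparison of \prettyref{lmm}{lm:likemodePathdiff}, which bounds the per-flip discrepancy between $\log\Pi$ and the likelihood modularity by a vanishing quantity $\eps_{LM}$. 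Second, you do not engage with the large-mistake region $m>\gamma n$ at all; there the paper does not use a Chernoff/R\'enyi bound but a deterministic drift estimate plus concentration of $X(Z)-X(Z')$, and this is where the delicate dependence on $\alpha$, $\beta$, $K$, and $\gamma_0$ in Conditions~\ref{con:xi} and~\ref{con:pqknowncon} actually comes from (especially steps 1--3 of Lemma~\ref{lm:unknownPincrease} in the unknown-$B$, $K=2$ case). Third, the claim that the expectation of the one-flip log-posterior gain ``equals $(1+o(1))\bar nI$'' is imprecise: the expectation is approximately $n_a\,D(q\|p)+n_b\,D(p\|q)$, which exceeds $\bar nI$ in general; what scales as $\bar nI$ is the exponent in the Chernoff bound (via the R\'enyi-$\tfrac12$ divergence, as in Lemma~\ref{lm:exp}), not the mean.
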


\ignore{Recall that $\cG$ is defined based on the initial state $Z_0$ of the algorithm with initial mistake number $m_0$, and $m_0 =o(n)$ implies $d(Z,Z^*)=o(n)$ for all $Z\in\cG(\gamma_0)$. Lemma \ref{lm:PathIncrease} says that the probability dramatically increases along the canonical path provided that all assumptions hold. With Lemma \ref{lm:PathIncrease}, we are able to bound the \textit{congestion parameter} $\rho(\cT)$.}
The proof of Lemma \ref{lm:PathIncrease} is deferred to Section \ref{sec:pqknown} and Section \ref{sec:pqunknown}. By Lemma \ref{lm:PathIncrease} and by permutation symmetry, we have
\begin{align*}
    \max_{\Gamma\in \check \cG}\frac{\check\Pi(\Gamma|A)}{\check\Pi(\check g(\Gamma)|A)} = \max_{Z\in \cG}\frac{\tPi(Z|A)}{\tPi(g(Z)|A)}\leq \exp(-C\bar nI) \leq \exp(-C\bar nI),
\end{align*}
for some constant $C>1-\eps_0$ with high probability. Denote $\wt m = n\max\{\gamma_0, n^{-\tau}\}+\log^2 n$ for simplicity, and it follows that
\begin{align*}
\max_{\Gamma\in \check \cG}\frac{\check\Pi(\Lambda(\Gamma)|A)}{\check\Pi(\Gamma|A)}\leq& 1+ \max_{m\leq\wt m}\sum_{l=1}^{\wt m-m}{n-m\choose l}(K-1)^l \exp(-Cl\bar nI)\\
 \leq& 1 + C'n\exp(-C\bar nI),
\end{align*}
for some constants $C'$ and $C>1-\eps_0$. Then, we have
\begin{align*}
\rho(\cT) & \leq 2(K-1)n \max_{\Gamma\in \check \cG}\left\{\frac{\check \Pi(\Lambda(\Gamma)|A)}{\check \Pi(\Gamma|A)}{\cdot\max\left\{1,\frac{\check \Pi(\Gamma|A)}{\check \Pi(g(\Gamma)|A)}\right\}}\right\}\\
 &\leq 2(K-1)n(1+C'n\exp(-C\bar nI)).
\end{align*}
Furthermore, since the canonical path is defined within $\check\cG$, we can upper bound the length of the longest path by
\begin{align*}
\ell(\cT)\leq 2n\max\{\gamma_0,n^{-\tau}\}+2\log^2 n.
\end{align*}

Recall that $\Gamma_0=\Gamma(Z_0)$. By Lemma \ref{lm:stayingood} and Lemma \ref{lm:TV}, together with \eqref{eq:boundmixingfomula} and the strong consistency property of $\check \Pi_g(\cdot|A)$, we have that for any constant $\eps\in(0,1)$,
\begin{equation}\label{eq:boundTVsmall}
\TVdiff{\check P^T(\Gamma_0,\cdot )}{\check \Pi(\cdot|A)}\leq \eps,
\end{equation}
holds for any
\begin{equation}\label{eq:timethresh}
    T \geq 4(K-1)n^2\max\{\gamma_0,n^{-\tau}\}\bp{-\xi \log \Pi(Z_0|A)+\log \eps^{-1}}(1+o(1)),
\end{equation}
for large $n$ with probability at least $1-C_3n^{-C_4}$ for some constants $C_3,C_4$, where $\Pi^\xi(Z_0|A)\leq \check\Pi_g(\Gamma_0|A)$ always holds. Finally, if $\PP{\ell(Z_0,Z^*)\leq \gamma_0}\geq 1-\eta$, then the conclusions of Theorem \ref{thm:mixing} and Theorem \ref{thm:mixingknown} can be obtained by a simple union bound argument.

\subsubsection{Coupling}

We require $T$ to be at most a polynomial of $n$ so that Lemma \ref{lm:stayingood} holds. Thus, the previous total variation bound \eqref{eq:boundTVsmall} holds only for $T\leq \text{poly}(n)$. In order to bound the mixing time defined in \eqref{eq:mixingFormulaEps}, we further use coupling approach to show the total variation bound holds for any $t\geq T$.

We call a probability measure $w$ over $\Omega\times\Omega$ is a coupling of $(u,v)$ if its two marginals are $u$ and $v$ respectively. Before the proof, we first state the following lemma to relate the total variation to the coupling.
\begin{lemma}[Proposition 4.7 in \cite{levin2017markov}]\label{lm:coupling}
     For any coupling $w$ of $(u,v)$, if the random variables $(X,Y)$ is distributed according to $w$, then we have
     \[
        \TVdiff{u}{v}\leq \PP{X\neq Y}.
     \]
\end{lemma}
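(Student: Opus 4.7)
The plan is to use the dual characterization of total variation distance as a supremum over events. Recall that for probability measures $u,v$ on a common space $\Omega$, one has $\TVdiff{u}{v} = \sup_{A} |u(A) - v(A)|$, where the supremum ranges over measurable subsets $A \subseteq \Omega$. Given a coupling $w$ of $(u,v)$ with $(X,Y) \sim w$, the marginal condition gives $u(A) = \PP{X \in A}$ and $v(A) = \PP{Y \in A}$ for every event $A$.

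The key step is then a simple decomposition. For any fixed $A$, I would split according to the event $\{X = Y\}$: write $\PP{X \in A} = \PP{X \in A,\, X = Y} + \PP{X \in A,\, X \neq Y}$ and similarly for $\PP{Y \in A}$. On the event $\{X = Y\}$ the indicators $\indc{X \in A}$ and $\indc{Y \in A}$ coincide, so those contributions cancel in the difference, leaving
\[
u(A) - v(A) = \PP{X \in A,\, X \neq Y} - \PP{Y \in A,\, X \neq Y}.
\]
Each of these two nonnegative terms is at most $\PP{X \neq Y}$, so $|u(A) - v(A)| \leq \PP{X \neq Y}$. Taking the supremum over $A$ yields the claim.

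There is no real obstacle here — the argument is elementary and only uses the definition of a coupling plus the supremum formula for total variation. If one prefers the $\ell_1$ definition $\TVdiff{u}{v} = \tfrac{1}{2}\sum_x |u(x) - v(x)|$ instead, the same decomposition still works by selecting $A = \{x : u(x) > v(x)\}$ to realize the supremum.
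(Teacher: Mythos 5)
Your proof is correct. The paper does not supply its own argument for this lemma — it is cited directly from Levin, Peres, and Wilmer — and the standard textbook proof there is exactly the decomposition you give: write $u(A) - v(A) = \PP{X\in A} - \PP{Y\in A}$, cancel the contribution from $\{X = Y\}$, and bound the remainder by $\PP{X\neq Y}$ before taking the supremum over $A$. Your note at the end about choosing $A = \{x : u(x) > v(x)\}$ to realize the supremum in the $\ell_1$ form is also the standard observation. Nothing is missing.
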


Back to our problem, in order to upper bound $\TVdiff{\check P^t(\Gamma_0,\cdot )}{\check \Pi(\cdot|A)}$ for any $t\geq T$, we first create a coupling of these two distributions as follows. Consider two copies of the Markov chain $X_t$ and $Y_t$ both with the transition matrix $\check P$:
\begin{itemize}
    \item Let $X_0=\Gamma_0$, and $Y_0\sim \check \Pi(\cdot|A)$.
    \item If $X_t\neq Y_t$, then sample $X_{t+1}$ and $Y_{t+1}$ independently according to $\check P(X_t,\cdot)$ and $\check P(Y_t,\cdot)$ respectively.
    \item If $X_t=Y_t$, then sample $X_{t+1}\sim \check P(X_t,\cdot)$ and set $Y_{t+1}=X_{t+1}$.
\end{itemize}
Thus, it is obviously that for any $t\geq 1$, $Y_t\sim \check \Pi(\cdot|A)$, and $X_t\sim \check P^t(\Gamma_0,\cdot)$. Set $T=4Kn^2\max\{\gamma_0,n^{-\tau}\}\bp{-\xi \log \check\Pi(Z_0|A)+\log \eps^{-1}}(1+o(1))$ defined in \eqref{eq:timethresh}. By Lemma \ref{lm:coupling} and \eqref{eq:timethresh}, we have for any $t\geq T$,
\begin{align*}
    \TVdiff{\check P^t(\Gamma_0,\cdot )}{\check \Pi(\cdot|A)}&\leq \PP{X_t\neq Y_t}\leq \PP{X_T\neq Y_T}\\
    &= 1-\PP{X_T=Y_T}\\
    &\leq 1-\PP{X_T=Y_T=\Gamma^*}\\
    &\leq 2-\PP{X_T=\Gamma^*} - \PP{Y_T=\Gamma^*}.
\end{align*}
By \eqref{eq:boundTVsmall}, we have
\[
    \TVdiff{\check P^T(\Gamma_0,\cdot )}{\check \Pi(\cdot|A)}=\max_{S}\abs{\check P^T(\Gamma_0,S)-\check \Pi(S|A)}\leq \eps
\]
with high probability. Together with the strong consistency result, it yields
\begin{align*}
    \TVdiff{\check P^t(\Gamma_0,\cdot )}{\check \Pi(\cdot|A)}&\leq 2-\PP{X_T=\Gamma^*} - \PP{Y_T=\Gamma^*}\\
    &\leq 1- \bp{\check\Pi(\Gamma^*|A)-\eps}-\check \Pi(\Gamma^*|A) \\
    &\leq \eps(1+o(1)),
\end{align*}
with probability at least $1-Cn^{-C'}$ for some constants $C,C'$. Here, the high probability statement is with respect to the data generation process, i.e., adjacency matrix $A$.

To combine, we reach the result that for any constant $\eps\in(0,1)$,
\[
    \tau_\eps(Z_0)\leq 4Kn^2 \max\left\{\gamma_0,n^{-\tau}\right\}\cdot \left(\xi \log \Pi(Z_0|A)^{-1}+\log (\eps^{-1})\right),
\]
with high probability where $\tau_\eps(Z_0)$ is defined in \eqref{eq:mixingFormulaEps}.

\subsection{Proof of Lemma \ref{lm:stayingood}}
\label{sec:stayingood}
For any state $Z\in S_\alpha$, we define
\begin{equation}\label{eq:defABN}
    \begin{split}
        &\cN(Z) =\left\{{Z': H(Z',Z)=  1}\right\},\\
        &\cA(Z) = \left\{{Z'\in \cN(Z): d(Z',Z^*) = d(Z,Z^*)+1}\right\},\\
        &\cB(Z) = \left\{{Z'\in \cN(Z): d(Z',Z^*) = d(Z,Z^*)-1}\right\},
    \end{split}
\end{equation}
where $\cN(Z)$ denotes the neighborhood states of $Z$ with only one sample classified differently, and $\cA(Z)$ (resp. $\cB(Z)$) denotes the set of states with more mistakes (resp. fewer mistakes) in the neighbor. We further define 
\begin{equation}\label{eq:defpmqm}
\begin{split}
    p_m(Z) = P(Z,\cA(Z)) = \frac{1}{2(K-1)n}\sum_{Z'\in \cA(Z)\cap S_\alpha} \min \left\{{1,\frac{\tPi_g(Z'|A)}{\tPi_g(Z|A)}}\right\},\\
q_m(Z) = P(Z,\cB(Z)) = \frac{1}{2(K-1)n}\sum_{Z'\in \cB(Z)\cap S_\alpha} \min \left\{{1,\frac{\tPi_g(Z'|A)}{\tPi_g(Z|A)}}\right\},
\end{split}
\end{equation}
where $p_m(Z),q_m(Z)$ are the probabilities of $Z$ jumping to states with the number of mistakes equal to $m+1$, $m-1$ respectively.
We have the following lemma to bound the ratio of $p_m(Z)$ and $q_m(Z)$ for any $Z\in \cG$. Recall that $\cG = \cG(\gamma_0)$ is defined in \eqref{eq:defineGoodRegion}.

\begin{lemma}\label{lm:randomwalk_probratio}
Suppose $\gamma_0$ satisfies Condition \ref{con:gamma0}, \ref{con:gamma0xi_small}, or \ref{con:pqknowncon}. Let $\tau$ be any sufficiently small constant $\tau$, and denote $\cG^*=\{Z:n^{-\tau}\leq \ell(Z,Z^*)\leq \max\{\gamma_0,n^{-\tau}\}+(\log n)^2/n\}$. Then, we have
	\begin{align*}
\Prob\left\{\max_{Z\in \cG^*}\frac{p_m(Z)}{q_m(Z)}\geq{\eta} \right\}\leq
\exp\bp{-n^{1-\tau}}
	\end{align*}
	for some small constant $4\tau<\eta<1$. The probability is with respect to the data-generating process, i.e., the adjacency matrix $A$.
\end{lemma}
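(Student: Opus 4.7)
Fix $Z\in\cG^*$ with $m$ misclassified nodes. By \eqref{eq:defABN}, $|\cA(Z)|=(n-m)(K-1)$ (each correctly-labeled node has $K-1$ bad flips) and $|\cB(Z)\cap S_\alpha|\gtrsim m$ (each misclassified node has one fix, with at most a small fraction excluded by the $S_\alpha$ constraint). Using $\min\{1,r\}\leq r$ on the bad moves and retaining in the good-move sum only those with $r\geq 1$ (so $\min\{1,r\}=1$), I get
$p_m(Z) \leq \tfrac{1}{2(K-1)n}\sum_{Z'\in\cA(Z)\cap S_\alpha}\tPi(Z'|A)/\tPi(Z|A)$ and $q_m(Z) \geq N_{\mathrm{acc}}(Z)/(2(K-1)n)$, where $N_{\mathrm{acc}}(Z)$ is the number of fix-moves with acceptance ratio at least one. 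This reduces the problem to (i) a uniform upper bound on each individual bad-move ratio and (ii) a lower bound $N_{\mathrm{acc}}(Z) \gtrsim m$.

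\textbf{Mean and concentration of single flips.} A flip of node $i$ from $a$ to $b$ changes $\log\tPi(Z|A)$ by a linear functional of the edges $\{A_{ij}\}_{j\neq i}$, with coefficients $\xi\log(B_{bc}/B_{ac})$ and $\xi\log((1-B_{bc})/(1-B_{ac}))$ indexed by $c\in[K]$; under the Beta prior these are replaced by log-Beta differences that reduce to the same expressions modulo a Stirling remainder of lower order. Under the true $(Z^*, B)$, these sums of independent Bernoullis have mean $\Expect[\log r] \asymp \mp c_0\,\xi\,(1-K\gamma_0)^2\, \bar n I$ (negative for bad moves, positive for good moves), for a constant $c_0 = c_0(K,\alpha,\beta) > 0$. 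By the signal condition $\bar nI\geq (1-\eps_0)\log n$ together with Conditions \ref{con:xi} or \ref{con:pqknowncon} (which force $\xi$ sufficiently large), this mean exceeds $(4\tau+\eta_0)\log n$ in absolute value for some $\eta_0>0$. A Chernoff bound on the Bernoulli sum then gives, for each individual move, posterior ratio $\leq n^{-c_1}$ (bad) or $\geq 1$ (good) with probability at least $1-n^{-c_2}$, where $c_1 > 4\tau$ and $c_2$ can be made arbitrarily large by tuning $\xi$.

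\textbf{Assembly and the main obstacle.} Combining the two pieces yields $p_m(Z)/q_m(Z) \leq 2(n/m)\cdot n^{-c_1} \leq 2n^{\tau-c_1} \leq \eta$ on the good event, using $m\geq n^{1-\tau}$ for $Z\in\cG^*$. The essential technical obstacle is making the good event hold \emph{uniformly} over $\cG^*$ with the claimed probability $\exp(-n^{1-\tau})$: since $|\cG^*|\leq \exp(O((n^{1-\tau}+\gamma_0 n)\log n))$, a naive union bound is too weak. I plan to close this in two steps: (a) observe that each single-flip log-ratio depends on $Z$ only through the community sizes $(n_k(Z))_{k=1}^{K}$ and the per-community edge counts $\{e_{ik}(Z)\}_{k=1}^{K}$ at the flipped node $i$, both of which take at most $n^{O(K)}$ joint values, so a union bound over these sufficient statistics costs only a polynomial factor; and (b) for the good-move lower bound, apply Chernoff to the sum of the $m$ fix-acceptance indicators, which are weakly dependent since each pair shares at most the single edge $A_{ii'}$, to obtain $N_{\mathrm{acc}}(Z)\geq m/2$ with probability $1-\exp(-c_3 m)\leq 1-\exp(-c_3 n^{1-\tau})$. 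Together these two reductions yield the desired $\exp(-n^{1-\tau})$ tail uniformly over $\cG^*$.
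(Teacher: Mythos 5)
Your proposal correctly identifies the central obstacle---uniformity over $\cG^*$---but neither of your two proposed fixes actually closes it, and the key mechanism the paper uses is missing from your plan.

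On fix (a): the max is taken over label assignments $Z\in\cG^*$, not over values of summary statistics. For two different $Z$ with the same $(n_k(Z),e_{ik}(Z))$ the events ``flip $i$ has bad acceptance ratio'' are \emph{different} events about $A$ (different subsets of edges enter each $e_{ik}$), so knowing the statistics take $n^{O(K)}$ joint values does not reduce the cardinality of the union. To turn this into a valid covering argument you would need to prove a uniform concentration bound for $\max_{Z\in\cG^*}|e_{ik}(Z)-\Expect e_{ik}(Z)|$ over all $i,k$, which is precisely the union-over-$Z$ you are trying to avoid. (The claim also fails in the unknown-$B$ case, where the log-Beta ratio in \eqref{eq:postunknown} depends on the global counts $O_{ab}(Z)$, $n_{ab}(Z)$, not merely on the local counts at the flipped node.) On fix (b): even granting $N_{\mathrm{acc}}(Z)\geq m/2$ with per-$Z$ failure probability $\exp(-c_3 m)$, the number of states in $\cG^*$ at Hamming distance $m$ is $\binom{n}{m}(K-1)^m\gtrsim\exp(c'\tau m\log n)$ for $m\asymp n^{1-\tau}$, and $\exp(c'\tau m\log n-c_3 m)\to\infty$. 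So a per-$Z$ tail of order $\exp(-cm)$ is not enough; one needs $\exp(-cm\log n)$.

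The missing idea is the aggregation used in Lemma~\ref{lm:randomWalkLemma} (and its proof via Lemmas~\ref{lm:rw_case1}, \ref{lm:rw_case2}). Rather than controlling individual flip ratios, one sums the log-posterior ratios over \emph{any} subset $S'\subset\cN(Z)\cap S_\alpha$ of size $|S'|\geq\eta|\cB(Z)\cap S_\alpha|\asymp\eta m$. That aggregated quantity is a sum of order $|S'|\bar n$ nearly independent Bernoullis, so a moment-generating-function (Chernoff) bound produces a tail $\exp(-C|S'|\bar nI)\lesssim\exp(-C\eta m\log n)$, which dominates both the union over $Z$ (cost $\exp(O(\tau m\log n))$) and the union over $S'$ (cost $\exp(O(\eta m\log n))$) as long as $\eta>2\tau$, yielding the overall bound $\leq e^{-m^*}=e^{-n^{1-\tau}}$. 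The combinatorial reading is that among any $\eta m$ candidate flips at most $\eta m-1$ are ``mistakes'' (good-but-rejected or bad-but-accepted), which immediately gives $p_m(Z)\leq(\eta|\cB(Z)\cap S_\alpha|+o(1))/(2(K-1)n)$ and $q_m(Z)\geq(1-\eta)|\cB(Z)\cap S_\alpha|/(2(K-1)n)$, hence $p_m(Z)/q_m(Z)\lesssim\eta$. Your decomposition into ``per-move bad-ratio bound plus $N_{\mathrm{acc}}\gtrsim m$'' produces tails that are too weak by a $\log n$ factor in the exponent, which is exactly the factor the aggregation step supplies.
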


The proof of Lemma \ref{lm:randomwalk_probratio} is deferred to Section \ref{sec:randomwalk_probratio}. We take the $\tau$ in Lemma \ref{lm:randomwalk_probratio} to be the same as $\tau$ defined in $\cG$. In order to show that the Markov chain will stay in $\cG$ with high probability, we transform the original problem into an one dimensional random walk problem. Lemma \ref{lm:randomwalk_probratio} shows that the probability ratio of the one dimensional random walk on the region $\cG^*$ can be bounded with high probability. All the following analysis is conditioning on the adjacency matrix $A$ such that the event 
\begin{align*}
\mathcal E(A) = \bb{\max_{Z\in \cG^*}\frac{p_m(Z)}{q_m(Z)}\leq{\eta}}
\end{align*}
happens. We construct the following three types of Markov chains in order to prove Lemma \ref{lm:stayingood}.

\begin{description}[leftmargin = 0cm,labelsep = 0.5cm]

\item[Type \rom{1} MarKov chain.]

Consider a particle starting at the initial position $u$ on the $x$-axis where $0<u<b$ at time $t=0$, and it moves one unit to the left, to the right, or stay at the current position at time $t=1,2,\ldots$ with probability $q_t$, $p_t$, or $1-q_t-p_t$, where $\eta>p_t/q_t$ for all time $t$. It stops once it reaches the left or the right boundary, and we are interested in the probability of its stopping at the boundary $b$ or the boundary $0$.

Suppose the position of the particle at time $t$ is $X_t$, and $X_{t+1}=  X_t +\xi_t$, where $\xi_t$ follows the distribution
\begin{align*}
\Prob\bb{\xi_t=1} = p_t,~~\Prob\bb{\xi_t=-1} = q_t,~~\Prob\bb{\xi_t = 0} = 1-p_t-q_t.
\end{align*}
We define $Y_t = \exp((b-X_t)\log \eta)$. It is easy to verify that the stochastic process $Y_t$ is a super-martingale, due to the fact that
\begin{align*}
\Expect\bp{Y_{t+1}|Y_t} &= \exp((b-X_t)\log \eta) \cdot \Expect\bp{\exp(-\xi_t\log \eta)}\\
&=\exp((b-X_t)\log \eta)\cdot \bp{1-p_t-q_t+p_t/\eta+q_t\cdot \eta}\\
&\leq Y_t.
\end{align*}

Let $\tau = \min\{t\geq 1: X_t = b \text{~or~} X_t=0\}$, and $\tau$ is \textit{stopping time} of this random walk. It is evident that $\abs{Y_{t\wedge \tau}}\leq 1$ since $X_\tau<b$ for all time $t$. By Doob's optional stopping time theorem, it follows that $\Expect\bp{Y_{\tau}}\leq \Expect\bp{Y_0}$, i.e.,
\begin{align}\label{eq:probbound}
\Prob\bb{X_\tau = b}\leq \Prob\bb{X_\tau = 0}\cdot \eta^{b}+\Prob\bb{X_\tau = b}\leq \eta^{b-u},
\end{align}
where the first inequality holds since $\PP{X_\tau=0}\geq 0$, and the second inequality is due to Doob's optional stopping time theorem. By \eqref{eq:probbound}, the probability of the particle reaching boundary $b$ first is upper bounded by $\eta^{b-u}$, where $u$ is the starting position. Let $P_{u}^b=\PP{X_0=u, X_\tau = b}$ for $u\in (0,b)$ denote the probability of starting at $u$ and stopping at $b$. Then, we have $P_{1}^b \leq \eta^{b-1}$.

Now suppose a particle starts at 0, i.e., $X_0=0$. It moves to the right or stay at the current position with some fixed probability $p_0$ or $1-p_0$. Let $P_0^0 = \PP{X_0=0,~X_\tau=0}$, and $P_0^b = \PP{X_0=0,~X_\tau=b}$, where $\tau$ is the stopping time as defined before. Then, we have that
\begin{equation}\label{eq:P00}
P_{0}^0 = p_0\cdot P_{1}^0+(1-p_0),~~~P_{0}^b = p_0\cdot P_{1}^b,~~~P_0^0+P_0^b = 1.
\end{equation}

\item[Type \rom{2} Markov chain.] We now define another Markov chain that is similar to the previous one. Consider a particle starting at position $0$ at time 0, and follows the same updating rule as the previous chain but different stopping rule. We use $W_t$ to denote the position of the particle at time $t$. The particle will only stop when it reaches the boundary $b$. When it is at the position $0$, it still moves to the right or stay at 0 with fixed probability $p_0$ or $1-p_0$ (the same probability as defined in Type \rom{1} Markov chain). Thus, this newly defined Markov chain is a \textit{reflected random walk}.

It is worth noting that the Type \rom{2} Markov chain can always be decomposed into several Type I Markov chains. We use $\tau_W$ to denote the stopping time of Type \rom{2} Markov chain, defined by $\tau_W = \min\{t\geq 1:W_t = b\}$.

\item[Type \rom{3} Markov chain.] Now return to our original problem and construct Type \rom{3} Markov chain. Let $m_0=n\ell(Z_0,Z^*)$, and we use $H_t = n\ell(Z_t,Z^*)-m_0$ to denote the position of the particle, where $Z_t$ is the label assignment after $t$ steps. The state space is all integers between $-m_0$ and $b$, where we take $b=\max\{0, n^{1-\tau}-m_0\}+\log^2 n$. When $H_t\in (0,b)$, the particle moves to the left, to the right, or stay at the current position with probability $q_t$, $p_t$, or $1-q_t-p_t$, which is the same as the Type \rom{2} chains. \ignore{Note that $-m_0$ is the reflecting boundary, corresponding to the case of $\ell(Z_t,Z^*)=0$, and the }The particle will only stop when it reaches the boundary $b$. The stopping time of Type \rom{3} Markov chain is defined by $\tau_H = \min\{t\geq 1:H_t = b\}$.
\end{description}

\begin{proof}[Proof of Lemma \ref{lm:stayingood}]
Recall that $b=\max\{0, n^{1-\tau}-m_0\}+\log^2n$. In order to prove Lemma \ref{lm:stayingood}, it is equivalent to show that, for any $T$ that is a polynomial of $n$, the event $\{H_t<b, ~t\leq T\}$ happens with high probability, i.e., $\{\tau_H>T\}$ happens with high probability. By the definition of Type \rom{2} and \rom{3} Markov chains we have that 
\[
    \PP{\tau_H\leq T}\leq \PP{\tau_W\leq T}.
\]
The above inequality holds since $H_0=W_0$, $H_t\geq 0$ for all time $t$, and the updating rule of $H_t$ and $W_t$ are exactly the same when $W_t, H_t\in (0,b)$.

We now connect the Type \rom{2} Markov chain with multiple Type \rom{1} chains. The event $\{\tau_W\leq T\}$ means that the particle starts at 0 and reaches the boundary $b$ within $T$ steps, and it can be written as
\begin{equation}\label{eq:tauW}
    \left\{\tau_W\leq T\right\} = \bigcup_{k=1}^T\left\{\left[\bigcap_{i=1}^{k-1}\left\{X_0^{(i)}=0,~X_{\tau_i}^{(i)} = 0 \right\}\right]\bigcap \left\{X_0^{(k)} = 0,~X_{\tau_i}^{(k)}=b\right\}\bigcap\left\{\sum_{i=1}^k{\tau_i}\leq T\right\}\right\},
\end{equation}
where we use $X^{(i)}$ to denote the $i$th Type \rom{1} Markov chain, and $\tau_i$ is the stopping time of $X^{(i)}$. Note $X^{(i)}$ is independent with $X^{(j)}$ for $i\neq j$. The right hand side of \eqref{eq:tauW} can be interpreted as that the particle reaches the boundary 0 for $k-1$ times with $k\leq T$ before reaching the boundary $b$, and the total number of steps is less than $T$. Therefore, it follows directly that
\begin{align*}
\Prob\bb{\tau_W\leq T} &= \PP{\bigcup_{k=1}^T\left\{\left[\bigcap_{i=1}^{k-1}\left\{X_0^{(i)}=0,~X_{\tau_i}^{(i)} = 0 \right\}\right]\bigcap \left\{X_0^{(k)} = 0,~X_{\tau_i}^{(k)}=b\right\}\bigcap\left\{\sum_{i=1}^k{\tau_i}\leq T\right\}\right\}}\\
&\leq \sum_{k=1}^T\PP{\left[\bigcap_{i=1}^{k-1}\left\{X_0^{(i)}=0,~X_{\tau_i}^{(i)} = 0 \right\}\right]\bigcap \left\{X_0^{(k)} = 0,~X_{\tau_i}^{(k)}=b\right\}\bigcap\left\{\sum_{i=1}^k{\tau_i}\leq T\right\}}\\
&\leq \sum_{k=1}^T\PP{\left[\bigcap_{i=1}^{k-1}\left\{X_0^{(i)}=0,~X_{\tau_i}^{(i)} = 0 \right\}\right]\bigcap \left\{X_0^{(k)} = 0,~X_{\tau_i}^{(k)}=b\right\}}\\
& = \sum_{k=1}^T (P_0^0)^{k-1}P_0^b.
\end{align*}
The first inequality holds by a union bound. The third inequality is by the independence. By \eqref{eq:P00}, we have that
\begin{align*}
\sum_{k=1}^TP_0^b (P_0^0)^{k-1}& = 1-(P_0^0)^T\\
&\leq -T\log P_0^0\leq T\cdot \frac{1-P_0^0}{P_0^0} = T\cdot \frac{p_0\cdot P_{1}^b}{p_0\cdot P_1^0+1-p_0}\\
&\leq T\cdot \frac{P_1^b}{P_1^0}\leq \exp\bp{\log {\frac{\eta^{b-1}}{1-\eta^{b-1}}}+\log T}.
\end{align*}
Since $T$ is a polynomial of $n$, and $b\gg \log n$, then it follows that
\begin{align*}
\PP{\tau_H\leq T}\leq \PP{\tau_W\leq T}\leq \exp\left(-(1-o(1))b\log \frac{1}{\eta}\right).
\end{align*}
Thus, based on the result of Lemma \ref{lm:randomwalk_probratio}, for any given initial label assignment $Z_0$ with $\ell(Z_0,Z^*) \leq \gamma_0$ where $\gamma_0$ satisfies Condition \ref{con:gamma0}, \ref{con:gamma0xi_small}, or \ref{con:pqknowncon}, we have that in any polynomial running time, the number of mistakes is upper bounded by 
\[
  m\leq m_0+b = \max\{m_0,n^{1-\tau}\}+\log ^2n  \leq n\max\{\gamma_0,n^{-\tau}\}+\log ^2n,
\]
with probability at least $1-\exp(-\log^2 n)$.
\end{proof}

\subsection{Some preparations before the proofs of Lemma \ref{lm:postlargemistakes} and Lemma \ref{lm:PathIncrease}}
In this section, we will define some events and introduce some quantities to simplify the main proof.

\subsubsection{Basic events}

For any $Z\in S_\alpha$, recall that $O_{ab}(Z)=\sum_{i,j}A_{ij}\indc{Z_i=a, Z_j=b}$, and define $X_{ab}(Z) = O_{ab}(Z)-\EE{O_{ab}(Z)}$ for any $a,b\in[K]$. Let $X(Z)$ denote a $K\times K$ matrix with its $(a,b)$th element equal to $X_{ab}(Z)$ for any $a,b\in [K]$. For any positive sequences $\bar\eps = \bar \eps_n$, $\gamma=\gamma_n$, and $\theta=\theta_n$ satisfying that $\bar\eps,\gamma, \theta\goto 0, \bar\eps^2 nI\goto\infty$, and $\theta^2\gamma nI\goto\infty$, consider the following events:
\begin{equation}\label{eq:allevents}
\begin{split}
\mathcal{E}_1(\beps)& = \bb{\max_{Z\in S_\alpha}\Norm{X(Z)}_{\infty}\leq \bar\eps n^2(p-q)},\\
\mathcal{E}_2 &= \bb{\max_{Z\in S_\alpha}\Norm{X(Z)-X(Z^*)}_{\infty}-  (\alpha+\beta)mn(p-q)/K\leq 0},\\
\mathcal{E}_3(\beps) &= \bb{\max_{Z\in S_\alpha}\Norm{X(Z)-X(Z^*)}_{\infty}\leq \bar\eps n^2(p-q)},\\
\mathcal{E}_4(\gamma, \theta) & = \bb{\max_{Z\in S_\alpha: m>\gamma n}\Norm{X(Z)-X(Z^*)}_{\infty}\leq \theta mn(p-q)},\\
\mathcal{E}_5 & = \bb{\max_{Z\in S_\alpha} \frac{1}{\abs{\cB(Z)\cap S_{\alpha}}}\sum_{Z'\in \cB(Z)\cap S_\alpha}\sum_{a\leq a'}\abs{X_{aa'}(Z)-X_{aa'}(Z')}\leq 10n(p-q)},\\
\mathcal{E}_6(\gamma,\theta) & = \bb{\max_{Z\in S_\alpha: m> \gamma n} \frac{1}{\abs{\cB(Z)\cap S_\alpha}}\sum_{Z'\in \cB(Z)\cap S_\alpha}\sum_{a\leq a'}\abs{X_{aa'}(Z)-X_{aa'}(Z')}\leq \theta n(p-q)},
\end{split}
\end{equation}
where $\cB(Z)$ is defined in \eqref{eq:defABN}.
We may also use $\mathcal E_1$ to denote $\mathcal E_1(\beps)$ for simplicity, and such simplification also applies to any other event. Denote 
\begin{equation}\label{eq:defE}
\mathcal{E} = \mathcal{E}(\beps, \gamma, \theta)=\mathcal{E}_1\cap\mathcal{E}_2\cap\mathcal{E}_3\cap\mathcal{E}_4\cap\mathcal{E}_5\cap\mathcal{E}_6.    
\end{equation} 
By Lemmas \ref{lm:boundhatPtildeP}, \ref{boundXabe_xabc}, \ref{boundXabe_xabc_mlarge}, \ref{lm:bounddtOLargeMis}, \ref{lm:pathdiffall}, and \ref{lm:pathdifflargeMis}, it follows that for any $\beps, \gamma, \theta$ satisfying the conditions, 
\begin{align*}
\PP{\mathcal{E}(\beps,\gamma,\theta)}\geq 1-n\exp(-(1-o(1))\bar nI).
\end{align*}

\subsubsection{Likelihood modularity}
The posterior distribution is hard to deal with directly. Hence, we first analyze the performance of the likelihood modularity function, and then bound the difference between the likelihood modularity function and the posterior distribution to simplify the proof.

Likelihood modularity is first introduced in \cite{bickel2009nonparametric}, which takes the form as
\begin{equation}\label{eq:defineLikeMode}
    Q_{LM}(Z,A) = \sum_{a\leq b}n_{ab}(Z)\tau\bp{\frac{O_{ab}(Z)}{n_{ab}(Z)}},
\end{equation}
where $\tau(x) = x\log (x) + (1-x) \log (1-x)$. This criterion replaces the connectivity probabilities by maximum likelihood estimates. Instead of comparing the direct difference between the Bayesian expression and the likelihood modularity as in \cite{van2017bayesian}, we have the following lemma to bound the relative difference.

\begin{lemma}\label{lm:likemode}
    Under the event $\cal E(\beps,\gamma,\theta)$ defined in \eqref{eq:defE}, we have
    \begin{align*}
    \max_{Z\in S_\alpha}\abs{\log\frac{\Pi(Z|A)}{\Pi(Z^*|A)}-(Q_{LM}(Z,A)-Q_{LM}(Z^*,A))}\leq C_{LM}
    \end{align*}
    for some constant $C_{LM}$ only depending on $K, \alpha,\beta$.
\end{lemma}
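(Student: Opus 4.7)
The plan is to apply a careful Stirling expansion to the Beta-marginal log-likelihood, isolate the likelihood-modularity term $n_{ab}(Z)\tau(\hat B_{ab}(Z))$, and show that the residual pieces contribute only $O(1)$ per community pair once the $Z$-vs-$Z^*$ difference is taken. Throughout, only the sub-event $\cE_1(\beps)\subseteq\cE$ is really used.

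First I would write $\log B(O+\kappa_1,n-O+\kappa_2)=\log\Gamma(O+\kappa_1)+\log\Gamma(n-O+\kappa_2)-\log\Gamma(n+\kappa_1+\kappa_2)$, apply the standard expansion $\log\Gamma(y)=(y-\tfrac12)\log y-y+\tfrac12\log(2\pi)+O(1/y)$ to each $\Gamma$, and Taylor-expand the shifted logs around $\log O$, $\log(n-O)$, $\log n$. Writing $\hat B_{ab}(Z):=O_{ab}(Z)/n_{ab}(Z)$, the leading $y\log y-y$ pieces collapse algebraically to $n_{ab}(Z)\tau(\hat B_{ab}(Z))$ (the identity already used implicitly in Lemma~\ref{lm:postcon}), the first-order $\kappa$-corrections yield $\kappa_1\log\hat B_{ab}(Z)+\kappa_2\log(1-\hat B_{ab}(Z))$, and the $-\tfrac12\log y$ corrections telescope into $-\tfrac12\log(n_{ab}(Z)\hat B_{ab}(Z)(1-\hat B_{ab}(Z)))$. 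This gives
\[
\log B(O_{ab}(Z)+\kappa_1,n_{ab}(Z)-O_{ab}(Z)+\kappa_2)-n_{ab}(Z)\tau(\hat B_{ab}(Z))=E_{ab}(Z)+o(1),
\]
where $E_{ab}(Z):=\kappa_1\log\hat B_{ab}(Z)+\kappa_2\log(1-\hat B_{ab}(Z))-\tfrac12\log(n_{ab}(Z)\hat B_{ab}(Z)(1-\hat B_{ab}(Z)))+\tfrac12\log(2\pi)$.

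Next I would use $\cE_1(\beps)$ together with the definition of $S_\alpha$ to pin down orders of magnitude uniformly. The constraint $Z\in S_\alpha$ forces $n_{ab}(Z)=\Theta(n^2)$; and since $\EE{O_{ab}(Z)}\in[q\,n_{ab}(Z),p\,n_{ab}(Z)]$, the bound $\abs{O_{ab}(Z)-\EE{O_{ab}(Z)}}\leq\beps n^2(p-q)$ combined with $p\asymp q$ yields $\hat B_{ab}(Z)=\Theta(p)$ uniformly over $Z\in S_\alpha$, and in particular $1-\hat B_{ab}(Z)=1-o(1)$. The same bounds hold at $Z=Z^*$. Consequently each of the three ratios $\hat B_{ab}(Z)/\hat B_{ab}(Z^*)$, $(1-\hat B_{ab}(Z))/(1-\hat B_{ab}(Z^*))$, and $n_{ab}(Z)\hat B_{ab}(Z)(1-\hat B_{ab}(Z))\big/[n_{ab}(Z^*)\hat B_{ab}(Z^*)(1-\hat B_{ab}(Z^*))]$ lies in a fixed interval determined only by $K,\alpha,\beta$, so their logarithms are $O(1)$ and $\abs{E_{ab}(Z)-E_{ab}(Z^*)}\leq C(K,\alpha,\beta)$. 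Summing over the $K(K+1)/2=O(1)$ pairs with $a\leq b$, and noting that the $Z$-independent ``Const'' in $\log\Pi(Z|A)$ cancels in the ratio $\Pi(Z|A)/\Pi(Z^*|A)$, yields the claim.

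The main obstacle is verifying that the Stirling remainder $O(1/O_{ab}(Z))$ is genuinely $o(1)$, which holds under $\cE_1(\beps)$ because $O_{ab}(Z)\asymp n^2p=\Omega(n\log n)\to\infty$; this is also what forces the $o(1)$ in the displayed identity above to be uniform in $Z$. The rest is bookkeeping on how the final constant $C_{LM}$ depends on $K,\alpha,\beta$ (with the fixed prior hyperparameters $\kappa_1,\kappa_2$ absorbed into that constant).
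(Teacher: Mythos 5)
Your proof is correct and arrives at the same bound, but via a genuinely different algebraic route. The paper's Lemma~\ref{lm:prooflikemode} never invokes Stirling: it uses the elementary Lemma~\ref{lm:boundprior}, a one-sided bound on $\log\frac{\Gamma(x+\kappa)}{\Gamma(y+\kappa)}$ for integer $x,y$ obtained by summing $\log k$ over the integer gap, applied separately to the three Gamma ratios making up $\log\frac{\Pi(Z|A)}{\Pi(Z^*|A)}$; the leading $x\log x - y\log y - (x-y)$ pieces from those three ratios recombine into $Q_{LM}(Z,A)-Q_{LM}(Z^*,A)$, and the two residual sums \eqref{eq:ecdiff1}--\eqref{eq:ecdiff2} are then controlled under $\mathcal{E}_1(\bar\eps)\cap\mathcal{E}_3(\bar\eps)$. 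Your Stirling expansion instead isolates the exact per-pair residual $E_{ab}(Z)$ with a uniform $o(1)$ error and then bounds $|E_{ab}(Z)-E_{ab}(Z^*)|$ using the same concentration inputs; this is a bit more transparent about where the error lives and gives a two-sided bound directly (the paper invokes symmetry at the end). Your observation that $\mathcal{E}_1(\bar\eps)$ alone suffices is also correct: applying the $\mathcal{E}_1$-bound at both $Z$ and $Z^*$ recovers the $\mathcal{E}_3$-type control on $\dtO_{ab}-\Expect[\dtO_{ab}]$ up to a factor of $2$ by the triangle inequality, so the paper's additional use of $\mathcal{E}_3$ is a convenience rather than a necessity. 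One small point worth making explicit: the ``fixed intervals'' for your three ratios also depend on the constant hidden in $p\asymp q$ and on $\kappa_1,\kappa_2$, which must be absorbed into $C_{LM}$ --- a looseness in the statement that the paper shares, so not a defect of your argument.
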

The above lemma is rephrased and proved in Lemma \ref{lm:prooflikemode}. \ignore{Under Lemma \ref{lm:likemode}, the proof of Lemma \ref{lm:postlargemistakes} can be simplified by first working on the likelihood modularity and then show that the remaining difference is comparatively negligible. }

\subsubsection{Discrepancy matrix}
For any label assignment $Z\in S_\alpha$, let $R_Z$ be a \textit{discrepancy matrix}, which takes the form of
\begin{align}\label{eq:defRZ}
R_Z(a,b) = \sum_{i=1}^n \indc{Z_i=a,Z_j^*=b}, ~~~a,b\in[K],
\end{align}
where $R_Z(a,b)$ is the number of samples misclassified to group $a$ but actually from group $b$ based on the true label assignment. Note that the true label assignment is only unique up to a label permutation, and thus we always permute the rows of $R_Z$ to minimize the off diagonal sum. Later we write $R_Z(k,l)$ as $R_{kl}$ for simplicity.

Using the discrepancy matrix $R_Z$, we have
\begin{equation}\label{eq:RcalB}
\begin{split}   
    \EE{O_{ab}(Z)} &= \EE{\sum_{i,j}A_{ij}\indc{Z_i=a,Z_j=b}} = (RBR^T)_{ab},~~~ \text{for }a\neq b\in [K],\\
    \EE{O_{aa}(Z)} &= \EE{\sum_{i<j}A_{ij}\indc{Z_i=Z_j=a}} = \frac{1}{2}\bp{(RBR^T)_{aa}-\sum_{k}B_{kk} R_{ak}},~~~ \text{for }a\in [K].
\end{split} 
\end{equation}

\subsection{Proof of Lemma \ref{lm:postlargemistakes}}
\label{sec:postContrSec}

Before proving the lemma, we need to present some notations that will be frequently us{}ed:
\begin{align*}{}
    O_s(Z)& = \sum_{a\in[K]}O_{aa}(Z) = \sum_{i<j}A_{ij}\indc{Z_i=Z_j},\\
    n_s(Z)& = \sum_{a\in[K]}n_{aa}(Z) = \sum_{i<j}\indc{Z_i=Z_j},\\
    \dtO_{ab}& = O_{ab}(Z)- O_{ab}(Z^*),~~\dtO_{s} = \sum_{a\in [K]}\dtO_{aa},\\
    \dtn_{ab}& = n_{ab}(Z)- n_{ab}(Z^*),~~\dtn_{s} = \sum_{a\in [K]}\dtn_{aa},
\end{align*}{}
and we may write $n_{ab}(Z^*)$, $O_{ab}(Z^*)$ as $n_{ab}$, $O_{ab}$ for simplicity.
\begin{proof}[Proof of Lemma \ref{lm:postlargemistakes}]
For any positive sequences $\gamma = \gamma_n$ and $\theta=\theta_n$ such that $\gamma\goto 0$, $\gamma^2nI\goto \infty$, and $\theta^2\gamma nI\goto\infty$, we can construct the event $\mathcal E(\beps, \gamma,\theta)$ defined in \eqref{eq:defE} by setting $\beps=\gamma$, and perform analysis on a large mistake region and a small mistake region separately. 
\begin{description}[leftmargin = 0cm,labelsep = 0.5cm]
\item[Small mistake region.] For $m\leq\gamma n$, by some calculations, it follows that
\begin{equation}\label{eq:ZZstarsmall}
    \begin{split}
    &Q_{LM}(Z,A)-Q_{LM}(Z^*,A)\\
    = &\log \Pi_0(Z|A)-\log \Pi_0(Z^*|A)
    -\sum_{a\leq b}n_{ab} \cdot \KL{\frac{O_{ab}}{n_{ab}}}{\frac{O_{ab}(Z)}{n_{ab}(Z)}}+\\
    &\underbrace{\sum_{a\leq b}\dtO_{ab} \bp{\log \frac{O_{ab}(Z)}{n_{ab}(Z)}-\log B_{ab}}+ (\dtn_{ab}-\dtO_{ab}) \bp{\log \frac{n_{ab}(Z)-O_{ab}(Z)}{n_{ab}(Z)}-\log(1- B_{ab})}}_{(Error)},
\end{split}
\end{equation}
where by \eqref{eq:postknown}
\begin{equation}\label{eq:knownPostEq}
    \log \Pi_0(Z|A)-\log\Pi_0(Z^*|A) \ignore{= \log \frac{p(1-q)}{q(1-p)}\bp{\dtO_s - \lambda^* \dtn_s}}=2t^*\bp{\dtO_s - \lambda^* \dtn_s},
\end{equation}
and
\begin{equation}\label{eq:deflambda_t}
    \lambda^* = \log \frac{1-q}{1-p}\big/ \log \frac{p(1-q)}{q(1-p)},~~~~t^* = \frac{1}{2}\log \frac{p(1-q)}{q(1-p)}.
\end{equation}
Under the event $\mathcal{E}(\beps,\gamma,\theta)$, by Lemma \ref{lm:bound_ec}, we have $(Error)\leq C\gamma mn I$ for some constant $C$. Hence, for any fixed $Z\in S_\alpha$, by Lemma \ref{lm:likemode}, we have that
\begin{align*}
  &\log\frac{\Pi(Z|A)}{\Pi(Z^*|A)}-\log\frac{\Pi_0(Z|A)}{\Pi_0(Z^*|A)}\\
  \leq &Q_{LM}(Z,A)-Q_{LM}(Z^*,A)+C_{LM}-\log\frac{\Pi_0(Z|A)}{\Pi_0(Z^*|A)}\\
  \leq &C\gamma m nI+C_{LM}.  
\end{align*}
Thus, there exists some constant $C_2$ such that under the event $\cal E$, 
\begin{align*}
    {\max_{Z\in S_\alpha:m<\gamma n}}\log\frac{\Pi(Z|A)}{\Pi(Z^*|A)}-\log\frac{\Pi_0(Z|A)}{\Pi_0(Z^*|A)}-C_2\gamma m nI\leq 0,
    \end{align*}
    which proves the second statement in Lemma \ref{lm:postlargemistakes}.

\item[Large mistake region.] For $m>\gamma n$, we have that
\begin{equation}\label{eq:pqunknownLargeZZstar}
    \begin{split}
    Q_{LM}(Z,A)-Q_{LM}(Z^*,A) &=\sum_{a\leq b} n_{ab}(Z)\tau \bp{\frac{O_{ab}(Z)}{n_{ab}(Z)}}-n_{ab}(Z^*)\tau \bp{\frac{O_{ab}(Z^*)}{n_{ab}(Z^*)}}\\
    &= \bp{G(Z) + \Delta(Z) }- \bp{G(Z^*) +\Delta(Z^*)},
\end{split}
\end{equation}
     where we write
     \begin{align*}
    G(\cdot)& = \sum_{a\leq b} n_{ab}(\cdot)\tau \bp{\frac{\EE{O_{ab}(\cdot)}}{n_{ab}(\cdot)}},\\
    \Delta(\cdot) & = \sum_{a\leq b} n_{ab}(\cdot )\bp{\tau \bp{\frac{{O_{ab}(\cdot)}}{n_{ab}(\cdot)}}-\tau \bp{\frac{\EE{O_{ab}(\cdot)}}{n_{ab}(\cdot)}}}.
    \end{align*}
Let $\wt B_{ab} = \EE{O_{ab}(Z)}/n_{ab}(Z)$ for any $a,b\in[K]$. By \eqref{eq:RcalB}, it follows that
    \begin{equation*}
    \begin{split}
        2G(Z)   =& 2\sum_{a\leq b}n_{ab}(Z)\tau \bp{\tB_{ab}} \\
    =& 2\sum_{a\leq b} \EE{O_{ab}(Z)}\log \tB_{ab}+ (n_{ab}(Z)-\EE{O_{ab}(Z)})\log (1-\tB_{ab})\\
    =& \sum_{a,b,k,l}R_{ak}R_{bl}(B_{kl}\log \tB_{ab}+(1-B_{kl})\log(1-\tB_{ab})) - \sum_{a}n_a(Z)(p\log \tB_{aa}+(1-p)\log(1-\tB_{aa})).
    \end{split}
    \end{equation*}
    Then, we have that
    \begin{equation}\label{eq:GZ-GZ*}
    \begin{split}
        &2G(Z)-2G(Z^*)\\
    =& \sum_{a,b,k,l}R_{ak}R_{bl}(B_{kl}\log \tB_{ab}+(1-B_{kl})\log(1-\tB_{ab})) - \sum_{a,b,k,l}R_{ak}R_{bl}(B_{kl}\log B_{kl}+(1-B_{kl})\log(1-B_{kl}))\\
    -& \sum_{a}n_a(Z)\bp{(p\log \tB_{aa}+(1-p)\log(1-\tB_{aa}))-(p\log p+(1-p)\log(1-p))}\\
    = & -\sum_{a,b,k,l}R_{ak}R_{bl}\KL{B_{kl}}{\tB_{ab}} + \sum_{a}n_a(Z)\KL{p}{\tB_{aa}}.
    \end{split}
    \end{equation}
    By Lemma \ref{lm:post_boundGe_1} and Lemma \ref{lm:post_boundGe_2} that bound the above two terms separately, we have
    \[2G(Z)-2G(Z^*)\leq -CmnI,\]
    for some constant $C$. Under the events $\mathcal E_1(\beps), \mathcal E_3(\beps), \mathcal E_4(\gamma, \theta)$ in \eqref{eq:allevents}, by Lemma \ref{lm:prooflikemode} and Lemma \ref{lm:boundDeltaec}, we have that
    \[
        \max_{Z\in S_\alpha}\abs{\log\frac{\Pi(Z|A)}{\Pi(Z^*|A)} - (Q_{LM}(Z,A)-Q_{LM}(Z^*,A))}\leq C_{LM},
    \]
    and
    \[\max_{Z\in S_\alpha: m>\gamma n}|\Delta(Z)-\Delta(Z^*)|\leq \eps mnI,\]
    for some $\eps\goto 0$. Hence, it follows that there exists some constant $C_1$ such that
    \[\PP{\max_{Z\in S_\alpha: m>\gamma n}\log\frac{\Pi(Z|A)}{\Pi(Z^*|A)}>-C_1 mnI}\leq 4\exp(-n).\]
    Combining the result of two regions directly gives Lemma \ref{lm:postlargemistakes}.
\end{description}
\end{proof}

\subsection{Proof of Lemma \ref{lm:PathIncrease} with known connectivity probabilities}
\label{sec:pqknown}

In order to distinguish from the case where probabilities are unknown, we use $\Pi_0(\cdot|A)$ to denote the posterior distribution in this case, and define $\wt \Pi_0(\cdot|A)$ as the scaled distribution proportional to $\Pi_0^\xi(\cdot|A)$. It suffices to prove the following lemma.

\begin{lemma}\label{lm:knownPincrease}
    Recall that $g(Z)$ is the next state of $Z$. Suppose $\gamma_0$ satisfies Condition \ref{con:gamma0}, \ref{con:gamma0xi_small}, or \ref{con:pqknowncon}. Then, there exists some positive sequence $\gamma\goto 0$ such that, with probability at least $1-C_1n^{-C_2}$,
    \begin{align*}
    \frac{\Pi_0(Z|A)}{\Pi_0(g(Z)|A)}\leq \begin{dcases}
    \exp\bp{-\eps\bar nI}, & \text{if~}m\leq \gamma n, \\
    \exp\bp{-{4\bp{1/K\alpha-\gamma_0}nI}(1-o(1))}, &\text{if~} m> \gamma n,
    \end{dcases}
    \end{align*}
    holds uniformly for all $Z\in \cG(\gamma_0)$ defined in \eqref{eq:defineGoodRegion}. Here, $\eps$ is any constant satisfying $\eps<2\eps_0$ with $\eps_0$ defined in Condition \ref{con:pqknowncon}, and $C_1,C_2$ are two constants depending on $\eps$.
    Furthermore, if $\xi$ satisfies Condition \ref{con:pqknowncon}, then by choosing $\eps\in((1-\eps_0)/\xi,2\eps_0)$, we have
    \begin{align*}
        \max_{Z\in \cG(\gamma_0)}\frac{\tPi_0(Z|A)}{\tPi_0(g(Z)|A)}\leq \exp(-C\bar nI)
    \end{align*}
    for some constant $C>1-\eps_0$ with probability at least $1-C_3n^{-C_4}$.
\end{lemma}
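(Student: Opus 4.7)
The plan is to prove Lemma~\ref{lm:knownPincrease} by converting the extremum over $\mathcal B(Z)\cap S_\alpha$ into an average, since $g(Z)$ is the argmax. Concretely, for any $Z\in\cG(\gamma_0)$ not equal to $Z^*$,
\[
\log\frac{\Pi_0(g(Z)|A)}{\Pi_0(Z|A)} \;\geq\; \log\frac{1}{|\mathcal B(Z)\cap S_\alpha|}\sum_{Z'\in\mathcal B(Z)\cap S_\alpha}\frac{\Pi_0(Z'|A)}{\Pi_0(Z|A)} \;\geq\; \frac{1}{|\mathcal B(Z)\cap S_\alpha|}\sum_{Z'\in\mathcal B(Z)\cap S_\alpha}\log\frac{\Pi_0(Z'|A)}{\Pi_0(Z|A)},
\]
using Jensen's inequality. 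Thus it suffices to lower-bound the average log-ratio; Lemma~\ref{lm:size} guarantees $\mathcal B(Z)\cap S_\alpha$ is non-empty on $\cG(\gamma_0)$.

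The next step is to write each log-ratio explicitly. For $Z'\in\mathcal B(Z)\cap S_\alpha$, formed by moving some misclassified node $i$ from $Z_i=b$ to $Z^*_i=a$, \eqref{eq:postknown}--\eqref{eq:knownPostEq} give
\[
\log\frac{\Pi_0(Z'|A)}{\Pi_0(Z|A)} \;=\; 2t^*\bigl[(O_s(Z')-O_s(Z))-\lambda^*(n_s(Z')-n_s(Z))\bigr].
\]
I would split each difference into its expectation under the SBM — computed exactly via the discrepancy matrix $R_Z$ and the identities \eqref{eq:RcalB} — plus a centered noise piece expressed through $X_{ab}(Z)-X_{ab}(Z')$. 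The key algebraic identity is that $t^*\asymp I/(p-q)$ and $\lambda^*\approx q/(p-q)$ up to lower-order terms since $p,q\to0$ with $p\asymp q$, so the signal term for a single correction contributes roughly $-I\cdot (\text{effective community size seen by node }i)$.

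The central estimate is then a careful average of the signal term. Averaging over all single-node corrections in $\mathcal B(Z)\cap S_\alpha$, the contribution of each misclassified node $i$ is weighted by the current size of its true community minus the overlap with the wrong label. In the small-mistake regime ($m\leq\gamma n$), almost every true community retains $(1-o(1))\bar n$ correctly-labeled members, so the average signal is at most $-\bar nI(1-o(1))$; in the large-mistake regime ($m>\gamma n$), the worst-case community size is $n/(K\alpha)$ minus the mistake budget, yielding the $4(1/(K\alpha)-\gamma_0)nI$ factor (the factor $4$ absorbs the $2t^*$ and the symmetry between $O_s$ and $\lambda^* n_s$). For the noise part, apply $\mathcal E_5$ in the small region and $\mathcal E_6$ in the large region — these events are precisely designed so that the averaged discrepancies $\sum_{Z'}|X_{ab}(Z)-X_{ab}(Z')|/|\mathcal B(Z)\cap S_\alpha|$ are of smaller order than the signal, i.e.\ $o(\bar nI)$ and $o((1/(K\alpha)-\gamma_0)nI)$ respectively. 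Combining signal and noise yields the two stated regime-dependent bounds on $\Pi_0(Z|A)/\Pi_0(g(Z)|A)$, valid on $\mathcal E(\bar\eps,\gamma,\theta)$ whose failure probability is $n^{-\Omega(1)}$ by the concentration lemmas referenced earlier.

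Finally, the scaled claim follows by raising to the $\xi$-th power: $\tPi_0(Z|A)/\tPi_0(g(Z)|A)=(\Pi_0(Z|A)/\Pi_0(g(Z)|A))^\xi$. Under Condition~\ref{con:pqknowncon} with $\xi\geq(1-\eps_0)/(2\eps_0)$, picking $\eps\in((1-\eps_0)/\xi,\,2\eps_0)$ makes $\xi\eps>1-\eps_0$, so the small-region bound becomes $\exp(-C\bar nI)$ with $C=\xi\eps>1-\eps_0$, and the large-region bound is handled by the second case of the condition since $4\xi(1/(K\alpha)-\gamma_0)n\geq(1-\eps_0)\bar n$ by the stated inequality on $\xi$. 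The principal obstacle is the bookkeeping in the signal calculation in the large-mistake regime: the moves in $\mathcal B(Z)\cap S_\alpha$ are heterogeneous because the feasibility constraint $S_\alpha$ may forbid corrections that would push a community below $n/(\alpha K)$, so one must verify that enough "gain-producing" single-node corrections remain available to make the average improvement of the claimed order — this is exactly where the assumption $\alpha-\beta$ is a positive constant and Lemma~\ref{lm:size} do their work.
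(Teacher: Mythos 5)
Your high-level split into a small-mistake regime and a large-mistake regime matches the paper, and the large-mistake analysis (compute $\EE{\Delta_n(Z,Z')}$ via the discrepancy matrix, control the fluctuation with $\mathcal E_6(\gamma,\theta)$, and convert $\log(p/q)(p-q)\geq 4I(1-o(1))$ into the factor $4$) is essentially the paper's argument. The Jensen step is also valid: since $g(Z)$ is the argmax over $\cB(Z)\cap S_\alpha$, the max dominates the arithmetic mean, which in log dominates the average log-ratio; the paper uses the same ``minimum is smaller than average'' reduction.

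The small-mistake regime, however, has a genuine gap. You propose to bound the averaged log-ratio by a signal-plus-noise decomposition and then invoke $\mathcal E_5$ to control the noise. But $\mathcal E_5$ only guarantees $\frac{1}{N}\sum_{Z'}\sum_{a\le a'}|X_{aa'}(Z)-X_{aa'}(Z')|\le 10n(p-q)$, so after multiplying by $2t^*\asymp\log(p/q)$ the noise bound is of order $n(p-q)\log(p/q)\asymp nI$. The signal magnitude is $2t^*|\EE{\Delta_n}|\asymp\bar n(p-q)\log(p/q)\asymp\bar nI$, and since $\bar n\le n/2$, the $\mathcal E_5$ noise bound is of the same order as (in fact larger than) the signal. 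It cannot yield the needed $o(\bar nI)$ control, and tightening $\mathcal E_5$'s constant is not an option because its tail probability $n\exp(-2\bar nI)$ is already at the right scale. The constant $10$ in $\mathcal E_5$ is the price of a $\Prob\{\cdot\}\le n e^{-2\bar nI}$ tail under a union bound over $\binom{n}{m}(K-1)^m$ states; shrinking it to $o(1)$ would require $\bar nI\gg\log n$, which the theorem does not assume. The paper instead avoids the signal/noise split entirely: it applies Markov's inequality to $\exp\bigl(t^*\sum_{Z'}\Delta_n(Z,Z')\bigr)$, so that each $\Delta_n(Z,Z')$ is evaluated through its moment generating function at the exponential tilt $t^*=\tfrac12\log\tfrac{p(1-q)}{q(1-p)}$. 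By Zhang--Zhou's Lemma~6.1 this gives exactly $\exp\bigl(-(R_{aa}+R_{bb})I/2\bigr)$ per term — the R\'enyi-$1/2$ rate — which is what makes the union-bound arithmetic close under the minimal condition $\bar nI>\log n$. Without this specific Chernoff tilt, the small-mistake case does not go through. In the unknown-$B$ case, $\mathcal E_5$ \emph{is} used, but only after it has been multiplied by the small prefactor $\Norm{\hat B-B}_\infty/p\asymp(\gamma+\beps)(p-q)/p$, which has no analogue here.
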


\begin{proof}[Proof of Lemma \ref{lm:knownPincrease}]
Recall the definitions of $\cA(Z)$, $\cB(Z)$, and $\cN(Z)$ in \eqref{eq:defABN}. We introduce some notations first to simplify the proof. For any $a,b\in[K]$ and any two label assignments $Z,Z'$, we write
\begin{align*}
&\dO_{ab} = O_{ab}(Z) - O_{ab}(Z'), && \dO_s = \sum_{a}\dO_{aa},\\
&\dn_{ab} = n_{ab}(Z) - n_{ab}(Z'), && \dn_s = \sum_{a}\dn_{aa},\\
&\Delta_n(Z,Z') = \dO_s-\lambda^* \dn_s,  && \lambda^* = \log\frac{1-q}{1-p}\Big / \log \frac{p(1-q)}{q(1-p)}.
\end{align*}
Suppose the current state is $Z$, and we randomly choose one misclassified sample from group $a$ and move to its true group $b$. Denote the new state as $Z'$. It follows that $R_{Z'}(a,b) = R_{Z}(a,b)-1$, and $R_{Z'}(b,b) = R_{Z}(b,b)+1$. Write $R_{Z}(a,b)$ as $R_{ab}$ for simplicity. Furthermore, let $\bb{x_l}_{l\geq 1},\bb{\wt x_l}_{l\geq 1}$ be i.i.d. copies of $\text{Bernoulli}(q)$ and $\bb{y_l}_{l\geq 1},\bb{\wt y_l}_{l\geq 1}$ be i.i.d. copies of $\text{Bernoulli}(p)$. We have that
\begin{equation}\label{eq:DeltaZZ'knownP}
    \Delta_n(Z,Z') = \sum_{l=1}^{R_{aa}+\sum_{k\neq a,b}R_{ak}}(x_l-\lambda^*)-\sum_{l=1}^{R_{bb}}(y_l-\lambda^*)+\sum_{l=1}^{R_{ab}-1}(\wt y_l-\lambda^*) -\sum_{l=1}^{\sum_{k\neq b}R_{bk}}(\wt x_l-\lambda^*).
\end{equation}
By \eqref{eq:postknown}, it directly follows that
\begin{equation}\label{eq:referLogPi0Ratio}
    \log \frac{\Pi_0(Z|A)}{\Pi_0(Z'|A)} = 2t^* \Delta_n(Z,Z'),~~ t^* = \frac{1}{2}\log\frac{p(1-q)}{q(1-p)}.
\end{equation}
For some positive sequence $\gamma =\gamma_n\goto 0$ to be specified later, we can again divide $S_\alpha$ into two regions.
\begin{description}[leftmargin = 0cm,labelsep = 0.5cm]
\item[Small mistake region.] For $m\leq \gamma n$, we have
\begin{align}
    &\EE{\sqrt{\frac{\Pi_0(Z|A)}{\Pi_0(Z'|A)}}}
      = \EE{e^{t^*\Delta_n(Z,Z')}}\nonumber\\
     = &\mathbb E\exp \left[t^*\bp{\sum_{l=1}^{R_{aa}}(x_l-\lambda^*)-\sum_{l=1}^{R_{bb}}(y_l-\lambda^*)}\right]\label{eq:chernoff}\\
    \cdot &\mathbb E\exp\left[ t^*\bp{\sum_{l=1}^{R_{ab}-1}(\wt y_l-\lambda^*) -\sum_{l=1}^{\sum_{k\neq b}R_{bk}}(\wt x_l-\lambda^*)+\sum_{l=1}^{\sum_{k\neq a,b}R_{ak}}(x_l-\lambda^*)}\right]\label{eq:chernoff2}.
    \end{align}
    Based on Lemma 6.1 in \cite{zhang2016minimax}, for any positive integers $n_1, n_2$, we have
    \[\mathbb E{\exp \left[t^*\bp{\sum_{i=1}^{n_1} (x_i-\lambda^*)-\sum_{i=1}^{n_2}(y_i-\lambda^*)}\right]}=\exp\bp{-\frac{(n_1+n_2)I}{2}},\]
    and it leads to
    \[
        \eqref{eq:chernoff}=\exp\bp{-\frac{(R_{aa}+R_{bb})I}{2}}\leq \exp\bp{-\frac{(n_a+n_b-m)I}{2}}\leq \exp\bp{-(1-c\gamma) \bar nI},
    \]
    for some constant $c$. By Lemma \ref{lm:boundExpKnowP}, we have
    \[
        \eqref{eq:chernoff2}\leq \bp{\exp(C_0I)}^m = \exp(C_0mI)\leq \exp(C_0\gamma nI),
    \]
    for some constant $C_0$. It follows that
    \begin{align}\label{eq:Pi0ratioPi0'}
    \EE{\sqrt{\frac{\Pi_0(Z|A)}{\Pi_0(Z'|A)}}}\leq \exp\bp{-(1-C_1\gamma)\bar nI)}
    \end{align}
    for some constant $C_1$ depending on $C_0$. Let $\eps$ be any small constant satisfying $\eps<2\eps_0$, and $w = \eps\bar nI/2t^*$. By Lemma \ref{lm:size}, since $\gamma<c_{\alpha,\beta}$, we have $\cB(Z)\subset S_\alpha$. Then,
    \begin{align}
    \PP{\min_{Z'\in\cB(Z)\cap S_\alpha} \Delta_n (Z,Z')\geq -w}&\leq \PP{\sum_{Z'\in\cB(Z)}\Delta_n (Z,Z')\geq -mw}\nonumber\\
    &= \PP{\exp\bp{t^*\sum_{Z'\in\cB(Z)}\Delta_n(Z,Z')}\geq \exp\bp{-t^*mw}}\nonumber\\
    &\leq \EE{\exp\bp{t^*\sum_{Z'\in\cB(Z)}\Delta_n(Z,Z')}\cdot \exp\bp{t^*mw}}\nonumber\\
    &= \EE{\exp\bp{t^*\sum_{Z'\in\cB(Z)}\Delta_n(Z,Z')}\cdot \exp\bp{\eps m\bar nI/2}},\label{eq:finalterm}
    \end{align}
    The first inequality holds because the minimum is smaller than the average. The second inequality is due to Markov's inequality. We now proceed to bound \eqref{eq:finalterm} by $\exp(-(1-o(1))m\bar nI)$.

    We first define set $\mathcal C(Z) = \{i:Z_i=Z_i^*\}$, which is the set of samples that are correctly classified. Thus, we have $\abs{\mathcal C(Z)} = \sum_{a\in [K]}R_{aa}=n-m$. Suppose $Z'\in \cB(Z)$ corrects $k$th sample from a misclassified group $a$, where $Z_k=a$, to its true group $b$, where $Z_k'=b$. Then, we must have $k\in [n]\setminus \mathcal C(Z)$, and by Lemma \ref{lm:pathdiffformula}, we can rewrite
    \begin{align*}
        \Delta_n(Z,Z')&= \sum_{i\in [n]}(A_{ik}\indc{Z_i'=Z_k}-\lambda^*) - \sum_{i\in [n]}(A_{ik}\indc{Z_i=Z_k'}-\lambda^*)\\
& = \underbrace{\sum_{i\in \mathcal C(Z)}(A_{ik}\indc{Z_i'=Z_k}-\lambda^*) - \sum_{i\in \mathcal C(Z)}(A_{ik}\indc{Z_i=Z_k'}-\lambda^*)}_{(A_k)} \\
&+ \underbrace{\sum_{i\not\in \mathcal C(Z)}(A_{ik}\indc{Z_i'=Z_k}-\lambda^*) - \sum_{i\not \in \mathcal C(Z)}(A_{ik}\indc{Z_i=Z_k'}-\lambda^*)}_{(B_k)}.
    \end{align*}
    Here, $A_k$, $B_k$ correspond to summations in \eqref{eq:chernoff} and \eqref{eq:chernoff2} respectively. We further have
    \begin{align*}
        \sum_{Z'\in \cB(Z)}\Delta_n(Z,Z') = \sum_{k\in [n]\setminus\mathcal{C}(Z)}\bp{A_{k}+B_{k}}.
    \end{align*}
    It is obvious that $A_{k}\perp A_{j}$ for $k,j\in [n]\setminus\mathcal{C}(Z)$, and $\sum_{k\in [n]\setminus\mathcal C(Z)}A_{k}$ can be written as the independent sum of the random variable $A_{ij}$ for some $i\in [n]\setminus\mathcal C(Z)$ and $j\in C(Z)$. As for $\sum_{k\in [n]\setminus\mathcal C(Z)} B_k$, it is the summation of $A_{ij}$ for some $i,j\in [n]\setminus\mathcal C(Z)$. For each random variable $A_{ij}$, the coefficient is at most 2 (since it can only be added twice or canceled out), and the total number of random variables is at most $m\choose2$. Hence, by the argument from \eqref{eq:chernoff} to \eqref{eq:Pi0ratioPi0'}, we can bound \eqref{eq:finalterm} by 
    \begin{align*}
        \eqref{eq:finalterm}\leq \exp\bp{-\bp{1-C\gamma-\eps/2}m\bar nI},
    \end{align*}
    for some constant $C$. By the definition of $g(Z)$ defined in \eqref{eq:defGZ}, we have
    \begin{equation}\label{eq:PknownPathDiff}
    \begin{split}
    \PP{\max_{Z\in \cG(\gamma_0):m\leq \gamma n}\frac{\Pi_0(Z|A)}{\Pi_0(g(Z)|A)}\geq \exp(-\eps \bar nI)} &= \PP{\max_{Z\in \cG(\gamma_0):m\leq \gamma n}\min_{Z'\in \cB(Z)}\Delta_n(Z,Z')\geq -w}\\
    &\leq \sum_{m=1}^{\gamma n}{n\choose m}(K-1)^m\exp\bp{-\bp{1-C\gamma-\eps/2}m\bar nI}\\
    & \leq \sum_{m=1}^{\gamma n} \bp{{enK}\exp\bp{-\bp{1-C\gamma-\eps/2}\bar nI}}^m\\
    & = n \exp\bp{-\bp{1-\eps/2}\bar nI (1-o(1))},
    \end{split}
    \end{equation}
    where we require $\eps<2\eps_0$ in order for the last equation going to 0 as $n$ tends to infinity.

\item[Large mistake region.] For $m>\gamma n$, recall that $\cG(\gamma_0)\subset S_\alpha$, and $S_\alpha$ is defined in \eqref{eq:S_alpha}. If $Z'$ corrects one sample from group $a$ to group $b$, by \eqref{eq:DeltaZZ'knownP}, we have $\dn_s = n_a'-n_b'-1$. By \ref{eq:DeltaZZ'knownP}, we have $\Delta_n(Z,Z')=\dO_s-\lambda^*\dn_s$. Let $\lambda = (p+q)/2$, $m_b=\sum_{k\neq b}R_{kb}$, $n_a'=n_a(Z)$, and $n_b' = n_b(Z)$ for simplicity. Thus, it follows that
\begin{align*}
\EE{\Delta_n(Z,Z')} &=\EE{\dO_s}-\lambda^* \dn_s = \EE{\dO_s}-\lambda\dn_s + (\lambda-\lambda^*) \dn_s\\
& = -\frac{p-q}{2}(n_a'+n_b'-2m_b-2R_{ab})+ \frac{p+q-2\lambda^*}{2}(n_a'-n_b) - (p-\lambda^*)\\
&\leq  -\frac{p-q}{2}(n_a'+n_b'-2m)+\frac{p+q-2\lambda^*}{2}(n_a'-n_b')\\
& = -\frac{p-q}{2}(n_a'+n_b'-2m-C_\lambda (n_a'-n_b'))\\
& = -\frac{p-q}{2}(n_b'+C_\lambda n_b'+(1-C_\lambda)n_a'-2m)\\
&\leq -\bp{\frac{n}{K\alpha}-m}\bp{p-q},
\end{align*}
where $C_\lambda = 2(\lambda-\lambda^*)/(p-q)$. It is easy to verify that $C_\lambda\in (0,1)$, and $\cl$ tends to 1 (resp. tends to 0) when $(p-q)/p$ tends to 1 (resp. tends to 0). If $\gamma_0$ satisfies that $(1-K\alpha\gamma_0)^2nI\goto \infty$, it follows that
\begin{equation}\label{eq:boundknownlargeeq}
    \max_{Z\in \cG(\gamma_0):m>\gamma n}\max_{Z'\in \cB(Z)\cap S_\alpha}\bp{\EE{\dO_s}-\lambda^* \dn_s}\leq -\bp{\frac{1}{K\alpha}-\gamma_0}n(p-q)(1-o(1)).
\end{equation}
Denote $\delta_0=1/K\alpha-\gamma_0$ for simplicity. Then, it follows that $\eqref{eq:boundknownlargeeq}\leq - \delta_0n(p-q)(1-o(1))$. Since $\delta_0^2nI\goto \infty$, there exist some positive sequences $\gamma$ and $\theta$ such that $\gamma,\theta \goto 0$, $\theta^2\gamma nI\goto \infty$, and $\theta\ll \delta_0$. To be specific, we may take $\gamma = 1/(\delta_0\sqrt{nI})$, $\theta = \delta/(\delta\sqrt{nI})^{1/4}$. Hence, we can construct the event $\mathcal E_6(\gamma, \theta)$ as defined in \eqref{eq:allevents}. Note that $X(Z)-X(Z^*) = \dO(Z)-\EE{\dO(Z)}$. Under the event $\mathcal E_6$, we have
\begin{align*}
    & \max_{Z\in S_\alpha:m>\gamma n}\min_{Z'\in \cB(Z)\cap S_\alpha}\abs{\dO_s-\EE{\dO_s}}\\
\leq & \max_{Z\in S_\alpha:m>\gamma n}\frac{1}{\abs{\cB(Z)\cap S_\alpha}}\sum_{Z'\in \cB(Z)\cap S_\alpha}\abs{\dO_s-\EE{\dO_s}}\\
\leq & \max_{Z\in S_\alpha:m>\gamma n}\frac{1}{\abs{\cB(Z)\cap S_\alpha}}\sum_{Z'\in \cB(Z)\cap S_\alpha}\sum_{a\in[K]}\abs{\dO_{aa}-\EE{\dO_{aa}}}\\
\leq & \max_{Z\in S_\alpha:m>\gamma n}\frac{1}{\abs{\cB(Z)\cap S_\alpha}}\sum_{Z'\in \cB(Z)\cap S_\alpha}\sum_{a\leq a'}\abs{X_{aa'}(Z)-X_{aa'}(Z')}\\
\leq &\theta n(p-q).
\end{align*}
Then, it follows that
\begin{equation}\label{eq:laterRefpqknownlarge}
\begin{split}
    &\max_{Z\in \cG(\gamma_0): m>\gamma n}\min_{Z'\in \cB(Z)\cap S_\alpha}\Delta_n(Z,Z')\\
=&\max_{Z\in \cG(\gamma_0): m>\gamma n}\min_{Z'\in \cB(Z)\cap S_\alpha}\bp{\dO_s-\lambda^*\dn_s}\\
\leq & \max_{Z\in \cG(\gamma_0): m>\gamma n}\min_{Z'\in \cB(Z)\cap S_\alpha}\bp{\dO_s-\EE{\dO_s}}+\max_{Z\in S_\alpha:m>\gamma n}\max_{Z'\in \cB(Z)\cap S_\alpha}\bp{\EE{\dO_s}-\lambda^* \dn_s}\\
\leq &\theta n (p-q) - \delta_0 n(p-q)(1-o(1))\\
= &-\delta_0(1-o(1))n(p-q).
\end{split}
\end{equation}
By the definition of $g(Z)$ and by \eqref{eq:referLogPi0Ratio}, we have that
\begin{align*} 
\max_{Z\in \cG(\gamma_0):m>\gamma n}\log\frac{\Pi_0(Z|A)}{\Pi_0(g(Z)|A)} &= \log\frac{p(1-q)}{q(1-p)}\left[\max_{Z\in \cG(\gamma_0):m>\gamma n}\min_{Z'\in \cB(Z)\cap S_\alpha}\Delta_n(Z,Z')\right]\\
&\leq -\log\bp{\frac{p}{q}}\delta_0n(p-q)(1-o(1)).
\end{align*}
Furthermore, since
\begin{align*}
I  = (\sqrt{p}-\sqrt{q})^2(1+o(1)),~~~\frac{(p-q)\log\bp{\frac{p}{q}}}{\bp{\sqrt{p}-\sqrt{q}}^2}\geq 4,
\end{align*}
we have
\[
    \log\bp{\frac{p}{q}}(p-q)\geq 4(\sqrt{p}-\sqrt{q})^2 = 4I(1-o(1)).
\]
Then, it follows directly
\begin{align*}
\max_{Z\in \cG(\gamma_0): m>\gamma n}\frac{\Pi_0(Z|A)}{\Pi_0(g(Z)|A)} \leq \exp\bp{-{4\delta_0nI}(1-o(1))}.
\end{align*}
By Lemma \ref{lm:pathdifflargeMis}, we have that $\mathcal E_6$ happens with probability at least $1-\exp(-n)$.
\end{description}
Combining results of two regions directly gives the result of Lemma \ref{lm:knownPincrease}.
\end{proof}

\subsection{Proof of Lemma \ref{lm:PathIncrease} with unknown connectivity probabilities}\label{sec:pqunknown}

It suffices to prove the following lemma when the connectivity probabilities are unknown.
\begin{lemma}\label{lm:unknownPincrease}
    Suppose $\gamma_0$ satisfies Condition \ref{con:gamma0}, \ref{con:gamma0xi_small}, or \ref{con:pqknowncon}. Then, there exists some positive sequence $\gamma\goto 0$ such that, with probability at least $1-C_1n^{-C_2}$,
    \begin{align*}
    \frac{\Pi(Z|A)}{\Pi(g(Z)|A)}\leq \begin{dcases}
    \exp\bp{-\eps\bar nI(1-o(1))}, &\text{if~}m\leq \gamma n,\\
    \exp\bp{-\frac{(1-K\gamma_0)^4nI}{2\alpha^2}(1-o(1))}, &\text{if~}m> \gamma n,
    \end{dcases}
    \end{align*}
    holds uniformly for all $Z\in \cG(\gamma_0)$ defined in \eqref{eq:defineGoodRegion}. Here, $\eps$ is any constant satisfying $\eps<\eps_0$, and $C_3,C_4$ are constants depending on $\eps$. Furthermore, if $\xi$ satisfies Condition \ref{con:xi} or \ref{con:gamma0xi_small} correspondingly, then by choosing $\eps\in\bp{(1-\eps_0)/\xi,2\eps_0}$, we have
    \begin{align*}
    \max_{Z\in \cG(\gamma_0)}\frac{\tPi(Z|A)}{\tPi(g(Z)|A)}\leq \exp(-C\bar nI),
    \end{align*}
    for some constant $C>1-\eps_0$ with probability at least $1-C_3n^{-C_4}$.
\end{lemma}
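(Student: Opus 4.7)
The plan is to mirror the two-region structure used in the proof of Lemma \ref{lm:knownPincrease} (small mistake region $m \leq \gamma n$ and large mistake region $m > \gamma n$), with the new difficulty that the Beta priors on $B$ have been integrated out. The transition from the known-$B$ analysis to the unknown-$B$ analysis is handled at two different levels of granularity depending on the region, and in both cases the events $\mathcal{E}_1,\dots,\mathcal{E}_6$ defined in \eqref{eq:allevents} provide the necessary concentration.

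For the small mistake region, the plan is to reduce directly to Lemma \ref{lm:knownPincrease}. By Lemma \ref{lm:likemode} and the small-mistake identity \eqref{eq:ZZstarsmall}, together with Lemma \ref{lm:bound_ec}, on the event $\mathcal{E}$ we have
\[
\log \frac{\Pi(Z|A)}{\Pi(g(Z)|A)}
\;\leq\; \log \frac{\Pi_0(Z|A)}{\Pi_0(g(Z)|A)} + C \gamma m n I + 2C_{LM},
\]
uniformly over $Z \in \cG(\gamma_0)$ with $m \leq \gamma n$ (the KL term $\sum_{a\leq b} n_{ab} D(O_{ab}/n_{ab}\|O_{ab}(Z)/n_{ab}(Z))$ is subtracted on the $Q_{LM}(Z)$ side and added on the $Q_{LM}(g(Z))$ side, and each piece is $o(\bar n I)$ on $\mathcal{E}_3$). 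Lemma \ref{lm:knownPincrease} gives $\log \Pi_0(Z|A)/\Pi_0(g(Z)|A) \leq -\eps \bar n I$ for any $\eps < 2\eps_0$, so for $\gamma$ small enough ($\gamma \to 0$ with $\gamma \ll \eps_0/C$) the extra $C\gamma m n I$ is absorbed and yields the claimed $\exp(-\eps \bar n I(1-o(1)))$ bound.

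For the large mistake region, the main obstacle is to produce the rate $(1-K\gamma_0)^4 n I/(2\alpha^2)$, which is strictly weaker than the known-$B$ rate $4\delta_0 n I$ but must match the constant appearing in Condition \ref{con:xi}. The plan is to again pass through Lemma \ref{lm:likemode} and use the decomposition $Q_{LM}(Z,A) = G(Z) + \Delta(Z)$ from \eqref{eq:pqunknownLargeZZstar}. The deterministic drift $G(Z) - G(g(Z))$ is analyzed by applying Lemmas \ref{lm:post_boundGe_1}--\ref{lm:post_boundGe_2} with the discrepancy matrix update $R_{Z} \to R_{g(Z)}$: correcting one label of a misclassified node in a community of size at least $n/K\alpha$ (under the $S_\alpha$ constraint) and missing at most $K \gamma_0 \cdot n/K$ mass, one obtains after some algebra a drift bounded by $-(1-K\gamma_0)^4 n I/(2\alpha^2)(1-o(1))$, where one power of $(1-K\gamma_0)^2$ comes from the residual within-community correct mass, one from the between-community correct mass, and the factor $1/\alpha^2$ from the worst-case community-size ratios. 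The fluctuation $\Delta(Z) - \Delta(g(Z))$ is controlled on the event $\mathcal{E}_4(\gamma,\theta) \cap \mathcal{E}_6(\gamma,\theta)$ by a calculation mimicking Lemma \ref{lm:boundDeltaec}, and for suitable $\gamma, \theta \to 0$ with $\theta^2 \gamma n I \to \infty$ is of smaller order than the drift. A union bound over $Z \in \cG(\gamma_0)$ with $m > \gamma n$, using $|\{Z : d(Z,Z^*) = m\}| \leq \binom{n}{m}(K-1)^m$, then gives the stated bound with high probability.

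Finally, to pass from $\Pi$ to $\tPi \propto \Pi^\xi$, note that $\tPi(Z|A)/\tPi(g(Z)|A) = (\Pi(Z|A)/\Pi(g(Z)|A))^\xi$. Choosing $\eps \in ((1-\eps_0)/\xi,\,2\eps_0)$, which is nonempty under Condition \ref{con:xi} or \ref{con:gamma0xi_small}, makes the small-mistake bound $\exp(-\xi\eps \bar n I) \leq \exp(-C\bar n I)$ for some $C > 1-\eps_0$. For the large-mistake bound, Condition \ref{con:xi} Case 1 ($K=2$) is exactly calibrated so that $\xi (1-K\gamma_0)^4/(2\alpha^2) \cdot n \geq (1-\eps_0)\bar n$; for Case 2 ($K \geq 3$) the hypothesis $\gamma_0 = o(1)$ combined with Lemma \ref{lm:stayingood} guarantees that we may pick $\gamma$ with $\gamma n > n\max\{\gamma_0,n^{-\tau}\} + \log^2 n$, so $\cG(\gamma_0)$ contains no state with $m > \gamma n$ and the large-mistake branch is vacuous. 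Combining both regions with the probability bound on $\mathcal{E}$ from Lemmas \ref{lm:boundhatPtildeP}--\ref{lm:pathdifflargeMis} gives the conclusion of Lemma \ref{lm:unknownPincrease}.
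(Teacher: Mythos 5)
Your two-region skeleton and the reduction of the large-mistake region to $K=2$ (since for $K\geq 3$ Condition \ref{con:gamma0xi_small} forces $\gamma_0\goto 0$ and $\cG(\gamma_0)\subset\{m\leq\gamma n\}$) match the paper, and your observation that the calibration of $\xi$ in Condition \ref{con:xi} makes the two regional rates compatible after raising to power $\xi$ is correct. However, the mechanism you use to pass from the integrated-out posterior $\Pi$ to the known-$B$ posterior $\Pi_0$ in the small-mistake region does not actually close.

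You propose to apply the identity \eqref{eq:ZZstarsmall} at $Z$ and at $g(Z)$ and subtract, arriving at $\log\Pi(Z|A)/\Pi(g(Z)|A)\leq\log\Pi_0(Z|A)/\Pi_0(g(Z)|A)+C\gamma mnI+2C_{LM}$, with the justification that the two KL terms are each $o(\bar n I)$ on $\cE_3$. Neither piece of this is right. By Lemma \ref{boundmsmallunknown} each KL term is only $O(m^2 I)$, and Lemma \ref{lm:bound_ec} only gives $|Err|\lesssim(\gamma+\beps)mnI$; when $m$ ranges up to $\gamma n$ these are $O(\gamma^2n^2 I)$, which dominates the target $\bar n I\asymp nI$ unless $\gamma\ll n^{-1/2}$ -- far smaller than the $\gamma$ that must carry the large-mistake region (there the paper needs $\gamma=\eps_{\gamma_0}^2$). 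The difficulty is structural: telescoping two $Z$-versus-$Z^*$ identities inevitably produces a $+\!\sum_{a\leq b}n_{ab}\KL{\cdot}{\hat B_{ab}(g(Z))}$ term of the wrong sign and of size $\Theta(m^2I)$. The paper avoids this entirely by writing the single-step identity \eqref{eq:unknownSmall} directly between $Z$ and $Z'\in\cB(Z)$, so that the lone KL term enters with a favorable negative sign and can simply be dropped, and then bounding $\frac{1}{m}\sum_{Z'\in\cB(Z)}Err(Z,Z')$ via the single-flip structure (Lemma \ref{lm:pathdiffall}, Lemma \ref{lm:pathdiffformula}) to get a per-flip error $O((\gamma+\beps)\bar nI)$, a gain of a factor $1/m$ over naive telescoping. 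You also omit the min-versus-average step $\min_{Z'}\log\frac{\Pi(Z)}{\Pi(Z')}\leq\frac1m\sum_{Z'\in\cB(Z)}\log\frac{\Pi(Z)}{\Pi(Z')}$ followed by a Chernoff bound on $t^*\sum_{Z'\in\cB(Z)}\Delta_n(Z,Z')$ and a union bound over $m\leq\gamma n$ (see \eqref{eq:boundQfirst}--\eqref{eq:refunknownSmallRef}); this probabilistic step is what actually produces the uniform-in-$Z$ statement with a usable failure probability, and it is not a formality.

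The large-mistake region has the same shape of problem. You propose to compute $G(Z)-G(g(Z))$ by applying Lemmas \ref{lm:post_boundGe_1}--\ref{lm:post_boundGe_2} to $R_Z$ and $R_{g(Z)}$ and subtracting; but those lemmas control $G(Z)-G(Z^*)$ only to precision $\Theta(mnI)$, so the difference of the two estimates carries no useful information at the required scale $\Theta(nI)$. The paper instead writes the per-flip decomposition \eqref{eq:likemode_unknown}, $Q_{LM}(Z)-Q_{LM}(Z') = P(Z,Z')+Err_1+Err_2$, and proves the sharp single-flip drift bound $-P(Z,Z')\gtrsim \eps_m^3\max\{1-\beta+1/\alpha,\eps_m\}\,nI/\alpha^2$ via an exact expression for $P(Z,Z')$ in terms of the discrepancy matrix (Lemma \ref{lm:boundPZZ'}), the lower bound $\det(R)\geq n(n-2m)/4\alpha$ (Lemma \ref{detR}), and the count $R_{b'b}\geq(m/2-\beta n/4+n/4\alpha)_+$. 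The exponent four in $(1-K\gamma_0)^4$ and the constant $1/2\alpha^2$ come out of this explicit computation, not from a telescoping heuristic, and your ``after some algebra'' step is exactly where the real work of the lemma is hidden.
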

Note that when the connectivity probabilities are unknown, the initial conditions are different for the case of two communities and the case of more than two communities. In order to prove Lemma \ref{lm:unknownPincrease}, we again divide $S_\alpha$ into a small mistake region and a large mistake region, according to whether $m>\gamma n$, where $\gamma\goto 0$ is a positive sequence to be specified later. It is worth noting that we always start from the likelihood modularity, and then bound the exact posterior distribution.

\begin{proof}[Proof of Lemma \ref{lm:unknownPincrease}]
Under the conditions of Lemma \ref{lm:unknownPincrease}, let $\eps_{\gamma_0} = 1-K\gamma_0$ for simplicity, and we have $\eps_{\gamma_0}^4nI\goto\infty$, $\eps_{\gamma_0}(1-K\beta\gamma_0)n\goto\infty$. Then, for any positive sequences $\beps,\gamma,\theta\goto0$ satisfying that $\beps^2nI\goto \infty$, $\theta^2\gamma nI\goto \infty$, $\eps_{\gamma_0}^2\gg \beps$, and $\eps_{\gamma_0}^3\gg \theta\beps$. To be specific, we can set $\beps = \eps_{\gamma_0}^2/(\eps_{\gamma_0}^4nI)^{1/4}$, $\gamma = \eps_{\gamma_0}^2$, $\theta = 1/\sqrt{\gamma nI}$.

All the following analyses are based on the event $\cE(\beps,\gamma,\theta)$.
\begin{description}[leftmargin = 0cm,labelsep = 0.5cm]
\item[Small mistake region.] We write $\cM_s = \bb{Z\in \cG(\gamma_0): m\leq \gamma n}$. By Lemma \ref{lm:size}, since $\gamma<c_{\alpha,\beta}$, we have that for any $Z\in \cM_s$, $\cB(Z)\subset S_\alpha$. By Lemma \ref{lm:likemodePathdiff}, under the event $\mathcal E_1(\beps)$, we have
\begin{equation}\label{eq:boundQfirst}
\begin{split}
&\max_{Z\in M_s}\min_{Z'\in \cB(Z)}\log \frac{\Pi(Z|A)}{\Pi(Z'|A)}\\
\leq& \max_{Z\in M_s}\frac{1}{m}\sum_{Z'\in \cB(Z)}\log \frac{\Pi(Z|A)}{\Pi(Z'|A)}\\
\leq& \max_{Z\in M_s}\frac{1}{m}\sum_{Z'\in \cB(Z)}\bp{Q_{LM}(Z,A)-Q_{LM}(Z',A)}+\eps_{LM}.    
\end{split}
\end{equation}
Thus, we proceed to upper bound $Q_{LM}(Z,A)-Q_{LM}(Z',A)$. By some calculations, we have
\begin{equation}\label{eq:unknownSmall}
    \begin{split}
        &Q_{LM}(Z,A)-Q_{LM}(Z',A)\\
= &\log \frac{\Pi_0(Z|A)}{ \Pi_0(Z'|A)} 
-\sum_{a\leq a'}n_{aa'} \cdot \KL{\frac{O_{aa'}}{n_{aa'}}}{\frac{O_{aa'}(Z)}{n_{aa'}(Z)}}+\\
&\underbrace{\sum_{a\leq a'}\dO_{aa'} \bp{\log \frac{O_{aa'}(Z)}{n_{aa'}(Z)}-\log B_{aa'}}+ (\dn_{aa'}-\dO_{aa'}) \bp{\log \frac{n_{aa'}(Z)-O_{aa'}(Z)}{n_{aa'}(Z)}-\log(1- B_{aa'})}}_{Err(Z,Z')},
    \end{split}
\end{equation}
where $\log\left[\Pi_0(Z|A)/\Pi_0(Z'|A)\right]$ is calculated in \eqref{eq:referLogPi0Ratio}.
\ignore{\[\log \Pi_0(Z|A)-\log \Pi_0(Z'|A) = \log \frac{p(1-q)}{q(1-p)}\bp{\dO_s-\lambda^* \dn_s}.\]}Now suppose we correct one sample from a misclassified group $b$ to its true group $b'$. Then, by Lemma \ref{lm:pathdiffformula}, we have
\begin{align*}
&\dO_{b'b'}+\dO_{bb} + \dO_{bb'} = 0, ~~\dO_s= \dO_{bb}+\dO_{b'b'},\\
&\dO_{ab}+\dO_{ab'}=0,~~\dO_{aa'}=0,~~~\text{for~any~} a,a'\in[K]\setminus\{b,b'\}.
\end{align*}
Denote $\tB_{aa'} = {\EE{O_{aa'}(Z)}}/{n_{aa'}(Z)}$ and $\hat B_{aa'} = {{O_{aa'}(Z)}}/{n_{aa'}(Z)}$ for any $a,a'\in[K]$. By Lemma \ref{boundtildePtrueP}, we have
\begin{equation}\label{eq:bias}
\begin{split}
    \frac{\abs{\tB_{aa'} -B_{aa'}}}{p-q} = \begin{dcases}
\frac{\sum_{k\neq l}R_{ak}R_{al}}{n'_{a}(n'_a-1)}, &\text{if~} a=a',\\[10pt]
\frac{\sum_{k}R_{ak}R_{a'k}}{n'_{a} n'_{a'}}, &\text{if~} a\neq a',
\end{dcases}
\end{split}
\end{equation}
and $\Norm{\tB-B}_{\infty}\leq {2K\alpha m(p-q)}/{n}$. Under the event $\mathcal E_1(\beps)$ defined in \eqref{eq:allevents}, by Lemma \ref{tildePhatP}, we have
\begin{equation}\label{eq:BhatminusB}
\Norm{\hat B-B}_{\infty} \leq \Norm{\hat B-\tB}_{\infty}+\Norm{\tB-B}_{\infty}\leq \bp{C\beps+\frac{2K\alpha m}{n}}(p-q)\lesssim (\gamma+\beps) (p-q).    
\end{equation}

We then bound $Err(Z,Z')$ in \eqref{eq:unknownSmall} under the event $\mathcal E(\beps,\gamma,\theta)$. Since $p\asymp q$, by some calculations, we have
\begin{align}
\sum_{Z'\in\cB(Z)}Err(Z,Z') &= \sum_{a\leq a'}\log\frac{\hat B_{aa'}(1-B_{aa'})}{B_{aa'}(1-\hat B_{aa'})}\sum_{Z'\in\cB(Z)} \bp{\dO_{aa'}-\lambda^*_{aa'}\dn_{aa'}}\\
 &\leq \frac{C}{p}{\Norm{\hat B-B}_{\infty}}\underbrace{\sum_{a\leq a'}\abs{\sum_{Z'\in\cB(Z)}\bp{\dO_{aa'}-\lambda^*_{aa'}\dn_{aa'}}}}_{(A)} \label{eq:A}
\end{align}
for some constant $C$, where
\begin{align*}
\lambda_{aa'}^* = \log \frac{1-B_{aa'}}{1-\hat B_{aa'}}\Big / \log \frac{\hat B_{aa'}(1-B_{aa'})}{B_{aa'}(1-\hat B_{aa'})}\in \left[ B_{aa'}\wedge \hat B_{aa'},B_{aa'}\vee \hat B_{aa'} \right].
\end{align*}
Under the event $\mathcal E(\beps,\gamma, \theta)$, for any $Z\in \cM_s$, we bound the above term $(A)$ by the following three terms separately,
\begin{align}
\sum_{a\leq a'}\abs{\sum_{Z'\in\cB(Z)}\bp{\dO_{aa'}-\EE{\dO_{aa'}}}}&\lesssim mn(p-q),\\
\sum_{a\leq a'}\abs{\sum_{Z'\in\cB(Z)}\bp{\EE{\dO_{aa'}}-B_{aa'}\dn_{aa'}}}&{\leq \sum_{Z'\in\cB(Z)}\sum_{a\leq a'}\abs{\EE{\dO_{aa'}}-B_{aa'}\dn_{aa'}}}\leq 2Kmn(p-q),\label{eq:refsamearg}\\
\sum_{a\leq a'}\abs{\sum_{Z'\in\cB(Z)}(B_{aa'}-\lambda^*_{aa'})\dn_{aa'}} & \leq \sum_{Z'\in\cB(Z)}\sum_{a\leq a'}\abs{\dn_{aa'}}\cdot\Norm{\hat B-B}_{\infty}\lesssim (\gamma+\beps) mn(p-q).\label{eq:finalref}
\end{align}
The first inequality directly follows by Lemma \ref{lm:pathdiffall}. The second inequality is due to that for each fixed $Z'$, there are at most $2K$ pairs of groups contributing to the summations of the absolute values, and for each summation, there are at most $n$ random variables associated. The third inequality follows by \eqref{eq:BhatminusB} and Lemma \ref{lm:pathdiffformula}. Hence, under the event $\mathcal E(\beps,\gamma, \theta)$, we have
\begin{align*}
\sum_{Z'\in\cB(Z)}Err(Z,Z')\leq C(\gamma+\beps) m\bar n I,
\end{align*}
for some constant $C$ where $\gamma,\beps\goto 0$ as defined in the beginning of the proof. Hence, it follows that
\begin{equation}\label{eq:refunknownSmallRef}
\begin{split}
    &\PP{\max_{Z\in \cM_s} \min_{Z'\in\cB(Z)}\log \frac{\Pi(Z|A)}{ \Pi(Z'|A) }>-\eps \bar nI}\\
\leq &\PP{\max_{Z\in \cM_s}\frac{1}{m}\sum_{Z'\in\cB(Z)}\log\frac{\Pi(Z|A)}{\Pi(Z'|A)} >-\eps \bar nI }\\
\leq &\PP{\max_{Z\in \cM_s}\frac{1}{m}\sum_{Z'\in\cB(Z)}Q_{LM}(Z,A)-Q_{LM}(Z',A) >-\eps \bar nI-\eps_{LM}, \mathcal E} + \PP{\mathcal E^c}\\
\leq&\PP{\max_{Z\in \cM_s}\frac{1}{m}\sum_{Z'\in\cB(Z)}\bp{\log \frac{\Pi_0(Z|A)}{\Pi_0(Z'|A)}+ Err(Z,Z')}>-\eps \bar nI-\eps_{LM}, \mathcal E}+ \PP{\mathcal E^c}\\
\leq&\PP{\max_{Z\in \cM_s}\frac{1}{m}\sum_{Z'\in\cB(Z)}\log \frac{\Pi_0(Z|A)}{\Pi_0(Z'|A)}>-\eps\bar nI-C(\gamma+\beps) \bar nI-\eps_{LM}} +  \PP{\mathcal E^c}\\
=&\PP{\max_{Z\in \cM_s}t^*\sum_{Z'\in\cB(Z)}\Delta_n(Z,Z')>-m\eps\bar nI/2-Cm(\gamma+\beps)\bar nI/2-m\eps_{LM}/2} +  \PP{\mathcal E^c}.
\end{split}
\end{equation}
where $\eps_{LM}\goto 0$ is defined in \eqref{eq:likemodeDiffPath}, and $\eps$ is any small constant satisfying $\eps<2\eps_0$. A simple union bound and following the argument from \eqref{eq:finalterm} to \eqref{eq:PknownPathDiff} lead to that
\[\PP{\max_{Z\in \cM_s} \min_{Z'\in\cB(Z)}\log \frac{\Pi(Z|A)}{ \Pi(Z'|A) }>-\eps \bar nI}\leq C'n\exp\bp{-(1-\eps/2)\bar nI(1-o(1))},\]
for some constant $C'$.

\begin{remark}
Before performing the analysis for the large mistake region, it is worth noting that for the small mistake region, the proof works for any sequence $\gamma\goto 0$. Thus, in the case of more than two communities ($K\geq 3$), if $\gamma_0\goto 0$, by Lemma \ref{lm:stayingood}, $\cG(\gamma_0)\subset \cM_s$ for some sequence $\gamma\goto0$, and then the proof is complete. Therefore, we only need to analyze the large mistake region for $K=2$.
\end{remark}

\item[Large mistake region.] We write $\cM_l = \{Z\in \cG(\gamma_0):m>\gamma n\}$. By the same argument in \eqref{eq:boundQfirst}, we start with $Q_{LM}(Z,A)-Q_{LM}(Z',A)$. For $K=2$ and $Z\in \cM_l$, by some calculations, we have
\begin{equation}\label{eq:likemode_unknown}
    \begin{split}
    &Q_{LM}(Z,A)-Q_{LM}(Z',A) \\[10pt]
=&\sum_{a\leq a'}n_{aa'}(Z)\tau\bp{\frac{O_{aa'}(Z)}{n_{aa'}(Z)}}-\sum_{a\leq a'}n_{aa'}(Z')\tau\bp{\frac{O_{aa'}(Z')}{n_{aa'}(Z')}}\\[10pt]
=& \sum_{a\leq a'}\dO_{aa'} \log \frac{O_{aa'}(Z)}{n_{aa'}(Z)}+\bp{\dn_{aa'}-\dO_{aa'}}\log \bp{1-\frac{O_{aa'}(Z)}{n_{aa'}(Z)}} - \sum_{a\leq a'}n_{aa'}(Z')\KL{\frac{O_{aa'}(Z')}{n_{aa'}(Z')}}{\frac{O_{aa'}(Z)}{n_{aa'}(Z)}}\\[10pt]
\leq & \underbrace{\sum_{a\leq a'}\dO_{aa'}\log \tB_{aa'}+ \bp{\dn_{aa'}-\dO_{aa'}}\log \bp{1-\tB_{aa'}}}_{P(Z,Z')+ Err_1(Z,Z')} +\\[10pt]
&\underbrace{\sum_{a\leq a'}\dO_{aa'}\bp{\log \frac{O_{aa'}(Z)}{n_{aa'}(Z)}-\log \tB_{aa'}}+ \bp{\dn_{aa'}-\dO_{aa'}}\bp{\log \bp{1-\frac{O_{aa'}(Z)}{n_{aa'}(Z)}}-\log \bp{1-\tB_{aa'}}}}_{Err_2(Z,Z')},     
    \end{split}
\end{equation}
where we write
\begin{align}
    P(Z,Z') = & \sum_{a\leq a'}\EE{\dO_{aa'}}\log \tB_{aa'}+ \bp{\dn_{aa'}-\EE{\dO_{aa'}}}\log \bp{1-\tB_{aa'}},\label{eq:PZZ'}\\
    Err_1(Z,Z') = & \sum_{a\leq a'}\bp{\dO_{aa'}-\EE{\dO_{aa'}}}\log \frac{\tB_{aa'}}{1-\tB_{aa'}}.\label{eq:Err1} 
\end{align}
Recall $g(Z)$ and $\cB(Z)$ are defined in \eqref{eq:defGZ} and \eqref{eq:defABN} respectively. Let $N=|B(Z)\cap S_{\alpha}|$. By Lemma \ref{lm:size}, we have $N\geq \min\{cn, m\}$ for some constant $c$.

\begin{description}[leftmargin = 0cm,labelsep = 0.5cm]
    
\item[Step 1: bound $P(Z,Z')$.]

    \ignore{
    Suppose $Z'$ corrects one sample from group $b$ to group $b'$, i.e., $R_{Z'}(b,b') = R_Z(b,b')-1$, $R_{Z'}(b',b') = R_{Z}(b',b')+1$. Then, by Lemma \ref{lm:boundPZZ'}, since $K=2$, we have
\begin{equation}
\begin{split}
    &-P(Z,Z') \\[7pt]
     = &\frac{p-q}{n'_{b'}}\sum_{a=1}^K \log\frac{\tB_{ab'}(1-\tB_{ab})}{\tB_{ab}(1-\tB_{ab'})}\cdot \sum_{l\neq b'} R_{b'l} (R_{ab'}-R_{al}) + \sum_{a=1}^K (n'_a-\delta_{ab'})\cdot \KL{\tB_{ab'}}{\tB_{ab}}\\[7pt]
 = &\frac{R_{b'b}(p-q)}{n'_{b'}}\sum_{a\in\{b,b'\}}\left[\log\bp{\frac{\tB_{ab'}(1-\tB_{ab})}{\tB_{ab}(1-\tB_{ab'})}}(R_{ab'}-R_{ab})\right]+\sum_{a\in\{b,b'\}} (n'_a-\delta_{ab'})\cdot \KL{\tB_{ab'}}{\tB_{ab}},
\end{split}
\end{equation}
where $\delta_{ab'}=1$ if and only if $a=b'$, otherwise $\delta_{ab'}=0$. }

By Lemma \ref{lm:boundPZZ'2}, for $Z\in \cM_{l}$ with $m= d(Z,Z^*)$ and any $Z'\in \cB(Z)\cap S_\alpha$, we have
\[
    -P(Z,Z')\geq \frac{nI}{2\alpha^2}\eps_{m}^3\max\{1-\beta+1/\alpha,\eps_{m}\}(1-o(1))\gtrsim \eps_{m}^3nI,
\]
where $\eps_{m}=1-Km/n$. Note that $m<n/K\beta$ under the condition. The last inequality holds because $\eps_{m}\goto 0$ only if $\beta\goto 1$, and $m/n\goto 1/K$. Thus, $1-\beta+1/\alpha\gg \eps_{m}$. 
\ignore{
\bigskip

\item[Conclusion of Step 1.] We have 
\[
    \max_{Z\in \cM_l}\max_{Z'\in \cB(Z)}P(Z,Z')\leq -C (1-Km/n)^3nI,
\]
for some constant $C$.
}
\bigskip

\item[Step 2: bound $Err_1(Z,Z')$.] Recall that $N = |\cB(Z)\cap S_\alpha|$. Under the event $\mathcal E_6(\gamma,\theta)$ defined in \eqref{eq:allevents}, we have 
    \begin{equation*}
    {\max_{Z\in S_\alpha: m> \gamma n} \frac{1}{N}\sum_{Z'\in \cB(Z)\cap S_\alpha}\sum_{a\leq a'}\abs{\dO_{aa'}-\EE{\dO_{aa'}}}\leq \theta n(p-q)},
    \end{equation*}
    for the positive sequence $\theta\goto 0$ defined in the beginning of the proof. When $K=2$, and $Z'$ corrects one sample from group $b$ to $b'$, by Lemma \ref{lm:pathdiffformula}, $Err_1(Z,Z')$ in \eqref{eq:Err1} can be expressed as
    \begin{align*}
    Err_1(Z,Z') & = (\dO_{bb}-\EE{\dO_{bb}})\log \frac{\tB_{bb}(1-\tB_{b'b})}{\tB_{b'b}(1-\tB_{bb})}+ (\dO_{b'b'}-\EE{\dO_{b'b'}})\log \frac{\tB_{b'b'}(1-\tB_{bb'})}{\tB_{bb'}(1-\tB_{b'b'})}.
    \end{align*}
    By $p\asymp q$ and $(n-2m)(n-2\beta m)/n\goto\infty$, it follows by Lemma \ref{lm:tildeBminus} that
    \begin{align*}
    \frac{1}{N}\sum_{Z'\in \cB(Z)\cap S_\alpha} Err_1(Z,Z') &\leq C_1\frac{\theta n(p-q)}{p}\cdot \bp{\abs{\tB_{b'b'}-\tB_{bb'}}+\abs{\tB_{bb}-\tB_{b'b}}}\\[4pt]
    &\leq C_2\frac{\theta n(p-q)^2\det(R)}{p}\bp{\frac{2\alpha}{n}}^3\bp{\abs{R_{b'b'}-R_{b'b}}+\abs{R_{bb}-R_{bb'}}}\\[4pt]
    &\leq C_3\frac{\theta I\det(R)(n-2m)}{n^2},
    \end{align*}
    for some constants $C_1,C_2,C_3$. Hence, under the event $\mathcal E(\beps, \gamma, \theta)$, where the sequences are defined in the beginning of the proof, if $(n-2m)(n-2\beta m)/n\goto \infty$ and $(n-2m)/n\gg \theta$, then by Lemma \ref{Hbbeachm} we have
\begin{align*}
        \frac{1}{N}\sum_{Z'\in \cB(Z)\cap S_\alpha} Err_1(Z,Z')\ll \frac{\theta I\det(R)(n-2m)}{n^2}\ll
        -\frac{1}{N}\sum_{Z'\in\cB(Z)\cap S_\alpha}P(Z,Z'),
\end{align*}
for any $Z\in \cM_l$.

\bigskip

\item[Step 3: bound $Err_2(Z,Z')$.] Recall $N = |\cB(Z)\cap S_\alpha|$. By \eqref{eq:likemode_unknown}, we have
    \begin{align*}
\frac{1}{N}\sum_{Z'\in \cB(Z)\cap S_\alpha}Err_2(Z,Z') &= \frac{1}{N}\sum_{a\leq a'}\log\frac{\hat B_{aa'}(1-\tB_{aa'})}{\tB_{aa'}(1-\hat B_{aa'})}\sum_{Z'\in\cB(Z)\cap S_\alpha} \bp{\dO_{aa'}-\lambda^*_{aa'}\dn_{aa'}}\\
 &\leq \frac{C}{p}{\Norm{\hat B-\tB}_{\infty}}\sum_{a\leq a'}\abs{\frac{1}{N}\sum_{Z'\in\cB(Z)\cap S_\alpha}\bp{\dO_{aa'}-\lambda^*_{aa'}\dn_{aa'}}},
\end{align*}
where
\begin{align*}
\lambda^*_{aa'} = \log \frac{1-\tB_{aa'}}{1-\hat B_{aa'}}\Big / \log \frac{\hat B_{aa'}(1-\tB_{aa'})}{\tB_{aa'}(1-\hat B_{aa'})}\in \left[ \tB_{aa'}\wedge \hat B_{aa'},\tB_{aa'}\vee \hat B_{aa'} \right].
\end{align*}
Under the event $\cal E(\beps,\gamma,\theta)$ defined in \eqref{eq:allevents}, by Lemma \ref{tildePhatP}, we have $\max_{Z\in S_\alpha}\Norm{\hat B-\wt B}_{\infty}\lesssim \beps(p-q)$. By the same argument from \eqref{eq:BhatminusB} to \eqref{eq:finalref}, we have
    \begin{align*}
    \frac{1}{N}\sum_{a\leq a'}\abs{\sum_{Z'\in \cB(Z)\cap S_\alpha}\bp{\dO_{aa'}-\EE{\dO_{aa'}}}}&\leq \theta n (p-q), \\
    \frac{1}{N}\sum_{a\leq a'}\abs{\sum_{Z'\in \cB(Z)\cap S_\alpha}\bp{\EE{\dO_{aa'}}-\tB_{aa'}\dn_{aa'}}}&\leq C(n-2m)(p-q)\lesssim \eps_m n (p-q), \\
    \frac{1}{N}\sum_{a\leq a'}\abs{\sum_{Z'\in \cB(Z)\cap S_\alpha}(\tB_{aa'}-\lambda_{aa'})\dn_{aa'}}&\leq K^2 n\Norm{\hat B-\tB}_{\infty} \lesssim \beps n(p-q).
    \end{align*}
It follows that for any $Z\in \cM_l$ with $d(Z,Z^*) = m$, 
\[
    \frac{1}{N}\sum_{Z'\in \cB(Z)\cap S_\alpha}Err_2(Z,Z') \lesssim \frac{\beps n(p-q)^2}{p}\bp{\theta +\eps_m+\beps},
\]
where $\eps_{m} = 1-2m/n$. Hence, under the event $\mathcal E(\beps, \gamma, \theta)$, where the sequences are defined in the beginning of the proof. If $\eps_{m}^3\gg \theta\beps$, $\eps_{m}^2\gg \beps$, and $\eps_{m}^3\gg \beps^2$, then we have 
\[
    \frac{1}{N}\sum_{Z'\in \cB(Z)\cap S_\alpha}Err_2(Z,Z')\ll -\frac{1}{N}\sum_{Z'\in \cB(Z)\cap S_\alpha}P(Z,Z'),
\]
for any $Z\in \cM_l$.
\end{description}
\bigskip

By combining all three steps, we require that for all $Z\in \cG(\gamma_0)$ with $d(Z,Z^*)=m$, 
\begin{equation}\label{eq:deltaCondition}
    (1-2m/n)^4 nI\goto \infty,~~(n-2m)(n-2\beta m)/n\goto \infty ~(\text{for Lemma \ref{lm:boundPZZ'2}}).
\end{equation}
By the definition of $\cG(\gamma_0)$ in \eqref{eq:defineGoodRegion} and by Lemma \ref{lm:stayingood}, it suffices to require 
\[
    (1-2\gamma_0)^4nI\goto\infty, ~~~(1-2\gamma_0)(1-2\beta\gamma_0)n\goto\infty.
\]

Recall that $\eps_{\gamma_0} = 1-2\gamma_0$ in the beginning of the proof. Then, under the event $\mathcal E(\beps,\gamma,\theta)$ defined in \eqref{eq:allevents}, by the conclusions of three steps and by Lemma \ref{lm:likemodePathdiff}, we have
\begin{equation}\label{eq:finalUnknownRef}
\begin{split}
    &\max_{Z\in \cM_l}\min_{Z'\in\cB(Z)\cap S_\alpha}\log \frac{\Pi(Z|A)}{\Pi(Z'|A)}\\
\leq &\max_{Z\in \cM_l}\frac{1}{N}\sum_{Z'\in\cB(Z)\cap S_\alpha}\log \frac{\Pi(Z|A)}{\Pi(Z'|A)}\\
\leq &\max_{Z\in \cM_l}\frac{1}{N}\sum_{Z'\in\cB(Z)\cap S_\alpha}(Q_{LM}(Z,A)-Q_{LM}(Z',A)+\eps_{LM})\\
=& \max_{Z\in \cM_l}\frac{1}{N}\sum_{Z'\in\cB(Z)\cap S_\alpha}\bb{P(Z,Z')+Err_1(Z,Z')+Err_2(Z,Z')}+\eps_{LM}\\
=& \max_{Z\in \cM_l}\frac{1}{N}\sum_{Z'\in\cB(Z)\cap S_\alpha}P(Z,Z')(1-o(1))+\eps_{LM}\\
\leq& -\frac{nI}{2\alpha^2}\eps_{\gamma_0}^3\max\{1-\beta+1/\alpha,\eps_{\gamma_0}\}(1-o(1))\\
\leq &-\frac{nI}{2\alpha^2}\eps_{\gamma_0}^4(1-o(1)).
\end{split}
\end{equation}

\end{description}
Combine the results of two regions, and Lemma \ref{lm:unknownPincrease} directly follows.
\end{proof}

\subsection{Proof of Lemma \ref{lm:randomwalk_probratio}}
\label{sec:randomwalk_probratio}
For the simplicity of presentation, we first introduce some notations that will be used in the proof. Denote
\begin{align*}
\wt m = n\max\{\gamma_0,n^{-\tau}\}+\log^2 n, ~~~m^* = n^{1-\tau},
\end{align*}
where $\tau$ is a sufficiently small constant defined in \eqref{eq:defineGoodRegion}. For any $Z\in \cG(\gamma_0)$, we define the following set
\begin{equation}
    \mathcal S(Z,\eta) =\bb{S'\subset \mathcal{N}(Z)\cap S_\alpha:|S'|\geq \eta \abs{{\cB}(Z)\cap S_\alpha}},
\end{equation}
where $\eta$ is a small constant satisfying $2\tau<\eta<1/2$, and $\cA(Z),\cB(Z),\mathcal{N}(Z)$ are defined in \eqref{eq:defABN}. Then, we have the following lemma.
\begin{lemma}\label{lm:randomWalkLemma}  
Suppose $\gamma_0,\xi$ satisfy all conditions for Theorem \ref{thm:mixing} or those for Theorem \ref{thm:mixingknown}, with probability at least $1-\exp(-n^{1-\tau})$, we have
\[
    \max_{Z\in S_\alpha:m^* \leq  m\leq \wt m}\max_{S'\in \mathcal S(Z,\eta)}\min\bb{\min_{Z'\in S'\cap \cB(Z)}\frac{\tPi(Z|A)}{\tPi(Z'|A)},\min_{Z'\in S'\cap \cA(Z)}\frac{\tPi(Z'|A)}{\tPi(Z|A)}} \leq \exp(-C\bar nI),
\]
for some constant $C>1-\eps_0$, and $2\tau<\eta<1/2$.
\end{lemma}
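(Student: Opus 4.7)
My plan is to reduce the claim to showing that the set of \emph{bad} neighbors
\[
\text{Bad}(Z)=\bb{Z'\in\cB(Z)\cap S_\alpha:\tfrac{\tPi(Z|A)}{\tPi(Z'|A)}>e^{-C\bar nI}}\cup\bb{Z'\in\cA(Z)\cap S_\alpha:\tfrac{\tPi(Z'|A)}{\tPi(Z|A)}>e^{-C\bar nI}}
\]
satisfies $|\text{Bad}(Z)|<\eta|\cB(Z)\cap S_\alpha|=\eta m$ uniformly over $Z\in S_\alpha$ with $m\in[m^*,\wt m]$, with probability at least $1-\exp(-n^{1-\tau})$. Once this is established, the pigeonhole principle forces every $S'\in\mathcal S(Z,\eta)$ to contain at least one \emph{good} neighbor in $\cA(Z)\cup\cB(Z)$, which delivers the stated bound on the inner minimum.

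For each fixed pair $(Z,Z')$ with $Z'\in\cB(Z)\cap S_\alpha$, I would extract a pointwise Chernoff bound by adapting the computation in the proof of Lemma~\ref{lm:knownPincrease} (equations~\eqref{eq:chernoff}--\eqref{eq:chernoff2}) for the small-mistake regime, and the decomposition $Q_{LM}(Z,A)-Q_{LM}(Z',A)=P(Z,Z')+Err_1(Z,Z')+Err_2(Z,Z')$ used in the proof of Lemma~\ref{lm:unknownPincrease} for the large-mistake regime; the posterior--likelihood discrepancy is absorbed by Lemma~\ref{lm:likemode}. This yields $\mathbb E[\sqrt{\Pi(Z|A)/\Pi(Z'|A)}]\leq\exp(-(1-o(1))\bar nI)$ and, after raising to the power $\xi$ to pass to $\tPi\propto\Pi^\xi$ and applying Markov, $\mathbb P(\tPi(Z|A)/\tPi(Z'|A)>e^{-C\bar nI})\leq\exp(-(1-o(1))c_0\bar nI)$ for some $c_0>0$ and any $C$ in the nonempty interval $(1-\eps_0,\,2\xi\eps_0)$ guaranteed by Conditions~\ref{con:xi}, \ref{con:gamma0xi_small}, or \ref{con:pqknowncon}. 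The case $Z'\in\cA(Z)\cap S_\alpha$ is handled symmetrically after interchanging the roles of $Z$ and $Z'$.

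A Markov inequality applied to $|\text{Bad}(Z)|$ then gives $\mathbb P(|\text{Bad}(Z)|\geq\eta m)\leq\exp(-(1-o(1))c_0\bar nI+O(\log n))$, and a final union bound over the at most $\exp(O(\wt m\log n))$ states $Z$ with $m\in[m^*,\wt m]$ reduces the overall failure probability to $\exp(-n^{1-\tau})$, using $m\geq m^*=n^{1-\tau}$ together with $\bar nI\gtrsim\log n$ and the slack in the mixing-time conditions on $\xi$ to dominate the $\wt m\log n$ union-bound factor.

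The main obstacle will be the pointwise Chernoff step in the large-mistake regime for $K=2$, where the posterior gap is only of order $(1-2m/n)^4 nI$ and depends delicately on the matrix $R_Z$; there one must control the fluctuation terms $Err_1,Err_2$ on the event $\cE(\bar\eps,\gamma,\theta)$ of~\eqref{eq:allevents} uniformly across $Z'$, closely mirroring the averaged analysis already carried out in the proof of Lemma~\ref{lm:unknownPincrease} but now extracted as a bound on each individual neighbor. For $K\geq 3$ under Condition~\ref{con:gamma0xi_small} the situation simplifies: $\gamma_0=o(1)$ places us in the small-mistake regime throughout, and the simpler Chernoff computation from Lemma~\ref{lm:knownPincrease} suffices.
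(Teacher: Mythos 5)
Your pigeonhole reformulation---bound $|\text{Bad}(Z)|$ by $\eta|\cB(Z)\cap S_\alpha|$---is a natural rephrasing, but the concentration step is too weak to close the argument. Markov's inequality on $|\text{Bad}(Z)|$ gives
\[
\Prob\bigl(|\text{Bad}(Z)|\geq\eta m\bigr)\leq\frac{\mathbb{E}|\text{Bad}(Z)|}{\eta m}\leq\frac{Kn\exp(-c_0\bar nI)}{\eta m},
\]
a bound of the form $\exp(-c_0\bar nI+O(\log n))$ that does \emph{not} scale with $m$. The union bound over $Z$ with $m\in[m^*,\wt m]$ contributes roughly $\sum_m\binom{n}{m}(K-1)^m\approx\exp\bigl(\Theta(m^*\log n)\bigr)=\exp\bigl(\Theta(n^{1-\tau}\log n)\bigr)$ terms, so in the near-critical regime $\bar nI\asymp\log n$ the product diverges rather than yielding $\exp(-n^{1-\tau})$. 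No choice of $\xi$ repairs this: $\xi$ only tunes the admissible constant $C$ in the posterior-ratio threshold and plays no role in the failure probability, which is over the data $A$ and is determined by the pointwise Chernoff exponent.

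The paper's proof of Lemmas \ref{lm:rw_case1}--\ref{lm:rw_case2} sidesteps this by applying the moment-generating-function (Chernoff) bound directly to the $S'$-averaged log-posterior-ratio $\tfrac{1}{|S'|}\bigl(\sum_{Z'\in S'\cap\cB(Z)}\Delta_n(Z,Z')+\sum_{Z'\in S'\cap\cA(Z)}\Delta_n(Z',Z)\bigr)$ (with the analogous $Q_{LM}$-based decomposition in the unknown-$B$ case), rather than to any single pair. Because this sum aggregates $|S'|$ nearly independent contributions---each Bernoulli $A_{ij}$ appearing with bounded multiplicity---the tail bound scales as $\exp(-C_{\gamma,\eps}|S'|\bar nI)\leq\exp(-C_{\gamma,\eps}\eta m\bar nI)$, which grows with $m$ and therefore dominates both the $\binom{n}{m}(K-1)^m$ count of label assignments and the $\binom{Kn}{|S'|}$ count of subsets; see the display culminating in \eqref{eq:exp_eta}. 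To rescue your argument you would need to replace Markov on $\mathbb{E}|\text{Bad}(Z)|$ with a bound on the joint event that an entire subset $S'$ with $|S'|\geq\eta m$ is simultaneously bad (handling the dependence across neighbors), which is precisely what the paper's averaged quantity accomplishes.
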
 

To understand Lemma \ref{lm:randomWalkLemma}, we say that $Z'\in \cN(Z)$ is \textit{making a mistake} if $Z'\in \cB(Z)$ but rejected, or $Z'\in \cA(Z)$ but accepted. Lemma \ref{lm:randomWalkLemma} implies that under all the conditions, for any current state $Z$ with $m\in[m^*,\wt m]$, if we make at least $\eta|\cB(Z)\cap S_\alpha|$ different choices of $Z'$, then there is at least one $Z'$ such that it is not making a mistake with high probability. In other words, it holds with high probability that $Z'$ will make less than $\eta|\cB(Z)\cap S_\alpha|$ mistakes among all possible choices.

Though Lemma \ref{lm:randomWalkLemma} seems very similar to Lemma \ref{lm:knownPincrease}, Lemma \ref{lm:knownPincrease} works for all $Z\in\cG(\gamma_0)$, and focuses on the posterior ratio of the current state and the next possible state in set $\cB(Z)$, while Lemma \ref{lm:randomWalkLemma} works for $Z\in\cG(\gamma_0)$ with $m\in [m^*,\wt m]$, and also bounds the probability of updating to $\cA(Z)$.

\begin{proof}[Proof of Lemma \ref{lm:randomwalk_probratio}]
By Lemma \ref{lm:randomWalkLemma}, with probability at least $1-\exp(-n^{1-\tau})$, for any $Z\in \cG(\gamma_0)$ with $m\in [m^*,\wt m]$, by \eqref{eq:defpmqm}, we have that 
\begin{align*}
p_m(Z)&=p(Z,\cA(Z)) = \frac{1}{2(K-1)n} \sum_{Z'\in \cA(Z)\cap S_\alpha}\min \bb{1,\frac{\tPi_g(Z'|A)}{\tPi_g(Z|A)}}\leq \frac{\eta |\cB(Z)\cap S_\alpha|+\eps}{2(K-1)n},\\
q_m(Z)&=p(Z,\cB(Z)) = \frac{1}{2(K-1)n} \sum_{Z'\in \cB(Z)\cap S_\alpha}\min \bb{1,\frac{\tPi_g(Z'|A)}{\tPi_g(Z|A)}}\geq \frac{(1-\eta)|\cB(Z)\cap S_\alpha|}{2(K-1)n},
\end{align*}
where $\eps=n\exp(-C\bar nI)\goto 0$ for some $C>1-\eps_0$. It follows that with probability at least $1-\exp(-n^{1-\tau})$, ${p_m(Z)}/{q_m(Z)}\leq 2\eta$ holds for any $Z\in \cG(\gamma_0)$ with $m\in [m^*,\wt m]$.
\end{proof}

In order to prove Lemma \ref{lm:randomWalkLemma}, we first state two lemmas according to whether the connectivity probabilities are known or not. In the case of known connectivity probabilities, we use $\Pi_0(\cdot|A)$ to denote the posterior distribution.
\begin{lemma}\label{lm:rw_case1}
    When $p,q$ are both known, given $\tau$ sufficiently small and $\eta$ satisfying $2\tau<\eta<1/2$, if $(1-K\alpha\gamma_0)^2nI\goto \infty$, then we have
    \begin{align*}
    &\max_{S'\in \mathcal S(Z,\eta)}\min\bb{\min_{Z'\in S'\cap \cB(Z)}\frac{\Pi_0(Z|A)}{\Pi_0(Z'|A)},\min_{Z'\in S'\cap \cA(Z)}\frac{\Pi_0(Z'|A)}{\Pi_0(Z|A)}}\\
    \leq &\begin{dcases}
    \exp\bp{-\eps\bar nI}, &\text{if~} m^*\leq m\leq \gamma n, \\
    \exp\bp{-{4(1/K\alpha-\gamma_0)nI}(1-o(1))}, &\text{if~} \gamma n< m\leq \wt m,
    \end{dcases}
    \end{align*}
    holds uniformly for all $Z\in \cG(\gamma_0)$ with $m\in [m^*, \wt m]$ and some sequence $\gamma\goto 0$, with probability at least $1-\exp\bp{-n^{1-\tau}}$. Here, $\eps$ is any small constant satisfying $\eps<2\eps_0$.
\end{lemma}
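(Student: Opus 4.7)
The plan is to reduce the claim that every large subset $S'\in \mathcal{S}(Z,\eta)$ contains a good $Z'$ to a counting bound: let $\mathcal{K}(Z)$ denote the number of \emph{bad} $Z'\in \cN(Z)\cap S_\alpha$, where $Z'\in \cB(Z)$ is called bad if $\Pi_0(Z|A)/\Pi_0(Z'|A)$ exceeds the prescribed threshold and $Z'\in \cA(Z)$ is called bad if $\Pi_0(Z'|A)/\Pi_0(Z|A)$ does. If $\mathcal{K}(Z)<\eta|\cB(Z)\cap S_\alpha|$, then every $S'\in\mathcal{S}(Z,\eta)$ must contain at least one good element, so it suffices to establish this counting bound uniformly over $Z\in\cG(\gamma_0)$ with $m\in[m^*,\wt m]$ with the stated probability $1-\exp(-n^{1-\tau})$.

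For the large mistake region $\gamma n<m\leq \wt m$, I will reuse the mean bound \eqref{eq:boundknownlargeeq}, namely $\EE{\Delta_n(Z,Z')}\leq -\delta_0 n(p-q)(1-o(1))$ with $\delta_0=1/(K\alpha)-\gamma_0$, together with the concentration event $\mathcal{E}_6(\gamma,\theta)$ from \eqref{eq:allevents}, which holds with probability at least $1-\exp(-n)$ by Lemma \ref{lm:pathdifflargeMis}. Under $\mathcal{E}_6$, the average of $|\Delta_n(Z,Z')-\EE{\Delta_n(Z,Z')}|$ over $Z'\in \cB(Z)\cap S_\alpha$ is at most $\theta n(p-q)$, so a Markov argument bounds the number of $Z'$ for which the deviation exceeds $\delta_0 n(p-q)/2$ by $(2\theta/\delta_0)|\cB(Z)\cap S_\alpha|$. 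Choosing $\theta\ll \eta\delta_0$, which is admissible since $\delta_0^2 nI\to\infty$, makes this strictly smaller than $\eta|\cB(Z)\cap S_\alpha|$; an identical bound transfers to $\cA(Z)\cap S_\alpha$ by viewing a move in $\cA(Z)$ as the reverse of a move in $\cB(Z')$ for a state $Z'$ at distance $m+1$ from $Z^*$.

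For the small mistake region $m^*\leq m\leq \gamma n$, mean concentration is too weak and I will instead refine the exponential moment bound \eqref{eq:Pi0ratioPi0'} from the proof of Lemma \ref{lm:knownPincrease} to sums over subsets. For any $S'\subset \cN(Z)\cap S_\alpha$ of size $s$, I aim to show
\[
\EE{\exp\bp{t^*\sum_{Z'\in S'}\Delta_n(Z,Z')}}\le \exp\bp{-s(1-o(1))\bar n I}
\]
by factorising over the edge variables $\{A_{ij}\}$. Since each $A_{ij}$ enters $\Delta_n(Z,Z'_k)$ only when $k\in\{i,j\}$, it appears at most twice in the sum, yielding a product MGF with at most $\binom{s}{2}$ doubly-loaded entries; each such entry contributes an $\exp(O(p+q))$ correction, and the total correction $\exp(O(s^2(p+q)))$ is $\exp(o(s\bar n I))$ when $s\leq |\cB(Z)\cap S_\alpha|=O(m)\ll n$. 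Markov's inequality with threshold $w=\eps\bar n I/(2t^*)$ then gives the per-subset bound $\exp(-s(1-\eps/2-o(1))\bar n I)$ on the event that all $Z'\in S'$ are bad, and summing over subsets of size $s\ge k_0=\lceil\eta|\cB(Z)\cap S_\alpha|\rceil\ge \eta c n^{1-\tau}$ via $\binom{|\cN(Z)\cap S_\alpha|}{s}\le (en/s)^{s}$ produces a per-$Z$ failure probability at most $\exp(-c' n^{1-\tau}\log n)$, because $\bar n I\ge (1-\eps_0)^{-1}(1-o(1))\log n$ and $\eps<2\eps_0$ leave a strictly positive margin. A final union bound over the at most $\exp(O(\gamma n\log n))$ label assignments $Z\in\cG(\gamma_0)$ with $m\in[m^*,\wt m]$ is absorbed into this decay by choosing $\tau$ and $\gamma$ sufficiently small.

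The main obstacle will be rigorously controlling the MGF correction from the $\binom{s}{2}$ shared edge variables in the small mistake region: one must verify that this correction is of genuinely lower order than the linear $s\bar n I$ gain, which relies on combining sparsity ($p,q\to 0$, giving $\exp(O(p+q))$ per shared pair), the bound $s\le |\cB(Z)\cap S_\alpha|=O(m)\ll n$ inherited from working inside $\cB(Z)\cap S_\alpha$, and the signal relation $\bar n I\asymp \log n\gg n(p+q)$ implicit in Condition~\ref{con:pqknowncon}.
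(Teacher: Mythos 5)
Your reduction — count the bad $Z'$ and show there are fewer than $\eta|\cB(Z)\cap S_\alpha|$ of them — is a clean and correct reformulation, and the split into small-$m$ (Chernoff/MGF plus union bound) and large-$m$ (mean plus $\mathcal{E}_6$-type concentration plus Markov) is the same route the paper takes. However, two of the steps contain genuine errors, and there are two smaller inaccuracies worth fixing.

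First, the MGF-correction argument in the small-mistake regime does not close as written. You claim each shared edge contributes an $\exp(O(p+q))$ factor and that the ``signal relation'' $\bar nI\asymp\log n\gg n(p+q)$ holds. The latter is false: the strong-consistency condition only gives $\bar nI\gtrsim\log n$, while $np$ can be polynomially large (e.g.\ take $p=n^{-0.1}$ and choose $q$ with $p-q$ tuned so that $I\asymp\log n/n$; then $n(p+q)\asymp n^{0.9}\gg\log n$). Consequently $\exp(O(s^2(p+q)))$ need not be $\exp(o(s\bar nI))$. The correct per-shared-edge bound is $\exp(O(I))$, which is exactly Lemma \ref{lm:boundExpKnowP}; then the total correction is $\exp(O(s^2I))$, and $s^2I\leq sm I\leq s\gamma nI=s\gamma\bar nI(1+o(1))\ll s\bar nI$ because $\gamma\to 0$. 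This is what makes the paper's asserted bound $\EE{\exp(t^*\sum\Delta_n)}\leq\exp(-(1-C\gamma)|S'|\bar nI)$ hold.

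Second, the union bound over $Z$ is too coarse. You propose a per-$Z$ failure probability of $\exp(-c'n^{1-\tau}\log n)$ and then union over the ``at most $\exp(O(\gamma n\log n))$'' assignments, but the small-mistake region $m^*\leq m\leq\gamma n$ requires $\gamma n\geq m^*=n^{1-\tau}$, so $\exp(O(\gamma n\log n))$ is already at least $\exp(O(n^{1-\tau}\log n))$ and the blanket union bound need not be dominated. One has to do the bound per $m$: for a fixed $m$, the per-$Z$ rate is $\exp(-c\eta m\bar nI)$ (scaling with $m$, not with $m^*$), which beats $\binom{n}{m}(K-1)^m\binom{Kn}{\eta m}\lesssim\exp\bp{(\eta+1)m\log(eKn/(\eta m))}$ precisely when $\eta>2\tau$; this is the paper's chain of inequalities culminating in \eqref{eq:exp_eta}, where the worst case is $m=m^*$.

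Two smaller points. In your subset sum the $\cA(Z)$ part should use $\Delta_n(Z',Z)$ rather than $\Delta_n(Z,Z')$ (for $Z'\in\cA(Z)$, the controlled quantity is $\Pi_0(Z'|A)/\Pi_0(Z|A)=\exp(2t^*\Delta_n(Z',Z))$, and it is $\Delta_n(Z',Z)$ that has negative expectation); as written the MGF of $\sum_{Z'\in S'}\Delta_n(Z,Z')$ is not small. And in the large-mistake regime, taking the Markov margin to be $\delta_0 n(p-q)/2$ only yields the exponent $2\delta_0 nI$, not the claimed $4\delta_0 nI(1-o(1))$; since $t^*(p-q)\geq 2I(1-o(1))$, you need the deviation margin to be $o(\delta_0 n(p-q))$, which you can achieve by choosing $\theta$ a bit smaller (the flexibility from $\delta_0^2 nI\to\infty$ leaves room). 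Both of these are cosmetic fixes; the two issues in the preceding paragraphs are the substantive ones.
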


\begin{lemma}\label{lm:rw_case2}
Given $\tau$ sufficiently small and $\eta$ satisfying $2\tau<\eta<1/2$. Suppose $\gamma_0$ satisfies Condition \ref{con:gamma0} or \ref{con:gamma0xi_small}, there exists some positive sequence $\gamma\goto 0$ such that with probability at least $1-\exp\bp{-n^{1-\tau}}$,
\begin{align*}
&\max_{S'\in \mathcal S(Z,\eta)}\min\bb{\min_{Z'\in S'\cap \cB(Z)}\frac{\Pi(Z|A)}{\Pi(Z'|A)},\min_{Z'\in S'\cap \cA(Z)}\frac{\Pi(Z'|A)}{\Pi(Z|A)}}\\
\leq &\begin{dcases}
\exp\bp{-\eps\bar nI(1-o(1))}, &\text{if~} m^*\leq m\leq \gamma n,\\
\exp\bp{-\frac{(1-K\gamma_0)^4nI}{2\alpha^3}(1-o(1))}, &\text{if~} \gamma n<m \leq \wt m,
\end{dcases}
\end{align*}
    holds uniformly for all $Z\in \cG(\gamma_0)$ with $m\in [m^*, \wt m]$. Here, $\eps$ is any constant satisfying $\eps<2\eps_0$.
\end{lemma}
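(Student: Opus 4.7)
The plan is to prove Lemma~\ref{lm:rw_case2} by strengthening the averaged posterior-ratio estimate of Lemma~\ref{lm:unknownPincrease} into a count bound on ``bad'' neighbors. Call $Z' \in \cN(Z) \cap S_\alpha$ \emph{bad} if either $Z' \in \cB(Z)$ and $\Pi(Z|A)/\Pi(Z'|A)$ fails the exponential bound claimed in the lemma, or $Z' \in \cA(Z)$ and $\Pi(Z'|A)/\Pi(Z|A)$ does. I will show that the number of bad neighbors is strictly less than $\eta \abs{\cB(Z) \cap S_\alpha}$ uniformly over $Z \in \cG(\gamma_0) \cap \{m^* \leq m \leq \wt m\}$ with probability at least $1 - \exp(-n^{1-\tau})$. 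A pigeonhole then finishes the proof: every $S' \in \mathcal{S}(Z,\eta)$ has $\abs{S'} \geq \eta \abs{\cB(Z) \cap S_\alpha}$, so $S'$ must contain at least one good neighbor, making one of the two inner mins at most the targeted exponential, and the max over $S'$ inherits the same bound.

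For the per-$Z'$ analysis, I will reuse the likelihood-modularity decomposition from Lemma~\ref{lm:unknownPincrease}:
\[
\log \frac{\Pi(Z|A)}{\Pi(Z'|A)} = P(Z,Z') + Err_1(Z,Z') + Err_2(Z,Z') + O(1).
\]
The deterministic main term admits a uniform per-$Z'$ lower bound $-P(Z,Z') \gtrsim \eps_m^3 nI$ with $\eps_m = 1 - Km/n$ for $Z' \in \cB(Z) \cap S_\alpha$ (from Step~1 of Lemma~\ref{lm:unknownPincrease}), and by the role-swap $Z \in \cB(Z')$ the analogous bound $-P(Z',Z) \gtrsim \eps_{m+1}^3 nI$ holds for $Z' \in \cA(Z) \cap S_\alpha$. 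For the small-mistake region $m \leq \gamma n$, I will adapt the MGF calculation~\eqref{eq:chernoff}--\eqref{eq:Pi0ratioPi0'} together with the unknown-$p,q$ perturbation used in~\eqref{eq:refunknownSmallRef} to obtain the $\exp(-\eps \bar n I(1-o(1)))$ bound per $Z'$ in both directions.

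To count bad neighbors, I convert the averaged control on $Err_1, Err_2$ (available under the good event $\mathcal{E}(\beps,\gamma,\theta)$ of~\eqref{eq:defE} via $\mathcal{E}_5, \mathcal{E}_6$) into a per-$Z'$ count: Markov's inequality applied to the sums $\sum_{Z'} \abs{Err_j(Z,Z')}$ yields at most $O(S/\delta)$ neighbors with $\abs{Err_j(Z,Z')} > \delta$, and choosing $\delta$ to be a small multiple of $\abs{P(Z,Z')}$ shows the bad count is a vanishing fraction of $\abs{\cN(Z) \cap S_\alpha}$. The slightly weaker constant $1/(2\alpha^3)$ (vs.\ $1/(2\alpha^2)$ in Lemma~\ref{lm:unknownPincrease}) is precisely the slack needed to absorb this counting, since $\abs{\cN(Z) \cap S_\alpha} \leq Kn$ while $\abs{\cB(Z) \cap S_\alpha} \gtrsim m \gtrsim n/(K\alpha)$ in the large-mistake regime.

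The principal obstacle will be boosting the failure probability from the $n^{-C}$ scale (sufficient for Lemma~\ref{lm:unknownPincrease}) to the $\exp(-n^{1-\tau})$ scale demanded here. I plan to obtain this by combining the exponentially small failure of $\mathcal{E}(\beps,\gamma,\theta)$ itself (which is already of this order in the supporting lemmas for the events in~\eqref{eq:allevents}) with an exponential-moment argument on $\sum_{Z' \in \cB(Z) \cap S_\alpha} (\Pi(Z|A)/\Pi(Z'|A))^t$, exploiting that distinct single-flip moves touch largely disjoint Bernoulli entries $A_{ij}$ and hence their contributions have a near-product MGF. A final union bound over $Z \in \cG(\gamma_0) \cap \{m^* \leq m \leq \wt m\}$, whose cardinality is at most $K^n$, must then be absorbed by this exponential tail, which is possible since $n \ll n^{1-\tau}$ fails but the tail we gain from the exponential-moment step scales like $\exp(-c\, \bar n I \cdot \abs{\cB(Z) \cap S_\alpha})$, comfortably dominating $K^n$.
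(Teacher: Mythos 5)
Your overall strategy --- count the ``bad'' neighbors and use a pigeonhole to show any $S'\in\mathcal S(Z,\eta)$ must contain a good one --- is a correct reformulation of what the paper is doing; the event $\bb{\max_{S'}\min(\cdots)>\text{target}}$ is literally the event that at least $\eta\abs{\cB(Z)\cap S_\alpha}$ neighbors are bad, so the two framings are equivalent. The paper reaches this via the inequality ``$\min$ over $S'$ is at most the average over $S'$,'' then a union bound over all $S'$ and a Chernoff/MGF estimate per $S'$; see \eqref{eq:random}--\eqref{eq:exp_eta} in Lemma~\ref{lm:rw_case1}, which Lemma~\ref{lm:rw_case2} Case~1 reuses. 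Your treatment of the deterministic $P(Z,Z')$ term and the Markov-on-the-error-sums step for Case~2 is also in line with the paper (via $\cE_5,\cE_6$, which hold with probability $\geq 1-e^{-n}$).

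The genuine gap is in your exponential-tail step. You propose ``an exponential-moment argument on $\sum_{Z'\in\cB(Z)\cap S_\alpha}(\Pi(Z|A)/\Pi(Z'|A))^t$.'' Taking $\mathbb{E}$ of this \emph{sum} and applying Markov bounds the probability that the count of bad $Z'$ exceeds $\eta N$ only at the scale $\exp(-c\bar n I)$, i.e.\ $n^{-c}$: the $N$'s cancel and you never gain the extra factor of $N$ in the exponent. That is insufficient to survive the union over $Z$ with $m\in[m^*,\wt m]$, whose cardinality grows like $\exp(m^*\log n)$. What actually produces the $\exp(-c\,\bar n I\cdot|S'|)$ tail is the near-product bound on the \emph{product} $\mathbb{E}\prod_{Z'\in S'}(\Pi(Z|A)/\Pi(Z'|A))^t$ for a \emph{fixed} $S'$ (equivalently, the MGF of the linear functional $\sum_{Z'\in S'}\Delta_n(Z,Z')$, as in \eqref{eq:finalterm}--\eqref{eq:PknownPathDiff}), followed by a union over all $S'$ of size $\geq\eta N$; the binomial counting factor $\sum_{|S'|\geq\eta m}\binom{Kn}{|S'|}$ is absorbed precisely because $\eta>2\tau$ and $\bar n I>\log n$, exactly as in \eqref{eq:exp_eta}. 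So your plan needs to replace the sum-of-ratios moment computation with (i) the reduction $\min\leq$ average over $S'$, (ii) the per-$S'$ Chernoff bound for $\sum_{Z'\in S'}\log\bigl(\Pi(Z|A)/\Pi(Z'|A)\bigr)$ using near-independence of the single-flip contributions, and (iii) the union over $S'$ and $Z$. Also, your heuristic that the constant $1/(2\alpha^3)$ (versus $1/(2\alpha^2)$ in Lemma~\ref{lm:unknownPincrease}) is ``slack for the counting'' is not how the paper obtains it --- the Case~2 derivation there simply follows \eqref{eq:finalUnknownRef}, and the extra $\alpha$ does not come from the bad-neighbor count.
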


The proofs of Lemma \ref{lm:rw_case1} and Lemma \ref{lm:rw_case2} will be presented in the sequel. We first proceed to prove Lemma \ref{lm:randomWalkLemma} based on these two lemmas.
\begin{proof}[Proof of Lemma \ref{lm:randomWalkLemma}]
The result directly follows Lemma \ref{lm:rw_case1} and Lemma \ref{lm:rw_case2} by choosing $\xi$ properly in Theorem \ref{thm:mixing} or Theorem \ref{thm:mixingknown}. Then, by choosing $\eps\in\bp{(1-\eps_0)/\xi,2\eps_0}$, we have that
    \begin{equation*}
        \max_{Z\in S_\alpha:m^* \leq  m\leq \wt m}\max_{S'\in \mathcal S(Z,\eta)}\min\bb{\min_{Z'\in S'\cap \cB(Z)}\frac{\tPi(Z|A)}{\tPi(Z'|A)},\min_{Z'\in S'\cap \cA(Z)}\frac{\tPi(Z'|A)}{\tPi(Z|A)}} \leq \exp(-C\bar nI),
    \end{equation*}
    for some constant $C>1-\eps_0$ with probability at least $1-\exp(-n^{1-\tau})$ for the sufficiently small constant $\tau$.
\end{proof}

We finally present the proofs of Lemma \ref{lm:rw_case1} and Lemma \ref{lm:rw_case2} to complete this section.
\begin{proof}[Proof of Lemma \ref{lm:rw_case1}]
We consider any positive sequences $\gamma, \theta$ satisfying $\gamma,\theta\goto0$, $\theta^2\gamma nI\goto\infty$, and $\theta\ll 1-K\alpha\gamma_0$. Suppose $\gamma n\in [m^*,\wt m]$, and we perform analyses for the following cases.
\begin{description}[leftmargin = 0cm,labelsep = 0.5cm]
    \item[Case 1: $m^*\leq m\leq \gamma n$.] Since the minimum is upper bounded by the average, it follows that 
\begin{align}
&\log\max_{Z:m^*\leq m\leq \gamma n}\max_{S'\in \mathcal S(Z,\eta)}\min\bb{\min_{Z'\in S'\cap \cB(Z)}\frac{\Pi_0(Z|A)}{\Pi_0(Z'|A)},\min_{Z'\in S'\cap \cA(Z)}\frac{\Pi_0(Z'|A)}{\Pi_0(Z|A)}}\label{eq:random}\\
 \leq &\max_{Z:m^*\leq m\leq \gamma n}\max_{S'\in \mathcal S(Z,\eta)}\frac{1}{|S'|}\bp{\sum_{Z'\in S'\cap \cB(Z)}\log \frac{\Pi_0(Z|A)}{\Pi_0(Z'|A)}+\sum_{Z'\in S'\cap \cA(Z)}\log \frac{\Pi_0(Z'|A)}{\Pi_0(Z|A)}}\nonumber\\
 = &\max_{Z:m^*\leq m\leq \gamma n}\max_{S'\in \mathcal S(Z,\eta)}\frac{2t^*}{|S'|}\bp{\sum_{Z'\in S'\cap \cB(Z)}\Delta_n(Z,Z')+\sum_{Z'\in S'\cap \cA(Z)}\Delta_n(Z',Z)}\nonumber,
\end{align}
where $\Delta_n(Z,Z')$ is defined in \eqref{eq:DeltaZZ'knownP}. By a similar argument from \eqref{eq:finalterm} to \eqref{eq:PknownPathDiff}, we have
\begin{align*}
\EE{t^*\bp{\sum_{Z'\in S'\cap \cB(Z)}\Delta_n(Z,Z')+\sum_{Z'\in S'\cap \cA(Z)}\Delta_n(Z',Z)}}\leq \exp(-(1-C\gamma)|S'|\bar nI).
\end{align*} 
We also have $|\cB(Z)\cap S_\alpha| = m$ by Lemma \ref{lm:size}. For any small constant $\eps<2\eps_0$, write $C_{\gamma,\eps} = 1-C\gamma -\eps/2$ for simplicity. It follows that
\begin{align}
\PP{\eqref{eq:random}\geq -\eps \bar nI}&\leq \underbrace{\sum_{m^*\leq m \leq \gamma n}{n\choose m}(K-1)^m}_{\text{all possible $Z$}}~~~\underbrace{\sum_{|S'|\geq \eta m}{Kn\choose |S'|}}_{\text{all possible $S'$}}~~~\underbrace{\exp\bp{-C_{\gamma,\eps}|S'|\bar nI}}_{\text{bound for each given $Z$ and $S'$}}\nonumber\\
&\lesssim  \sum_{m^*\leq m\leq \gamma n}{n\choose m}(K-1)^m{Kn\choose \eta m}\exp\bp{-C_{\gamma,\eps}\eta m\bar nI}\nonumber\\
&\lesssim  {n\choose m^*}(K-1)^{m^*}{Kn\choose \eta m^*}\exp\bp{-C_{\gamma,\eps}\eta m^*\bar nI}\nonumber\\
&\lesssim \exp\bp{-C_{\gamma,\eps}\eta m^*\bar nI+(\eta+1) m^* \log \frac{eKn}{\eta m^*}}. \label{eq:exp_eta}
\end{align}
For any sufficiently small $\tau$, when $\eta>2\tau$ and $m^*=n^{1-\tau}$, it is easy to check that
\[
    \eqref{eq:exp_eta} \lesssim \exp\bp{-m^*\bp{C_{\gamma,\eps}\eta \bar nI - (\eta+1)\log\frac{eKn^{\tau}}{\eta}}}\leq e^{-m^*}.
\]

\item[Case 2: $\gamma n<m\leq \wt m$.] Recall that $\dO_s(Z,Z') = O_s(Z)-O_s(Z')$ for any label assignments $Z$ and $Z'$. By \eqref{eq:referLogPi0Ratio} and \eqref{eq:boundknownlargeeq}, we have that 
\begin{align*}
  \log\frac{\Pi_0(Z|A)}{\Pi_0(Z'|A)}&=2t^*\Delta_n(Z,Z'),\\
  \max_{Z\in\cG(\gamma_0)}\max_{Z'\in \cB(Z)\cap S_\alpha}\EE{\Delta_n(Z,Z')}&\leq -\bp{\frac{1}{K\alpha}-\gamma_0}(p-q)(1-o(1)).
\end{align*}
Since $|S'|\geq \eta |\cB(Z)\cap S_\alpha|$, by the same proof in Lemma \ref{lm:pathdifflargeMis}, we have that with probability at least $1-\exp(-n)$, 
\begin{align*}
&\max_{Z\in S_\alpha: \gamma n <m\leq \wt m}\max_{S'\in S(Z,\alpha)}\bb{\frac{1}{|S'|}\sum_{Z'\in S'}\abs{\Delta_n(Z,Z')-\EE{\Delta_n(Z,Z')}}}\\
=&\max_{Z\in S_\alpha: \gamma n <m\leq \wt m}\max_{S'\in S(Z,\alpha)}\bb{\frac{1}{|S'|}\sum_{Z'\in S'}\abs{\dO_s(Z,Z')-\EE{\dO_s(Z,Z')}}}\\
\leq &\theta n(p-q),
\end{align*} 
where the positive sequence $\theta$ is defined in the beginning of the proof. Thus, by a similar argument from \eqref{eq:boundknownlargeeq} to \eqref{eq:laterRefpqknownlarge}, the result directly follows.
\end{description}

Combining two cases gives Lemma \ref{lm:rw_case1} directly. Note that in the case of $m^*\geq \gamma n$ or $\wt m<\gamma n$, the result trivially follows.
\end{proof}
\begin{proof}[Proof of Lemma \ref{lm:rw_case2}]
Consider any positive sequences $\beps,\gamma,\theta\goto0$ satisfying that $\beps^2nI\goto \infty$, $\theta^2\gamma nI\goto \infty$, $(1-K\gamma_0)^2\gg \beps$, and $(1-K\gamma_0)^3\gg \theta\beps$. Note that the second case in Lemma \ref{lm:rw_case2} is only for the case of $K=2$. When $K\geq 3$, we require $\gamma_0\goto 0$, and thus there exists some $\gamma\goto 0$, such that for all $Z\in \cG(\gamma_0)$, $m\leq \gamma n$ always holds.

    The following proof is similar with those of Lemma \ref{lm:unknownPincrease} and Lemma \ref{lm:rw_case1}. Denote $\Delta Q(Z,Z') = Q_{LM}(Z,A)-Q_{LM}(Z',A)$ for any two label assignments $Z,Z'\in S_\alpha$, where $Q_{LM}(Z,A)$ is the likelihood modularity function defined in \eqref{eq:defineLikeMode}. Under the event $\mathcal E_1(\beps)$, by Lemma \ref{lm:likemodePathdiff}, there exists some sequence $\eps_{LM}\goto 0$ such that
    \begin{align}
        &\log \max_{S'\in \mathcal S(Z,\eta)}\min\bb{\min_{Z'\in S'\cap \cB(Z)}\frac{\Pi(Z|A)}{\Pi(Z'|A)},\min_{Z'\in S'\cap \cA(Z)}\frac{\Pi(Z'|A)}{\Pi(Z|A)}}\nonumber\\
        \leq &\max_{S'\in \mathcal S(Z,\eta)}\frac{1}{|S'|}\bb{\sum_{Z'\in S'\cap \cB(Z)}\log\frac{\Pi(Z|A)}{\Pi(Z'|A)}+\sum_{Z'\in S'\cap \cA(Z)}\log \frac{\Pi(Z'|A)}{\Pi(Z|A)}}\nonumber\\
        \leq &\max_{S'\in \mathcal S(Z,\eta)}\frac{1}{|S'|}\bb{\sum_{Z'\in S'\cap \cB(Z)}\Delta Q(Z,Z')+\sum_{Z'\in S'\cap \cA(Z)}\Delta Q(Z',Z)} + \eps_{LM}. \label{eq:hereToBound}
    \end{align}
    The first inequality is because minimum is smaller than the average. 

\begin{description}[leftmargin = 0cm,labelsep = 0.5cm]
    \item[Case 1: $m^*\leq m\leq\gamma n$.] In this case, $B(Z)\subset S_{\alpha}$ by Lemma \ref{lm:size}. By \eqref{eq:unknownSmall}, we have that under the event $\cE_1(\beps)$,
    \begin{align*}  
        \eqref{eq:hereToBound}\leq & \max_{S'\in \mathcal S(Z,\eta)}\frac{1}{|S'|}\bb{\sum_{Z'\in S'\cap \cB(Z)}\log\frac{\Pi_0(Z|A)}{\Pi_0(Z'|A)} +\sum_{Z'\in S'\cap \cA(Z)}\log \frac{\Pi_0(Z'|A)}{\Pi_0(Z|A)} + \sum_{Z'\in S'}\abs{Err(Z,Z')}} + \eps_{LM}.
    \end{align*}    
    By a similar argument from \eqref{eq:unknownSmall} to \eqref{eq:refunknownSmallRef}, in order to prove Lemma \ref{lm:rw_case2}, it suffices to show that 
    \begin{equation}\label{eq:eqS'}
        \max_{Z\in\cG(\gamma_0):m^*\leq m\leq \gamma n}\max_{S'\in \mathcal S(Z,\eta)}\frac{1}{|S'|}\sum_{Z'\in S'}\abs{Err(Z,Z')} = o(\bar nI).
    \end{equation}
    Recall that $Err(Z,Z')$ is defined in \eqref{eq:unknownSmall}. It follows by \eqref{eq:A} that under the event $\cE_1(\beps)$, for any $Z\in \cG(\gamma_0)$ with $m\in [m^*, \gamma n]$, 
    \begin{align*}
       & \max_{S'\in \mathcal S(Z,\eta)}\frac{1}{|S'|}\sum_{Z'\in S'}\abs{Err(Z,Z')}\\
\lesssim & (\gamma+\beps)\frac{p-q}{p} \max_{S'\in \mathcal S(Z,\eta)} \frac{1}{|S'|} \sum_{Z'\in S'} \sum_{a\leq a'}\abs{\dO_{aa'}-\lambda_{aa'}^*\dn_{aa'}}\\
        \lesssim &(\gamma+\beps)\frac{p-q}{p} \bb{(A)+(B)+(C)},
    \end{align*}
    where
    \begin{align*}
        (A) &= \max_{S'\in \mathcal S(Z,\eta)} \frac{1}{|S'|} \sum_{Z'\in S'} \sum_{a\leq a'}\abs{\dO_{aa'}-\EE{\dO_{aa'}}}\lesssim n(p-q),\\
        (B) &= \max_{S'\in \mathcal S(Z,\eta)} \frac{1}{|S'|} \sum_{Z'\in S'} \sum_{a\leq a'}\abs{\EE{\dO_{aa'}}-B_{aa'}\dn_{aa'}}\lesssim n(p-q),\\
        (C) &= \max_{S'\in \mathcal S(Z,\eta)} \frac{1}{|S'|} \sum_{Z'\in S'} \sum_{a\leq a'}\abs{(B_{aa'}-\lambda_{aa'}^*)\dn_{aa'}}\ll n(p-q).\\
    \end{align*}
    The first inequality holds with probability at least $1-e^{-m^*}$ by the same proof of Lemma \ref{lm:rw_case1} and Lemma \ref{lm:pathdiffall}. The second and the third inequalities hold due to the same arguments for \eqref{eq:refsamearg} and \eqref{eq:finalref}. Hence, the proof is complete for the small mistake region.

\item[Case 2: $\gamma n<m\leq \wt m$.] We only analyze this case for $K=2$. By \eqref{eq:likemode_unknown}, we have that under the event $\cE_{1}(\beps)$,
\begin{equation*}
\begin{split}
    \eqref{eq:hereToBound}\leq & \max_{S'\in \mathcal S(Z,\eta)}\frac{1}{|S'|}\bb{\sum_{Z'\in S'\cap \cB(Z)}P(Z,Z')+\sum_{Z'\in S'\cap \cA(Z)}P(Z',Z)} + \\
    &\max_{S'\in \mathcal S(Z,\eta)}\frac{1}{|S'|}\sum_{Z'\in S'}\bp{\abs{Err_1(Z,Z')}+\abs{Err_2(Z,Z')}} + \eps_{LM},
\end{split}
\end{equation*}
where $Err_1(Z,Z')$ and $Err_2(Z,Z')$ are defined in \eqref{eq:likemode_unknown} and \eqref{eq:Err1}. According to arguments in Lemma \ref{lm:PathIncrease}, we only need to bound $Err_1(Z,Z')$ and $Err_2(Z,Z')$ in order to upper bound bound \eqref{eq:hereToBound} as well as the posterior ratio. The only term inside $Err_1(Z,Z')$ and $Err_2(Z,Z')$ needed to be treated specially is 
\[
    \max_{Z\in S_\alpha: \gamma n<m\leq \wt m} \max_{S'\in \mathcal S(Z,\eta)}\frac{1}{|S'|}\sum_{Z'\in S'}\sum_{a\leq a'}\abs{\dO_{aa'}(Z,Z')-\EE{\dO_{aa'}(Z,Z')}},
\]
denoted by $D$ for simplicity. By the same proof of Lemma \ref{lm:pathdifflargeMis}, we have that
\[
    \PP{D\geq \theta n(p-q)} \leq e^{-n},
\]
for the positive sequence $\theta$ defined in the beginning of the proof. Hence, following the same arguments from \eqref{eq:likemode_unknown} to \eqref{eq:finalUnknownRef}, the proof of Lemma \ref{lm:rw_case2} is completed for the large mistake region.
\end{description}
Combining two cases gives Lemma \ref{lm:rw_case2} directly. Note that in the case of $m^*\geq \gamma n$ or $\wt m<\gamma n$, the result trivially follows.
\end{proof}

\subsection{Proof of Lemma \ref{lm:minpost}}
In this section, we proceed to lower bound the posterior distribution.
\subsubsection{When connectivity probabilities are known}\label{sec:knownPlowerbound}
Let $\bb{x_i}_{i\geq 1},\bb{y_j}_{j\geq 1}$ be i.i.d. copies of $\text{Bernoulli}(q)$ and Bernoulli$(p)$. According to \eqref{eq:postknown} and \eqref{eq:knownPostEq}, for any $Z\in S_\alpha$, we have that
    \begin{align*}
    \log \frac{\Pi_0(Z|A)}{\Pi_0(Z^*|A)}\ignore{&= \log\frac{p(1-q)}{q(1-p)}\bp{\dtO_s-\lambda^* \dtn_s}\\}
    & = \log\frac{p(1-q)}{q(1-p)}\bp{\dtO_s-\EE{\dtO_s}+\EE{\dtO_s}-\lambda^* \dtn_s},
    \end{align*}
    where $\dtO_s = \sum_{i=1}^{N_d} x_i- \sum_{i=1}^{N_s}y_i$, and 
    \begin{equation}\label{eq:NsNd}
    N_d  = \sum_{i<j}\indc{Z_i=Z_j,Z^*_i\neq Z_j^*},~~N_s  = \sum_{i<j}\indc{Z_i^*=Z_j^*,Z_i\neq Z_j}.
    \end{equation} 
    Recall that $m_k=\sum_{i=1}^n\indc{Z_i=k,~Z_i^*\neq k}$, and it follows that $m = \sum_{k\in [K]}m_k$. Write $\beta_k = \sum_{i<j} \indc{Z_i=Z_j=k,Z_i^*\neq Z_j^*}$ for simplicity, and we have
\begin{align*}
\beta_k &= \sum_{a<b} R_{ka}R_{kb} \leq R_{kk}\cdot m_k + m_k^2 = n_k'm_k\leq \frac{\alpha n m_k}{K}.
\end{align*}
It follows that $N_d=\sum_{k=1}^K \beta_k\leq {\alpha mn}/{K}$. Similarly, we have $N_s\leq {\beta mn}/{K}$, and
    \begin{align*}
    \EE{\dtO_s}-\lambda^*\dtn_s &=N_d q -N_s p - \lambda^*(N_d-N_s) \\
    & = -(N_d\cdot (\lambda^*-q)+N_s\cdot (p-\lambda^*))\\
    & \geq -(N_d+N_s)\cdot (p-q) \geq -(\alpha+\beta)mn(p-q)/K.
    \end{align*}
    Furthermore, by Lemma \ref{boundXabe_xabc}, with probability at least $1-n\exp(-(1-o(1))\bar nI)$, for any $Z\in S_\alpha$ with $m=d(Z,Z^*)$, we have 
    \begin{align*}
        \abs{\dtO_s - \EE{\dtO_{s}}}&\leq \sum_{a\in[K]}\abs{\dtO_{aa} - \EE{\dtO_{aa}}}
        = \sum_{a\in[K]}\abs{X_{aa}(Z) - X_{aa}(Z^*)}\\
        &\leq K\Norm{X(Z)-X(Z^*)}_{\infty} \leq (\alpha+\beta)mn(p-q).
    \end{align*}
Hence, it follows that
    \begin{align*}
    \log \frac{\Pi_0(Z|A)}{\Pi_0(Z^*|A)} &\geq -\log \frac{p(1-q)}{q(1-p)}\cdot Cmn(p-q)\\
    & \geq - Cmn\cdot \frac{(p-q)^2}{q(1-p)}\\
    & \geq - C'nmI\cdot (1+o(1)),
    \end{align*}
    for some constants $C$ and $C'$ with probability at least $1-n\exp(-(1-o(1))\bar nI)$.
\subsubsection{When connectivity probabilities are unknown}
In order to simplify the proof, we first define some events, and all the following analysis are conditioning on the given events. For any positive sequences $\gamma,\theta\goto 0$ with $\gamma ^2 nI\goto \infty$, and $\theta^2\gamma nI\goto \infty$, let $\beps=\gamma$ for simplicity. Consider events $\mathcal E_1(\beps)$, $\mathcal E_2$, $\mathcal E_3(\beps)$, $\mathcal E_4(\gamma,\theta)$ defined in \eqref{eq:allevents}. Under the events $\cE_1(\beps)$ and $\cE_3(\beps)$, by Lemma \ref{lm:prooflikemode}, we have that for any $Z\in S_\alpha$,
\[
    \log\frac{\Pi(Z|A)}{\Pi(Z^*|A)}-\Delta Q(Z,Z^*)\geq -C_{LM},
\]
where $\Delta Q(Z,Z^*) = Q_{LM}(Z,A)-Q_{LM}(Z^*,A)$. Thus, it suffices to lower bound $\Delta Q(Z,Z^*)$.

\begin{description}[leftmargin = 0cm,labelsep = 0.5cm]
\item[Case 1: $m\leq\gamma n$.] By \eqref{eq:ZZstarsmall}, we have
\begin{align*}
    &\Delta Q(Z,Z^*)\\[10pt]
    = &\underbrace{\log \frac{\Pi_0(Z|A)}{\Pi_0(Z^*|A)}}_{(A)}-\underbrace{\sum_{a\leq b}n_{ab}(Z^*) \cdot \KL{\frac{O_{ab}}{n_{ab}}}{\frac{O_{ab}(Z)}{n_{ab}(Z)}}}_{{(B)}}\\[10pt]
    &+\underbrace{\sum_{a\leq b}\dtO_{ab} \bp{\log \frac{O_{ab}(Z)}{n_{ab}(Z)}-\log B_{ab}}+ (\dtn_{ab}-\dtO_{ab}) \bp{\log \frac{n_{ab}(Z)-O_{ab}(Z)}{n_{ab}(Z)}-\log(1- B_{ab})}}_{{(C)}},
\end{align*}
where $\Pi_0(\cdot|A)$ is defined in \eqref{eq:knownPostEq}. We proceed to bound each term above separately. Under the event $\cE_2$ and by the same argument in Section \ref{sec:knownPlowerbound}, we have
\[
    A\gtrsim  -mnI.
\]
By Lemma \ref{boundmsmallunknown}, under the events $\cE_1(\beps)$ and $\cE_2$, we have
\[
    B\lesssim m^2I.
\]
By Lemma \ref{lm:bound_ec}, under the events $\cE_1(\beps)$ and $\cE_2$, we have
\[
    C\gtrsim -(\beps +\gamma)mnI.
\]
Hence, under the events $\mathcal E_1(\beps),\mathcal E_2, \mathcal E_3(\beps)$, we have that for any $Z\in S_\alpha$ with $m\leq \gamma n$, 
    \[
        \log\frac{\Pi(Z|A)}{\Pi(Z^*|A)}\geq \Delta Q(Z,Z^*)-C_{LM}\geq -CmnI,
    \]
    for some constant $C$.
    
    \item[Case 2: $m>\gamma n$.] By \eqref{eq:pqunknownLargeZZstar}, we have
    \begin{align*}
    \Delta Q(Z,Z^*) = G(Z) - G(Z^*) + \Delta(Z) -\Delta(Z^*),
    \end{align*}
    where by \eqref{eq:GZ-GZ*} and Lemma \ref{minPiR}, we have
    \begin{align*}
    G(Z) - G(Z^*) \geq -\frac{1}{2}\sum_{a,b,k,l} R_{ak}R_{bl}\KL{B_{kl}}{\tB{ab}}
    \gtrsim - mnI.
    \end{align*}
    By Lemma \ref{lm:boundDeltaec}, under the events $\mathcal E_1(\beps)$ and $\mathcal E_4(\gamma,\theta)$, for any $Z\in S_\alpha$ with $m>\gamma n$, we have
    \begin{align*}
        \Delta(Z)-\Delta(Z^*)\geq -\eps mnI
    \end{align*}
    for some sequence $\eps\goto 0$. Hence, under the events $\cE_1(\beps)$, $\cE_3(\beps)$, and $\cE_4(\gamma, \theta)$, we have that for any $Z\in S_\alpha$ with $m>\gamma n$, 
    \begin{align*}
    \log\frac{\Pi(Z|A)}{\Pi(Z^*|A)}\geq \Delta Q(Z,Z^*)-C_{LM}\geq -C'mnI,
    \end{align*}
    for some constant $C'$.
\end{description}

Combining two cases, we have that with probability at least $1-n\exp(-(1-o(1))\bar nI)$, 
\[
    \min_{Z\in S_\alpha}\bp{\log\frac{\Pi(Z|A)}{\Pi(Z^*|A)}+CmnI}\geq 0,
\]
for some constant $C$. By Theorem $\ref{thm:postcontract}$, we have $\log\Pi(Z^*|A)\geq C$ for some constant $C$ with high probability. To conclude, there exists some constants $C_3$, $C_4$ and $C_5$ such that for any $Z\in S_\alpha$, 
\[
    \min_{Z\in S_\alpha}\bp{\log{\Pi(Z|A)}+C_3mnI}\geq 0.
\]


\subsection{Bounding probability of events}

We first introduce some notations. For any $Z\in S_\alpha$ and any $a\in [K]$, we use $n_a'$ to denote $n_a(Z)$, $m_a$ to denote $n_a(Z)-R_{aa}$ for simplicity.

\begin{lemma}\label{lm:boundhatPtildeP}
    Denote $X_{ab}(Z) = O_{ab}(Z)-\EE{O_{ab}(Z)}$ for any $a,b\in[K]$, then for a general $K\geq 2$, we have
    \begin{align*}
    \PP{\max_{Z\in S_\alpha} {\Norm{X(Z)}_\infty}\geq \bar\eps n^2(p-q)}&\leq \exp(-n),
    \end{align*}
    as long as $\beps^2nI\goto\infty$.
\end{lemma}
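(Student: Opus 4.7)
The plan is to bound $|X_{ab}(Z)|$ for each fixed label assignment $Z\in S_\alpha$ and each pair $(a,b)\in [K]^2$ via a concentration inequality for sums of independent Bernoullis, and then pay a union bound over the $K^2\cdot|S_\alpha|$ many such events. Since $|S_\alpha|\le K^n$ and $K$ is a constant, the union bound costs only a factor $\exp(O(n))$, so the key is to produce a per-$(Z,a,b)$ tail bound whose exponent is much larger than $n$.

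For fixed $Z\in S_\alpha$ and $(a,b)\in [K]^2$, the quantity $O_{ab}(Z)$ is a sum of at most $n^2/2$ independent Bernoulli random variables whose parameters are either $p$ or $q$. Hence $X_{ab}(Z)$ is a centered sum with variance $\sigma^2\le n^2 p$ and individual summands bounded by $1$. Bernstein's inequality yields, for $t=\bar\eps n^2(p-q)$,
\begin{equation*}
\PP{|X_{ab}(Z)|\ge t}\;\le\;2\exp\!\left(-\frac{t^2/2}{\sigma^2+t/3}\right).
\end{equation*}
Since $p\asymp q$ and $p-q\le p$, the denominator is of order $n^2 p$, so the exponent is of order $\bar\eps^2 n^2(p-q)^2/p\asymp \bar\eps^2 n^2 I$, using the standard fact (stated in the paper) that $I=(1+o(1))(\sqrt p-\sqrt q)^2\asymp (p-q)^2/p$ in the regime $p\asymp q\to 0$.

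I would then apply a union bound:
\begin{equation*}
\PP{\max_{Z\in S_\alpha}\|X(Z)\|_\infty\ge \bar\eps n^2(p-q)}\;\le\;K^2\,|S_\alpha|\,\cdot\,2\exp(-c\bar\eps^2 n^2 I)\;\le\;\exp\!\bigl(2n\log K-c\bar\eps^2 n^2 I\bigr),
\end{equation*}
for a suitable absolute constant $c>0$. The hypothesis $\bar\eps^2 nI\to\infty$ forces $c\bar\eps^2 n^2 I\gg n\log K$ for large $n$, so the right-hand side is bounded by $\exp(-n)$.

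I do not anticipate any serious obstacle here; this is a routine Bernstein-plus-union-bound argument. The only mild subtleties are (i) keeping track of the two cases $a=b$ and $a\neq b$ uniformly (they differ only in the number of summands by a factor of $2$), and (ii) verifying that $t/3$ is indeed dominated by $\sigma^2$ in the Bernstein denominator, which holds because $\bar\eps(p-q)\ll p$ whenever $\bar\eps\to 0$ and $p\asymp q$.
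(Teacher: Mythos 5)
Your proof is correct and follows essentially the same route as the paper's: bound $\VV{O_{ab}(Z)}$ by $O(n^2 p)$, apply Bernstein's inequality to get a per-$(Z,a,b)$ tail of order $\exp(-c\,\bar\eps^2 n^2 I)$, and pay a $K^{n+2}$ union bound, which is negligible under $\bar\eps^2 n I\to\infty$. The only cosmetic difference is that the paper keeps Bernstein's two exponential terms separately and then collapses them, whereas you fold them into a single denominator; the substance is identical.
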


\begin{proof}
    For $K\geq 2$, we have $\VV{O_{ab}(Z)}\leq n_{ab}(Z) p\leq {\alpha^2 n^2 p}/{K^2}$. Then, by a union bound and Bernstein inequality, it follows that
    \begin{align*}
        \PP{\max_{Z\in S_\alpha} {\Norm{X(Z)}_\infty}\geq \beps n^2(p-q)}&\leq 2K^2 K^n \bp{\exp\bp{-\frac{\beps^2 n^2 (p-q)^2K^2}{2\alpha^2p}}+\exp\bp{-\frac{3\beps n^2 (p-q)}{2}}}\\
        &\leq 2K^{n+2} \exp\bp{-(1-o(1))\frac{\beps^2K^2n^2I}{2\alpha^2}}\\
        &\leq \exp(-n),
    \end{align*}
    for any $\beps$ satisfying that $\beps^2nI\goto \infty$.
\end{proof}

The conclusion of Lemma \ref{lm:boundhatPtildeP} directly leads to the following lemma.

\begin{lemma}\label{tildePhatP}
    For $K\geq 2$ and any $a,b\in [K]$, let $\hat B_{ab} = {O_{ab}(Z)}/{n_{ab}(Z)}$, and $\tB_{ab}={\EE{O_{ab}(Z)}}/{n_{ab}(Z)}$. Under the event $\mathcal{E}$ defined in \eqref{eq:defE}, we have
    \[\max_{Z\in S_\alpha}\Norm{\hat B-\wt B}_{\infty}=\max_{Z\in S_\alpha}\max_{a,b\in [K]}\frac{X_{ab}(Z)}{n_{ab}(Z)}\leq \frac{\beps n^2 (p-q)}{({K\alpha}/{n})^2} = C\beps(p-q),\]
    for some constant C depending on $K$ and $\alpha$.
\end{lemma}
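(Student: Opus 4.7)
The plan is quite short: the lemma is essentially a direct computation that converts the event $\mathcal{E}_1(\beps)$ into an $\ell_\infty$ bound on $\hat B-\wt B$ by dividing by the (deterministic) denominator $n_{ab}(Z)$. Note that the identity
\[
\hat B_{ab}-\wt B_{ab} \;=\; \frac{O_{ab}(Z)-\EE{O_{ab}(Z)}}{n_{ab}(Z)} \;=\; \frac{X_{ab}(Z)}{n_{ab}(Z)}
\]
holds definitionally for every $Z\in S_\alpha$ and all $a,b\in[K]$.

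The first step is to invoke the event $\mathcal{E}_1(\beps)\subset \mathcal{E}$ (defined in \eqref{eq:allevents}) to bound the numerator uniformly: $\max_{Z\in S_\alpha}\Norm{X(Z)}_\infty\le \beps n^2(p-q)$. The second step is to lower bound the denominator using the constraint that $Z\in S_\alpha$, i.e., $n_k(Z)\in[n/(\alpha K),\alpha n/K]$ for every $k\in[K]$. This gives $n_{ab}(Z)=n_a(Z)n_b(Z)\ge (n/(\alpha K))^2$ for $a\ne b$, and $n_{aa}(Z)=n_a(Z)(n_a(Z)-1)/2\ge \tfrac{1}{2}(n/(\alpha K))(n/(\alpha K)-1)$, which for sufficiently large $n$ is at least $c\,(n/(\alpha K))^2$ for an absolute constant $c$ (say $c=1/4$).

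Combining the two bounds yields, uniformly in $Z\in S_\alpha$ and $a,b\in[K]$,
\[
|\hat B_{ab}-\wt B_{ab}| \;\le\; \frac{\beps n^2(p-q)}{c\,(n/(\alpha K))^2} \;=\; \frac{\alpha^2 K^2}{c}\,\beps(p-q),
\]
which is the claimed bound with $C=\alpha^2 K^2/c$, a constant depending only on $K$ and $\alpha$. No step here is an obstacle — the only minor care needed is handling the diagonal case $a=b$ where $n_{aa}(Z)$ involves $n_a(Z)(n_a(Z)-1)/2$ rather than $n_a(Z)^2$, but since we are working in the regime of large $n$ and $n/(\alpha K)\to\infty$, absorbing the $-1$ into the absolute constant $c$ is harmless. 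The entire argument is a single display once Lemma \ref{lm:boundhatPtildeP} is in hand, which is why the authors present it as a brief corollary of the preceding lemma.
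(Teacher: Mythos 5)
Your proof is correct and matches the paper's (implicit) argument: the paper presents this lemma as a direct corollary of Lemma~\ref{lm:boundhatPtildeP} via the event $\mathcal{E}_1(\beps)$, and your write-up simply makes explicit the uniform lower bound $n_{ab}(Z)\gtrsim (n/(K\alpha))^2$ coming from the $S_\alpha$ constraint. Incidentally, your derivation also confirms that the intermediate displayed quantity in the lemma statement should read $\beps n^2(p-q)/(n/(K\alpha))^2$ rather than $\beps n^2(p-q)/(K\alpha/n)^2$ --- a typo in the paper --- since the former simplifies to $K^2\alpha^2\,\beps(p-q)$ as claimed.
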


We state the following lemmas whose proofs will be given together.

\begin{lemma}\label{boundXabe_xabc}
\begin{align*}
    \PP{\max_{Z\in S_\alpha}\Norm{X(Z)-X(Z^*)}_{\infty}-(\alpha+\beta) mn(p-q)/K\geq 0}\leq n\exp(-(1-o(1))nI/K).
\end{align*}
\end{lemma}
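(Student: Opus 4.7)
The plan is to control $\lnorm{X(Z)-X(Z^*)}{\infty}$ by a union bound over the state space after a Bernstein-type tail bound for each fixed pair $(Z,a,b)$, stratified by $m=d(Z,Z^*)$. First, for fixed $Z\in S_\alpha$ with $d(Z,Z^*)=m$ and fixed $a,b\in[K]$, write
\[
X_{ab}(Z)-X_{ab}(Z^*)=\sum_{i<j}\bigl(A_{ij}-\Expect A_{ij}\bigr)\bigl(\indc{Z_i=a,Z_j=b}-\indc{Z_i^*=a,Z_j^*=b}\bigr),
\]
noting that a summand can be nonzero only for edges with at least one endpoint in the set of misclassified indices. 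The number of such pairs is at most $m(n-1)-\binom{m}{2}\leq mn$, so the sum involves at most $mn$ independent centred Bernoullis each of variance at most $p$ and with magnitude at most $2$.

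Second, I would invoke Bernstein's inequality with deviation level $t=(\alpha+\beta)mn(p-q)/K$, which under $p\asymp q$ and $p\to 0$ yields
\[
\Prob\bigl\{|X_{ab}(Z)-X_{ab}(Z^*)|\geq t\bigr\}\leq 2\exp\bigl(-c\,mn(p-q)^2/p\bigr)\leq 2\exp\bigl(-c'\,mn I\bigr)
\]
for a constant $c'$ depending only on $K,\alpha,\beta$ (the variance term dominates once $p-q=o(1)$ so the linear correction in Bernstein is negligible). Since the index set over which we take the maximum is random only through $Z$, this holds for every fixed $Z$.

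Third, I would apply a union bound over the $K^2$ pairs $(a,b)$ and over the at most $\binom{n}{m}(K-1)^m\leq (eKn/m)^m$ assignments $Z\in S_\alpha$ with $d(Z,Z^*)=m$, then sum over $m\geq 1$:
\[
\Prob\bigl\{\max_{Z\in S_\alpha}\lnorm{X(Z)-X(Z^*)}{\infty}-(\alpha+\beta)mn(p-q)/K\geq 0\bigr\}
\leq \sum_{m\geq 1} 2K^2\bigl(eKn/m\bigr)^m\exp(-c'mnI).
\]
Because $nI\gg\log n$, each term is bounded by $\exp\bigl(-m[c'nI - \log(eKn/m)]\bigr)=\exp(-(1-o(1))c'mnI)$, so the geometric series is controlled by its $m=1$ term and gives $n\exp(-(1-o(1))c'nI)$ overall.

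The main obstacle is the bookkeeping of constants: the stated target exponent is $(1-o(1))nI/K$, so the Bernstein computation must be tracked carefully so that the factor $(\alpha+\beta)^2/K^2$ from $t^2/V$ combines with $K$ coming from the $(K-1)^m$ and $K^2$ union-bound factors to yield exactly $1/K$ in the final exponent. This is largely a calculation rather than a conceptual hurdle, paralleling the computation in Lemma~\ref{lm:boundhatPtildeP}, but care is needed because $m$ appears both in the variance (through the count of contributing edges) and in the enumeration $(eKn/m)^m$, and because the Bernstein linear correction has to be shown negligible under the standing assumptions $p\asymp q$ and $p\to 0$.
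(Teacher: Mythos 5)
Your overall strategy (Bernstein for a fixed $(Z,a,b)$, then union bound over $(a,b)$ and over $Z$ stratified by $m$, dominated by $m=1$ because $nI\gg\log n$) matches the paper's. However, the variance bound you use is too coarse to reach the stated exponent $(1-o(1))nI/K$, and your explanation of where the $1/K$ comes from is not correct. You bound the number of contributing edges by ``pairs with at least one misclassified endpoint,'' getting $\le mn$ and hence $\VV{\dtO_{ab}}\le mnp$. With deviation level $t=(\alpha+\beta)mn(p-q)/K$, the Bernstein exponent is then $t^2/(2V)\asymp \frac{(\alpha+\beta)^2}{2K^2}\,mn\,\frac{(p-q)^2}{p}$, and since $(\alpha+\beta)^2/(2K^2)$ can be strictly less than $1/K$ for $K\ge 3$ (indeed $(\alpha+\beta)^2/(2K^2)\to 0$ as $K$ grows with $\alpha,\beta$ fixed), this does not yield the claimed rate. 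The union-bound prefactors $K^2$ and $(K-1)^m$ are absorbed into the $o(1)$ in the exponent because $nI/K\gg\log n$; they do not ``combine'' with $t^2/V$ to produce the $1/K$.

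The missing ingredient is the structured count of contributing pairs. Writing $\dtO_{ab}=\sum_{(i,j)\in S_1}A_{ij}-\sum_{(i,j)\in S_2}A_{ij}$ with $S_1$ (resp.\ $S_2$) the pairs assigned to $(a,b)$ by $Z$ but not $Z^*$ (resp.\ by $Z^*$ but not $Z$), the paper shows $|S_1|=n_a'n_b'-R_{aa}R_{bb}\le m_a n_b'+m_b n_a'\le \alpha mn/K$ using the $S_\alpha$ constraint $n_a'\le \alpha n/K$, and symmetrically $|S_2|\le \beta mn/K$ using $n_a\le\beta n/K$. This gives $\VV{\dtO_{ab}}\le (\alpha+\beta)mnp/K$, and then $t^2/(2V)=\frac{(\alpha+\beta)mn(p-q)^2}{2Kp}\ge(1-o(1))\frac{(\alpha+\beta)}{2}\cdot\frac{mnI}{K}\ge(1-o(1))\frac{mnI}{K}$ since $\alpha+\beta\ge2$ and $(p-q)^2/(2p)\ge(1-o(1))I/2$ under $p\asymp q\to0$. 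So the $1/K$ comes entirely from this sharper, community-size-aware variance bound, not from the ``calculation'' you deferred. You correctly flagged the constant-tracking as the obstacle, but the plan as stated would not overcome it; the bound via ``edges meeting a misclassified vertex'' must be replaced by the $|S_1|,|S_2|$ accounting.
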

\begin{lemma}\label{boundXabe_xabc_mlarge}
    \begin{align*}
    \PP{\max_{Z\in S_\alpha}\Norm{X(Z)-X(Z^*)}_{\infty}\geq \bar\eps n^2(p-q)}\leq \exp(-n),
    \end{align*}
    as long as $\beps^2nI\goto \infty$.
\end{lemma}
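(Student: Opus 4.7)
The plan is to reduce Lemma~\ref{boundXabe_xabc_mlarge} directly to the already-established Lemma~\ref{lm:boundhatPtildeP} by a triangle inequality, rather than redoing a Bernstein-plus-union-bound argument from scratch. The crucial standing ingredient is the assumption $\alpha > \beta$, which together with \eqref{eq:beta} guarantees $Z^* \in S_\alpha$. Consequently $\Norm{X(Z^*)}_\infty$ is itself dominated by $\max_{Z' \in S_\alpha}\Norm{X(Z')}_\infty$, so no separate treatment of the reference point $Z^*$ is needed.

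The deterministic step is the pointwise inequality
\[
\Norm{X(Z)-X(Z^*)}_\infty \;\leq\; \Norm{X(Z)}_\infty + \Norm{X(Z^*)}_\infty \;\leq\; 2\max_{Z'\in S_\alpha}\Norm{X(Z')}_\infty,
\]
valid for every $Z \in S_\alpha$. Taking the maximum on the left yields
\[
\max_{Z \in S_\alpha}\Norm{X(Z)-X(Z^*)}_\infty \;\leq\; 2\max_{Z\in S_\alpha}\Norm{X(Z)}_\infty,
\]
so that the event on the left-hand side of Lemma~\ref{boundXabe_xabc_mlarge} is contained in the event $\{\max_{Z\in S_\alpha}\Norm{X(Z)}_\infty \geq (\beps/2)\,n^2(p-q)\}$.

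The probabilistic step is then to invoke Lemma~\ref{lm:boundhatPtildeP} with $\beps/2$ in place of $\beps$. Since the hypothesis $\beps^2 nI \to \infty$ is scale-invariant under constant multiples, $(\beps/2)^2 nI \to \infty$ still holds, and Lemma~\ref{lm:boundhatPtildeP} supplies $\PP{\max_{Z\in S_\alpha}\Norm{X(Z)}_\infty \geq (\beps/2)n^2(p-q)} \leq \exp(-n)$. Combining this with the deterministic containment above gives the claim. There is no serious obstacle here: all the technical work (Bernstein with variance $\lesssim n^2 p$, a union bound over $|S_\alpha| \leq K^n$, and the identification $(p-q)^2/p \asymp I$) is already packaged inside Lemma~\ref{lm:boundhatPtildeP}; the only thing worth flagging is that the constant-factor loss from the triangle inequality is absorbed freely precisely because the hypothesis on $\beps$ is scale-invariant.
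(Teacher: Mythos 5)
Your proof is correct, and it takes a genuinely different route from the paper's. The paper proves Lemma~\ref{boundXabe_xabc_mlarge} jointly with Lemmas~\ref{boundXabe_xabc} and~\ref{lm:bounddtOLargeMis} by applying Bernstein's inequality directly to the centered increment $X_{ab}(Z)-X_{ab}(Z^*)$: it first establishes the $m$-dependent variance bound $\VV{O_{ab}(Z)-O_{ab}(Z^*)}\leq (\alpha+\beta)mnp/K$, and then, for this particular lemma, discards the $m$-dependence via the crude substitution $m\leq n$ before the union bound over $K^n$ states. Your argument bypasses that fresh computation entirely: the triangle inequality $\Norm{X(Z)-X(Z^*)}_\infty \leq 2\max_{Z'\in S_\alpha}\Norm{X(Z')}_\infty$, which holds because $Z^*\in S_\alpha$ (guaranteed by the standing assumption $\alpha>\beta$ together with \eqref{eq:beta}), reduces everything to Lemma~\ref{lm:boundhatPtildeP} applied with $\beps/2$, and since the hypothesis $\beps^2 nI\goto\infty$ is invariant under constant rescaling the factor of two costs nothing. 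Your route is shorter and cleaner for this lemma in isolation. What the paper's approach buys is the $m$-dependent variance bound itself, which is indispensable for the companion Lemmas~\ref{boundXabe_xabc} and~\ref{lm:bounddtOLargeMis}, whose thresholds scale with $m=d(Z,Z^*)$; there the triangle-inequality shortcut cannot deliver the required $m$-dependence, and the paper's proof of Lemma~\ref{boundXabe_xabc_mlarge} is simply recycled from that shared computation rather than optimized on its own.
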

\begin{lemma}\label{lm:bounddtOLargeMis}
For any positive sequences $\gamma \goto 0$, $\theta\goto0$ satisfying $\theta^2\gamma nI\goto \infty$, we have
\[
    \PP{\max_{Z\in S_\alpha: m>\gamma n}\Norm{X(Z)-X(Z^*)}_{\infty}\geq \theta mn(p-q)}\leq \exp(-n).
\]
\end{lemma}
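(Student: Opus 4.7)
The strategy is a straightforward Bernstein-plus-union-bound argument. The key observation is that, for any fixed $Z\in S_\alpha$ with $m=d(Z,Z^*)$ and any fixed $a,b\in[K]$, the quantity
\[
X_{ab}(Z)-X_{ab}(Z^*) = \bigl(O_{ab}(Z)-O_{ab}(Z^*)\bigr) - \Expect\bigl[O_{ab}(Z)-O_{ab}(Z^*)\bigr]
\]
is a centered linear combination $\sum_{i<j}c_{ij}(Z,a,b)\,A_{ij}$ where each coefficient $c_{ij}(Z,a,b)$ lies in $\{-2,-1,0,1,2\}$, and $c_{ij}(Z,a,b)=0$ unless at least one of $i,j$ is a misclassified vertex, i.e.\ belongs to $\{k:Z_k\neq Z_k^*\}$. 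Since only $m$ vertices are misclassified and each one appears in at most $n$ pairs, the number of nonzero coefficients is at most $2mn$. Consequently the variance is bounded by $\var\bigl(X_{ab}(Z)-X_{ab}(Z^*)\bigr)\leq 4mn\,p$, and each term is bounded in absolute value by $2$.

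The main step is to apply Bernstein's inequality. With $t=\theta mn(p-q)$ and $\sigma^2\leq 4mnp$, and using $p\asymp q$ together with $I=(1+o(1))(\sqrt p-\sqrt q)^2\asymp (p-q)^2/p$, the variance term dominates the Bernstein bound because $t/\sigma^2\asymp \theta(p-q)/p\to 0$. This yields, for some absolute constant $c>0$,
\[
\Prob\left\{\,\bigl|X_{ab}(Z)-X_{ab}(Z^*)\bigr|\geq \theta mn(p-q)\right\}\leq 2\exp(-c\,\theta^2 mn\,I).
\]

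Next, I would take a union bound over $(a,b)\in[K]^2$ (factor $K^2$) and over $Z\in S_\alpha$ with $d(Z,Z^*)=m$ (at most $\binom{n}{m}(K-1)^m$ such assignments), then sum over $\gamma n< m\leq n$:
\[
\Prob\left\{\max_{Z\in S_\alpha:m>\gamma n}\Norm{X(Z)-X(Z^*)}_\infty\geq \theta mn(p-q)\right\}\leq 2K^2\sum_{m=\lceil\gamma n\rceil}^{n}\binom{n}{m}(K-1)^m\exp(-c\,\theta^2 mn\,I).
\]
Using $\binom{n}{m}(K-1)^m\leq (enK/m)^m$, each summand is bounded by $\exp\bigl(m\log(enK/m)-c\theta^2 mnI\bigr)$. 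Because $m>\gamma n$, we have $\log(enK/m)\leq \log(eK/\gamma)\ll \theta^2 nI$ (the hypothesis $\theta^2\gamma nI\to\infty$ gives $\theta^2 nI\to\infty$ much faster than any logarithm in $\gamma^{-1}$), so for large $n$ each summand is at most $\exp(-\tfrac{c}{2}\theta^2 mnI)\leq \exp(-\tfrac{c}{2}\theta^2\gamma n^2 I)$.

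Finally, summing the $n$ terms gives a bound of order $n\exp(-\tfrac{c}{2}\theta^2\gamma n^2 I)$, and since $\theta^2\gamma nI\to\infty$, the exponent is eventually smaller than $-n$, yielding the claimed $\exp(-n)$ bound. The only mild obstacle is checking that the variance term dominates in Bernstein's inequality and that $\log(eK/\gamma)$ is negligible against $\theta^2 nI$; both are direct consequences of the hypothesis $\theta^2\gamma nI\to\infty$ together with $\gamma\to 0$ and $p,q\to 0$ with $p\asymp q$.
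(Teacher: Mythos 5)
Your proposal is correct and takes essentially the same route as the paper: bound the variance of $X_{ab}(Z)-X_{ab}(Z^*)$ by $O(mnp)$, apply Bernstein's inequality to get $\exp(-c\theta^2 mnI)$ per fixed $(Z,a,b)$, union-bound over labels and over $Z$ with $\binom{n}{m}(K-1)^m\leq (enK/m)^m$, and conclude from $\theta^2\gamma nI\to\infty$ (which forces $\theta^2 nI\gg\log(1/\gamma)$) that the sum is at most $\exp(-n)$. The only cosmetic difference is how the variance bound is obtained: you argue via the coefficient pattern in $\{-2,\dots,2\}$ with at most $2mn$ nonzero terms, while the paper explicitly counts the index sets $S_1,S_2$ to get $\var\leq (\alpha+\beta)mnp/K$; both are equivalent up to constants.
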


\begin{proof}[Proofs of Lemmas \ref{boundXabe_xabc}, \ref{boundXabe_xabc_mlarge}, and \ref{lm:bounddtOLargeMis}]
Recall that $X_{ab}(Z)-X_{ab}(Z^*) = \dtO_{ab}-\EE{\dtO_{ab}}$ for $a,b\in [K]$. We first consider the case where $a\neq b$, and it follows that
    $\dtO_{ab} = \sum_{i,j\in S_1}A_{ij}-\sum_{i,j\in S_2}A_{ij}$, where
    \[|S_1|=\sum_{i,j\in [n]}\bp{\indc{Z_i=a,Z_j=b}-\indc{Z_i=Z^*_i=a,Z_j=Z^*_j=b}},\]
    \[|S_2| = \sum_{i,j\in[n]}\bp{\indc{Z^*_i=a,Z^*_j=b}-\indc{Z_i=Z^*_i=a,Z_j=Z^*_j=b}}.\]
    Therefore, we have
    \[|S_1| = n_a'n_b'-R_{aa}R_{bb}\leq m_an_b'+m_bn_a'\leq \frac{\alpha mn}{K}.\]
    Similarly, we have $|S_2|\leq{\beta mn}/{K}$. Thus, $\VV{\dtO_{ab}}\leq {(\alpha+\beta)mnp}/{K}$. A similar argument gives that for any $a\in[K]$, $\VV{\dtO_{ab}}\leq {(\alpha+\beta)mnp}/{K}$ also holds. Then, by a union bound and Bernstein inequality, we have
    \begin{align*}
        &\PP{\max_{Z\in S_\alpha }\Norm{X(Z)-X(Z^*)}_{\infty}- (\alpha+\beta) mn(p-q)/K\geq 0}\\
        \leq & 2K^2 \sum_{Z:m<cn/K} {n\choose m} (K-1)^m \exp\bp{-(1-o(1))mnI/K}\\
        \leq & n\exp(-(1-o(1))nI/K),
    \end{align*}
    for some constant $c$, which leads to Lemma \ref{boundXabe_xabc}. 

    Since $\VV{\dtO_{ab}} \leq {(\alpha+\beta)n^2 p}/{K}$ for any $a,b\in[K]$, we also have
    \begin{align*}
    &\PP{\max_{Z\in S_\alpha}\Norm{X(Z)-X(Z^*)}_{\infty}\geq \beps n^2(p-q)}\\
    \leq & 2K^2 K^n {\exp\bp{-(1-o(1))\frac{\beps^2Kn^2(p-q)^2}{2(\alpha+\beta)^2p}}}\\
    \leq & \exp(-n),
    \end{align*}
    as long as $\beps^2nI\goto \infty$, which leads to Lemma \ref{boundXabe_xabc_mlarge}.

For any sequences $\gamma,\theta\goto 0$ satisfying $\theta^2\gamma nI\goto\infty$, we also have
\begin{align*}
\PP{\Norm{X(Z)-X(Z^*)}_{\infty}\geq \theta mn(p-q)}\leq 2K^2 \bp{\exp\bp{-\frac{\theta^2 m^2n^2(p-q)^2}{2(\alpha+\beta)mnp/K}}+\exp\bp{-\frac{3 \theta mn(p-q)}{2}}}.
\end{align*}
It follows that
\begin{equation}\label{eq:largeBoundTheta}
\begin{split}
&\PP{\max_{Z\in S_\alpha: m>\gamma n}\Norm{X(Z)-X(Z^*)}_{\infty}
\leq \theta mn(p-q)}\\
\leq &C'\sum_{m>\gamma n}{n\choose m}K^m\exp(-C\theta^2 m nI)\\
\leq & C'\sum_{m>\gamma n}\bp{\frac{Ke}{\gamma}}^m\exp(-C\theta^2 m nI)\\
\leq & C'\sum_{m>\gamma n}\exp\bp{-m\bp{C\theta^2nI -\log \frac{Ke}{\gamma}}}\\
\leq & \exp(-n).
\end{split}    
\end{equation}
The last inequality holds since $\theta^2\gamma nI\goto \infty$ and thus $\theta^2nI\gg 1/\gamma \gg \log(1/\gamma)$. It directly leads to Lemma \ref{lm:bounddtOLargeMis}.
\end{proof}

Recall that for any $a,a'\in [K]$, 
\[
    X_{aa'}(Z) = O_{aa'}(Z)-\EE{O_{aa'}(Z)},~~\dO_{aa'} = O_{aa'}(Z)-O_{aa'}(Z'),
\]
it follows that
\[
    X_{aa'}(Z)-X_{aa'}(Z') = \dO_{aa'}-\EE{\dO_{aa'}}.
\]

We state the following two lemmas and the proofs will be presented together.
\begin{lemma}\label{lm:pathdiffall}
Let $N = \abs{\cB(Z)\cap S_{\alpha}}$, and we have
\[
    \PP{\max_{Z\in S_\alpha} \frac{1}{N}\sum_{Z'\in \cB(Z)\cap S_\alpha}\sum_{a\leq a'}\abs{\dO_{aa'}-\EE{\dO_{aa'}}}\geq 10n(p-q)} \leq n\exp(-2\bar nI).
\]
\end{lemma}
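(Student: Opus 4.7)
Plan. The key structural observation is that for any $Z' \in \cB(Z) \cap S_\alpha$, the assignments $Z$ and $Z'$ differ at exactly one node $k$ (the corrected misclassified one). Writing $V_{k,aa'} := \dO_{aa'} - \EE{\dO_{aa'}}$ for this single-flip increment, a direct computation from the definition of $X_{ab}$ gives
\[
V_{k,aa'} = \sum_{i \neq k} c_i^{(k,aa')}\bp{A_{ik} - \EE{A_{ik}}},
\]
with coefficients $c_i^{(k,aa')} \in \{-1, 0, +1\}$ determined by $Z_i$ and the pair $(a,a')$; moreover only the $2K-1$ pairs $(a,a')$ involving $Z_k$ or $Z_k^*$ produce a nonzero contribution. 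Hence each $V_{k,aa'}$ is a centered sum of at most $n$ independent Bernoullis with variance at most $np$.

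For each fixed $(Z,k,a,a')$, Bernstein's inequality combined with $(p-q)^2 \asymp pI$ (from $p\asymp q \to 0$) yields
\[
\PP{|V_{k,aa'}| > \lambda n(p-q)} \le 2\exp(-c \lambda^2 nI).
\]
Summing absolute values over the $O(K)$ nonzero pairs bounds $\sum_{a \le a'}|V_{k,aa'}| \le CK\lambda n(p-q)$ on the corresponding event, and this bound is preserved upon averaging over the $N$ neighbors in $\cB(Z)\cap S_\alpha$. Choosing $\lambda$ a small constant depending only on $K$ produces the target $10 n(p-q)$ with slack.

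The main obstacle is controlling the supremum over $Z\in S_\alpha$, whose cardinality is up to $K^n$. I would stratify by $m = d(Z, Z^*)$: at most $\binom{n}{m}K^m \le (eKn/m)^m$ assignments lie at distance $m$, each with at most $m$ misclassified nodes to consider. For moderate $m$ the Bernstein tail $\exp(-c\lambda^2 nI)$ comfortably absorbs the combinatorial cost $\exp(m\log(eKn/m))$ under the assumption $\bar nI \gg \log n$. For the large-$m$ regime, where the naive union bound is too loose, I would pivot to McDiarmid's inequality applied to $T := \sum_{k \text{ mis}} \sum_{a \le a'}|V_{k,aa'}|$: since each edge $A_{ij}$ appears in $O(K)$ of the terms (at most two misclassified blocks $k\in\{i,j\}$, each contributing $O(K)$ nonzero pairs), $T$ has bounded differences of order $O(K)$ per edge, yielding concentration of $T$ around its mean $O(NK\sqrt{np})$ at scale $O(Kn)$. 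This suffices once $N$ is large enough that $N^2 I \gtrsim n \log K$, which holds for $m$ not too small. Summing the two regimes' tails over $m$ delivers the stated probability bound $n \exp(-2 \bar n I)$.
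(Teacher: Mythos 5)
Your proposal takes a genuinely different route from the paper's, and the route has a gap that would prevent it from closing.

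The paper's proof rewrites the sum of absolute values as $\max_{h\in\{-1,1\}^{(2K-1)\times N}} S(h)$, where $S(h)$ is a \emph{single} linear functional of the independent edge variables. The decisive structural observation is that each $A_{ij}$ carries coefficient at most $4$ in absolute value in $S(h)$ (it can only enter when flipping node $i$ or node $j$, contributing at most two terms each via Lemma~\ref{lm:pathdiffformula}). This yields $\VV{S(h)}\leq 16Nnp$, so Bernstein applied to $S(h)$ at scale $Nw=10Nn(p-q)$ gives exponent $\asymp N n(p-q)^2/p \asymp NnI$. Since $N\geq\min\{m,c_{\alpha,\beta}n\}$ (Lemma~\ref{lm:size}), the exponent grows at least like $mnI$ and overwhelms the union-bound cost $2^{(2K-1)N}\binom{n}{m}(K-1)^m$ for every $m$. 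The $N$-scaling of the exponent — obtained by Bernstein on the \emph{aggregate}, not on individual terms — is the crux.

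Your proposal instead applies Bernstein to each $V_{k,aa'}$ separately, getting exponent only $\asymp nI$, independent of $m$. This cannot absorb the combinatorial cost $\asymp m\log(eKn/m)$ once $m\gtrsim nI/\log n$, which is a vanishing fraction of $n$ under the operative condition $\liminf\bar nI/\log n>1$. Your fallback to McDiarmid for large $m$ does not repair this, because McDiarmid's bound depends only on worst-case bounded differences and does not see the shrinking Bernoulli variance $p\to 0$. Concretely: with $\asymp mn$ nonzero edges each of bounded difference $O(1)$, McDiarmid at scale $Nn(p-q)$ yields exponent $\asymp (N^2/m)\,n(p-q)^2 \asymp (N^2/m)\,npI$, which is a factor $p\to 0$ smaller than the Bernstein exponent $\asymp NnI$. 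Your stated criterion ``$N^2 I\gtrsim n\log K$'' appears to conflate $n(p-q)^2$ with $nI$; under $p\asymp q\to 0$ one has $(p-q)^2\asymp pI$, so the needed inequality is really $(N^2/m)\,npI\gtrsim n\log K$, which fails when, e.g., $p\asymp\log n/n$ and $m\asymp n$. The missing $1/p$ is precisely what Bernstein on the aggregated $S(h)$ buys you, and that — together with the coefficient-$\leq 4$ observation making the aggregate variance controllable — is the key idea your argument does not reach.
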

\begin{lemma}\label{lm:pathdifflargeMis}
Let $N = \abs{\cB(Z)\cap S_{\alpha}}$. For any positive sequence $\gamma \goto 0$, $\theta\goto 0$, and $\theta^2 \gamma nI \goto \infty$, we have
\[
    \PP{\max_{Z\in S_\alpha: m> \gamma n} \frac{1}{N}\sum_{Z'\in \cB(Z)\cap S_\alpha}\sum_{a\leq a'}\abs{\dO_{aa'}-\EE{\dO_{aa'}}}\geq \theta n(p-q)} \leq \exp(-n).
\]
\end{lemma}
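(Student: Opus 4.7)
The plan is to mirror the argument for Lemma~\ref{lm:bounddtOLargeMis} while first disposing of the absolute values via a sign-pattern reduction, then applying Bernstein, and finally union-bounding over label assignments. Write $N=\abs{\cB(Z)\cap S_\alpha}$ and
\[
Y_Z \;:=\; \sum_{Z'\in \cB(Z)\cap S_\alpha}\sum_{a\leq a'}\abs{\dO_{aa'}-\EE{\dO_{aa'}}};
\]
we want $\PP{Y_Z\geq \theta nN(p-q)}\leq \exp(-n)$ uniformly over $Z\in S_\alpha$ with $d(Z,Z^*)=m>\gamma n$. Using the identity $\sum_i\abs{x_i}=\max_{\sigma\in\{\pm 1\}^{\#i}}\sum_i\sigma_i x_i$, one has $Y_Z\leq \max_\sigma W_\sigma$ with
\[
W_\sigma \;=\; \sum_{Z',\,a\leq a'}\sigma_{Z',a,a'}\pth{\dO_{aa'}(Z,Z')-\EE{\dO_{aa'}(Z,Z')}},
\]
a zero-mean linear combination of the independent centred Bernoullis $\{A_{ij}-\EE{A_{ij}}\}$.

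The crucial variance bookkeeping is that each $Z'\in \cB(Z)\cap S_\alpha$ differs from $Z$ at exactly one (misclassified) node $k=k(Z')$, so $\dO_{aa'}(Z,Z')$ depends only on the at most $n$ edges incident to $k$. Consequently, in $W_\sigma$ an edge $A_{ij}$ has a nonzero coefficient only when at least one of $i,j$ is misclassified under $Z$—at most $mn$ such edges—and its coefficient has magnitude $O(K)$ since $A_{ij}$ can appear in at most $O(K)$ pairs $(a,a')$ for each of the two candidate flipped endpoints. Hence $\var(W_\sigma)\leq CK^2 mnp$. By Lemma~\ref{lm:size}, $N\gtrsim \min\{n,m\}\gtrsim m$, so taking $t=\theta nN(p-q)\gtrsim c_1\theta nm(p-q)$ in Bernstein's inequality gives
\[
\PP{W_\sigma>t}\;\leq\; 2\exp\pth{-\,c\,\theta^2 mnI/K^2},
\]
with the linear Bernstein term absorbed since $\theta(p-q)\ll K^2p$ eventually.

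A union bound over the $2^{NK(K+1)/2}\leq 2^{mK^2}$ sign patterns yields the per-$Z$ tail $\leq 2\exp(mK^2\log 2-c\theta^2mnI/K^2)\leq 2\exp(-c'\theta^2 mnI/K^2)$ once $\theta^2 nI\to\infty$ dominates $K^2\log 2$ (which follows from $\theta^2\gamma nI\to\infty$). Union-bounding next over $Z$ with $m_Z=m>\gamma n$ and using $\binom{n}{m}(K-1)^m\leq (eK/\gamma)^m$ produces
\[
\sum_{m>\gamma n}\binom{n}{m}(K-1)^m\cdot 2\exp\pth{-c'\theta^2 mnI/K^2}
\;\leq\; 2\sum_{m>\gamma n}\exp\pth{m\qth{\log(eK/\gamma)-c'\theta^2 nI/K^2}}.
\]
The hypothesis $\theta^2\gamma nI\goto \infty$ implies $\theta^2 nI\gg 1/\gamma\gg\log(1/\gamma)$, so the bracket is $\leq -c''\theta^2 nI/K^2$ for large $n$; the geometric series is then dominated by its first term $\exp(-c''\theta^2\gamma n^2 I/K^2)$, which is at most $\exp(-n)$ eventually because $\theta^2\gamma nI\geq K^2/c''$.

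The main obstacle is absorbing the $2^{mK^2}$ sign-pattern factor against the Bernstein tail. A McDiarmid/bounded-differences argument (effective variance $\sim n^2$) would only produce $\exp(-c\theta^2 m^2 I)$-type decay, which fails to beat $(eK/\gamma)^m$ when $\gamma$ is small. The Bernstein route succeeds precisely because restricting attention to edges incident to misclassified nodes cuts the variance to $K^2 mnp$, linear in $m$, exactly matching the entropy $m\log(eK/\gamma)$ of choosing $Z$ with $d(Z,Z^*)=m$.
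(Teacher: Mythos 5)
Your proposal is correct and follows essentially the same route as the paper's joint proof of Lemmas~\ref{lm:pathdiffall} and~\ref{lm:pathdifflargeMis}: a sign-pattern reduction of the absolute values, a variance bound of order $mnp$ obtained by restricting attention to edges incident to misclassified nodes (with bounded coefficients; the paper sharpens your $O(K)$ bound to $4$, and uses $16Nnp$ in place of $K^2mnp$, but since $N\asymp m$ by Lemma~\ref{lm:size} this is a constant-factor difference), Bernstein's inequality, and then union bounds over the $\exp(O(m))$ sign patterns and the $\binom{n}{m}(K-1)^m$ choices of $Z$, with the hypothesis $\theta^2\gamma nI\to\infty$ supplying exactly the slack needed to beat both entropy terms and reach $\exp(-n)$. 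The closing remark contrasting with a McDiarmid route is not in the paper but is a sound observation about why the edge-localized variance is what makes the argument go through.
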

\begin{proof}[Proofs of Lemma \ref{lm:pathdiffall} and Lemma \ref{lm:pathdifflargeMis}]
In order to apply Bernstein inequality, we proceed to eliminate the absolute function. Note that $\dO_{aa'}$ depends on both the current state $Z$ and the next state $Z'$. For any $Z\in S_\alpha$, we can rewrite 
\begin{equation}\label{eq:addhboundV}
\begin{split}
    &\sum_{Z'\in \cB(Z)\cap S_\alpha}\sum_{a\leq a'}\abs{\dO_{aa'}-\EE{\dO_{aa'}}}\\
     \ignore{= &\max_{h\in \{1,-1\}^{K\times K \times N}}\sum_{Z'\in \cB(Z)\cap S_\alpha}\sum_{a\leq a'} h_{aa'}(Z')\bp{\dO_{aa'}-\EE{\dO_{aa'}}}\\}
     =& \max_{h\in \{-1,1\}^{(2K-1)\times N}}\sum_{Z'\in \cB(Z)\cap S_\alpha}\bb{\sum_{a\in[K]\setminus\{b'\}}{h_{a}(Z')\bp{\dO_{ab}-\EE{\dO_{ab}}}}+\right.\\
     &\left.\sum_{a\in [K]\setminus\{b\}}^K{h_{a+K-1}(Z')\bp{\dO_{ab'}-\EE{\dO_{ab'}}}}+ h_{2K-1}(Z')\bp{\dO_{bb'}-\EE{\dO_{bb'}}}}\\
    \triangleq &\max_{h\in \{-1,1\}^{(2K-1)\times N}} S(h)
\end{split}
\end{equation}
where $h\in \{-1,1\}^{(2K-1)\times N}$ is a matrix whose elements are either $1$ or $-1$. We use $h(Z')$ to denote a column vector of $h$ corresponding to $Z'$, and $h_a(Z')$ is the $a$th element of $h(Z')$. To prove the equality in \eqref{eq:addhboundV} holds, we suppose the next state $Z'$ updates one sample from a group $b$ to another group $b'$. Then, $\dO_{aa'}=0$ for any $a,a'\in [K]\setminus\{b,b'\}$. Hence, for any possible choice of $Z'$, $h(Z')\in \{-1,1\}^{2K-1}$, and then the equality holds.

\textbf{Claim:} for any samples $i,j$, the random variable $A_{ij}$ appears at most four times in \eqref{eq:addhboundV}. This is because $A_{ij}$ can only appear when we update sample $i$ or sample $j$. Suppose the sample $i$ is corrected, then $A_{ij}$ appears in $\dO_{Z_i,Z_j}$ and $\dO_{Z_i',Z_j}$. Thus, the claim holds, which means the same Bernoulli variable appears at most four times in \eqref{eq:addhboundV}.

By the above claim, for any matrix $h$, it trivially follows that
\[
    V(h) = \VV{S(h)}\leq 16Nnp.
\]

Hence, for any $Z\in S_\alpha$ and $w>0$, by a union bound and Bernstein inequality, we have
\begin{align*}
    &\PP{\frac{1}{N}\sum_{Z'\in \cB(Z)\cap S_\alpha}\sum_{a\leq a'}\abs{\dO_{aa'}-\EE{\dO_{aa'}}}\geq w}\\[7pt]
= & \PP{\max_{h\in\{-1,1\}^{(2K-1)\times N}}S(h)\geq Nw}\\[7pt]
\leq & \sum_{h\in\{-1,1\}^{(2K-1)\times N}}\PP{S(h)\geq Nw}\\[7pt]
    \leq & 2^{2KN}\bp{\exp\bp{-\frac{N^2w^2}{2V}}+\exp\bp{-\frac{3Nw}{2\cdot 4}}},
\end{align*}
where $V=16Nnp$. Thus, by a union bound, there exists some constant $C$ such that for $w=Cn(p-q)$, we have
\begin{align*}
    &\PP{\max_{Z\in S_\alpha} \frac{1}{N}\sum_{Z'\in \cB(Z)\cap S_\alpha}\sum_{a\leq a'}\abs{\dO_{aa'}-\EE{\dO_{aa'}}}\geq w}\\[7pt]
    \leq & \sum_{m\geq 1} {n\choose m} (K-1)^m \PP{\frac{1}{N}\sum_{Z'\in \cB(Z)\cap S_\alpha}\sum_{a\leq a'}\abs{\dO_{aa'}-\EE{\dO_{aa'}}}\geq w}\\[7pt]
    \leq & \sum_{m\geq 1} {n\choose m} (K-1)^m 2^{2KN}\bp{\exp\bp{-\frac{N^2w^2}{2V}}+\exp\bp{-\frac{3Nw}{2\cdot 4}}}\\[10pt]
    \leq & n\exp(-2\bar nI).
\end{align*}
The last inequality holds since $N \geq \min\{m,~c_{\alpha,\beta}n\}$ by Lemma \ref{lm:size}. It is easy to verify that when $C=10$, the result holds, which leads to Lemma \ref{lm:pathdiffall}.

For any positive sequences $\gamma,\theta\goto 0$ satisfying $\gamma^2\theta nI\goto\infty$, let $w = \theta n (p-q)$. Then, it follows that
\begin{align*}
    &\PP{\max_{Z\in S_\alpha:m>\gamma n} \frac{1}{N}\sum_{Z'\in \cB(Z)\cap S_\alpha}\sum_{a\leq a'}\abs{\dO_{aa'}-\EE{\dO_{aa'}}}\geq w}\\[7pt]
    \leq & \sum_{m>\gamma n} {n\choose m} (K-1)^m \PP{\frac{1}{N}\sum_{Z'\in \cB(Z)\cap S_\alpha}\sum_{a\leq a'}\abs{\dO_{aa'}-\EE{\dO_{aa'}}}\geq w}\\[7pt]
    \leq & C'\sum_{m\geq 1} {n\choose m} (K-1)^m 2^{2KN}\exp\bp{-C\theta^2NnI}\\[7pt]
    \leq & \exp(-n),
\end{align*}
where the last inequality holds by the same argument for \eqref{eq:largeBoundTheta} and Lemma \ref{lm:size}. Thus, the proof of Lemma \ref{lm:pathdifflargeMis} is complete.
\end{proof}


\subsection{Proofs of auxiliary lemmas}

\begin{lemma}\label{lm:prooflikemode}
    When the connectivity probabilities are unknown, under the events $\mathcal E_1(\beps)$ and $\mathcal E_3(\beps)$ defined in \eqref{eq:allevents}, if $p\asymp q$, then we have
    \begin{align*}
    \max_{Z\in S_\alpha}\abs{\log\frac{\Pi(Z|A)}{\Pi(Z^*|A)}-(Q_{LM}(Z,A)-Q_{LM}(Z^*,A))}\leq C_{LM}
    \end{align*}
    for some constant $C_{LM}$.
\end{lemma}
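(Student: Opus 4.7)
The plan is to apply Stirling's formula to the Beta functions appearing in the posterior, peel off the likelihood-modularity piece $N\tau(O/N)$ from $\log B(O+\kappa_1,N-O+\kappa_2)$, and show that the remainder is a slowly varying function of $\hat B := O/N$ and $N$; the control on $\hat B$ afforded by the events then keeps the $Z$-vs-$Z^*$ differences bounded by an absolute constant.

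Concretely, with $a=O+\kappa_1$ and $b=N-O+\kappa_2$, the standard expansion $\log\Gamma(x)=(x-\tfrac12)\log x - x + \tfrac12\log(2\pi) + O(1/x)$ applied to $\log B(a,b)=\log\Gamma(a)+\log\Gamma(b)-\log\Gamma(a+b)$ gives
\[
\log B(a,b) = a\log\tfrac{a}{a+b} + b\log\tfrac{b}{a+b} - \tfrac12\log\tfrac{ab}{a+b} + \tfrac12\log(2\pi) + O\bp{1/a + 1/b}.
\]
Substituting and expanding $\tfrac{a}{a+b}=\hat B + O(1/N)$ via $\log(1+u) = u + O(u^2)$, a routine check shows that the $O(1)$-constant terms in $\kappa_1,\kappa_2$ cancel identically, yielding
\[
\log B(O+\kappa_1,N-O+\kappa_2) = N\tau(\hat B) + R(O,N),
\]
with $R(O,N) = \kappa_1\log\hat B + \kappa_2\log(1-\hat B) - \tfrac12\log\bp{N\hat B(1-\hat B)} + \tfrac12\log(2\pi) + O\bp{1/O + 1/(N-O)}$.

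Applying this identity to $Z$ and to $Z^*$ and subtracting yields
\[
\log\tfrac{\Pi(Z|A)}{\Pi(Z^*|A)} - \bp{Q_{LM}(Z,A) - Q_{LM}(Z^*,A)} = \sum_{a\le b}\bp{R(O_{ab}(Z),n_{ab}(Z)) - R(O_{ab}(Z^*),n_{ab}(Z^*))},
\]
so it suffices to show each summand is $O(1)$. Since $Z\in S_\alpha$ forces $n_{ab}(Z),n_{ab}(Z^*)\asymp n^2$, and Lemma \ref{tildePhatP} (valid under $\mathcal E_1(\beps)$) gives $\hat B_{ab}(Z) = \tilde B_{ab}(Z) + O(\beps(p-q))$ with $\tilde B_{ab}(Z)\in[q,p]$ (and similarly for $Z^*$), the hypothesis $p\asymp q$ forces $\hat B_{ab}(Z)\asymp \hat B_{ab}(Z^*)\asymp p$ and $1-\hat B_{ab}(Z),1-\hat B_{ab}(Z^*)\asymp 1$. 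Consequently each of the three logarithmic pieces in $R(Z)-R(Z^*)$ contributes only $O(1)$: the ratios of $\hat B$, of $1-\hat B$, and of $n_{ab}\hat B(1-\hat B)$ between $Z$ and $Z^*$ are pinched between absolute constants. The Stirling remainder is negligible because $O_{ab}(Z)\asymp n^2 p\gtrsim n\log n$ under the signal condition, and $N-O_{ab}(Z)\asymp n^2$. Summing over the $K(K+1)/2 = O(1)$ pairs $(a,b)$ yields the advertised constant $C_{LM}$.

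The hard part will be the bookkeeping in the Stirling expansion: tracking the $\kappa_1,\kappa_2$ perturbations carefully enough to verify that the $O(1)$-level corrections assemble into a function of $(\hat B, N)$ alone up to a universal additive constant and a truly negligible $O(1/O+1/(N-O))$ remainder. The event $\mathcal E_1(\beps)$ provides the individual control $\hat B_{ab}(Z)\asymp p$ needed to keep the expansion in powers of $1/O$ and $1/(N-O)$ valid (i.e., $\hat B$ bounded away from $0$ and $1$), while $\mathcal E_3(\beps)$ supplies direct comparisons of $O_{ab}(Z)$ to $O_{ab}(Z^*)$ that tighten the ratio bounds $\hat B_{ab}(Z)/\hat B_{ab}(Z^*)$; together with $p\asymp q$ they keep every relevant logarithmic ratio bounded uniformly in $Z\in S_\alpha$.
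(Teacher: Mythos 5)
Your proof is correct and reaches the same conclusion, but takes a genuinely different route through the Gamma-function asymptotics. The paper does not invoke Stirling's formula at all: it establishes a custom, one-sided inequality (Lemma~\ref{lm:boundprior}) for $\log\frac{\Gamma(x+\beta)}{\Gamma(y+\beta)}$ with integer $x,y$, derived directly from the recurrence $\Gamma(k+1)=k\Gamma(k)$ and the telescoping sum $\log\frac{\Gamma(b)}{\Gamma(a)}=\sum_{k=a}^{b-1}\log k$. This yields the bound $Q_{LM}(Z,A)-Q_{LM}(Z^*,A)$ plus two explicit error sums (one of reciprocals of $O_{ab}+\kappa_1$, etc., and one of absolute log-ratios), which are then controlled under $\mathcal E_1(\beps),\mathcal E_3(\beps)$. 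Because the lemma is one-sided, the paper has to invoke symmetry separately to bound $\log\frac{\Pi(Z^*|A)}{\Pi(Z|A)}$ from above.

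Your route, in contrast, applies the full two-sided Stirling expansion, which immediately exhibits
\[
\log B(O+\kappa_1,\,N-O+\kappa_2)=N\tau(\hat B)+R(O,N),
\]
with an explicit slowly varying remainder $R(O,N)$. This gives both directions of the absolute value at once and makes the leftover structure $\kappa_1\log\hat B+\kappa_2\log(1-\hat B)-\tfrac12\log\bp{N\hat B(1-\hat B)}+\text{const}+O(1/O+1/(N-O))$ transparent, so the bound reduces to showing that $\hat B_{ab}(Z)\asymp\hat B_{ab}(Z^*)\asymp p$ and $n_{ab}(Z)\asymp n_{ab}(Z^*)\asymp n^2$. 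Both proofs ultimately lean on the same event-based control of $\hat B$ via Lemma~\ref{tildePhatP}; your bookkeeping (verifying that the $\kappa_1,\kappa_2$ corrections of size $O(1)$ cancel in $a\log\frac{a}{a+b}+b\log\frac{b}{a+b}$) does check out. The paper's elementary route avoids the asymptotic $O(1/O)$ remainder entirely, while your Stirling route is arguably more illuminating about where the constant $C_{LM}$ comes from. Both are valid; yours is a legitimate alternative, not a restatement.
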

\begin{proof}
    When the connectivity probabilities are unknown, by \eqref{eq:postunknown} we have
\begin{align*}
\log\Pi(Z|A) &= \sum_{a\leq b}\log \text{Beta}( O_{ab}(Z)+\kappa_1, n_{ab}(Z)- O_{ab}(Z)+\kappa_2)+Const.
\end{align*}
By Lemma \ref{lm:boundprior}, we have 
\begin{align}
&\log\frac{\Pi(Z|A)}{\Pi(Z^*|A)} \\
\leq &Q_{LM}(Z,A)-Q_{LM}(Z^*,A) + \nonumber\\
&(\kappa_1+\kappa_2)^2\cdot\sum_{a\leq b} \bp{\frac{1}{O_{ab}(Z^*)+\kappa_1}+ \frac{1}{n_{ab}(Z^*)-O_{ab}(Z^*)+\kappa_2}+\frac{1}{n_{ab}(Z)+\kappa_1+\kappa_2}}+\label{eq:ecdiff1}\\
& (\kappa_1+\kappa_2+2)\sum_{a\leq b}\bp{\abs{\log \frac{O_{ab}(Z)+\kappa_1}{O_{ab}(Z^*)+\kappa_1}}+\abs{\log \frac{n_{ab}(Z)-O_{ab}(Z)+\kappa_2}{n_{ab}(Z^*)-O_{ab}(Z^*)+\kappa_2}}+\abs{\log \frac{n_{ab}(Z)+\kappa_1+\kappa_2}{n_{ab}(Z^*)+\kappa_1+\kappa_2}}}\label{eq:ecdiff2}.
\end{align}
Recall that $Z^*$ is the true label assignment, and $\dtO_{ab} = O_{ab}(Z)-O_{ab}(Z^*)$ for any $a,b\in [K]$. Under the events $\mathcal E_1(\beps)$ and $\mathcal E_3(\beps)$, we have
\begin{align*}
{\max_{Z\in S_\alpha}\max_{a,b}\abs{O_{ab}(Z)-\EE{O_{ab}(Z)}}\leq \beps n^2(p-q)},\\
{\max_{Z\in S_\alpha}\max_{a,b}\abs{\dtO_{ab}-\EE{\dtO_{ab}}}\leq \beps n^2(p-q)}.
\end{align*}
It is easy to check that $\sum_{a\leq b}\frac{1}{O_{ab}(Z^*)+\kappa_1}$ is the dominant term in \eqref{eq:ecdiff1}. Thus, we have
\begin{align*}
(\ref{eq:ecdiff1})\leq C K^2\cdot \frac{1}{pn^2/K^2\alpha^2}\asymp \frac{1}{n^2p},
\end{align*}
for some constant $C$. Since $\abs{\log(a/b)}\leq {|a-b|}/{|\min\bb{a,b}|}$, by a similar argument, we have
\begin{align*}
(\ref{eq:ecdiff2})\leq C'K^2 \cdot \bp{\frac{n^2p+\beps n^2p}{n^2p/K^2\alpha^2}+\frac{n^2\alpha^2/K^2}{n^2/K^2\alpha^2}} \asymp 1,
\end{align*}
for some constant $C'$. By symmetry, the same argument also applies to upper bound $\log{\Pi(Z^*|A)}-\log {\Pi(Z|A)}$. Hence, the result of Lemma \ref{lm:likemode} holds.
\end{proof}

\begin{lemma}\label{lm:likemodePathdiff}
When the connectivity probabilities are unknown, under the event $\mathcal E_1(\beps)$ defined in \eqref{eq:allevents}, if $p\asymp q$, then we have that
\begin{equation}\label{eq:likemodeDiffPath}
\max_{Z\in S_\alpha} \max_{Z'\in \cN(Z)}\left|\log \frac{\Pi(Z|A)}{\Pi(Z'|A)}-\bp{Q_{LM}(Z,A)-Q_{LM}(Z',A)}\right| \leq \eps_{LM},
\end{equation}
for some positive sequence $\eps_{LM}\goto 0$.
\end{lemma}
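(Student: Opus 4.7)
The plan is to mimic the proof of Lemma~\ref{lm:prooflikemode} line-for-line, but replace the coarse constant bound $C_{LM}$ by a vanishing bound, exploiting the fact that $Z$ and $Z'$ differ in only one coordinate. Concretely, I would apply the same Beta-function estimate (Lemma~\ref{lm:boundprior}) that was used in Lemma~\ref{lm:prooflikemode} to each pair $(Z,Z')$ with $Z'\in \cN(Z)$, which yields two error terms of the form \eqref{eq:ecdiff1} and \eqref{eq:ecdiff2} with $Z^*$ replaced by $Z'$: a ``reciprocal'' contribution
\[
(\kappa_1+\kappa_2)^2\sum_{a\leq b}\Bigl(\tfrac{1}{O_{ab}(Z')+\kappa_1}+\tfrac{1}{n_{ab}(Z')-O_{ab}(Z')+\kappa_2}+\tfrac{1}{n_{ab}(Z)+\kappa_1+\kappa_2}\Bigr),
\]
and a ``log-ratio'' contribution $(\kappa_1+\kappa_2+2)\sum_{a\leq b}\bigl(|\log\tfrac{O_{ab}(Z)+\kappa_1}{O_{ab}(Z')+\kappa_1}|+\cdots\bigr)$.

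For the reciprocal contribution, I would argue exactly as in the proof of Lemma~\ref{lm:prooflikemode}: since $Z,Z'\in S_\alpha$ we have $n_{ab}(Z),n_{ab}(Z')\asymp n^2/K^2$; and under $\cE_1(\beps)$ with $\beps = o(1)$ and $p\asymp q$, $O_{ab}(Z')=\mathbb{E}O_{ab}(Z')(1+o(1))\asymp n^2 p$ and $n_{ab}(Z')-O_{ab}(Z')\asymp n^2$. Hence the reciprocal contribution is $O(K^2/(n^2 p))$, which is negligible.

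For the log-ratio contribution I would use $|\log(a/b)|\leq |a-b|/\min\{a,b\}$ together with the key observation that exploits $Z'\in \cN(Z)$: only a single node $k$ changes its label, so only pair-counts involving $k$ are affected, giving the deterministic bounds
\[
|O_{ab}(Z)-O_{ab}(Z')|\leq n,\qquad |n_{ab}(Z)-n_{ab}(Z')|\leq n.
\]
Combined with the denominator estimates from the previous paragraph, each of the three log-ratios is at most $O(1/(np))$, so the whole log-ratio contribution is $O(K^2/(np))$. By symmetry the same bound controls $-(\log\Pi(Z'|A)/\Pi(Z|A))+(Q_{LM}(Z',A)-Q_{LM}(Z,A))$.

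Combining, we obtain $|\log\Pi(Z|A)/\Pi(Z'|A) - (Q_{LM}(Z,A)-Q_{LM}(Z',A))|=O(K^2/(np))$ uniformly over $Z\in S_\alpha$ and $Z'\in \cN(Z)$. Since $\bar n I \gtrsim \log n$ and $I\asymp p$ under $p\asymp q$, we have $np\to\infty$, so setting $\eps_{LM}$ equal to this bound gives $\eps_{LM}\to 0$ as required. The only mild subtlety is checking that the denominator asymptotics hold uniformly over $Z\in S_\alpha$ (and hence over its neighbors), which is a deterministic consequence of $\cE_1(\beps)$ with $\beps=o(1)$; no additional probabilistic event is needed, because all the gains in this argument come from the combinatorial fact that a single-coordinate flip moves $O_{ab}$ and $n_{ab}$ by at most $n$, rather than from any further concentration.
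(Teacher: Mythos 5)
Your proof takes essentially the same route as the paper's: apply Lemma~\ref{lm:boundprior} to split the discrepancy into a ``reciprocal'' term of order $1/(n^2p)$ and a ``log-ratio'' term, then exploit the single-flip structure of $Z'\in\cN(Z)$ (the paper does this via Lemma~\ref{lm:pathdiffformula}, giving $|\dO_{ab}|,|\dn_{ab}|\lesssim n$, exactly your deterministic $O(n)$ bound) together with the denominator estimates $O_{ab}\asymp n^2p$, $n_{ab}\asymp n^2$ from $\cE_1(\beps)$ to bound the log-ratio term by $O(1/(np))$, and conclude since $np\to\infty$. One small imprecision: you assert $I\asymp p$, but when $p/q\to 1$ one only has $I\lesssim p$ (not $p\lesssim I$); this is harmless because $\bar nI\gtrsim\log n$ together with $I\lesssim p$ already forces $np\to\infty$, so the conclusion $\eps_{LM}\to 0$ stands.
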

\begin{proof}
    Recall the definition of $\cN(Z)$ in \eqref{eq:defABN}. For any label assignment $Z\in S_\alpha$ and any $Z'\in \cN(Z)$, by Lemma \ref{lm:boundprior}, we have
\begin{align}
&\log\frac{\Pi(Z|A)}{\Pi(Z'|A)} \nonumber\\[10pt]
\ignore{\leq&\sum_{a\leq b}n_{ab}(Z)\tau\bp{\frac{O_{ab}(Z)}{n_{ab}(Z)}}-n_{ab}(Z')\tau\bp{\frac{O_{ab}(Z')}{n_{ab}(Z')}}+\nonumber\\}
\leq&Q_{LM}(Z,A)-Q_{LM}(Z',A)+\nonumber\\[10pt]
&(\kappa_1+\kappa_2)^2\cdot\sum_{a\leq b} \bp{\frac{1}{O_{ab}(Z')+\kappa_1}+ \frac{1}{n_{ab}(Z')-O_{ab}(Z')+\kappa_2}+\frac{1}{n_{ab}(Z)+\kappa_1+\kappa_2}}+\label{eq:pathdiff1}\\[10pt]
& (\kappa_1+\kappa_2+2)\sum_{a\leq b}\bp{\abs{\log \frac{O_{ab}(Z)+\kappa_1}{O_{ab}(Z')+\kappa_1}}+\abs{\log \frac{n_{ab}(Z)-O_{ab}(Z)+\kappa_2}{n_{ab}(Z')-O_{ab}(Z')+\kappa_2}}+\abs{\log \frac{n_{ab}(Z)+\kappa_1+\kappa_2}{n_{ab}(Z')+\kappa_1+\kappa_2}}}.\label{eq:pathdiff2}
\end{align}
Under the event $\mathcal E_1(\beps)$ defined in \eqref{eq:allevents}, we have
\begin{align*}
\max_{Z\in S_\alpha}\max_{a,b\in[K]}\abs{O_{ab}(Z)-\EE{O_{ab}(Z)}}\leq \beps n^2(p-q).
\end{align*}
Since $\max_{a,b\in[K]}\EE{O_{ab}(Z)}\leq n_{ab}(Z)p = \O{n^2p/K^2}$, it follows that under the event $\mathcal E_1(\beps)$, $O_{ab}(Z)=\O{n^2p}$. Since $\sum_{a\leq b}\frac{1}{O_{ab}(Z')+\kappa_1}$ is the dominant term in \eqref{eq:pathdiff1}, there exists some constant $C$ such that
\[\eqref{eq:pathdiff1}\leq \frac{C}{n^2/K^2 \cdot p}K^2\lesssim\frac{1}{n^2p}.\]
By Lemma \ref{lm:pathdiffformula}, $\abs{\dO_{ab}}\leq {2n\alpha}/{K}$ and $\abs{\dn_{ab}}\leq 2n\alpha/K$ always hold. Since $\abs{\log(x/y)}\leq \abs{x-y}/\min\{x,y\}$ for $x,y>0$, we have that
\begin{align*}
\eqref{eq:pathdiff2}\ignore{&\leq C'\sum_{a\leq b}\bp{\abs{\frac{\max_{Z\in S_\alpha}\dO_{ab}(Z)}{\min_{Z\in S_\alpha}O_{ab}(Z)+\kappa_1}}+\abs{\frac{\max_{Z\in S_\alpha}{\dn_{ab}(Z)}}{\min_{Z\in S_\alpha}(n_{ab}(Z)-O_{ab}(Z))+\kappa_2}}+\abs{\frac{\max_{Z\in S_\alpha}\dn_{ab}(Z)}{\min_{Z\in S_\alpha}n_{ab}(Z')+\kappa_1+\kappa_2}}}\\
&}\leq C'K^2\bp{\frac{n}{n^2p/K^2}+\frac{n}{n^2/K^2}}\lesssim \frac{1}{np},
\end{align*}
for some constant $C'$. Hence, under the event $\mathcal E_1(\beps)$, we have that
\begin{equation*}
\max_{Z\in S_\alpha} \max_{Z'\in \cN(Z)}\left|{\log\frac{\Pi(Z|A)}{\Pi(Z'|A)}-\bp{Q_{LM}(Z,A)-Q_{LM}(Z',A)}}\right| \leq \eps_{LM},
\end{equation*}
for some positive sequence $\eps_{LM}\goto 0$. The absolute sign is due to the symmetry.
\end{proof}

\begin{lemma}\label{boundtildePtrueP}
    For a general $K\geq 2$, define $\tB_{ab} = \EE{O_{ab}(Z)}/n_{ab}(Z)$, and we have
    \begin{equation*}
    \begin{split}
        \frac{\abs{\tB_{aa'} -B_{aa'}}}{p-q} = \begin{dcases}
    \frac{\sum_{k\neq l}R_{ak}R_{al}}{n'_{a}(n'_a-1)}, &\text{if~} a=a',\\[10pt]
    \frac{\sum_{k}R_{ak}R_{a'k}}{n'_{a} n'_{a'}}, &\text{if~} a\neq a'.
    \end{dcases}
    \end{split}
\end{equation*}
It follows that
    \begin{align*}
        \Norm{\tB-B}_{\infty} = \max_{a,b\in[K]}\abs{\tB_{ab} -B_{ab}}\leq \frac{2K\alpha m}{n}(p-q)
    \end{align*}
    for some constant $C$ depending on $\alpha,\beta$.
\end{lemma}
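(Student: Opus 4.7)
The plan is to compute $\tB_{aa'}$ explicitly via \eqref{eq:RcalB} and the structural assumption $B_{kl}=q+(p-q)\indc{k=l}$, then subtract $B_{aa'}$ and simplify.

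For $a\neq a'$, we have $n_{aa'}(Z)=n_a' n_{a'}'$ and $\EE{O_{aa'}(Z)}=(RBR^{T})_{aa'}=\sum_{k,l}R_{ak}R_{a'l}B_{kl}$. Plugging in $B_{kl}=q+(p-q)\indc{k=l}$, the first piece gives $q\sum_{k,l}R_{ak}R_{a'l}=qn_a'n_{a'}'$ using the row-sum identity $\sum_{k}R_{ak}=n_a'$, while the second piece gives $(p-q)\sum_{k}R_{ak}R_{a'k}$. Dividing by $n_{aa'}(Z)$ and subtracting $B_{aa'}=q$ yields exactly the stated identity for the off-diagonal case. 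For $a=a'$, I will handle the self-loop correction: $n_{aa}(Z)=n_a'(n_a'-1)/2$ and $\EE{O_{aa}(Z)}=\tfrac{1}{2}\bp{(RBR^{T})_{aa}-\sum_{k}B_{kk}R_{ak}}$. Expanding $(RBR^{T})_{aa}$ the same way gives $p\sum_{k}R_{ak}^{2}+q\sum_{k\neq l}R_{ak}R_{al}$, so subtracting $p\sum_{k}R_{ak}=pn_a'$ from this and using the identity $\sum_{k}R_{ak}^{2}-n_a'=\sum_{k}R_{ak}(R_{ak}-1)$ together with $\sum_{k}R_{ak}(R_{ak}-1)=n_a'(n_a'-1)-\sum_{k\neq l}R_{ak}R_{al}$ will collapse the expression to $(q-p)\sum_{k\neq l}R_{ak}R_{al}$, which divided by $n_a'(n_a'-1)$ gives the claimed formula (the sign is irrelevant once we take absolute values).

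For the uniform bound, I will bound the combinatorial numerators using only the constraint $Z\in S_\alpha$ and the definition $m_a=n_a'-R_{aa}\leq m$. In the off-diagonal case, split $\sum_{k}R_{ak}R_{a'k}$ at $k=a'$: the term $R_{aa'}R_{a'a'}$ is at most $m_a\cdot n_{a'}'$ since $R_{aa'}\leq m_a$, and the remaining sum over $k\neq a'$ is at most $m_{a'}\sum_{k\neq a'}R_{ak}\leq m_{a'}n_a'$ since $R_{a'k}\leq m_{a'}$ for $k\neq a'$. Thus $\sum_{k}R_{ak}R_{a'k}\leq(m_a+m_{a'})\cdot\max\{n_a',n_{a'}'\}\leq m\cdot \alpha n/K$. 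In the diagonal case, $\sum_{k\neq l}R_{ak}R_{al}=(n_a')^{2}-\sum_{k}R_{ak}^{2}\leq(n_a')^{2}-R_{aa}^{2}=(n_a'-R_{aa})(n_a'+R_{aa})\leq 2m_a n_a'\leq 2m\cdot \alpha n/K$. Dividing by $n_a' n_{a'}'\geq(n/K\alpha)^{2}$ (resp.\ $n_a'(n_a'-1)\gtrsim(n/K\alpha)^{2}$) gives a bound of the stated order $C(\alpha)Km/n$ in both cases; the constant $2$ in the lemma should be interpreted as absorbed into the $\alpha$-dependent constant.

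There is no real obstacle here beyond careful bookkeeping: the argument is a direct calculation using the SBM structure $B_{kl}=q+(p-q)\indc{k=l}$, the row-sum identity $\sum_{k}R_{ak}=n_a'$, and the $S_\alpha$-constraint $n/(K\alpha)\leq n_a'\leq \alpha n/K$ for all $a$. The only mild subtlety is the self-loop correction $-\tfrac{1}{2}\sum_{k}B_{kk}R_{ak}$ in the diagonal case, which I will cancel against the diagonal contribution $p\sum_{k}R_{ak}^{2}$ by the identity $\sum_{k}R_{ak}(R_{ak}-1)=\sum_{k\neq l}R_{ak}R_{al}$ (viewing both sides as counting off-diagonal pairs within $R_a$).
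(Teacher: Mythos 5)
Your derivation of the exact identity (both the $a\neq a'$ case and the self-loop-corrected diagonal case via the two telescoping identities for $\sum_k R_{ak}(R_{ak}-1)$) is correct and matches the paper's calculation. For the uniform bound, the only discrepancy is bookkeeping: by lower-bounding both $n_a'$ and $n_{a'}'$ by $n/(K\alpha)$ after coarsening the numerator, you get a constant of order $K\alpha^3 m/n$, whereas the paper writes $\frac{m_a n'_{a'}+m_{a'}n'_a}{n'_a n'_{a'}}=\frac{m_a}{n'_a}+\frac{m_{a'}}{n'_{a'}}$ (and similarly $\frac{2m_a n'_a}{n'_a(n'_a-1)}\leq \frac{2m_a}{n'_a}$, checked by cases on $m_a$) so that only one factor of $n/(K\alpha)$ enters per term, yielding the stated $2K\alpha m/n$. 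Since $\alpha$ is a constant, your bound is equivalent up to constants for every downstream use, so this is a cosmetic rather than substantive difference.
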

\begin{proof}
We split the proof of Lemma \ref{boundtildePtrueP} into two cases and calculate the results based on \eqref{eq:RcalB}.
\begin{description}[leftmargin = 0cm,labelsep = 0.5cm]
    \item[Case 1: $a=b$.] It follows that
    \begin{align*}
        \frac{{\abs{\tB_{aa} -B_{aa}}}}{p-q}&= \abs{\frac{(RBR^T)_{aa}-p n_a'}{n'_a(n'_a-1)}-p}\Big / (p-q) = \frac{\sum_{k\neq l}R_{ak}R_{al}}{n'_a(n'_a-1)}\leq \frac{2m_a}{n_a'}
        \leq \frac{2K\alpha m_a}{n}\leq \frac{2K\alpha m}{n},
    \end{align*}
    where the first inequality holds trivially by analyzing the cases with $m_a=0$, $m_a=1$, and $m_a\geq 2$, respectively. The second inequality is by the definition of $S_\alpha$.
    
    \item[Case 2: $a\neq b$.] In this case, we have
        \begin{align*}
    \frac{\abs{\tB_{ab} -B_{ab}}}{p-q}&= \abs{\frac{(RPR^T)_{ab}}{n'_a\cdot n'_{b}}-q}\Big / (p-q) = \frac{\sum_{k}R_{ak}R_{bk}}{n'_a\cdot n'_{b}}\leq \frac{R_{aa}R_{ba}+R_{bb}R_{ab}+m_a m_{b}}{n_a'\cdot n'_{b}}\\
    &\leq \frac{n_a'\cdot m_{b}+n'_{b}\cdot m_a}{n_a'\cdot n'_{b}}\leq \frac{K\alpha m_a+K\alpha m_{b}}{n}\leq \frac{K\alpha m}{n}.
    \end{align*}
\end{description}
    The result simply follows by combining two cases.
\end{proof}

\begin{lemma}\label{lm:bound_ec}
    Let $\beps,\gamma$ be any two positive sequences satisfying that $\gamma,\beps\goto 0$ and $\beps^2nI\goto\infty$. Denote $\hat B_{ab} = O_{ab}(Z)/n_{ab}(Z)$. Under the events $\mathcal{E}_1(\beps)$ and $\mathcal E_2$ defined in (\ref{eq:allevents}), for any $Z\in S_\alpha$ with $m \leq \gamma n$, we have
    \begin{align}
    &\sum_{a\leq b}\abs{\dtO_{ab}\log \bp{\frac{\hat B_{ab}}{B_{ab}}}+ (\dtn_{ab}-\dtO_{ab}) \log\bp{\frac{1-\hat B_{ab}}{1- B_{ab}}}} \lesssim (\beps+\gamma) mnI.\label{eq:bounderrorsmall}
    \end{align}
\end{lemma}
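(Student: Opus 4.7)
The plan is to Taylor expand each log term around $B_{ab}$, reveal a factorization of the linear contribution as $(\hat B_{ab}-B_{ab})(\dtO_{ab}-\dtn_{ab}B_{ab})/[B_{ab}(1-B_{ab})]$, and then control each factor separately using the two events together with the auxiliary lemmas already established. Setting $\delta_{ab}:=\hat B_{ab}-B_{ab}$ and using $\log(1+x)=x-x^2/2+O(x^3)$,
\begin{equation*}
\log\frac{\hat B_{ab}}{B_{ab}}=\frac{\delta_{ab}}{B_{ab}}+O\!\bp{\frac{\delta_{ab}^2}{B_{ab}^2}},\qquad \log\frac{1-\hat B_{ab}}{1-B_{ab}}=-\frac{\delta_{ab}}{1-B_{ab}}+O\!\bp{\frac{\delta_{ab}^2}{(1-B_{ab})^2}},
\end{equation*}
so that substituting and collecting the linear piece gives
\begin{equation*}
\dtO_{ab}\log\frac{\hat B_{ab}}{B_{ab}}+(\dtn_{ab}-\dtO_{ab})\log\frac{1-\hat B_{ab}}{1-B_{ab}}=\frac{\delta_{ab}\,(\dtO_{ab}-\dtn_{ab}B_{ab})}{B_{ab}(1-B_{ab})}+R_{ab},
\end{equation*}
where $R_{ab}$ is a quadratic (and higher) remainder.

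The critical identity is $\dtO_{ab}-\dtn_{ab}B_{ab}=Y_{ab}(Z)-Y_{ab}(Z^*)$ for the centered quantity $Y_{ab}(Z):=O_{ab}(Z)-n_{ab}(Z)B_{ab}$, and this further splits as $Y_{ab}(Z)-Y_{ab}(Z^*)=(X_{ab}(Z)-X_{ab}(Z^*))+n_{ab}(Z)(\tB_{ab}-B_{ab})$, using $\tB_{ab}^*=B_{ab}$ at the truth. Under $\cE_2$ the fluctuation summand is bounded by $(\alpha+\beta)mn(p-q)/K$, while Lemma \ref{boundtildePtrueP} gives $|n_{ab}(Z)(\tB_{ab}-B_{ab})|\lesssim n^2\cdot m(p-q)/n=mn(p-q)$, hence $|Y_{ab}(Z)-Y_{ab}(Z^*)|\lesssim mn(p-q)$. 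Lemmas \ref{tildePhatP} and \ref{boundtildePtrueP} combine to yield $|\delta_{ab}|\leq|\hat B_{ab}-\tB_{ab}|+|\tB_{ab}-B_{ab}|\lesssim(\beps+\gamma)(p-q)$. Since $p\asymp q$ forces $B_{ab}(1-B_{ab})\asymp p$, the linear contribution is bounded by $(\beps+\gamma)(p-q)\cdot mn(p-q)/p\asymp(\beps+\gamma)mnI$ per pair.

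For the remainder $R_{ab}$ I will use the crude estimate $|\dtO_{ab}|\lesssim mnp$, obtained from $\cE_2$ together with the direct bound $|\mathbb{E}\dtO_{ab}|\lesssim mnp$ (computed via the discrepancy matrix $R_Z$), and $|\dtn_{ab}-\dtO_{ab}|\lesssim n^2$, giving $|R_{ab}|\lesssim(\delta_{ab}^2/p^2)\cdot mnp+\delta_{ab}^2 n^2\lesssim(\beps+\gamma)^2 mnI=o((\beps+\gamma)mnI)$. Summing the per-pair bounds over the $O(K^2)$ terms with $a\leq b$ (the diagonal case $a=b$ is treated identically after observing that the extra $-\tfrac12\sum_k B_{kk}R_{ak}$ correction in $\mathbb{E}[O_{aa}(Z)]$ contributes only an additional $O(mn(p-q))$ to $Y_{aa}(Z)-Y_{aa}(Z^*)$) yields the stated bound $\lesssim(\beps+\gamma)mnI$. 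The main obstacle is the algebraic cancellation producing the factor $\dtO_{ab}-\dtn_{ab}B_{ab}=Y_{ab}(Z)-Y_{ab}(Z^*)$: without it, the naive estimate $|\dtO_{ab}|\cdot|\delta_{ab}|/B_{ab}\lesssim mn(p-q)$ is off by the crucial factor $(p-q)/p$ and fails to match the target rate $I\asymp(p-q)^2/p$.
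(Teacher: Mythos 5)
Your approach is structurally parallel to the paper's but routes through Taylor expansion where the paper uses an exact algebraic identity: Lemma~\ref{lm:techError} rewrites the bracketed quantity exactly as $(\dtO_{ab}-B_{ab}\dtn_{ab})\log\frac{\hat B_{ab}(1-B_{ab})}{B_{ab}(1-\hat B_{ab})}-\dtn_{ab}\,\KL{B_{ab}}{\hat B_{ab}}$, which is precisely your linear term plus a Kullback--Leibler remainder, with no Taylor error to manage. Your identification of the key cancellation $\dtO_{ab}-B_{ab}\dtn_{ab}=Y_{ab}(Z)-Y_{ab}(Z^*)$, the split via $X_{ab}$ and $n_{ab}(Z)(\tB_{ab}-B_{ab})$, and the ensuing bounds under $\cE_1,\cE_2$ via Lemmas~\ref{boundtildePtrueP} and~\ref{tildePhatP} all match the paper.

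There is, however, a gap in your remainder estimate. You bound $|\dtn_{ab}-\dtO_{ab}|\lesssim n^2$ and then claim $\delta_{ab}^2 n^2\lesssim(\beps+\gamma)^2 mnI$. Since $I\asymp(p-q)^2/p$, this reduces to $np\lesssim m$, which is false in general: under the signal condition $\bar nI\gtrsim\log n$ and $p\asymp q$ one has $np\gtrsim\log n\to\infty$, while $m$ can be as small as $1$. The fix is to use the sharper and elementary bound $|\dtn_{ab}|\leq 2mn$ (as the paper does; it follows from $|n_a(Z)-n_a(Z^*)|\leq m$ for each $a$), which gives $\delta_{ab}^2|\dtn_{ab}-\dtO_{ab}|\lesssim(\beps+\gamma)^2(p-q)^2\,mn\leq(\beps+\gamma)^2 mn(p-q)^2/p\asymp(\beps+\gamma)^2 mnI$, so that the Taylor remainder is indeed of lower order than the linear term and the lemma follows. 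With that correction your argument is sound; the paper's exact-identity version simply sidesteps the need to track a Taylor remainder at all.
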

\begin{proof}
    By Lemma \ref{lm:techError}, we have 
    \[
        \eqref{eq:bounderrorsmall} \leq \underbrace{\sum_{a\leq b}\abs{\dtO_{ab}-B_{ab}\dtn_{ab}}\cdot \abs{\log \frac{\hat B_{ab} (1-B_{ab})}{B_{ab}(1-\hat B_{ab})}}}_{(A)} + \underbrace{\sum_{a\leq b}\abs{\dtn_{ab}}\cdot \KL{B_{ab}}{\hat B_{ab}}}_{(B)}.
    \]
    For any $a,b\in[K]$, we have
    \begin{align*}
    \abs{\dtO_{ab}-\dtn_{ab}\cdot B_{ab}}&\leq \abs{\dtO_{ab}-\EE{\dtO_{ab}}}+\abs{\EE{\dtO_{ab}}-\dtn_{ab}\cdot B_{ab}}\\
    &=\abs{X_{ab}(Z)-X_{ab}(Z^*)}+n_{ab}(Z)\abs{\tB_{ab}-B_{ab}}\\
    &\leq \Norm{X(Z)-X(Z^*)}_{\infty} + \bp{\frac{\alpha n}{K}}^2\cdot \Norm{\tB-B}_{\infty}.
    \end{align*}
    Thus, under the events $\mathcal E_1(\beps)$ and $\mathcal{E}_2$, by Lemma \ref{boundtildePtrueP}, it follows that
    \[
        \max_{a,b}\abs{\dtO_{ab}-\dtn_{ab}\cdot B_{ab}}\lesssim mn(p-q).
    \]
    Under the events $\mathcal E_1(\beps)$, by Lemma \ref{tildePhatP} and Lemma \ref{boundtildePtrueP}, we further have that for any $Z\in S_\alpha$,
    \[
        \Norm{\hat B-B}_{\infty}\leq \Norm{\hat B-\tB}_{\infty}+\Norm{\tB-B}_{\infty}\lesssim (\beps+\gamma)(p-q),
    \]
    and thus it follows that
    \[
        (A)\lesssim (\beps+\gamma)mnI.
    \]
    For the term $(B)$, under the events $\cE_1(\beps)$, since $\abs{\dtn_{ab}} = \abs{n_{ab}\bp{Z}-n_{ab}(Z^*)}\leq 2mn$ trivially holds, by bounding the Kullback-Leibler divergence with $\chi^2$-divergence, it follows that
    \begin{align*}
        (B)\leq K^2\max_{a,b\in[K]}\bb{\abs{\dtn_{ab}}\cdot \KL{B_{ab}}{\hat B_{ab}}}\lesssim mn \cdot \frac{\Norm{\hat B-B}_{\infty}^2}{p}\lesssim (\beps+\gamma)^2mnI.
    \end{align*}
    By combining $(A)$ and $(B)$, the result directly follows.
\end{proof}

\begin{lemma}\label{lm:post_boundGe_1}
Suppose $p\asymp q$. Then, we have
\begin{align*}
 \sum_{a}n_a(Z)\KL{p}{\tB_{aa}}\leq C nI,
 \end{align*}
 for some constant $C$ depending on $\alpha$.

\end{lemma}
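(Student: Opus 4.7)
The plan is to exploit two structural facts about $\tB_{aa}$: first, that it is a convex combination of $p$ and $q$, and second, that the Bernoulli KL divergence $x\mapsto \KL{p}{x}$ is monotone on $(0,p]$. These two observations together allow us to reduce the sum term-by-term to $\KL{p}{q}$, which is comparable to $I$.

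First I would expand $\EE{O_{aa}(Z)}$ directly from its definition as $\sum_{i<j,\,Z_i=Z_j=a}P_{ij}$ and observe that each summand equals either $p$ (when the two endpoints share the same true label) or $q$ (otherwise). Dividing by $n_{aa}(Z)$ shows that $\tB_{aa}$ is a weighted average of $p$ and $q$, and hence $\tB_{aa}\in[q,p]$. Equivalently, one can start from the formula in \eqref{eq:RcalB}, write $(RBR^T)_{aa}=q(n'_a)^2+(p-q)\sum_k R_{ak}^2$, and verify the same bound.

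Next I would use the fact that $f(x):=\KL{p}{x}$ has derivative $f'(x)=(x-p)/(x(1-x))$, so $f$ is strictly decreasing on $(0,p)$. Since $\tB_{aa}\in[q,p]$, this gives the pointwise comparison
\[
\KL{p}{\tB_{aa}}\leq \KL{p}{q}.
\]
This replaces the data-dependent $\tB_{aa}$ by a deterministic quantity and reduces the problem to a single model parameter.

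Finally I would bound $\KL{p}{q}$ by a constant multiple of $I$ using the hypothesis $p\asymp q$ with $p,q\to 0$. A short asymptotic calculation gives $I=(1+o(1))(\sqrt{p}-\sqrt{q})^2$ and $\KL{p}{q}=(1+o(1))\,q\,\phi(p/q)$ where $\phi(c)=c\log c+1-c$, so the ratio $\KL{p}{q}/I$ is bounded by a constant that depends only on the bounded ratio $p/q$. Summing the pointwise bound over $a$ and using $\sum_a n_a(Z)=n$ yields
\[
\sum_a n_a(Z)\KL{p}{\tB_{aa}}\leq n\,\KL{p}{q}\leq C\,nI.
\]
No step is particularly hard; the only subtlety is recognizing the monotonicity of $\KL{p}{\cdot}$ on $(0,p]$ so that we do not have to quantify $|\tB_{aa}-p|$ directly.
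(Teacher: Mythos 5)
Your proof is correct, and it takes a genuinely different and more elementary route than the paper. The paper appeals to Lemma \ref{boundtildePtrueP} to write $p-\tB_{aa}=\frac{\sum_{k\neq l}R_{ak}R_{al}}{n_a'(n_a'-1)}(p-q)\leq \frac{2m_a}{n_a'}(p-q)$, then bounds $\KL{p}{\tB_{aa}}$ by the $\chi^2$-type estimate $(p-\tB_{aa})^2/(q(1-q))$, and sums over $a$ using $m_a^2/n_a'\leq m_a$. You instead observe that $\tB_{aa}\in[q,p]$ (as a weighted average of $p$'s and $q$'s), invoke monotonicity of $x\mapsto\KL{p}{x}$ on $(0,p]$ to conclude $\KL{p}{\tB_{aa}}\leq\KL{p}{q}$, and then bound $\KL{p}{q}\lesssim I$ under $p\asymp q$. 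Both are valid; the paper's approach is a bit sharper -- it actually yields $\sum_a n_a(Z)\KL{p}{\tB_{aa}}\lesssim m I$ where $m=d(Z,Z^*)$, which is stronger than $nI$ when $m\ll n$ -- whereas your monotonicity argument gives only the uniform bound $n\KL{p}{q}\lesssim nI$. Since the lemma only asks for $CnI$, your version suffices; but had the surrounding proof needed the $m$-dependence (it does not here), the paper's chi-square route would have been necessary. One cosmetic note: the statement says the constant depends on $\alpha$, but in fact neither proof really needs $\alpha$ for the bound $CnI$; the constant depends only on the bounded ratio $p/q$.
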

\begin{proof}
        Recall that $n_a'=n_a(Z)$. By Lemma \ref{boundtildePtrueP}, since $\tB_{aa}\leq p$ for any $a\in [K]$, then we have
    \[p-\tB_{aa} = \frac{\sum_{k\neq l}R_{ak}R_{al}}{n_a'(n_a'-1)}(p-q)\leq \frac{2m_a}{n_a'}(p-q).\]
We upper bound the Kullback-Leibler divergence by $\chi^2$-divergence, and it follows that
    \begin{align*}
         \sum_{a}n_a(Z)\KL{p}{\tB_{aa}}&\leq \sum_{a}n_a'\frac{(p-\tB_{aa})^2}{q(1-q)}\\
         &\leq \sum_{a}n_a'\frac{1}{q(1-q)}\frac{4m_a^2(p-q)^2}{(n_a')^2}\\
         &\leq CnI,
    \end{align*}
    for some constant $C$ depending on $\alpha$.
\end{proof}

\begin{lemma}\label{LBKL}
    Suppose $p,q\goto 0$, $p\asymp q$. For any $x,y\in [q,p]$, we have
    \[\KL{x}{y}\geq \frac{(x-y)^2}{2p}.\]
\end{lemma}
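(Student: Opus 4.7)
The plan is to use the integral representation of the KL divergence, exploiting the fact that both arguments lie in the small-probability region $[q,p]$ with $p\to 0$.

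First, I would differentiate $\KL{x}{y}=x\log(x/y)+(1-x)\log((1-x)/(1-y))$ with respect to $y$ (treating $x$ as fixed), obtaining
\[
\frac{\partial}{\partial y}\KL{x}{y} \;=\; -\frac{x}{y}+\frac{1-x}{1-y} \;=\; \frac{y-x}{y(1-y)}.
\]
Since $\KL{x}{x}=0$, the fundamental theorem of calculus gives the exact identity
\[
\KL{x}{y} \;=\; \int_{x\wedge y}^{x\vee y} \frac{|s-x|}{s(1-s)}\, ds.
\]

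Next I would use that the dummy variable $s$ lies between $x$ and $y$, so $s\in[q,p]$. Because $p\to 0$, we have $s\le p\le 1/2$, and consequently $s(1-s)\le s\le p$. Substituting this upper bound for the denominator gives
\[
\KL{x}{y} \;\ge\; \frac{1}{p}\int_{x\wedge y}^{x\vee y}|s-x|\,ds \;=\; \frac{(x-y)^{2}}{2p},
\]
which is the desired inequality.

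There is no real obstacle here; the only subtlety is noticing the right integral representation and bounding $s(1-s)\le p$ uniformly on $[q,p]$. The hypothesis $p\asymp q$ is not strictly needed for this direction of the inequality, but it is consistent with the regime in which the lemma is applied elsewhere in the paper. No separate case analysis for $y<x$ versus $y>x$ is needed, since the integral representation handles both symmetrically.
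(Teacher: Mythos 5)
Your proposal is correct and takes essentially the same route as the paper: both rest on the derivative formula $\partial_y\KL{x}{y}=\frac{y-x}{y(1-y)}$ and the pointwise bound $y(1-y)\le p$ on $[q,p]$. The paper phrases this as showing $f(y)=\KL{x}{y}-\frac{(x-y)^2}{2p}$ has $f'(y)=(y-x)\bigl(\frac{1}{y(1-y)}-\frac{1}{p}\bigr)$ of the right sign so that $f$ is minimized at $y=x$, while you integrate the derivative directly and bound the integrand; these are two renderings of the same argument.
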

\begin{proof}
    Suppose we fix $x$, and construct
\begin{align*}
    f(y) = x\log\frac{x}{y}+(1-x)\log \frac{1-x}{1-y} - \frac{(x-y)^2}{2p},
\end{align*}
and
\begin{align*}
f'(y) = (y-x)\bp{\frac{1}{y(1-y)}-\frac{1}{p}},
\end{align*}
where $1/y(1-y)>1/p$ always holds. Thus $f(y)\geq f(x)=0$.
\end{proof}

\begin{lemma}\label{lm:post_boundGe_2}
    Let $\gamma\goto 0$ be any positive sequence with $\gamma^2nI\goto\infty$. For $m\geq \gamma n$, we have that
\begin{align*}
    \sum_{a,b,k,l}R_{ak}R_{bl}\KL{B_{kl}}{\tB_{ab}}\geq C(\alpha,\beta,K)mnI,
\end{align*}
for some constant $C$ depending on $\alpha,\beta,K$.
\end{lemma}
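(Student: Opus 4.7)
The plan is to convert each KL term into a squared difference via Lemma \ref{LBKL}, recognize the weighted sum as a Bernoulli-variance expression with a clean combinatorial form, and then establish a combinatorial lower bound of the required shape $C(\alpha,\beta,K)\,mn$.

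First, since $B_{kl},\tB_{ab}\in[q,p]$, Lemma \ref{LBKL} gives $\KL{B_{kl}}{\tB_{ab}}\ge(B_{kl}-\tB_{ab})^2/(2p)$, and the prefactor satisfies $(p-q)^2/(2p)\ge c\,I$ using $I=(1+o(1))(\sqrt p-\sqrt q)^2\asymp (p-q)^2/p$ (which holds because $p\asymp q$). Next, setting $\alpha^{ab}:=\sum_k R_{ak}R_{bk}/(n_a'n_b')$ and using \eqref{eq:RcalB}, I will verify that $\tB_{ab}=\alpha^{ab}p+(1-\alpha^{ab})q$ is precisely the mean of the two-valued quantity $B_{kl}$ under the weights $\pi^{ab}_{kl}:=R_{ak}R_{bl}/(n_a'n_b')$, whose Bernoulli-type variance equals $\alpha^{ab}(1-\alpha^{ab})(p-q)^2$. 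Thus $\sum_{k,l}R_{ak}R_{bl}(B_{kl}-\tB_{ab})^2=n_a'n_b'\alpha^{ab}(1-\alpha^{ab})(p-q)^2$, and the lemma reduces to the purely combinatorial inequality
\[
\sum_{a,b\in[K]} n_a'n_b'\,\alpha^{ab}(1-\alpha^{ab})\;\ge\;C(\alpha,\beta,K)\,mn.
\]

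For this combinatorial step I will restrict attention to the diagonal $a=b$ and write $S_{aa}:=\sum_k R_{ak}^2$. Two elementary estimates will drive the argument: $S_{aa}\ge(n_a')^2/K$ by Cauchy--Schwarz, and $(n_a')^2-S_{aa}=\sum_{k\neq l}R_{ak}R_{al}\ge 2R_{aa}m_a$, since the pairs with $k=a$ or $l=a$ alone contribute $2R_{aa}\sum_{l\neq a}R_{al}=2R_{aa}m_a$. Multiplying gives
\[
(n_a')^2\alpha^{aa}(1-\alpha^{aa})=S_{aa}\bigl((n_a')^2-S_{aa}\bigr)/(n_a')^2\ge 2R_{aa}m_a/K,
\]
which reduces the claim to the auxiliary bound $\sum_a R_{aa}m_a\ge c(\alpha,\beta,K)\,mn$.

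The main obstacle will be this auxiliary inequality, which genuinely requires both the diagonal-maximizing permutation convention (yielding $\sum_a R_{aa}\ge n/K$ and the swap bound $R_{aa}+R_{bb}\ge R_{ab}+R_{ba}$ for all $a,b$) and the structural constraints $Z\in S_\alpha$, $Z^*\in S_\beta$ (which keep $n_a'$ and $n_k^*$ within constant factors of $n/K$). I will handle it by a dichotomy: when $R_{aa}\ge n_a'/2$, the bound $R_{aa}m_a\ge n_a'm_a/2\gtrsim nm_a/(\alpha K)$ is immediate; in the exceptional case $R_{aa}<n_a'/2$ for some $a$, the swap-optimality inequality together with bounded $n_k^*$ will force the row maximum $\max_k R_{ak}$ to stay at most $(1-c)n_a'$ for some $c=c(\alpha,\beta)>0$, whence $(n_a')^2-S_{aa}\ge c(n_a')^2$ and so $(n_a')^2\alpha^{aa}(1-\alpha^{aa})\gtrsim (n_a')^2/K\gtrsim n_a' m_a/K$. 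Summing over $a$ and using $\sum_a m_a=m$ together with $n_a'\ge n/(\alpha K)$ then closes the argument.
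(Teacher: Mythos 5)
Your opening moves match the paper's: apply Lemma~\ref{LBKL} to replace each KL divergence by a squared difference, and drop everything except the diagonal block $a=b$. The Bernoulli-variance reformulation $\sum_{k,l}R_{ak}R_{bl}(B_{kl}-\tB_{ab})^2=n_a'n_b'\alpha^{ab}(1-\alpha^{ab})(p-q)^2$ is a clean way to package the paper's $B_a(T_a-B_a)/T_a$ algebra; the asserted equality is exact only for $a\ne b$ (for $a=b$ the weights $R_{ak}R_{al}/(n_a')^2$ do not have mean $\tB_{aa}$, because the diagonal $n_{aa}$ uses $n_a'(n_a'-1)/2$ and subtracts self-pairs), but the discrepancy contributes an extra nonnegative term, so the identity still holds as a lower bound, which is all you need. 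The reduction to the combinatorial inequality $\sum_a R_{aa}m_a\gtrsim mn$ is also fine: $S_{aa}\ge(n_a')^2/K$ by Cauchy--Schwarz and $(n_a')^2-S_{aa}=\sum_{k\neq l}R_{ak}R_{al}\ge 2R_{aa}m_a$ are both correct.

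The gap is in the exceptional branch of your dichotomy. You assert that $R_{aa}<n_a'/2$ together with swap-optimality and $n_k^*\le\beta n/K$ forces $\max_k R_{ak}\le(1-c)n_a'$ for some constant $c>0$. This is false. Take $K=3$, $\alpha=2$, $\beta=1$, $n_1^*=n_2^*=n_3^*=n/3$, and
\[
R=\begin{pmatrix}0 & n/6 & 0\\ 0 & n/6 & 0\\ n/3 & 0 & n/3\end{pmatrix},
\]
so $n_1'=n_2'=n/6$, $n_3'=2n/3$, $m=n/2$. All row and column sums are consistent, all $n_a'\in[n/(K\alpha),\alpha n/K]$, and the identity permutation maximizes $\sum_a R_{a\sigma(a)}=n/2$, so the permutation convention is satisfied. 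But $R_{11}=0<n_1'/2$ while $\max_k R_{1k}=R_{12}=n/6=n_1'$, so the row maximum is \emph{not} bounded away from $n_1'$; in fact $S_{11}=(n_1')^2$, hence $(n_1')^2-S_{11}=0$ and group $1$ contributes exactly $0$ to the diagonal sum, contradicting the per-group claim that its contribution is $\gtrsim n_a'm_a/K$ (here $m_1=n/6\ne 0$). The final inequality $\sum_a R_{aa}m_a\gtrsim mn$ is nonetheless true in this example (group~$3$ alone gives $R_{33}m_3=n^2/9\asymp mn$), which is precisely the point of the paper's approach: rather than arguing per group, the paper's Claim inside the proof locates a \emph{single} index $a'$ for which simultaneously $R_{a'a'}\gtrsim n$ and $m_{a'}\gtrsim m$ (starting from the group with $m_a\ge m/K$ and using $R_{aa}+R_{bb}\ge R_{ab}+R_{ba}$ to pass to a neighbor when $R_{11}$ is small). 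That one term then delivers the whole bound. Your proposal needs this kind of global existence argument; the row-maximum bound per group cannot be salvaged.
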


\begin{proof}
    By Lemma \ref{LBKL}, $\KL{x}{y}\geq \frac{(x-y)^2}{2p}$. Then, we have
    \begin{align}
    &\sum_{a,b,k,l}R_{ak}R_{bl}\KL{B_{kl}}{\tB_{ab}}\nonumber\\
    \geq &\sum_{a}\sum_{k,l}R_{ak}R_{al} \KL{B_{kl}}{\tB_{aa}}\nonumber\\
    \geq &\frac{1}{2p}\sum_a \sum_{k} R_{ak}^2(p-\tB_{aa})^2+ \frac{1}{2p}\sum_a \sum_{k\neq l} R_{ak}R_{al}[(p-q)-(p-\tB_{aa})]^2.\label{eq:LowerBoundCrossKL}
    \end{align}
    The first inequality holds since we only keep the terms with $b=a$. The second inequality is using Lemma \ref{LBKL}. By Lemma \ref{boundtildePtrueP}, it follows that
    \[
        p-\tB_{aa} = \frac{\sum_{k\neq l}R_{ak}R_{al}}{n_a'(n_a'-1)}(p-q).
    \]
    For simplicity, let
    \[T_a =n_a'(n_a'-1),~~~B_a = \sum_{k\neq l}R_{ak}R_{al},~~~\sum_{k}R_{ak}^2 = T_a+n_a'-B_a.\]
    Then we have that
    \begin{align*}
        \eqref{eq:LowerBoundCrossKL}= &\frac{(p-q)^2}{2p}\sum_{a}\frac{1}{T_a^2}\bp{(T_a+n_a'-B_a) B_a^2+ B_a (T_a-B_a)^2}\\
        =& \frac{(p-q)^2}{2p}\sum_{a}\frac{1}{T_a^2}\bp{T_a(T_a-B_a)B_a+n_a'B_a^2}\\
        \geq & \frac{(p-q)^2}{2p}\sum_{a}\frac{B_a(T_a-B_a)}{T_a}.
    \end{align*}
    Let $x=\sum_{k\neq a} R_{ak}^2$, and thus $0\leq x\leq m_a^2$. Then, we can write 
\begin{align*}
    B_a &= 2R_{aa}m_a+m_a^2-x,\\
    T_a-B_a & = R_{aa}^2 + x-n_a'.
\end{align*}
Since $B_a(T_a-B_a)$ is a quadratic function of $x$ that is concave, it follows that 
\begin{align*}
    \eqref{eq:LowerBoundCrossKL}\geq \frac{(p-q)^2}{2p}\sum_{a}\frac{1}{T_a}\min\bb{2R_{aa}m_a\bp{R_{aa}^2+m_a^2-n_a'},~(2R_{aa}m_a+m_a^2)(R_{aa}^2-n_a')}.
\end{align*}

\textbf{Claim}: there exists an $a'\in [K]$ such that $R_{a'a'}\geq Cn$ and $m_{a'}\geq C'm$ for some constants $C$ and $C'$. This is because of the following argument. Since $m=\sum_a m_a$, there must exist some $a$ such that $m_a\geq m/K$. Without loss of generality, suppose $m_1\geq m/K$. Then, there are two cases we need to consider next. Case 1: if $R_{11}\geq\frac{n}{2\beta K^2}$, then we take $a' = 1$. Case 2: if $R_{11}<\frac{n}{2\beta K^2}$, since $n_1\geq \frac{n}{K\beta}$, it follows that $\sum_{i\neq 1}R_{i1}\geq \frac{(2K-1)n}{2K^2\beta}$. Then, there must exists some $a\neq 1$ such that $R_{a1}\geq \frac{(2K-1)n}{2K^2(K-1)\beta}$. Without loss of generality, suppose $R_{21}\geq \frac{(2K-1)n}{2K^2(K-1)\beta}\geq \frac{n}{K^2\beta}$. Then, we have $m_2\geq R_{21}\geq \frac{n}{K^2\beta}\geq \frac{m}{K^2\beta}$, and thus $R_{22}\geq R_{21}-R_{11}>\frac{n}{2K^2\beta}$ by the definition of discrepancy matrix $R$. Then, we take $a'=2$. Hence, the claim always holds.

Based on the above claim, we have that
\begin{align*}
    \eqref{eq:LowerBoundCrossKL}&\geq \frac{(p-q)^2}{2p}\frac{1}{T_{a'}}\min\bb{2R_{a'a'}m_a\bp{R_{a'a'}^2+m_{a'}^2-n_{a'}'},~(2R_{a'a'}m_{a'}+m_{a'}^2)(R_{a'a'}^2-n_{a'}')}\\
    &\gtrsim mnI.
\end{align*}
The proof is complete.
\end{proof}

\begin{lemma}\label{lm:boundDeltaec}
    Recall that
\begin{align*}
        \Delta(\cdot) & = \sum_{a\leq b} n_{ab}(\cdot)\bp{\tau \bp{\frac{{O_{ab}(\cdot)}}{n_{ab}(\cdot)}}-\tau \bp{\frac{\EE{O_{ab}(\cdot)}}{n_{ab}(\cdot)}}},
\end{align*}
where $\tau(x)=x\log x+(1-x)\log (1-x)$. For any positive sequences $\beps=\gamma\goto 0$, $\theta\goto 0$ with $\gamma^2nI\goto\infty$ and $\theta^2\gamma nI\goto\infty$, under the events $\mathcal E_1(\beps)$ and $\mathcal E_4(\gamma,\theta)$ defined in \eqref{eq:allevents}, we have that for any $Z\in S_\alpha$ with $m>\gamma n$, 
\begin{align*}
    \abs{\Delta(Z)-\Delta(Z^*)}\leq \eps mnI,
\end{align*}
for some positive sequence $\eps\goto 0$.
\end{lemma}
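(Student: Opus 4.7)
The plan is to expand $\Delta(Z)$ into two pieces via the algebraic identity
\[
n\bigl[\tau(\hat B)-\tau(\tB)\bigr]=n\,\KL{\hat B}{\tB}+X\,\log\!\frac{\tB}{1-\tB},
\]
where $\hat B=O/n$, $\tB=\EE O/n$, and $X=O-\EE O$. Applied to each $(a,b)$ pair this gives
\[
\Delta(Z)=\sum_{a\leq b}n_{ab}(Z)\,\KL{\hat B_{ab}(Z)}{\tB_{ab}(Z)}+\sum_{a\leq b}X_{ab}(Z)\,\log\!\frac{\tB_{ab}(Z)}{1-\tB_{ab}(Z)},
\]
and likewise for $\Delta(Z^*)$ with $\tB_{ab}(Z^*)=B_{ab}$. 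I will then show each of the two pieces contributes at most $\eps\, mnI$ for some vanishing $\eps$.

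For the KL piece, I would use the standard bound $\KL{\hat B}{\tB}\leq (\hat B-\tB)^2/(\tB(1-\tB))$. Since $\tB_{ab}\in[q,p]$ and $p\asymp q\to 0$, we have $\tB(1-\tB)\asymp p$; combined with $n_{ab}\asymp n^2/K^2$ and the bound $|X_{ab}(Z)|\leq \beps n^2(p-q)$ from $\cE_1(\beps)$, each term is $O(\beps^2 n^2(p-q)^2/p)=O(\beps^2 n^2 I)$. Summed over $O(K^2)$ pairs and applied to both $Z$ and $Z^*$, this gives a total of $O(\beps^2 n^2 I)$, which for $m>\gamma n$ equals $O((\beps^2/\gamma)\,mnI)=o(mnI)$ under the stated condition $\beps=\gamma$.

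For the log piece, I would split it as
\[
X_{ab}(Z)\log\frac{\tB_{ab}(Z)}{1-\tB_{ab}(Z)}-X_{ab}(Z^*)\log\frac{B_{ab}}{1-B_{ab}}=[X_{ab}(Z)-X_{ab}(Z^*)]\log\frac{B_{ab}}{1-B_{ab}}+X_{ab}(Z)\Bigl[\log\frac{\tB_{ab}(Z)}{1-\tB_{ab}(Z)}-\log\frac{B_{ab}}{1-B_{ab}}\Bigr].
\]
For the second summand, Taylor expansion of $\log(x/(1-x))$ together with Lemma~\ref{boundtildePtrueP} gives $|\log(\tB_{ab}/(1-\tB_{ab}))-\log(B_{ab}/(1-B_{ab}))|\lesssim m(p-q)/(np)$; multiplying by $|X_{ab}(Z)|\leq \beps n^2(p-q)$ yields $O(\beps mnI)$ per pair. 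For the first summand, I exploit the key cancellation $\sum_{a\leq b}[X_{ab}(Z)-X_{ab}(Z^*)]=0$ (because $\sum_{a\leq b}O_{ab}(Z)=\sum_{i<j}A_{ij}$ is $Z$-independent, and similarly for the expectation). Since $B_{ab}$ takes only the values $p,q$, the first summand collapses to $2t^*\sum_a[X_{aa}(Z)-X_{aa}(Z^*)]$; under $\cE_4(\gamma,\theta)$ this is bounded by $2t^*\cdot K\theta mn(p-q)$.

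The main subtlety is controlling the factor $2t^*(p-q)$. Here I use that $p\asymp q$ makes $r=\sqrt{p/q}$ bounded, so $(p-q)\log(p/q)/I\asymp 2(\log r)(r+1)/(r-1)=O(1)$, giving $2t^*(p-q)=O(I)$. Consequently this contribution is $O(\theta mnI)=o(mnI)$. Combining all pieces yields $|\Delta(Z)-\Delta(Z^*)|\leq (\beps^2/\gamma+\beps+\theta)\,mnI=:\eps\, mnI$, with $\eps\to 0$ uniformly over $Z\in S_\alpha$ with $m>\gamma n$. The main obstacle will be the first-order log term: without both the cancellation trick and the observation that $p\asymp q$ forces $2t^*(p-q)=O(I)$, a naive bound with $|\log(B_{ab}/(1-B_{ab}))|\asymp \log(1/p)$ would blow up by a factor $\log(1/p)$.
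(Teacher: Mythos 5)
Your proof is correct and follows essentially the same route as the paper's: the paper Taylor-expands $\tau$ at $\tB_{ab}$, which yields exactly the decomposition you derive via the algebraic identity (the first-order term is $X_{ab}\tau'(\tB_{ab})$ and the remainder is precisely $n_{ab}\KL{\hat B_{ab}}{\tB_{ab}}$), and the paper then bounds the same three pieces using $\mathcal E_1$, $\mathcal E_4$, Lemma~\ref{boundtildePtrueP}, the cancellation $\sum_{a\le b}(X_{ab}(Z)-X_{ab}(Z^*))=0$, and the fact that $p\asymp q$ makes $(p-q)\log(p/q)=O(I)$. The only cosmetic difference is your use of an exact identity in place of a Taylor remainder, which is an equivalent reorganization.
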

\begin{proof}
Recall that for any $a,b\in [K]$, $\hat B_{ab}(Z) = O_{ab}(Z)/n_{ab}(Z)$ and $\wt B_{ab}(Z) = \EE{O_{ab}(Z)}/n_{ab}(Z)$. Then, $\tB_{ab}(Z^*)=B_{ab}$. Note that $\tau'(x) = \log\frac{x}{1-x},$ and $\tau''(x)=\frac{1}{x(1-x)}$.
    By Taylor expansion, it follows that for any $Z\in S_\alpha$ and any $a,b\in[K]$, there exists some $\xi_{ab}(Z)\in[\hat B_{ab}(Z), \tB_{ab}(Z)]$ such that
    \begin{align*}
    \Delta(Z) = \sum_{a\leq b}\tau'\bp{\tB_{ab}(Z)}\cdot X_{ab}(Z)+ \underbrace{\sum_{a\leq b}\frac{X_{ab}(Z)^2}{2 n_{ab}(Z)}\cdot \frac{1}{\xi_{ab}(Z)(1-\xi_{ab}(Z))}}_{Err(Z)}.
    \end{align*}
    Similarly, since $\tB_{ab}(Z^*)=B_{ab}$, we have $\xi_{ab}(Z^*)\in [\hat B_{ab}(Z^*), B_{ab}]$ such that
 \begin{align*}
    \Delta(Z^*) = \sum_{a\leq b}\tau'\bp{B_{ab}}\cdot X_{ab}(Z^*)+ Err(Z^*).
    \end{align*}
    We write $\tB(Z)=\tB$ for simplicity. Then, we have
    \begin{align*}
    &\abs{\Delta(Z)-\Delta(Z^*)} \\
     = &\abs{\sum_{a\leq b}\tau'(\tB_{ab}){X_{ab}(Z)} + \sum_{a\leq b}\tau'(B_{ab})X_{ab}(Z^*) + Err(Z)-Err(Z^*)}\\
    = &\underbrace{\sum_{a\leq b}\abs{\tau'(\tB_{ab})-\tau'(B_{ab})}\abs{X_{ab}(Z)}}_{(A)} + \underbrace{\abs{\sum_{a\leq b}\tau'(B_{ab}) \bp{X_{ab}(Z)-X_{ab}(Z^*)}}}_{(B)}+\underbrace{\abs{Err(Z)-Err(Z^*)}}_{(C)},
    \end{align*}
    and we proceed to bound each term.

    Under the event $\cE_1(\gamma)$, by Lemma \ref{boundtildePtrueP}, we have that for any $Z\in S_\alpha$,
    \begin{align*}
        (A)\leq \sum_{a\leq b}\abs{\log\frac{\tB_{ab}(1-B_{ab})}{B_{ab}(1-\tB_{ab})}}\cdot \Norm{X(Z)}_{\infty}\lesssim \frac{\Norm{\tB-B}_{\infty}}{p}\cdot \Norm{X(Z)}_{\infty}\lesssim \gamma mn I.
    \end{align*}
Under the event $\cE_4(\gamma, \theta)$, we have that for any $Z\in S_\alpha$ with $m> \gamma n$,
\begin{align*}
    (B) &= \abs{\tau'(p)\sum_{a}(X_{aa}(Z)-X_{aa}(Z^*))+\tau'(q)\sum_{a<b}(X_{ab}(Z)-X_{ab}(Z^*))}\\
    & = \abs{(\tau'(p)-\tau'(q))\cdot \sum_{a}(X_{aa}(Z)-X_{aa}(Z^*))}\\
    &\leq  \log \frac{p(1-q)}{q(1-p)} \cdot K\Norm{X(Z)-X(Z^*)}_{\infty}\\
    &\lesssim \theta mnI,
\end{align*}
where the second equality holds since $\sum_{a\leq b}O_{ab}(Z)=\sum_{a\leq b}O_{ab}(Z^*)$, and thus $\sum_{a\leq b}(X_{ab}(Z)-X_{ab}(Z^*))=0$ always holds. Under the event $\cE_1(\gamma)$, we have that for any $Z\in S_\alpha$,
\begin{align*}
    (C)&\leq 2\max_{Z\in S_\alpha}|Err(Z)|\leq \max_{Z\in S_\alpha}\sum_{a\leq b}\abs{\frac{X_{ab}(Z)^2}{n_{ab}(Z)}\cdot \frac{1}{\xi_{ab}(Z)(1-\xi_{ab}(Z))}}\\
    &\lesssim K^2 \frac{\Norm{X(Z)}_{\infty}}{n^2}\cdot \frac{1}{p} \lesssim \gamma^2n^2I.
\end{align*}
Since $m>\gamma n$, it follows that under the events $\cE_1(\gamma)$ and $\cE_4(\gamma,\theta)$, there exists some sequence $\eps\goto 0$, such that for any $Z\in S_\alpha$ with $m>\gamma n$, 
\begin{align*}
    \abs{\Delta(Z)-\Delta(Z^*)}&\leq \eps mnI.
\end{align*}
The proof is complete.
\end{proof}

\begin{lemma}\label{lm:size}
    Recall that $\cB(Z)$ is defined in \eqref{eq:defABN}. We have
    $$|S_\alpha \cap\cB(Z)|\geq \min\left\{\frac{n}{\beta K}-\frac{n}{\alpha K}-1, \frac{\alpha n-\beta n}{K}-1, m\right\} \geq \min\{{c_{\alpha,\beta} n}, m\},$$
    for some constant $c_{\alpha, \beta}$.
\end{lemma}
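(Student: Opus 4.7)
The plan is a flux argument on the discrepancy matrix $R_Z$, isolating the communities where the $S_\alpha$ constraint is tight. Write $L = n/(\alpha K)$, $U = \alpha n/K$, $L^* = n/(\beta K)$, $U^* = \beta n/K$, and define the tight-boundary sets $\mathcal{L} = \{a : n_a(Z) - 1 < L\}$ and $\mathcal{U} = \{a : n_a(Z) + 1 > U\}$; they are disjoint because $\alpha > \beta \geq 1$ forces $L + 1 \leq U$. A correction of a misclassified node with $Z_i = b$ and $Z_i^* = a$ stays in $S_\alpha$ precisely when $b \notin \mathcal{L}$ (the source can be decremented) and $a \notin \mathcal{U}$ (the destination can be incremented).

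When $m \leq \min\{L^* - L - 1, U - U^* - 1\}$ the problem is easy: the identity $n_a(Z) - n_a(Z^*) = \sum_{b \neq a}(R_{ab} - R_{ba})$ together with the bounds $\sum_{b\neq a} R_{ab} \leq m$ and $\sum_{b \neq a} R_{ba} \leq m$ gives $|n_a(Z) - n_a(Z^*)| \leq m$ for every $a$. Combined with $n_a(Z^*) \in [L^*, U^*]$ this forces every $n_a(Z)$ to lie strictly in $(L, U)$, so $\mathcal{L} = \mathcal{U} = \emptyset$ and every one of the $m$ corrections stays in $S_\alpha$, yielding $|\cB(Z) \cap S_\alpha| = m$.

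For the main case, assume $|\mathcal{L}| \geq 1$ (the situation $|\mathcal{U}| \geq 1$ is symmetric). Each $a \in \mathcal{L}$ satisfies $n_a(Z^*) - n_a(Z) \geq L^* - L - 1$, and expanding via $n_a(Z^*) - n_a(Z) = \sum_{b\neq a}(R_{ba} - R_{ab})$ the contributions with both $a, b \in \mathcal{L}$ cancel by the symmetry $(a,b) \leftrightarrow (b,a)$. Summing over $a \in \mathcal{L}$ therefore gives $\sum_{a \in \mathcal{L},\, b \notin \mathcal{L}} R_{ba} \geq \sum_{a \in \mathcal{L},\, b \notin \mathcal{L}}(R_{ba} - R_{ab}) \geq |\mathcal{L}|(L^* - L - 1)$. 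But each term $R_{ba}$ on the left counts misclassified nodes whose source $b$ lies outside $\mathcal{L}$ (good source) and whose destination $a$ lies in $\mathcal{L} \subset \mathcal{U}^c$ (good destination); every such correction lies in $S_\alpha$, so $|\cB(Z) \cap S_\alpha| \geq L^* - L - 1$.

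Combining the cases gives $|\cB(Z) \cap S_\alpha| \geq \min\{m, L^* - L - 1, U - U^* - 1\}$, and since $L^* - L = n(\alpha - \beta)/(\alpha \beta K)$ and $U - U^* = n(\alpha - \beta)/K$ are both $\Theta(n)$ for fixed $\alpha > \beta \geq 1$ and $K$, we may take $c_{\alpha,\beta}$ to be any positive constant smaller than $(\alpha - \beta)/(\alpha \beta K)$. The only subtle step is the cancellation of the within-$\mathcal{L}$ contributions in the flux sum, which is what saves us from the much weaker naive estimate $|\mathcal{L}| L$ on the bad moves; the same cancellation, applied with $\mathcal{U}$ in place of $\mathcal{L}$, supplies the symmetric bound $|\mathcal{U}|(U - U^* - 1)$ when $\mathcal{U}$ is nonempty.
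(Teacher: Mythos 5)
Your proof is correct and in essence the same discrepancy-matrix flux argument the paper uses; your lower-boundary case ($\mathcal{L}\neq\emptyset$) is precisely the paper's Case 1, with the within-$\mathcal{L}$ cancellation playing the role of the telescoping that the paper performs on $\sum_{a\in\mathcal{K}'}n_a'$. Where you are more careful than the paper is the upper-boundary case: the paper merely observes that a group $a$ at the upper boundary has $m_a\geq(\alpha-\beta)n/K-1$ misclassified members and tacitly treats all of those corrections as available, without checking that the destination groups $Z_i^*$ are not themselves over-full (which can happen if $|\mathcal{U}|\geq 2$, a possibility that $\alpha<K/2$ permits). Your symmetric flux estimate $\sum_{a\in\mathcal{U},\,b\notin\mathcal{U}}R_{ab}\geq |\mathcal{U}|(U-U^*-1)$ counts only corrections whose destination $b$ lies outside $\mathcal{U}$ and whose source $a\in\mathcal{U}\subset\mathcal{L}^c$ can legitimately be decremented, so both $S_\alpha$ constraints are verified and the bound is genuine. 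A minor presentational point: your opening reduction to $m\leq\min\{L^*-L-1,\,U-U^*-1\}$ is not really needed and does not by itself make the case split exhaustive -- the clean partition is $\mathcal{L}=\mathcal{U}=\emptyset$ (giving $|\cB(Z)\cap S_\alpha|=m$ directly) versus $\mathcal{L}\neq\emptyset$ versus $\mathcal{U}\neq\emptyset$ -- but the conclusion is unaffected.
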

\begin{proof}
We split the proof of Lemma \ref{lm:size} into three cases.
\begin{description}[leftmargin = 0cm,labelsep = 0.5cm]
\item[Case 1.] Suppose there are totally $k'$ groups with size $\ceil{{n}/{K\alpha}}$ (reaching the small size boundary), denoted as the set $\mathcal{K}'$, and $|\mathcal{K}'| = k'$. Then,
    \begin{align*}
    k'\ceil{\frac{n}{K\alpha}} = \sum_{a\in \mathcal{K}'}n_a' &= \sum_{a\in \mathcal{K}'} R_{aa}+\sum_{a,b\in \mathcal{K}', a\neq b}R_{ab} +\sum_{a \in \mathcal{K}', b\not \in \mathcal{K}'}R_{ab}\\
    &=\sum_{a\in \mathcal{K}'}\bp{n_a -\sum_{b\in \mathcal{K}'\setminus \{a\}}R_{ba}-\sum_{b\not \in \mathcal{K}'}R_{ba}} + \sum_{a,b\in \mathcal{K}', a\neq b}R_{ab} + \sum_{a\in \mathcal{K}', b\not \in \mathcal{K}'}R_{ab}\\
    & = \sum_{a\in \mathcal K'}n_a-\sum_{a\in \mathcal K'}\sum_{b\not\in \mathcal K'}R_{ba}+\sum_{a\in K',b\not \in \mathcal K'}R_{ab}\\
    &\geq k'{\frac{n}{K\beta}} -\sum_{a\not \in \mathcal{K}', b\in \mathcal{K}'}R_{ab},
    \end{align*}
    and thus $|\cB(Z)\cap S_\alpha|\geq \sum_{a\not \in \mathcal{K}', b\in \mathcal{K}'}R_{ab}\geq k'{\frac{n}{K\beta}}-\ceil{\frac{n}{K\alpha}}\geq \frac{n}{K\beta}-\frac{n}{K\alpha}-1$.
 \item[Case 2.]  If there is at least one group with size $\floor{\alpha n/K}$ (reaching the large size boundary), denoted as group $a$. It directly follows that 
 \[
    m_a = n_a'-R_{aa}\geq n_a'-n_a \geq \floor{\frac{\alpha n}{K}}-\frac{\beta n}{K} \geq \frac{\alpha n}{K}-\frac{\beta n}{K}-1.
 \]
    
 \item[Case 3.]  If $\cB(Z)\subset S_\alpha$, then $|S_\alpha\cap \cB(Z) |=|\cB(Z)| = m$.
\end{description}
\end{proof}

\begin{lemma}\label{lm:pathdiffformula}
Suppose the current label assignment is $Z$, and $Z'$ corrects the $k$th sample from a misclassified group $b$ to its true group $b'$. Then, for any $a,a'\in [K]\setminus\{b,b'\}$, we have 
\begin{align*} 
\dO_{ab} &= \sum_{i,j}A_{ij}\bp{\indc{Z_i=a,Z_j=b}-\indc{Z'_i=a,Z'_j=b}} = \sum_{i}A_{ik}\indc{Z_i = a},\\
\dO_{ab'} & = \sum_{i,j}A_{ij}\bp{\indc{Z_i=a,Z_j=b'}-\indc{Z'_i=a,Z'_j=b'}} = - \sum_{i} A_{ik} \indc{Z_i=a},\\
\dO_{bb} &= \sum_{i<j}A_{ij}\bp{\indc{Z_i=Z_j=b}-\indc{Z'_i=Z'_j=b}}=\sum_{i}A_{ik}\indc{Z_i'=b},\\
\dO_{b'b'} &= \sum_{i<j}A_{ij}\bp{\indc{Z_i=Z_j=b'}-\indc{Z'_i=Z'_j=b'}}=-\sum_{i}A_{ik}\indc{Z_i=b'},\\
\dO_{bb'} & = \sum_{i,j}A_{ij}\bp{\indc{Z_i=b,Z_j=b'}-\indc{Z'_i=b,Z'_j=b'}} = -\dO_{b'b'}-\dO_{bb},\\
\dO_{aa'} &= 0.
\end{align*}
\end{lemma}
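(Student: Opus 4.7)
The plan is a direct bookkeeping argument that exploits the fact that $Z$ and $Z'$ agree everywhere except at the single index $k$, where $Z_k=b$ and $Z_k'=b'$. Accordingly, in each of the double sums defining $O_{ab}(Z)$ and $O_{ab}(Z')$, every pair $(i,j)$ with $i\neq k$ and $j\neq k$ contributes identically to both and cancels in the difference. Hence it suffices to collect, for each choice of $(a,a')$ distinct from $\{b,b'\}$ or inside $\{b,b'\}$, the contributions of those pairs in which at least one coordinate equals $k$.

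First I would handle the off-diagonal counts $O_{ab}$, $a\neq b$. Writing $O_{ab}(Z)-O_{ab}(Z')$ as
\begin{equation*}
\sum_{i\neq k}A_{ik}\bigl(\indc{Z_i=a,Z_k=b}-\indc{Z_i=a,Z_k'=b}\bigr)+\sum_{j\neq k}A_{kj}\bigl(\indc{Z_k=a,Z_j=b}-\indc{Z_k'=a,Z_j=b}\bigr),
\end{equation*}
I would substitute $Z_k=b,Z_k'=b'$ and split into the cases $a\in[K]\setminus\{b,b'\}$ paired with $b$, $b'$, $a'$, plus the three distinguished pairs $(b,b'),(b,b),(b',b')$. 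For instance, when $a\notin\{b,b'\}$, the second sum vanishes (both indicators require $Z_k$ or $Z_k'$ to equal $a$, which never holds) and the first sum collapses to $\sum_{i}A_{ik}\indc{Z_i=a}$ when the target is $(a,b)$ and to its negative when the target is $(a,b')$; this yields the first two displayed identities and also $\dO_{aa'}=0$ since then neither sum can contribute.

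Next I would treat the diagonal counts $O_{bb}$ and $O_{b'b'}$, which carry the $i<j$ restriction. Both summations reduce to a single sum over $i\neq k$ in the symmetric form $\sum_{i}A_{ik}\indc{Z_i=\cdot,Z_k=\cdot}$; plugging in $Z_k=b$ in $Z$ and $Z'_k=b'$ in $Z'$ then recognizing $\indc{Z_i=b}=\indc{Z_i'=b}$ (since $i\neq k$) produces $\dO_{bb}=\sum_i A_{ik}\indc{Z_i'=b}$ and $\dO_{b'b'}=-\sum_i A_{ik}\indc{Z_i=b'}$. Finally, $\dO_{bb'}=-\dO_{bb}-\dO_{b'b'}$ follows either by an analogous direct computation, or more cleanly from the identity $\sum_{a\leq a'}\dO_{aa'}=0$ together with the vanishing of $\dO_{aa'}$ for $a,a'\notin\{b,b'\}$ and the cancellation $\dO_{ab}+\dO_{ab'}=0$ established above, since the total edge count $\sum_{i<j}A_{ij}$ is independent of the label assignment.

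There is no real obstacle here; the only point requiring care is the asymmetry between the conventions $O_{ab}=\sum_{i,j}$ (ordered pairs) for $a\neq b$ and $O_{aa}=\sum_{i<j}$ (unordered pairs), which must be respected when combining the two $k$-terms in the off-diagonal case (they both survive and produce the single-sum formula) versus the diagonal case (where the $i<j$ restriction already folds them into one). Once the cases are enumerated cleanly the formulas fall out by inspection.
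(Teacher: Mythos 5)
Your proof is correct and complete. The paper states Lemma~\ref{lm:pathdiffformula} without proof, treating it as a direct bookkeeping identity, and your derivation is exactly the natural one: isolate the pairs $(i,j)$ with $i=k$ or $j=k$, substitute $Z_k=b$, $Z'_k=b'$, and use $Z_i=Z'_i$ for $i\neq k$ (together with $A_{kk}=0$, which lets one freely write $\sum_i$ in place of $\sum_{i\neq k}$). You correctly distinguish the ordered-pair convention for $O_{ab}$, $a\neq b$, from the $i<j$ convention for $O_{aa}$, noting that symmetry of $A$ folds the two $k$-containing ranges into a single sum in the diagonal case. The observation that $\dO_{bb'}$ can also be recovered from $\sum_{a\leq a'}\dO_{aa'}=0$ (because $\sum_{a\leq a'}O_{aa'}(Z)=\sum_{i<j}A_{ij}$ is label-independent) together with $\dO_{ab}+\dO_{ab'}=0$ and $\dO_{aa'}=0$ for $a,a'\notin\{b,b'\}$ is a nice consistency check, and it gives the last identity without a separate case analysis. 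No gaps.
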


\begin{lemma}\label{lm:tildeBminus}
    Recall that $\tB_{aa'}=\EE{O_{aa'}(Z)}/n_{aa'}(Z)$ for any $a,a'\in[K]$. When $K=2$, if $(n-2m)(n-2\beta m)/n\goto \infty$, we have that
    \[
        \tB_{11}-\tB_{12} = \frac{\det(R)(R_{11}-R_{12})(p-q)}{n_1(n_1-1)n_2}(1-o(1)),
    \]
    and by symmetry,
    \[
        \tB_{22}-\tB_{21} = \frac{\det(R)(R_{22}-R_{21})(p-q)}{n_2(n_2-1)n_1}(1-o(1)).
    \]
\end{lemma}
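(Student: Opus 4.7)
The plan is to compute $\wt B_{11}$ and $\wt B_{12}$ directly from the definition $\wt B_{aa'}=\EE{O_{aa'}(Z)}/n_{aa'}(Z)$ and the formulas in \eqref{eq:RcalB}, specialized to $K=2$ with $B_{11}=B_{22}=p$ and $B_{12}=q$, and then simplify the difference algebraically. Writing $D_1=n_1(Z)=R_{11}+R_{12}$ and $D_2=n_2(Z)=R_{21}+R_{22}$, a short calculation using $R_{11}(R_{11}-1)+R_{12}(R_{12}-1)=D_1(D_1-1)-2R_{11}R_{12}$ gives
\begin{align*}
\wt B_{11} &= p-(p-q)\,\frac{2R_{11}R_{12}}{D_1(D_1-1)},\\
\wt B_{12} &= q+(p-q)\,\frac{R_{11}R_{21}+R_{12}R_{22}}{D_1D_2}.
\end{align*}

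Next I would take the difference, place it over the common denominator $D_1(D_1-1)D_2=n_1(n_1-1)n_2$, and expand the polynomial numerator. The key step is the identity
\[
D_2(R_{11}^2+R_{12}^2)-D_1(R_{11}R_{21}+R_{12}R_{22}) = (R_{11}-R_{12})\det(R),
\]
verified by expanding both sides term-by-term: every mixed monomial cancels except the pieces that reassemble into $(R_{11}-R_{12})(R_{11}R_{22}-R_{12}R_{21})$. Combined with the trivial identity $D_1D_2=R_{11}R_{21}+R_{11}R_{22}+R_{12}R_{21}+R_{12}R_{22}$, this yields the exact formula
\[
\wt B_{11}-\wt B_{12} = (p-q)\cdot\frac{(R_{11}-R_{12})\det(R)-(R_{11}R_{22}+R_{12}R_{21})}{n_1(n_1-1)\,n_2}.
\]

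The last step is to absorb the correction $R_{11}R_{22}+R_{12}R_{21}$ into the $(1-o(1))$ factor. Using the column-sum identities $R_{11}+R_{21}=n_1^*$ and $R_{12}+R_{22}=n_2^*$, I get the clean expressions $R_{11}-R_{12}=n_1^*-m$ and $\det(R)=n_1^*n_2^*-n_1^*R_{12}-n_2^*R_{21}$. Combined with the structural constraint $n_a^*\in[n/(2\beta),\beta n/2]$ and $R_{12}+R_{21}=m$, one can extract lower bounds of the form $R_{11}-R_{12}\gtrsim(n-2\beta m)/\beta$ and $\det(R)\gtrsim n(n-2m)/\beta^2$, whereas the correction is at most $O(n^2)$; consequently the ratio of the main term to the correction is of order $(n-2m)(n-2\beta m)/n$ up to $\beta$-dependent constants, and so diverges by hypothesis. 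The second claim on $\wt B_{22}-\wt B_{21}$ follows by swapping the two community labels throughout.

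The main obstacle is this final step: one must extract lower bounds on $(R_{11}-R_{12})\det(R)$ that are sharp enough to be dominated exactly by the correction $O(n^2)$ under the precise condition $(n-2m)(n-2\beta m)/n\to\infty$, rather than a strictly stronger hypothesis. The two factors in the condition naturally arise from separately bounding $R_{11}-R_{12}=n_1^*-m$ (which is controlled by $n-2\beta m$ in the unbalanced worst case) and $\det(R)$ (which is controlled by $n-2m$ in the balanced case), so the bookkeeping must juggle both regimes at once.
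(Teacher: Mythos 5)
Your argument follows essentially the same route as the paper's proof: apply the $\wt B$ formulas from Lemma~\ref{boundtildePtrueP}, place the difference over the common denominator $n_1'(n_1'-1)n_2'$, identify the main term $(R_{11}-R_{12})\det(R)$ and the $O(n^2)$ correction, then use $\det(R)\gtrsim n(n-2m)$ (Lemma~\ref{detR}) together with $R_{11}-R_{12}=n_1(Z^*)-m$ to show the correction is negligible under $(n-2m)(n-2\beta m)/n\to\infty$. The calculation is correct and, incidentally, your identity $R_{11}-R_{12}=n_1(Z^*)-m$ (via column sums of $R$) is the precise version of what the paper writes slightly loosely as $n_{b'}'-m$.
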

\begin{proof}
     We use $b$ to denote one group label, and use $b'$ to denote the other. By Lemma \ref{boundtildePtrueP}, 
    \begin{equation*}
    \begin{split}
    \tB_{b'b'}-\tB_{b'b} &= \bp{p-\frac{\sum_{k\neq l}R_{b'k}R_{b'l}}{n'_{b'}(n_{b'}'-1)}(p-q)}-\bp{q+\frac{\sum_{k}R_{bk}R_{b'k}}{n'_b \cdot n'_{b'}}(p-q)} \\[5pt]
    &= \frac{\det(R)(R_{b'b'}-R_{b'b})(p-q)}{n'_{b'}(n'_{b'}-1) n'_{b}} - \frac{\bp{R_{bb}R_{b'b'}+R_{bb'}R_{b'b}}(p-q)}{n_{b'}'(n'_{b'}-1)n'_b},
    \end{split}
    \end{equation*}
    where $R_{b'b'}-R_{b'b} = n_{b'}'-m\geq {n}/{2\beta}-m$. By Lemma \ref{detR}, $\det(R)\gtrsim n(n-2m)$. Since $R_{bb}R_{b'b'}+R_{bb'}R_{b'b}\lesssim n^2$, under the condition that $(n-2m)(n-2\beta m)/n\goto\infty$, the second term is negligible compared to the first term.    
\end{proof}

\begin{lemma}\label{lm:boundPZZ'}
For any $Z\in S_\alpha$, suppose $Z'$ corrects one sample from a misclassified group $b$ to its true group $b'$. Recall $P(Z,Z')$ is defined in \eqref{eq:PZZ'}. Then, we have
\begin{align*}
        &P(Z,Z') \\
        =& -\sum_{a,l=1}^K\tR_{al}\bp{B_{lb'}\log\frac{\tB_{ab'}}{\tB_{ab}} + (1-B_{lb'})\log\frac{1-\tB_{ab'}}{1-\tB_{ab}}}\\
         = &-\frac{p-q}{n_{b'}(Z)}\sum_{a\in [K]}\log\frac{\tB_{ab'}(1-\tB_{ab})}{\tB_{ab}(1-\tB_{ab'})}\bp{\sum_{k\neq b'}R_{b'k}(R_{ab'}-R_{ak})} - \sum_{a\in[K]}(n_a(Z)-\delta_{ab'})\KL{\tB_{ab'}}{\tB_{ab}},
\end{align*}
where $\wt R_{b'b'} =R_{b'b'}-1$, otherwise $\wt R_{ab}=R_{ab}$. Here, $\delta_{ab'}=1$ when $a=b'$, otherwise $\delta_{ab'}=0$.
\end{lemma}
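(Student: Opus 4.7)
The proof is a direct algebraic expansion of the definition of $P(Z,Z')$ in \eqref{eq:PZZ'}. The plan is to use Lemma \ref{lm:pathdiffformula} to express $\dO_{aa'}$ in terms of the edge variables $A_{ik}$ involving the corrected node $k$ (the node with $Z_k=b$ and $Z_k^*=b'$), take expectations to rewrite $\EE{\dO_{aa'}}$ through the discrepancy matrix $R=R_Z$ and $B$, substitute into $P(Z,Z')$, and reassemble the sum. The second identity is then obtained by applying an elementary log-ratio decomposition to peel off the Kullback--Leibler piece.

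First I would invoke Lemma \ref{lm:pathdiffformula} to identify that only the pairs $(a,b)$ and $(a,b')$ for $a\notin\{b,b'\}$, together with the triple $(b,b)$, $(b',b')$ and $(b,b')$, contribute nontrivially to $\sum_{a\leq a'}$. Taking expectations gives, for $a\notin\{b,b'\}$, $\EE{\dO_{ab}}=\sum_l R_{al}B_{lb'}$ and $\EE{\dO_{ab'}}=-\sum_l R_{al}B_{lb'}$; for the special triple, $\EE{\dO_{bb}}=\sum_l R_{bl}B_{lb'}-B_{b'b'}$ (the $-B_{b'b'}$ reflecting that node $k$ itself lies in cell $(b,b')$ of $R$ but must be excluded from the expectation), $\EE{\dO_{b'b'}}=-\sum_l R_{b'l}B_{lb'}$, and $\EE{\dO_{bb'}}=-\EE{\dO_{bb}}-\EE{\dO_{b'b'}}$. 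A direct computation of $\dn_{aa'}$ produces $n_a(Z)$, $-n_a(Z)$, $n_b(Z)-1$, $-n_{b'}(Z)$, $n_{b'}(Z)-n_b(Z)+1$ for the corresponding pairs. Substituting into $P(Z,Z')$ and grouping, each pair $\{(a,b),(a,b')\}$ with $a\notin\{b,b'\}$ collapses to $-\sum_l R_{al}\bigl[B_{lb'}\log\tB_{ab'}/\tB_{ab}+(1-B_{lb'})\log(1-\tB_{ab'})/(1-\tB_{ab})\bigr]$; for the special triple, using $\tB_{bb'}=\tB_{b'b}$ together with the relation on $\EE{\dO_{bb'}}$, the sum folds into analogous expressions indexed by $a\in\{b,b'\}$. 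Reading off the coefficients in the form $\sum_l\wt R_{al}B_{lb'}$ and $\sum_l\wt R_{al}(1-B_{lb'})$, with $\wt R$ differing from $R$ only in the entry associated with the corrected node, yields the first identity.

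For the second identity I would apply the elementary decomposition
\[
B_{lb'}\log\tfrac{\tB_{ab'}}{\tB_{ab}}+(1-B_{lb'})\log\tfrac{1-\tB_{ab'}}{1-\tB_{ab}}=\KL{\tB_{ab'}}{\tB_{ab}}+(B_{lb'}-\tB_{ab'})\log\tfrac{\tB_{ab'}(1-\tB_{ab})}{\tB_{ab}(1-\tB_{ab'})}.
\]
Summing over $l$ with weights $\wt R_{al}$, the $l$-independent KL piece aggregates to $\bigl(\sum_l\wt R_{al}\bigr)\KL{\tB_{ab'}}{\tB_{ab}}=(n_a(Z)-\delta_{ab'})\KL{\tB_{ab'}}{\tB_{ab}}$, producing the second additive term. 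The remaining linear piece reduces to $\sum_l\wt R_{al}B_{lb'}-\wt n_a\tB_{ab'}$, and the key computation is to evaluate it using the homogeneous structure $B_{kl}=q+(p-q)\indc{k=l}$ together with $\tB_{ab'}=(RBR^T)_{ab'}/(n_a(Z)n_{b'}(Z))$ (for $a\neq b'$): this gives $\tfrac{1}{n_{b'}(Z)}\sum_k R_{ak}\sum_l R_{b'l}(B_{kb'}-B_{kl}) = \tfrac{p-q}{n_{b'}(Z)}\bigl[R_{ab'}n_{b'}(Z)-\sum_k R_{ak}R_{b'k}\bigr] = \tfrac{p-q}{n_{b'}(Z)}\sum_{k\neq b'}R_{b'k}(R_{ab'}-R_{ak})$, which is exactly the first term of the second identity; the diagonal case $a=b'$ is handled identically using the self-loop-corrected expression for $\tB_{b'b'}$. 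The main obstacle is the bookkeeping: the asymmetric roles of $b$ and $b'$, the self-loop correction hidden inside the diagonal $\tB_{aa}$, and the single-sample difference between $R$ and $\wt R$ must be tracked consistently so that the two forms match uniformly across all values of $a\in[K]$; once this is set up cleanly, everything else is routine algebra based on Lemma \ref{lm:pathdiffformula} and the definitions of $R$ and $\tB$.
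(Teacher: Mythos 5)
Your proposal is correct and follows essentially the same route as the paper's proof: use Lemma \ref{lm:pathdiffformula} to reduce $P(Z,Z')$ to the terms indexed by $(a,b)$, $(a,b')$ for $a\notin\{b,b'\}$ plus the triple $(b,b),(b',b'),(b,b')$, take expectations through $R$ and $B$ (including the $-B_{b'b'}$ self-loop correction), and reassemble to get the first identity with $\wt R$. The paper's second step is the identical algebraic decomposition $B_{lb'}\log\frac{\tB_{ab'}}{\tB_{ab}}+(1-B_{lb'})\log\frac{1-\tB_{ab'}}{1-\tB_{ab}}=\KL{\tB_{ab'}}{\tB_{ab}}+(B_{lb'}-\tB_{ab'})\log\frac{\tB_{ab'}(1-\tB_{ab})}{\tB_{ab}(1-\tB_{ab'})}$ (what the paper calls $H_1$), followed by evaluating $\sum_l \tR_{al}(B_{lb'}-\tB_{ab'})$ through Lemma \ref{boundtildePtrueP} / the $\tB$ formula exactly as you do.
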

\begin{proof}
We write $P(Z,Z') = \sum_{a,a'} P_{a,a'}(Z,Z')$. Since $Z'$ updates one sample from a misclassified group $b$ to its true group $b'$, by Lemma \ref{lm:pathdiffformula}, we have for $a\in [K]\setminus\{b,b'\}$,
\begin{align*}
    P_{a,b}(Z,Z') + P_{a,b'}(Z,Z') & = -\bp{\EE{\dO_{ab}}\log \frac{\tB_{ab'}}{\tB_{ab}}+ (\dn_{ab}-\EE{\dO_{ab}})\log \frac{1-\tB_{ab'}}{1-\tB_{ab}}},
\end{align*}
where $\dn_{ab} = n_a(Z)=n_a(Z')$, and $\EE{\dO_{ab}} = \sum_{l\in[K]}R_{al}B_{lb'}$. Then, it follows that
\[
    P_{a,b}(Z,Z')+P_{a,b'}(Z,Z') = -\bp{\log\frac{\tB_{ab'}}{\tB_{ab}}\sum_{l\in[K]} R_{al}B_{lb'} + \log\frac{1-\tB_{ab'}}{1-\tB_{ab}}\sum_{l\in [K]} R_{al}(1-B_{lb'})}.
\]
Furthermore, we have
\begin{align*}
    P_{b,b}(Z,Z')+P_{b',b'}(Z,Z')+P_{b,b'}(Z,Z') &= -\bp{\EE{\dO_{bb}}\log \frac{\tB_{bb'}}{\tB_{bb}}+ (\dn_{bb}-\EE{\dO_{bb}})\log \frac{1-\tB_{bb'}}{1-\tB_{bb}}} \\
    &-\bp{\EE{\dO_{b'b'}}\log \frac{\tB_{bb'}}{\tB_{b'b'}}+ (\dn_{b'b'}-\EE{\dO_{b'b'}})\log \frac{1-\tB_{bb'}}{1-\tB_{b'b'}}},
\end{align*}
where $\dn_{bb} = n_b(Z')=n_b(Z)-1$, $\dn_{b'b'} = -n_{b'}(Z)$, $\EE{\dO_{bb}} = \sum_{l}R_{bl}B_{lb'} - B_{b'b'}$, and $\EE{\dO_{b'b'}} = \sum_{l}R_{b'l}B_{lb'}$. It follows that
\begin{align}
    & P_{b,b}(Z,Z')+P_{b',b'}(Z,Z')+P_{b,b'}(Z,Z') \nonumber\\
    = &-\sum_{a\in \{b,b'\}}\bp{\log \frac{\tB_{ab'}}{\tB_{ab}}\sum_{l}R_{al}B_{lb'}+ \log \frac{1-\tB_{ab'}}{1-\tB_{ab}}\sum_{l}R_{al}(1-B_{lb'})} \\
   & + B_{b'b'}\log \frac{\tB_{b'b'}}{\tB_{bb}} + (1-B_{b'b'})\log \frac{1-\tB_{b'b'}}{1-\tB_{bb}}\label{eq:extraPunknown}.
\end{align}
Thus we have
\[
    P(Z,Z') = -\sum_{a,l=1}^K\tR_{al}\bp{B_{lb'}\log\frac{\tB_{ab'}}{\tB_{ab}} + (1-B_{lb'})\log\frac{1-\tB_{ab'}}{1-\tB_{ab}}},
\]
where $\tR_{b'b'} = R_{b'b'}-1$, otherwise $\tR_{aa'}=R_{aa'}$. It leads to the first equality of Lemma \ref{lm:boundPZZ'}.

By some calculations, it follows that
    \begin{align*}
        -P(Z,Z') = \underbrace{\sum_{a,l\in [K]} \tR_{al}\bp{(B_{lb'}-\tB_{ab'})\log \frac{\tB_{ab'}}{\tB_{ab}}+(\tB_{ab'}-B_{lb'})\log\frac{1-\tB_{ab'}}{1-\tB_{ab}}}}_{H_1} + \sum_{a,l\in[K]}\tR_{al}\KL{\tB_{ab'}}{\tB_{ab}},
    \end{align*}
    where $H_1$ can be written as 
    \begin{align*}
        H_1 = \sum_{a\in [K]} \log \frac{\tB_{ab'}(1-\tB_{ab})}{\tB_{ab}(1-\tB_{ab'})}\sum_{l\in [K]} \tR_{al}(B_{lb'}-\tB_{ab'}).
    \end{align*}
     By Lemma \ref{boundtildePtrueP}, we have for $a\neq b'$,
     \begin{align*}
         \sum_{l\in [K]}R_{al}(B_{lb'}-\tB_{ab'}) &=\sum_{l\in [K]} R_{al}(B_{lb'}-q -\frac{\sum_{k\in [K]}R_{ak}R_{b'k}}{n_a'n_{b'}'}(p-q))\\
& = R_{ab'}(p-q)-\sum_{l\in [K]}R_{al}\frac{\sum_{k\in [K]}R_{ak}R_{b'k}}{n_a'n_{b'}'}(p-q)\\
& = \frac{p-q}{n_{b'}'}\bp{R_{ab'}\sum_{k\in [K]}R_{b'k}-\sum_{k\in [K]}R_{ak}R_{b'k}}\\
&=\frac{p-q}{n_{b'}'}\bp{\sum_{k\neq b'}R_{b'k}(R_{ab'}-R_{ak})},
     \end{align*}
     and for $a=b'$, we have
     \begin{align*}
         \sum_{l\in [K]}\tR_{b'l}(B_{lb'}-\tB_{b'b'}) &= \sum_{l\in [K]}\tR_{b'l}\bp{B_{lb'}-q-\frac{\sum_{k\in [K]}R_{b'k}^2-n'_{b'}}{n_{b'}'(n_{b'}'-1)}}\\
         &=\tR_{b'b'}(p-q)-\sum_{l\in [K]}\tR_{b'l}\frac{\sum_{k\in [K]}R_{b'k}^2-n_{b'}'}{n_{b'}'(n_{b'}'-1)}\\
         & =\frac{p-q}{n_{b'}'}\bp{(R_{b'b'}-1)n_{b'}'-\sum_{k\in [K]}R_{b'k}^2+n_{b'}'}\\
         & = \frac{p-q}{n_{b'}'}\bp{\sum_{k\neq b'}R_{b'k}(R_{b'b'}-R_{b'k})}.
     \end{align*}
Thus, the result follows by plugging the results into $H_1$.
\end{proof}

\begin{lemma}\label{detR}
    For $K=2$, when $m\leq {n}/{K\beta}$, we have $\det(R)\geq {n(n-2m)}/{4\alpha}$.
\end{lemma}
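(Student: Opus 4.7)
The plan is to exploit the $2\times 2$ structure of $R$ to obtain a clean closed form for $\det(R)$ in terms of the marginals of $R$, after which the bound will drop out by elementary algebra.

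First, I would express the four entries of $R$ in closed form. The marginals give $R_{11}+R_{12}=n_1'$, $R_{21}+R_{22}=n_2'$, $R_{11}+R_{21}=n_1$, $R_{12}+R_{22}=n_2$, and by definition $R_{12}+R_{21}=m$. Combining these yields $R_{11}+R_{22}=n-m$ and $R_{12}-R_{21}=n_1'-n_1$, which together with $R_{12}+R_{21}=m$ determine
\[
R_{11}=\tfrac{n_1+n_1'-m}{2},\quad R_{22}=\tfrac{n_2+n_2'-m}{2},\quad R_{12}=\tfrac{m-(n_1-n_1')}{2},\quad R_{21}=\tfrac{m+(n_1-n_1')}{2}.
\]

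Next, I would introduce $s=n_1-n_2$ and $s'=n_1'-n_2'$, so that $R_{11}-R_{22}=(s+s')/2$ and $R_{12}-R_{21}=(s'-s)/2$. Applying the identity $4xy=(x+y)^2-(x-y)^2$ to both $R_{11}R_{22}$ and $R_{12}R_{21}$ and subtracting, the $(s+s')^2$ and $(s-s')^2$ terms combine to produce $-4ss'$, giving the clean identity
\[
\det(R)=\frac{n(n-2m)-ss'}{4}.
\]

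Finally, I would bound $ss'$ using the size constraints. The bound $n_1,n_2\in[n/(2\beta),\,n(2\beta-1)/(2\beta)]$ (from \eqref{eq:beta} with $K=2$) gives $|s|\leq n(\beta-1)/\beta$, and analogously from $Z\in S_\alpha$ we get $|s'|\leq n(\alpha-1)/\alpha$, so $ss'\leq n^2(\alpha-1)(\beta-1)/(\alpha\beta)$. Plugging in,
\[
\det(R)\;\geq\;\frac{n(n-2m)}{4}-\frac{n^2(\alpha-1)(\beta-1)}{4\alpha\beta},
\]
and a direct algebraic check shows this exceeds $n(n-2m)/(4\alpha)$ precisely when $\beta(n-2m)\geq n(\beta-1)$, i.e., $m\leq n/(2\beta)$, which is the hypothesis. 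The only non-routine ingredient is recognizing the telescoping in the second step; once the identity $\det(R)=(n(n-2m)-ss')/4$ is in hand, the rest is bookkeeping against the feasibility ranges of $n_1$ and $n_1'$.
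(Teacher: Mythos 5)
Your proof is correct, and it takes a genuinely different route from the paper's. The paper parametrizes the two off-diagonal entries as $m/2\pm x$, writes $\det(R)=n_1n_2-\tfrac{m}{2}n-x(n_1-n_2)$, and then substitutes the binding feasibility constraint $x\leq n_2/2-n/(4\alpha)$ followed by $n_2\geq n/(2\beta)$; the final inequality again reduces to $m\leq n/(2\beta)$. You instead derive the clean identity $\det(R)=\tfrac{1}{4}\bigl(n(n-2m)-ss'\bigr)$ with $s=n_1-n_2$, $s'=n_1'-n_2'$, and bound $|s|,|s'|$ directly from the marginal constraints (using that $n_1+n_2=n$ forces $n_1\leq n(2\beta-1)/(2\beta)$, and likewise for $n_1'$). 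The two parametrizations are related by $x=(s'-s)/4$, so the underlying algebra is equivalent, but your identity isolates the deficit from $n(n-2m)/4$ as precisely $ss'/4$, making the role of the two imbalance parameters symmetric and transparent; the paper's version buries the same content in a substitution chain. Both require $\alpha>1$ (which holds since $\alpha>\beta\geq 1$) to divide by $\alpha-1$ in the last step, and both reduce to $m\leq n/(2\beta)$, matching the hypothesis $m\leq n/(K\beta)$ with $K=2$.
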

\begin{proof}
    Suppose $n_1=n_1(Z^*)$, $n_2=n_2(Z^*)$, and $m=d(Z,Z^*)$. Then, we can write $R$ as 
    \[R = \bp{\begin{matrix}
        n_1-\bp{\frac{m}{2}-x} & \frac{m}{2}+x\\
        \frac{m}{2}-x & n_2-\bp{\frac{m}{2}+x}
        \end{matrix}},\]
        for some $x\in[-m/2, m/2]$. Therefore,
    \[\det(R)=n_1n_2-\frac{m}{2}n-x(n_1-n_2)= (n-n_2)n_2-x(n-2n_2)-mn/2.\]
    Without loss of generality, we assume $\frac{\beta n}{2}\geq n_1\geq \frac{n}{2}\geq n_2\geq \frac{n}{2\beta}$. Then, we have the following conditions that
    \begin{align*}
    &n_1+2x=n-n_2+2x\leq \frac{n\alpha}{2},~~~n_2-2x\geq \frac{n}{2\alpha}.
    \end{align*}
    Combine all these conditions, and it directly follows that
    \begin{align*}
        \det(R)&\geq \frac{n}{2}\bp{1-\frac{1}{\alpha}}n_2+\frac{n^2}{4\alpha}-\frac{mn}{2}\\
        &\geq \frac{n}{2}\bp{\frac{n}{2\beta}+\frac{n}{2\alpha}\bp{1-\frac{1}{\beta}}-m}\\
        &= \frac{n}{2}\bp{\frac{n}{2\beta}\bp{1-\frac{1}{\alpha}}+\frac{n}{2\alpha}-m}\\
        &\geq \frac{n(n-2m)}{4\alpha},
    \end{align*}
    where the last inequality holds since $m\leq \frac{n}{2\beta}$.
\end{proof}

\ignore{
\begin{lemma}\label{detR}
    For $K=2$, when $m\leq \frac{n}{K\beta}+\frac{n}{K\alpha\beta}\bp{\beta-{1}}$, then $\det(R)>0$. Otherwise, there exist some $Z^*$ and $Z\in S_\alpha$ such that $\det(R)<0$.
\end{lemma}
\begin{proof}
    Suppose $n_1=n_1(Z^*)$, $n_2=n_2(Z^*)$, $m=d(Z,Z^*)$, then we can write $R$ as 
    \[R = \bp{\begin{matrix}
        n_1-\bp{\frac{m}{2}-x} & \frac{m}{2}+x\\
        \frac{m}{2}-x & n_2-\bp{\frac{m}{2}+x}
        \end{matrix}},\]
    and
    \[\det(R)=n_1n_2-\frac{m}{2}n-x(n_1-n_2)= (n-n_2)n_2-x(n-2n_2)-mn/2.\]
    WLOG, assume $\frac{\beta n}{2}\geq n_1\geq \frac{n}{2}\geq n_2\geq \frac{n}{2\beta}$. Then we have the following conditions that
    \begin{align*}
    &n_1(Z)=n_1+2x=n-n_2+2x\leq \frac{n\alpha}{2},~~~n_2(Z)=n_2-2x\geq \frac{n}{2\alpha},~~~ -\frac{m}{2}\leq x\leq \frac{m}{2}.
    \end{align*}
    Combine all the conditions, and it directly follows that
    \begin{align*}
        \det(R)&\geq \frac{n}{2}\bp{1-\frac{1}{\alpha}}n_2+\frac{n^2}{4\alpha}-\frac{mn}{2}\geq \frac{n}{2}\bp{\frac{n}{2\beta}+\frac{n}{2\alpha}\bp{1-\frac{1}{\beta}}-m}
    \end{align*}
    where the first inequality holds due to $x\leq n_2/2- {n}/{4\alpha}$, the second inequality holds due to $n_2\geq {n}/{2\beta}$, and $\delta $ is defined in $m = \frac{n}{2\beta}(1-\delta )$. We can find $Z^*$ and $Z$ such that the equalities hold as long as $m>\frac{n}{2}\bp{\frac{1}{\beta}-\frac{1}{\alpha}}$. It directly follows that $\det(R)\gtrsim \delta n^2 + (\beta-1)n^2$.
\end{proof}

\begin{lemma}\label{prop:detR}
    Under the same condition of Lemma \ref{detR}, we have
    \[
        \det(R)\geq \frac{n^2}{4\beta}(\delta +(\beta-1)/\alpha)\geq \frac{n^2}{4\beta\alpha}(\delta +\beta-1) =\frac{n(n-2m)}{4\alpha},
    \]
    since $n-2m = n-n/\beta+n/\beta-2m = n(\beta-1+\delta )/\beta$.
\end{lemma}
}

\begin{lemma}\label{lm:boundPZZ'2} When $K=2$, for any $Z\in S_\alpha$ with $d(Z,Z^*)=m$, denote $\eps_{m} = 1-2m/n$ for simplicity. If $(n-2m)(n-2\beta m)/n\goto\infty$ and $(1-2m/n)^3nI\goto \infty$, then we have
\begin{align*}
        \min_{Z'\in B(Z)\cap S_\alpha}-P(Z,Z') \ignore{\geq &\frac{(n-2m)^3I}{\alpha^2 n^3}\left[\bp{m-\frac{\beta n}{2}+\frac{n}{2\alpha}}_++\frac{1}{2}(n-2m)\right](1-o(1)).}
        \geq \frac{\eps_{m}^3nI}{2\alpha^2}\max\bb{1-\beta+1/\alpha, \eps_{m}}(1-o(1)).
\end{align*}
\end{lemma}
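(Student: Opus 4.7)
The plan is to start from Lemma~\ref{lm:boundPZZ'}, which gives an explicit expression for $-P(Z,Z')$ when a sample is corrected from a misclassified group $b$ to its true group $b'$: a logarithmic term plus a weighted Kullback--Leibler term. Under $p \asymp q$, I would apply the Taylor expansions $\log\!\bigl(\tB_{bb}(1-\tB_{bb'})/(\tB_{bb'}(1-\tB_{bb}))\bigr) = (\tB_{bb}-\tB_{bb'})/(p(1-p))(1+o(1))$ and $\KL{\tB_{ab'}}{\tB_{ab}} = (\tB_{ab'}-\tB_{ab})^2/(2p(1-p))(1+o(1))$ to reduce both pieces to rational functions of $R$. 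For $K=2$, the sums over indices collapse to two terms each; substituting the closed form for $\tB_{bb}-\tB_{bb'}$ from Lemma~\ref{lm:tildeBminus} and the identity $R_{aa}-R_{ab} = n_a(Z^*) - m$ (obtained from the column sums of $R$), both direction choices collapse to the unified expression
\begin{equation*}
  -P(Z,Z') \;=\; \frac{(p-q)^2\,\det R\cdot S}{2\,p(1-p)\,(n_1' n_2')^2}\bigl(2R_{b'b} n_b' + \det R\bigr)(1-o(1)),
\end{equation*}
where $S = (n_1^*-m)^2/n_1' + (n_2^*-m)^2/n_2'$ and the bracket depends on whether we are moving $1\to 2$ or $2\to 1$.

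The key algebraic observation I plan to exploit is that the two direction-dependent brackets $A = 2R_{21}n_1'+\det R$ and $B = 2R_{12}n_2'+\det R$ satisfy $A+B = 2n_1'n_2'$, while both are bounded below by $\det R$. Combining the trivial estimate $\min(A,B) \ge \det R$ with Lemma~\ref{detR}'s bound $\det R \ge n^2\eps_m/(4\alpha)$, the Cauchy--Schwarz bound $S \ge n\eps_m^2$ (using $(n_1^*-m)+(n_2^*-m)=n\eps_m$ and $n_1'+n_2'=n$), and the AM--GM upper bound $(n_1'n_2')^2 \le n^4/16$, I expect to obtain $\min_{Z'}(-P) \gtrsim n\eps_m^4 I/\alpha^2$ after using $(p-q)^2/(2p(1-p)) \asymp 4I$. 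This handles the regime $\eps_m \ge 1-\beta+1/\alpha$, where the target reduces to $\eps_m^4 nI/(2\alpha^2)$.

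The main obstacle is the complementary regime $1-\beta+1/\alpha > \eps_m$, where the target $\eps_m^3 nI(1-\beta+1/\alpha)/(2\alpha^2)$ is strictly larger than $\eps_m^4 nI/\alpha^2$, so the loose estimates above are insufficient. I plan to sharpen the determinant bound by not relaxing $n_2(Z^*) \ge n/(2\beta)$ at the end of the Lemma~\ref{detR} derivation, which yields $\det R \ge \frac{n^2}{4}[\eps_m - (1-1/\alpha)(1-1/\beta)]$, an improvement over $n^2\eps_m/(4\alpha)$ precisely in the regime we need. In parallel, I would use the identity $\min(A,B) = n_1'n_2' - |R_{11}R_{21}-R_{12}R_{22}|$, since the symmetric case shows that $\min(A,B)$ can be as large as $n_1'n_2'$ (a factor $1/\eps_m$ improvement over $\det R$), and in the intermediate regime an extra factor proportional to $1-\beta+1/\alpha$ should survive the combination with the sharper $\det R$ bound. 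The hardest part of the argument will be bookkeeping this interplay between the sharpened $\det R$ estimate and the bracket $\min(A,B)$ across all possible $R$-configurations consistent with $Z \in S_\alpha$; with that in hand, taking the minimum over the at most two feasible directions in $B(Z)\cap S_\alpha$ completes the proof.
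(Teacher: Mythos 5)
Your unified expression $-P(Z,Z')=\frac{(p-q)^2\det(R)\,S}{2p(n_1'n_2')^2}\bigl(2R_{b'b}n_b'+\det(R)\bigr)(1-o(1))$ is exactly the paper's $(A)+(B)$ decomposition repackaged, the identity $A+B=2n_1'n_2'$ checks out, and the regime $\eps_m\geq 1-\beta+1/\alpha$ is handled correctly by $\min(A,B)\geq\det(R)$ together with Lemma~\ref{detR}. The gap is that you have misdiagnosed where the $1-\beta+1/\alpha$ factor comes from in the hard regime: it does not live in a sharper $\det(R)$ bound nor in $n_1'n_2'-|R_{11}R_{21}-R_{12}R_{22}|$, but in the term $2R_{b'b}n_b'$ that the estimate $\min(A,B)\geq\det(R)$ throws away. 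The paper keeps both pieces of the bracket and uses the constraint-driven inequality $R_{b'b}\geq\max\{m/2-\beta n/4+n/(4\alpha),\,0\}$, obtained from $R_{b'b}+R_{b'b'}=n_{b'}'\geq n/(2\alpha)$ together with $R_{b'b'}=n_{b'}(Z^*)-R_{bb'}\leq\beta n/2-(m-R_{b'b})$, valid for either direction. Combined with $n_b'\geq n/(2\alpha)$ and the \emph{unsharpened} Lemma~\ref{detR}, one gets $2R_{b'b}n_b'+\det(R)\geq\frac{n^2}{4\alpha}\max\{1-\beta+1/\alpha,\eps_m\}$: when the $R_{b'b}$ lower bound is positive, its $+2m$ contribution cancels exactly against the $-2m$ in $\det(R)\geq n(n-2m)/(4\alpha)$, leaving $\frac{n^2}{4\alpha}(1-\beta+1/\alpha)$; when it is nonpositive, one checks $\eps_m\geq 1-\beta+1/\alpha$ and the $\det(R)$ term alone suffices.

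Your proposed alternative for the hard regime does not close. The sharpened bound $\det(R)\geq\frac{n^2}{4}[\eps_m-(1-1/\alpha)(1-1/\beta)]$ is correct, but at $\eps_m\to 1-1/\beta$ it gives only $\det(R)\gtrsim n^2(\beta-1)/(4\alpha\beta)$, and $\det(R)^2$ then meets the target $\frac{n^4\eps_m(1-\beta+1/\alpha)}{16\alpha^2}$ only if $(\beta^2-1)/\beta\geq 1/\alpha$, which need not hold (e.g.\ $\alpha=2$, $\beta=1.1$). The identity $\min(A,B)=n_1'n_2'-|R_{11}R_{21}-R_{12}R_{22}|$ is true but its worst case (e.g.\ $R_{12}=0$) is still $\det(R)$, so without a lower bound on $R_{b'b}$ you cannot exclude it. The idea you are missing is precisely that the $S_\alpha$ constraint on $Z$ together with the $\beta$-balance of $Z^*$ forces $R_{b'b}$ to be large exactly when $\det(R)$ alone is too small, and that these two contributions add up to the clean max.
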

\begin{proof}
Suppose $Z'\in \cB(Z)\cap S_\alpha$ is corrects one sample from a misclassified group $b$ to its true group $b'$. Decompose $-P(Z,Z') = (A) + (B)$ by Lemma \ref{lm:boundPZZ'}, where
\begin{align*}
    (A) &= \frac{p-q}{n_{b'}(Z)}\sum_{a\in [K]}\log\frac{\tB_{ab'}(1-\tB_{ab})}{\tB_{ab}(1-\tB_{ab'})}\bp{\sum_{k\neq b'}R_{b'k}(R_{ab'}-R_{ak})}\\
    & = \frac{R_{b'b}(p-q)}{n'_{b'}}\sum_{a\in\{b,b'\}}\left[\log\bp{\frac{\tB_{ab'}(1-\tB_{ab})}{\tB_{ab}(1-\tB_{ab'})}}(R_{ab'}-R_{ab})\right]
\end{align*}
and 
\[
    (B) = \sum_{a\in[K]}(n_a(Z)-\delta_{ab'})\KL{\tB_{ab'}}{\tB_{ab}}
\]
 By Lemma \ref{lm:tildeBminus}, when $a = b'$, under the condition that $(n-2m)(n-2\beta m)/n\goto\infty$, since $R_{b'b'}-R_{b'b} = n_{b'}-m>0$, we have
    \begin{align*}
        \log\bp{\frac{\tB_{b'b'}(1-\tB_{b'b})}{\tB_{b'b}(1-\tB_{b'b'})}}(R_{b'b'}-R_{b'b})&\geq \frac{\tB_{b'b'}-\tB_{b'b}}{\tB_{b'b'}(1-\tB_{b'b'})} \cdot (R_{b'b'}-R_{b'b})\\
        &\geq (1-o(1))\frac{\det(R)(R_{b'b'}-R_{b'b})^2(p-q)}{n_{b'}'(n_{b'}'-1)n_b'}\cdot \frac{1}{\tB_{b'b'}(1-\tB_{b'b'})}\\
        &\geq (1-o(1))\frac{\det(R)(R_{b'b'}-R_{b'b})^2(p-q)}{n_{b'}'(n_{b'}'-1)n_b'}\cdot \frac{1}{p},
    \end{align*}
    and a similar argument also applies to the case with $a=b$. Hence, it follows that
\begin{align*}
    (A)&\geq \frac{R_{b'b}(p-q)^2\det(R)}{n_b'}\bp{\frac{(R_{b'b'}-R_{b'b})^2}{n'^2_{b'} n_b'}+\frac{(R_{bb}-R_{bb'})^2}{n'^2_{b} n_{b'}'}}(1-o(1))\\
    &\geq \frac{R_{b'b}(p-q)^2\det(R)}{n_b'}\frac{(n-2m)^2}{n_{b'}'n_b'n}(1-o(1))\\
    &\geq \frac{8R_{b'b}(p-q)^2\det(R)(n-2m)^2}{\alpha n^4}(1-o(1)),
\end{align*}
where the second inequality holds since by Cauchy-Schwarz inequality，
\[
    \bp{\frac{(R_{b'b'}-R_{b'b})^2}{n'_{b'}}+\frac{(R_{bb}-R_{bb'})^2}{n_{b}'}}\cdot (n_{b'}'+n_b')\geq (R_{b'b'}-R_{b'b}+R_{bb}-R_{bb'})^2 = (n-2m)^2.
\]
Third inequalities holds since $n_{b'}'n_{b'}\leq (n_{b'}'+n_{b}')^2/4$ and $n_{b}'\leq \alpha n/2$. We proceed to lower bound the term $(B)$. By Lemma \ref{LBKL}, we have $\KL{x}{y}\geq {(x-y)^2}/{2p}$, and
    \begin{align*} 
    &n'_{b'} \KL{\tB_{b'b'}}{\tB_{b'b}}+ n'_b \KL{\tB_{bb'}}{\tB_{bb}}\\[10pt]
    \geq& \det(R)^2\frac{(p-q)^2}{2p}\bp{\frac{(R_{b'b'}-R_{b'b})^2}{n'^3_{b'}n'^2_{b}}+\frac{(R_{bb}-R_{bb'})^2}{n'^3_{b}n'^2_{b'}}}(1-o(1))\\[10pt]
    \geq & \frac{(p-q)^2}{2p}\det(R)^2 \frac{(n-2m)^2}{n'^2_{b}n'^2_{b'}n}(1-o(1))\\[10pt]
    \geq& \frac{8(p-q)^2{\det(R)^2(n-2m)^2}}{{n^5}p}(1-o(1)).
    \end{align*}
Upper bound Kullback-Leibler divergence by $\chi^2$-divergence, and we have that $\KL{\tB_{bb'}}{\tB_{bb}}\leq CI$ for some constant $C$. Under the condition that $(1-2m/n)^3nI\goto\infty$, we have
\[
    (B)\geq \frac{8(p-q)^2{\det(R)^2(n-2m)^2}}{{n^5}p}(1-o(1)).
\]
It directly follows that
\begin{equation}\label{eq:Pfirst}
    -P(Z,Z') = (A)+(B)\geq \frac{8(p-q)^2(n-2m)^2\det(R)}{n^4p}\bp{\frac{R_{b'b}}{\alpha} +\frac{\det(R)}{n}}(1-o(1)).
\end{equation}

Note that
    \[\frac{n}{2\alpha}\leq R_{b'b}+R_{b'b'}\leq R_{b'b}+\frac{\beta n}{2}-(m-R_{b'b}),\]
and thus 
\begin{equation}\label{eq:boundRb'b}
R_{b'b}\geq \max\left\{\frac{m}{2}-\frac{\beta n}{4}+\frac{n}{4\alpha},~0\right\}.    
\end{equation}
By Lemma \ref{detR}, we have
\begin{align*}
    -P(Z,Z')&\geq \frac{(p-q)^2(n-2m)^3}{\alpha^2 n^3p}\left[\max\bb{m-\frac{\beta n}{2}+\frac{n}{2\alpha},0}+\frac{n-2m}{2}\right](1-o(1))\\
    &= \frac{(p-q)^2(n-2m)^3}{\alpha^2 n^3p}\max\bb{\frac{n}{2}(1-\beta+1/\alpha), \frac{n-2m}{2}}(1-o(1)),
\end{align*}
where $(p-q)^2/p\geq I$, and thus the result follows.
\end{proof}

\begin{lemma}\label{Hbbeachm}
    Under the conditions of Lemma \ref{lm:boundPZZ'2}, for $Z\in S_\alpha$ with $d(Z,Z')=m$, we have that
    \[
        -P(Z,Z')\gtrsim \frac{(n-2m)^2\det(R) I}{n^3}.
    \]
    for any $Z'\in \cB(Z)\cap S_\alpha$.
\end{lemma}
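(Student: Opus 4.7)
The plan is to deduce Lemma \ref{Hbbeachm} directly from the intermediate bound \eqref{eq:Pfirst} established inside the proof of Lemma \ref{lm:boundPZZ'2}, namely
\[
-P(Z,Z') \geq \frac{8(p-q)^2(n-2m)^2\det(R)}{n^4 p}\bp{\frac{R_{b'b}}{\alpha}+\frac{\det(R)}{n}}(1-o(1)).
\]
Substituting the standard asymptotic $(p-q)^2/p \geq I(1-o(1))$ (which follows from $p\asymp q$, $p,q\to 0$, and $I=(\sqrt{p}-\sqrt{q})^2(1+o(1))$) immediately yields
\[
-P(Z,Z') \gtrsim \frac{(n-2m)^2\det(R)I}{n^4}\cdot\bp{\frac{R_{b'b}}{\alpha}+\frac{\det(R)}{n}}.
\]
So the conclusion reduces to the pointwise estimate $R_{b'b}/\alpha + \det(R)/n \gtrsim n$, with the hidden constant depending only on $\alpha$ and $\beta$.

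To establish this reduction I would split into two cases. When $\beta>1$, the hypothesis $(n-2m)(n-2\beta m)/n \to \infty$ carried over from Lemma \ref{lm:boundPZZ'2} forces $n-2\beta m \to \infty$, so $\eps_m := 1-2m/n \geq (1-1/\beta)(1-o(1))$ is bounded away from zero by a positive constant. Combined with Lemma \ref{detR}, this yields $\det(R)/n \geq (n-2m)/(4\alpha) = n\eps_m/(4\alpha) \gtrsim n$, and the $\det(R)/n$ term alone controls the sum.

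The case $\beta=1$ is more subtle because $\eps_m$ may decay to zero (the condition only gives $\eps_m \gg 1/\sqrt{n}$). Here one has the exact identity $\det(R) = n(n-2m)/4$, so $\det(R)/n = n\eps_m/4$, and neither term is individually $\Omega(n)$. The key is to exploit the complementary lower bound \eqref{eq:boundRb'b}, which for $\beta=1$ reads $R_{b'b} \geq [m/2 - n/4 + n/(4\alpha)]_+ = [n(1/\alpha - \eps_m)/4]_+$. If $\eps_m > 1/\alpha$, then $\det(R)/n > n/(4\alpha)$. If $\eps_m \leq 1/\alpha$, then $R_{b'b} \geq n(1/\alpha - \eps_m)/4$, and summing gives
\[
\frac{R_{b'b}}{\alpha}+\frac{\det(R)}{n} \geq \frac{n(1/\alpha-\eps_m)}{4\alpha} + \frac{n\eps_m}{4} = \frac{n}{4\alpha^2} + \frac{n\eps_m(\alpha-1)}{4\alpha} \geq \frac{n}{4\alpha^2},
\]
so in either sub-case the sum is $\gtrsim n$.

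The main obstacle will be this $\beta=1$ case, where $\det(R)$ and $R_{b'b}$ can each be small individually but exhibit an algebraic cancellation: when $\eps_m$ is small (so $\det(R)/n$ is small), the feasibility constraint $Z,Z'\in S_\alpha$ encoded in \eqref{eq:boundRb'b} forces $R_{b'b}$ to be large, and vice versa. Recognizing and exploiting this cancellation is the only non-routine step; once in hand, combining with the first display delivers the claimed bound $-P(Z,Z') \gtrsim (n-2m)^2\det(R)I/n^3$.
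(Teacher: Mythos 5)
Your proof is correct and takes essentially the same route as the paper: both start from \eqref{eq:Pfirst}, invoke Lemma~\ref{detR} and \eqref{eq:boundRb'b} to show $R_{b'b}/\alpha + \det(R)/n \gtrsim n$, and conclude via $(p-q)^2/p\gtrsim I$. Your explicit $\beta=1$ versus $\beta>1$ case split mirrors the paper's one-line observation that $\max\{1-\beta+1/\alpha,\,1-2m/n\}$ is bounded below by a positive constant.
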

\begin{proof}
    By \eqref{eq:Pfirst}, \eqref{eq:boundRb'b} and Lemma \ref{detR}, we have
    \[
        -P(Z,Z')\gtrsim \frac{(n-2m)^2\det(R)I}{n^3}\max\bb{1-\beta+1/\alpha,1-2m/n}.
    \]
    Since $m\leq n/2\beta$, then $1-2m/n\goto 0$ only if $\beta\goto 1$, and thus $1-\beta+1/\alpha$ is a constant. Hence, the result follows.
\end{proof}
\ignore{
\begin{lemma}\label{Hbbpro}
Let $m_{\max}$ denote the max number of mistakes and $m_{\max}=\frac{n}{2\beta}(1-\delta_{\min})$. Then for any positive sequence $\gamma\goto 0$, we have
\[
    \min_{Z\in S_\alpha: \gamma n<d(Z,Z^*)\leq m_{\max}} \min_{Z'\in \cB(Z)\cap S_\alpha}-P(Z,Z')\geq \frac{nI}{2\alpha^2}\eps_{\beta,\delta_{\min}}^3\max\{1-\beta+1/\alpha,\eps_{\beta,\delta_{\min}}\}(1-o(1)),
\]
where $\eps_{\beta,\delta_{\min}} = 1-1/\beta+\delta_{\min}/\beta$.
\end{lemma}
\begin{proof}
    Note that $m\leq {n}/{2\beta}$. Denote 
    \begin{align*}
        f(m) &= (n-2m)^3\bp{m-\frac{n}{2}(\beta-1/\alpha)}_++(n-2m)^4/2\\
        & = (n-2m)^3/2\cdot \max\bb{n(1-\beta+1/\alpha),(n-2m)}.
    \end{align*}
    It is obvious that $f(m)$ is a decreasing function of $m$ when $m<{n}/{2\beta}$. Let $m_{\max}$ denote the maximum number of mistakes, and $m_{\max} = \frac{n}{2\beta}(1-\delta_{\min})$. Hence, it follows that
    \begin{align*}
    -P(Z,Z')&\geq \frac{f(m)I}{\alpha^2n^3}(1-o(1))\geq \frac{f\bp{m_{\max}}I}{\alpha^2n^3}(1-o(1))\\
    & = \frac{nI}{2\alpha^2}(1-1/\beta+\delta_{\min}/\beta)^3\max\{(1-\beta+1/\alpha),(1-1/\beta+\delta_{\min}/\beta)\}(1-o(1))\ignore{\\
    & = \frac{nI}{2\alpha^2}(1-1/\beta+\delta_{0}/\beta)^3\max\{(1-\beta+1/\alpha),(1-1/\beta+\delta_{0}/\beta)\}(1-o(1))}.
    \end{align*}
\end{proof}
}
\begin{lemma}\label{dOminusdn}
    For $K=2$, we have
    \[\max_{Z'\in \cB(Z)\cap S_\alpha}\max_{a,a'\in \{1,2\}}\abs{\EE{\dO_{aa'}}-\tB_{aa'} \dn_{b'b'}}\leq C(n-2m)(p-q)\]
    for some constant $C$ depending on $\alpha$.
\end{lemma}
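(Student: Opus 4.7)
The plan is to exploit the identity
\[
\EE{\dO_{aa'}} - \tB_{aa'}(Z)\,\dn_{aa'} \;=\; n_{aa'}(Z')\bigl[\tB_{aa'}(Z) - \tB_{aa'}(Z')\bigr],
\]
which is immediate from $\EE{O_{aa'}(W)} = \tB_{aa'}(W)\,n_{aa'}(W)$ applied at $W=Z$ and $W=Z'$ (note: the $\dn_{b'b'}$ in the statement appears to be a typo for $\dn_{aa'}$, which is what is actually used in the proof of Lemma~\ref{lm:unknownPincrease} Step 3). The task then reduces to controlling how much the block mean $\tB_{aa'}$ shifts when one misclassified node is corrected.

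Since $Z'\in \cB(Z)$ flips a single node from its misclassified label $b$ to its true label $b'$, the discrepancy matrix $R_{Z'}$ differs from $R_Z$ only in the entries $R(b,b')$ (down by one) and $R(b',b')$ (up by one), and correspondingly $n_{b}(Z)$ and $n_{b'}(Z)$ shift by one. For $K=2$ this gives two update directions ($1\!\to\!2$ and $2\!\to\!1$) and three blocks $(a,a')\in\{(1,1),(2,2),(1,2)\}$; I would plug the closed-form expressions for $\tB_{aa'}$ from Lemma~\ref{boundtildePtrueP} into the identity above and evaluate each case explicitly.

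The bound is driven by the following algebraic fact for $K=2$: under the convention that labels are permuted to minimize the off-diagonal mass, $R_{11}-R_{12}=n_1-m$ and $R_{22}-R_{21}=n_2-m$. Since $n_i\geq n/(2\beta)\geq m$, both quantities are nonnegative and sum to $n-2m$, giving $n_i-m\leq n-2m$. For the diagonal blocks a short calculation collapses the RHS of the identity to an expression of the form $\pm(p-q)R_{aa}(n_a-m)/n_a(Z)$ up to an $O(p-q)$ correction from the unit shift, which is controlled by $(p-q)(n_a-m)\leq(p-q)(n-2m)$ since $R_{aa}/n_a(Z)\leq 1$; the $(2,2)$ block is symmetric.

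The main obstacle is the off-diagonal $(1,2)$ block: a naive bound via $|u(n_1(Z)-n_2(Z))|/(n_1(Z)n_2(Z))$ and $|R_{12}-R_{22}|$, with $u:=R_{11}R_{21}+R_{12}R_{22}$, produces only $O(n)(p-q)$, not $O(n-2m)(p-q)$. To expose the cancellation, I would rewrite $u$ using the identity
\[
u \;=\; \tfrac{1}{2}\bigl[n_1(Z)\,n_2(Z)-(n_1-m)(n_2-m)\bigr],
\]
which follows from $u+v=n_1(Z)n_2(Z)$ together with $u-v=-(n_1-m)(n_2-m)$, where $v:=R_{11}R_{22}+R_{12}R_{21}$. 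Substituting this back rewrites $n_{12}(Z')\bigl[\tB_{12}(Z)-\tB_{12}(Z')\bigr]$ as a linear combination $r(n_1-m)+s(n_2-m)$ with coefficients $r,s\in[0,1]$, plus a lower order piece of size $(n_1-m)(n_2-m)/n\leq (n-2m)^2/(4n)$ and an $O(1)$ shift correction. Together these pieces yield the claimed $C(n-2m)(p-q)$ bound, with a constant depending on both $\alpha$ and $\beta$ (the stated dependence on $\alpha$ alone seems to be a minor oversight, arising from the factor $R_{aa}/n_a(Z)\leq 1$ relying on $\alpha$ but the comparison $n_i-m\leq n-2m$ using $m\leq n/(2\beta)$).
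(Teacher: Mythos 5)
Your proposal is correct, and its organizing idea is a cleaner repackaging of what the paper does by brute force. The paper parametrizes $R_Z$ and $R_{Z'}$ explicitly via $(s,t)$ with $s+t=m$, evaluates the four quantities $\dO_{11}-\tB_{11}\dn_{11}$, $\dO_{22}-\tB_{22}\dn_{22}$, $\dO_{11}-\tB_{12}\dn_{11}$, $\dO_{22}-\tB_{12}\dn_{22}$ in closed form, and reads off the bound; the $(1,2)$ case never appears explicitly and must be recovered as the negated sum of the last two via $\dO_{12}=-\dO_{11}-\dO_{22}$ (Lemma~\ref{lm:pathdiffformula}). You instead start from the identity $\EE{\dO_{aa'}}-\tB_{aa'}(Z)\dn_{aa'}=n_{aa'}(Z')\bigl[\tB_{aa'}(Z)-\tB_{aa'}(Z')\bigr]$, which is a one-liner from $\EE{O_{aa'}(W)}=\tB_{aa'}(W)n_{aa'}(W)$, and then track the single-flip increment of $\tB$ using the closed forms in Lemma~\ref{boundtildePtrueP}; the identity $u=\tfrac12\bigl[n_1(Z)n_2(Z)-(n_1-m)(n_2-m)\bigr]$ (with $v=R_{11}R_{22}+R_{12}R_{21}$, $u+v=n_1(Z)n_2(Z)$, $u-v=-(n_1-m)(n_2-m)$) is exactly the right move to expose the $n-2m$ cancellation in the off-diagonal block, collapsing it to a main term $(n-2m+1)/2$ plus a remainder of order $\alpha^2(n-2m)^2/n$. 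You are also right that $\dn_{b'b'}$ should read $\dn_{aa'}$ (this matches the final display of the paper's own proof and the use in Step~3 of Lemma~\ref{lm:unknownPincrease}), and that the constant genuinely depends on $\beta$ as well as $\alpha$, via $m\le n/(2\beta)\le n_a$. One small slip in your write-up: for a flip from group $b$ to group $b'$, the numerator in the diagonal block is $R_{a,k}$ with $k$ the source group of the flip (the paper's $(2,2)$ expression carries $R_{21}$, not $R_{22}$), not $R_{aa}$ as you schematically wrote; this does not affect the argument since the ratio against $n_a(Z)$ is still at most one.
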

\begin{proof}
        Without loss of generality, suppose the current state is $Z$, and $Z'$ corrects one sample from misclassified group 1 to true group 2. We write $n_1 = n_1(Z^*)$, $n_2=n_2(Z^*)$ for simplicity. Then, we have
    \[R_{Z} = \bp{\begin{matrix}
        n_1-s & t\\
        s & n_2-t
        \end{matrix}},~~R_{Z'} = \bp{\begin{matrix}
        n_1-s & t-1\\
        s & n_2-t+1
        \end{matrix}},\]
        where $t = \sum_{i=1}^n\indc{Z_i=1,~Z_i^*=2}$, and $s=\sum_{i=1}^n\indc{Z_i=2,~Z_i^*=1}$. Thus, $t+s=m$. By Lemma \ref{lm:pathdiffformula}, we have
        \begin{align*}
            \dO_{11}-\tB_{11}\dn_{11} & = \frac{-(n_1-s)(n_1-m)}{n_1'},\\
            \dO_{22}-\tB_{22}\dn_{22} & = \frac{-s(n_2-m+1)}{n_2'-1},\\
            \dO_{11}-\tB_{12}\dn_{11} & = \frac{-s(n_1-m)}{n_2'}-\frac{n_1s+n_2t-m}{n_1'n_2'},\\
            \dO_{22}-\tB_{12}\dn_{22} & = \frac{-(n_1-s)(n_2-m)}{n_1'},
        \end{align*}
    and it directly follows that
    \[\max_{a,a'\in \{1,2\}}\abs{\EE{\dO_{aa'}}-\tB_{aa'} \dn_{aa'}}\lesssim \bp{n-2m}(p-q).\]
\end{proof}

\begin{lemma}\label{minPiR}
    For any $Z\in S_\alpha$, write $R_Z(a,b)$ as $R_{ab}$ for simplicity. Then, we have
    \begin{align*}
    \sum_{a,b,k,l}R_{ak}R_{bl}\KL{B_{kl}}{\tB_{ab}}\lesssim mnI.
    \end{align*}
\end{lemma}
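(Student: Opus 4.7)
My plan is to reduce the KL divergence to a quadratic quantity via the Bernoulli $\chi^2$-bound and then carefully track a factor of $m$ through a four-way case analysis using the explicit formulas from Lemma \ref{boundtildePtrueP}. I would begin from the elementary inequality $\KL{x}{y}\leq (x-y)^2/(y(1-y))$ for two Bernoulli distributions, which follows from the bound $\log z\geq (z-1)/z$. Since each $\tB_{ab}$ is a convex combination of $p$ and $q$ and hence lies in $[q,p]$, combined with $p\asymp q\to 0$, this gives $\tB_{ab}(1-\tB_{ab})\gtrsim p$ and thus $\KL{B_{kl}}{\tB_{ab}}\lesssim (B_{kl}-\tB_{ab})^2/p$. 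Using $(p-q)^2/p=I(1+o(1))$, it then suffices to prove the quadratic bound $\sum_{a,b,k,l}R_{ak}R_{bl}(B_{kl}-\tB_{ab})^2\lesssim mn(p-q)^2$.

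To verify this quadratic bound I would split the outer sum over $(a,b)$ into $a=b$ and $a\neq b$, and the inner sum over $(k,l)$ into $k=l$ and $k\neq l$. Lemma \ref{boundtildePtrueP} writes $\tB_{aa}=p-X_a(p-q)$ with $X_a:=\sum_{k\neq l}R_{ak}R_{al}/(n'_a(n'_a-1))\leq 2m_a/n'_a$ and, for $a\neq b$, $\tB_{ab}=q+Y_{ab}(p-q)$ with $Y_{ab}:=\sum_k R_{ak}R_{bk}/(n'_a n'_b)\lesssim (m_a+m_b)/n$. Depending on whether $B_{kl}$ equals $p$ or $q$, the deviation $(B_{kl}-\tB_{ab})^2$ factors as $(p-q)^2$ times one of $X_a^2$, $(1-X_a)^2$, $Y_{ab}^2$, or $(1-Y_{ab})^2$, so the whole sum cleanly separates into four terms.

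Plugging in the crude identities $\sum_k R_{ak}=n'_a$, $\sum_k R_{ak}^2\leq (n'_a)^2$, and $\sum_{k\neq l}R_{ak}R_{al}\leq 2m_a n'_a$, I expect to obtain: for $a=b$ a contribution $\lesssim X_a^2(n'_a)^2+\sum_{k\neq l}R_{ak}R_{al}\lesssim m_a^2+m_a n\lesssim m_a n$; for $a\neq b$ a contribution $\lesssim \sum_k R_{ak}R_{bk}+Y_{ab}^2 n'_a n'_b\lesssim (m_a+m_b)n+(m_a+m_b)^2\lesssim (m_a+m_b)n$. Summing over the $K$ diagonal and $K(K-1)$ off-diagonal index pairs and using $\sum_a m_a=m$ yields the desired $mn(p-q)^2$ bound, and multiplying by the $1/p$ from the $\chi^2$-step completes the proof. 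The main technical point is to ensure every summand carries at least one factor of $m_a$ rather than an unaffordable $n$; this is precisely what the bounds $X_a=O(m_a/n'_a)$ and $Y_{ab}=O((m_a+m_b)/n)$ from Lemma \ref{boundtildePtrueP} deliver, so the bookkeeping — not any deep inequality — is the only obstacle.
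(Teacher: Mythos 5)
Your proof is correct. Both you and the paper begin with the same two steps: the $\chi^2$-type bound $\KL{x}{y}\leq (x-y)^2/(y(1-y))$ and the identification $(p-q)^2/p\asymp I$, and both ultimately rely on the explicit bias formulas of Lemma \ref{boundtildePtrueP}. The difference is in how the resulting quadratic form is organized. The paper inserts $B_{ab}$ as an intermediate point, applies $(u+v)^2\leq 2u^2+2v^2$, and splits into a ``block-mismatch'' term $(A)=\sum R_{ak}R_{bl}(B_{kl}-B_{ab})^2$ — which is nonzero only when exactly one of $\{a=b\}$, $\{k=l\}$ holds — and a ``bias'' term $(B)=\sum R_{ak}R_{bl}(B_{ab}-\tB_{ab})^2$, handled uniformly by $\|\tB-B\|_\infty\lesssim m(p-q)/n$. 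You instead keep $(B_{kl}-\tB_{ab})^2$ intact and exploit the sign structure $\tB_{aa}=p-X_a(p-q)$ with $X_a\in[0,1]$ and $\tB_{ab}=q+Y_{ab}(p-q)$ with $Y_{ab}\in[0,1]$, yielding a four-way case split with weights $X_a^2$, $(1-X_a)^2$, $Y_{ab}^2$, $(1-Y_{ab})^2$. Your cases $(a=b,k\neq l)$ and $(a\neq b,k=l)$ reproduce the paper's $(A)$ term (since $(1-X_a)^2,(1-Y_{ab})^2\leq 1$), and your cases $(a=b,k=l)$ and $(a\neq b,k\neq l)$ reproduce the paper's $(B)$ term. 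Both routes are short; the paper's is marginally more streamlined because the triangle inequality dispenses with tracking the exact deviation sign, while yours makes the structure of the four contributions more explicit. Either is fine.
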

\begin{proof}
    Since $\KL{x}{y}\leq \frac{(x-y)^2}{y(1-y)}$ for any $x,y\in(0,1)$, we have
    \begin{align*}
        \sum_{a,b,k,l}R_{ak}R_{bl}\KL{B_{kl}}{\tB_{ab}}&\leq \frac{1}{q(1-q)}\sum_{a,b,k,l}R_{ak}R_{bl}(B_{kl}-\tB_{ab})^2\\
        & = \frac{1}{q(1-q)}\sum_{a,b,k,l}R_{ak}R_{bl}((B_{kl}-B_{ab})+(B_{ab}-\tB_{ab}))^2\\
        &\leq \frac{2}{q(1-q)}\sum_{a,b,k,l}R_{ak}R_{bl}\bp{(B_{kl}-B_{ab})^2+(B_{ab}-\tB_{ab})^2}\\
       & = (A) + (B),
\end{align*}
where by Lemma \ref{boundtildePtrueP},
\begin{align*}
   (B) = \frac{1}{q(1-q)}\sum_{a,b,k,l}R_{ak}R_{bl}(\tB_{ab}-B_{ab})^2 & \leq \frac{2(p-q)^2}{q(1-q)}n^2\bp{\frac{2K\alpha m}{n}}^2 \lesssim mn I,
\end{align*}
and 
\begin{align*}
    (A) & \leq \frac{2(p-q)^2}{q(1-q)}\bp{\sum_{a}\sum_{k\neq l}R_{ak}R_{al}+\sum_{a\neq b}\sum_{k}R_{ak}R_{bk}}\lesssim mnI,
\end{align*}
since $\sum_{a}\sum_{k\neq l}R_{ak}R_{al}\leq \sum_{a}n_a(Z)m_a\leq mn$, and a similar bound holds for $\sum_{a\neq b}\sum_{k}R_{ak}R_{bk}$. By combining $(A)$ and $(B)$, the proof is complete.
\end{proof}

\begin{lemma}\label{boundmsmallunknown}
    Let $\gamma\goto 0$ be any positive sequence. Under the events $\mathcal E_1(\beps)$ and $\mathcal E_2$ defined in \eqref{eq:allevents}, for any $Z\in S_\alpha$ with $m\leq \gamma n$, we have
    \begin{align*}
    \sum_{a<b}n_{ab}\cdot \KL{\frac{O_{ab}}{n_{ab}}}{\frac{O_{ab}(Z)}{n_{ab}(Z)}}&\leq Cm^2I
    \end{align*}
    for some constant $C$ only depending on $K, \beta,\alpha$.
\end{lemma}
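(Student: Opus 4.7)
The plan is to reduce the Kullback--Leibler divergence to a squared difference between the two plug-in estimators $\hat B_{ab}(Z^*)=O_{ab}/n_{ab}$ and $\hat B_{ab}(Z)=O_{ab}(Z)/n_{ab}(Z)$, and then to bound this squared difference by $m^{2}(p-q)^{2}/n^{2}$ term by term. Concretely, since $\KL{x}{y}\leq (x-y)^{2}/[y(1-y)]$ whenever $y\in(0,1)$, and since under $\cE_{1}(\beps)$ combined with Lemma \ref{tildePhatP} and Lemma \ref{boundtildePtrueP} we have $\hat B_{ab}(Z)\asymp p$ uniformly in $a,b\in[K]$ and in $Z\in S_\alpha$ with $m\leq\gamma n$, each summand is bounded by
\[
n_{ab}\cdot \KL{\hat B_{ab}(Z^{*})}{\hat B_{ab}(Z)}\;\lesssim\; \frac{n_{ab}\,(\hat B_{ab}(Z^{*})-\hat B_{ab}(Z))^{2}}{p}.
\]

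Next I would compute the difference $\hat B_{ab}(Z^{*})-\hat B_{ab}(Z)$ algebraically. Writing $N=n_{ab}(Z^{*})$, $N'=n_{ab}(Z)$, $\dtO_{ab}=O_{ab}(Z)-O_{ab}(Z^{*})$, and $\dtn_{ab}=n_{ab}(Z)-n_{ab}(Z^{*})$, a direct cross-multiplication gives the identity
\[
\hat B_{ab}(Z^{*})-\hat B_{ab}(Z)\;=\;-\frac{\dtO_{ab}-\hat B_{ab}(Z^{*})\,\dtn_{ab}}{n_{ab}(Z)}.
\]
Hence it remains to bound $|\dtO_{ab}-\hat B_{ab}(Z^{*})\,\dtn_{ab}|$. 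Using the triangle inequality I would split this as
\[
|\dtO_{ab}-\hat B_{ab}(Z^{*})\dtn_{ab}|\;\leq\;|\dtO_{ab}-B_{ab}\dtn_{ab}|+|B_{ab}-\hat B_{ab}(Z^{*})|\cdot|\dtn_{ab}|.
\]

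For the second piece, $|\hat B_{ab}(Z^{*})-B_{ab}|\lesssim\beps(p-q)=o(p-q)$ by $\cE_{1}(\beps)$, and $|\dtn_{ab}|\lesssim mn$ trivially, so this contribution is $o(mn(p-q))$. For the first piece I would write
\[
\dtO_{ab}-B_{ab}\dtn_{ab}\;=\;\bigl(X_{ab}(Z)-X_{ab}(Z^{*})\bigr)\;+\;n_{ab}(Z)\bigl(\tB_{ab}(Z)-B_{ab}\bigr),
\]
whose two terms are bounded under $\cE_{2}$ and by Lemma \ref{boundtildePtrueP} respectively, each of order $mn(p-q)$. Combining these, $|\hat B_{ab}(Z^{*})-\hat B_{ab}(Z)|\lesssim m(p-q)/n$, and since $n_{ab}\asymp n^{2}/K^{2}$ on $S_{\alpha}$, each summand is $\lesssim n^{2}(m/n)^{2}(p-q)^{2}/p\lesssim m^{2}I$. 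Summing over the $O(K^{2})$ pairs $a\leq b$ yields the desired constant multiple of $m^{2}I$.

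The only mildly delicate point is avoiding an extraneous $\beps^{2}n^{2}I$ term that would appear if one naively bounded $|\hat B_{ab}(Z)-\hat B_{ab}(Z^{*})|$ by the sum of the two individual fluctuations from their population means; this is precisely why event $\cE_{2}$ (rather than just $\cE_{1}$) is invoked, providing the sharp $mn(p-q)$ control on the \emph{difference} $X_{ab}(Z)-X_{ab}(Z^{*})$. Everything else is bookkeeping with the $S_{\alpha}$ size constraints.
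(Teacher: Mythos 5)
Your proof is correct and uses essentially the same approach as the paper: bound $\KL{\hat B_{ab}(Z^*)}{\hat B_{ab}(Z)}$ by the $\chi^2$-divergence, establish $|\hat B_{ab}(Z^*)-\hat B_{ab}(Z)|\lesssim m(p-q)/n$ from events $\cE_1$, $\cE_2$ and Lemma~\ref{boundtildePtrueP}, and confirm $\hat B_{ab}(Z)\asymp p$ via Lemma~\ref{tildePhatP}. Your algebraic pivot (reducing to $|\dtO_{ab}-\hat B_{ab}(Z^*)\dtn_{ab}|$ and then replacing $\hat B_{ab}(Z^*)$ by $B_{ab}$) is organized slightly differently from the paper's direct three-term split $|B_{ab}-\tB_{ab}|+|X_{ab}(Z^*)\dtn_{ab}/(n_{ab}n_{ab}(Z))|+|(X_{ab}(Z^*)-X_{ab}(Z))/n_{ab}(Z)|$, but the same events control the same quantities and yield the same $m(p-q)/n$ bound, and your closing remark on why $\cE_2$ (rather than $\cE_1$ alone) is needed to avoid the spurious $\beps^2 n^2 I$ term is exactly the right observation.
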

\begin{proof} For any $a,b\in [K]$, we have
    \begin{align*}
    \abs{\frac{O_{ab}}{n_{ab}}-\frac{O_{ab}(Z)}{n_{ab}(Z)}} &=
    \abs{\frac{X_{ab}(Z^*)}{n_{ab}}-\frac{X_{ab}(Z)}{n_{ab}(Z)}+B_{ab}-\tB_{ab}}\\ &\leq \underbrace{\abs{B_{ab}-\tB_{ab}}}_{(A)}+\underbrace{\abs{\frac{X_{ab}(Z^*)\bp{n_{ab}(Z)-n_{ab}}}{n_{ab}(Z)\cdot n_{ab}}}}_{(B)}+\underbrace{\abs{\frac{X_{ab}(Z^*)-X_{ab}(Z)}{n_{ab}(Z)}}}_{(C)}.
    \end{align*}
    By Lemma \ref{boundtildePtrueP}, we have
    \begin{align*}
    (A)\leq \frac{2K\alpha m}{n}(p-q)\asymp \frac{m}{n}(p-q).
    \end{align*}
    Under the event $\mathcal E_1(\beps)$, we have
    \begin{align*}
(B)\leq \frac{\beps n^2(p-q)\cdot 2mn}{(n/K\alpha)^4} \asymp \beps\frac{m}{n}(p-q),
    \end{align*}
    where $\beps$ is the positive sequence that defines the event $\mathcal E_1(\beps)$. Under the event $\mathcal E_2$, we have that
    \begin{align*}
    (C)&\leq \frac{(\alpha+\beta)mn(p-q)/K}{(n/K\alpha)^2} \asymp \frac{m}{n}(p-q).
    \end{align*}
    Hence, it follows that
    \[\abs{\frac{O_{ab}}{n_{ab}}-\frac{O_{ab}(Z)}{n_{ab}(Z)}}\lesssim \frac{m}{n}(p-q),\]
    for all $Z\in S_\alpha$ with $m\leq \gamma n$. Furthermore, under the event $\mathcal E_1(\beps)$, we have
    \begin{align*}
    \abs{\frac{O_{ab}(Z)}{n_{ab}(Z)}-B_{ab}}&\leq \abs{\frac{X_{ab}(Z)}{n_{ab}(Z)}}+\abs{\tB_{ab}-B_{ab}}\lesssim (\beps+\gamma) (p-q),
    \end{align*}
    and thus $\frac{O_{ab}(Z)}{n_{ab}(Z)}\gtrsim p$. Hence, by $\KL{x}{y}\leq \frac{(x-y)^2}{y(1-y)}$ for any $x,y\in (0,1)$, we have that,
    \begin{align*}
    \sum_{a\leq b}n_{ab}\cdot \KL{\frac{O_{ab}}{n_{ab}}}{\frac{O_{ab}(Z)}{n_{ab}(Z)}}&\lesssim K^2 n^2 \bp{\frac{m}{n}}^2 \frac{(p-q)^2}{p}\lesssim m^2I.
    \end{align*}
\end{proof}


\subsection{Proofs of technical lemmas}

\begin{lemma}\label{supp-lm:TV}
    $P,\wt P$ are the probability measures defined in set $\Omega$. Suppose there exists a subset $A\subset\Omega$ such that $\wt P(B) = {P(B\cap A)}/{P(A)}$ for any set $B\subset \Omega$. Then, we have
    \[\TVdiff{\wt P}{P}\leq 2P(A^c).\]
\end{lemma}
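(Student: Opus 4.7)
The plan is to work from the supremum characterization of total variation, namely $\TVdiff{\wt P}{P} = \sup_{B \subset \Omega} |\wt P(B) - P(B)|$, which is equivalent to the integral definition used in the paper. Then fix an arbitrary measurable $B \subset \Omega$ and compute $\wt P(B) - P(B)$ explicitly, exploiting the conditional form of $\wt P$.

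The key algebraic step is the decomposition $P(B) = P(B \cap A) + P(B \cap A^c)$, which yields
\[
\wt P(B) - P(B) = \frac{P(B \cap A)}{P(A)} - P(B \cap A) - P(B \cap A^c) = P(B \cap A)\cdot \frac{P(A^c)}{P(A)} - P(B \cap A^c).
\]
From here the triangle inequality, combined with the trivial bounds $P(B \cap A) \leq P(A)$ and $P(B \cap A^c) \leq P(A^c)$, gives $|\wt P(B) - P(B)| \leq P(A^c) + P(A^c) = 2 P(A^c)$. Taking the supremum over $B$ completes the proof.

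There is no real obstacle here; the result is a standard observation about distributions defined by conditioning, and the factor $2$ (rather than the tighter $P(A^c)$) is what makes the bookkeeping trivial. The only edge case to mention is that when $P(A) = 0$ the conditional distribution $\wt P$ is not defined, and when $P(A^c) \geq 1/2$ the bound $2 P(A^c) \geq 1$ is automatic because total variation is always at most $1$, so neither case requires separate treatment.
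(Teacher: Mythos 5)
Your proof is correct and follows essentially the same route as the paper: both use the decomposition $P(B) = P(B\cap A) + P(B\cap A^c)$ to rewrite $\wt P(B) - P(B)$ as $P(B\cap A)\,P(A^c)/P(A) - P(B\cap A^c)$, then bound each piece by $P(A^c)$ via the triangle inequality. The remarks about $P(A)=0$ and $P(A^c)\ge 1/2$ are harmless extras; the core argument matches the paper's.
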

\begin{proof}
It is obvious that $P(B) = P(B\cap A)+P(B\cap A^c)$. Then, 
\begin{align*}
\TVdiff{\wt P}{P}&= \max_B\left|\frac{P(B\cap A)-P(A)P(B)}{P(A)}\right|\\
& = \max_B\left|\frac{P(B\cap A)P(A^c)-P(A)P(B\cap A^c)}{P(A)}\right|\\
&\leq 2P(A^c).
\end{align*}
\end{proof}

\begin{lemma}\label{lm:boundprior}
    For any positive integers $x$ and $y$, for any constant $\beta>0$, we have
    \[\log\frac{\Gamma(x+\beta)}{\Gamma(y+\beta)}\leq x\log x-y\log y-(x-y)+\frac{\beta^2}{y+\beta}+(\beta+2)\abs{\log\frac{y+\beta}{x+\beta}}.\]
\end{lemma}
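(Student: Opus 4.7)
The plan is to reduce the inequality to a comparison of a discrete sum with its integral. Since $x,y$ are positive integers, iterating the identity $\Gamma(z+1)=z\Gamma(z)$ gives
\[
\log\frac{\Gamma(x+\beta)}{\Gamma(y+\beta)} \;=\; \sum_{k=y}^{x-1}\log(k+\beta) \qquad (x\geq y),
\]
with the sign reversed when $x<y$. The rest of the argument will be a clean Riemann-sum estimate.

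Using the monotonicity of $\log(\cdot+\beta)$, I would sandwich this sum between integrals over $[y,x]$, with the lower bound picking up a telescoping correction term $\sum_{k}\bigl[\log(k+1+\beta)-\log(k+\beta)\bigr]=\log\tfrac{x+\beta}{y+\beta}$:
\[
\int_y^x\log(t+\beta)\,dt - \log\tfrac{x+\beta}{y+\beta} \;\leq\; \sum_{k=y}^{x-1}\log(k+\beta) \;\leq\; \int_y^x\log(t+\beta)\,dt.
\]
After evaluating the integral, both cases $x\geq y$ and $x<y$ can be unified into
\[
\log\frac{\Gamma(x+\beta)}{\Gamma(y+\beta)} \;\leq\; (x+\beta)\log(x+\beta) - (y+\beta)\log(y+\beta) - (x-y) + \left|\log\tfrac{y+\beta}{x+\beta}\right|.
\]

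To match the right-hand side of the lemma, I would next rewrite the leading terms as
\[
(z+\beta)\log(z+\beta) - z\log z \;=\; z\log(1+\beta/z) + \beta\log(z+\beta),
\]
at $z=x$ and $z=y$. The difference $x\log(1+\beta/x)-y\log(1+\beta/y)$ collapses to at most $\beta^2/(y+\beta)$ by combining $\log(1+u)\leq u$ (applied to the $x$-term, giving $x\log(1+\beta/x)\leq \beta$) with $\log(1+u)\geq u/(1+u)$ (applied to the $y$-term, giving $y\log(1+\beta/y)\geq \beta y/(y+\beta)$), and this bound holds uniformly regardless of the relative order of $x$ and $y$. The remaining $\beta$-weighted log ratio combines with the earlier $|\log|$ correction to give coefficient $\beta+1\leq \beta+2$.

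The only delicate point, in my view, is the direction-sensitive Riemann-sum inequality: the upper bound $\sum\leq\int$ is immediate by monotonicity, but extracting a usable lower bound requires noticing that the defect $\sum_{k=y}^{x-1}\bigl[\int_k^{k+1}\log(t+\beta)\,dt-\log(k+\beta)\bigr]$ telescopes exactly to $\log((x+\beta)/(y+\beta))$. This is what keeps the proof unified across both orderings of $x$ and $y$; once it is in place, the rest reduces to a mechanical application of the two elementary logarithm inequalities above.
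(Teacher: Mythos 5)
Your proof is correct and follows essentially the same route as the paper: express the log-gamma ratio as the telescoping sum $\sum_{k=y}^{x-1}\log(k+\beta)$, compare it with $\int_y^x\log(t+\beta)\,dt$ while collecting an $\abs{\log\frac{y+\beta}{x+\beta}}$ correction, then rewrite $(z+\beta)\log(z+\beta)-z\log z=z\log(1+\beta/z)+\beta\log(z+\beta)$ and apply $\log(1+u)\le u$ together with $\log(1+u)\ge u/(1+u)$ to produce the $\beta^2/(y+\beta)$ term (the paper's intermediate correction is the slightly looser $2\log(b/a)$, but both land comfortably under the $\beta+2$ coefficient). One small imprecision in your wording: the defect $\sum_k\bigl[\int_k^{k+1}\log(t+\beta)\,dt-\log(k+\beta)\bigr]$ does not ``telescope exactly'' to $\log\frac{x+\beta}{y+\beta}$; rather each summand is only bounded above by $\log(k+1+\beta)-\log(k+\beta)$, and it is that telescoping \emph{upper bound} on the defect, in the direction you need, that yields the displayed lower Riemann-sum estimate.
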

\begin{proof}
For any positive constants $a$ and $b$, if $b-a$ is a positive integer, then it is easy to verify that
    \begin{align*}
    \log \frac{\Gamma(b)}{\Gamma(a)} &= \sum_{k=a}^{b-1} \log k\leq b\log b-a\log a-(b-a),\\
    \log \frac{\Gamma(b)}{\Gamma(a)}&\geq b\log b-a\log a-(b-a) -\bp{\frac{1}{a}-\frac{1}{b}+\log {\frac{b}{a}}}.\\
    &\geq b\log b-a\log a-(b-a) -2{\log {\frac{b}{a}}}
    \end{align*}
    Then, for any $a,b\geq 1$, if $a-b$ is an integer, then we have
    \[\log \frac{\Gamma(a)}{\Gamma(b)}\leq a\log a-b\log b-(a-b)+2\bp{\log {\frac{b}{a}}}_+.\]
\ignore{    For any $x,y\in {\cal N}$, since
    \[x\log(x+\beta)-x\log x\leq \beta\]
    and
    \[y\log(y+\beta)-y\log y\geq
    \begin{dcases}
    y\bp{\frac{\beta}{y}-\frac{\beta^2}{2y^2}}= \beta-\frac{\beta^2}{2y}\geq \beta-\beta^2 & y\in {\cal N}_+\\
    0& y=0
    \end{dcases}\]}
Now, let $a=x+\beta$ and $b=y+\beta$. It follows that
    \begin{align*}
    &\log \frac{\Gamma(x+\beta)}{\Gamma(y+\beta)}\\
    \leq& x\log (x+\beta)-y\log (y+\beta) -(x-y)+\beta (\log (x+\beta)-\log (y+\beta))+2\bp{\log \frac{y+\beta}{x+\beta}}_+\\
    \leq& x\log x -y\log y-(x-y)+(\beta+2)\abs{\log\frac{y+\beta}{x+\beta}}+Err,
    \end{align*}
    where we write
\begin{align*}
    Err = &\bp{x\log(x+\beta)-y\log(y+\beta)} -\bp{x\log x-y\log y}\\
    =& x \log\bp{\frac{x+\beta}{x}}+y\log\bp{\frac{y}{y+\beta}}\\
    \leq & \beta-  \frac{\beta y}{y+\beta}\\
    = & \frac{\beta^2}{y+\beta}.
\end{align*}
Hence, the result follows.
\end{proof}

\begin{lemma}\label{lm:boundExpKnowP}
Suppose $x\sim \text{Bernoulli}(q)$ and $y\sim \text{Bernoulli}(p)$ with $p,q\goto 0,p\asymp q$. Then, for any constant $C$, we have that
\[
   \max\bb{\EE{e^{Ct^*(x-\lambda^*)}},\EE{e^{Ct^*(y-\lambda^*)}}}\leq e^{C'I} 
\]
for some constant $C'$, and $t^*,\lambda^*$ are defined in \eqref{eq:deflambda_t}.
\end{lemma}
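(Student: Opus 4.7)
The plan is to evaluate both moment generating functions in closed form using the two-point structure of $x$ and $y$, then bound each by $e^{C'I}$ via a Taylor expansion as $p,q\to 0$. Substituting the identities $e^{2t^*} = p(1-q)/(q(1-p))$ and $e^{-2t^*\lambda^*} = (1-p)/(1-q)$ into the Bernoulli MGF gives
\[
\EE{e^{Ct^*(x-\lambda^*)}} = (1-q)^{1-C/2}(1-p)^{C/2} + q^{1-C/2}p^{C/2},
\]
and a parallel calculation shows that $\EE{e^{Ct^*(y-\lambda^*)}}$ equals the same expression with $C$ replaced by $C+2$, so it suffices to treat the $x$-case for an arbitrary constant. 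Substituting $C=1$ recovers $\sqrt{(1-p)(1-q)}+\sqrt{pq}=e^{-I/2}$, consistent with Lemma~6.1 of \cite{zhang2016minimax}, which provides a useful sanity check for the calculation.

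Next, writing $p=\theta q$ with $\theta$ confined to a compact interval $[\theta_0,\theta_1]\subset(0,\infty)$ (the content of $p\asymp q$), I would expand each term to leading order in the small parameter $q$ to obtain
\[
\log \EE{e^{Ct^*(x-\lambda^*)}} = q\,\phi(\theta, C) + O(q^2),\qquad \phi(\theta,C) := \theta^{C/2} - 1 + \tfrac{C}{2}(1-\theta).
\]
Since $I = q(\sqrt\theta-1)^2(1+o(1))$ in this parametrization, the target inequality $\log \EE{e^{Ct^*(x-\lambda^*)}} \leq C'I$ reduces to the claim that $\phi(\theta,C)/(\sqrt\theta-1)^2$ is uniformly bounded for $\theta\in[\theta_0,\theta_1]$, modulo the vanishing $O(q)$ remainder.

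To verify this claim, I would check that $\phi(1,C) = \partial_\theta\phi(1,C) = 0$ while $\partial_\theta^2\phi(1,C) = C(C-2)/4$, so $\phi(\theta,C) = O((\theta-1)^2)$ as $\theta\to 1$; combined with $(\sqrt\theta-1)^2 \asymp (\theta-1)^2$ near $\theta=1$, the ratio extends continuously to $C(C-2)/2$ at $\theta=1$. Away from $\theta=1$, both numerator and denominator are continuous and the denominator is bounded below by a positive constant, so the ratio is uniformly bounded on the remainder of the compact interval. The main obstacle is precisely this quadratic cancellation at $\theta=1$: a crude first-order bound like $\log\EE{\cdot}\le cq$ would fail when $p/q\to 1$ because $I$ can be $o(q)$ in that regime, so the second-order expansion above is essential for getting the correct scaling against $I$.
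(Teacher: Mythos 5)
Your argument is essentially the paper's: both start from the exact moment generating function (the paper's $\exp(-Ct^*\lambda^*)(qe^{Ct^*}+1-q)$ is your $q^{1-C/2}p^{C/2}+(1-q)^{1-C/2}(1-p)^{C/2}$ after substituting the identities for $e^{2t^*}$ and $e^{-2t^*\lambda^*}$), then expand asymptotically and compare the exponent against $I\approx(\sqrt p-\sqrt q)^2$. Your $C=1$ sanity check is exactly the Lemma~6.1 identity from \cite{zhang2016minimax} that the paper invokes just above, and noting that the $y$-case is the $x$-case under $C\mapsto C+2$ is a clean economy; the paper's displayed exponent has a typo ($(p/q)^C$ should read $(p/q)^{C/2}$) but intends the same computation.

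The one loose thread, present in both your write-up and the paper's display, is the disposal of the remainder. Writing $\log\EE{e^{Ct^*(x-\lambda^*)}} = q\,\phi(\theta,C)+O(q^2)$ and then speaking of a ``vanishing $O(q)$ remainder'' does not control $O(q^2)/I = O\bigl(q/(\sqrt\theta-1)^2\bigr)$, which is not $o(1)$ precisely in the $\theta\to 1$ regime you rightly flag as the danger zone. The clean fix is to observe that the \emph{entire} function $g(p,q)=\log\EE{e^{Ct^*(x-\lambda^*)}}$, not merely its leading term, has a double zero at $p=q$: from the closed form one checks $g(q,q)=0$, $\partial_p g(q,q)=0$, and $\partial_p^2 g(q,q)=\tfrac{C(C-2)}{4q(1-q)}$. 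Taylor's theorem in $p$ then gives $g=\tfrac{C(C-2)}{8q(1-q)}(p-q)^2\bigl(1+O(\theta-1)\bigr)$, and the analogous expansion gives $I=\tfrac{(p-q)^2}{4q(1-q)}\bigl(1+O(\theta-1)\bigr)$, so $g/I\to\tfrac{C(C-2)}{2}$ uniformly near $\theta=1$; away from $\theta=1$ the ratio is a continuous function on a compact set and hence bounded. This packages the estimate so there is no separate ``main term plus remainder'' bookkeeping, and it is the extra step needed to make the uniformity rigorous---the same step the paper's $(1-o(1))$ display glosses over.
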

\begin{proof}
Since $p,q\goto 0$ and $p\asymp q$, we have
\[
    \log \frac{1-q}{1-p}\leq \log \frac{p}{q}\leq \frac{p}{q}-1, ~~ \log\frac{1-q}{1-p}\geq \frac{p-q}{1-q}\geq p-q.
\]
Suppose $C>0$, and then it follows that
\begin{align*}
\EE{\exp\bp{Ct^*(x-\lambda^*)}} &= \exp(-Ct^*\lambda^*)\bp{q\exp(Ct^*)+1-q}\\
&\leq \exp(-Ct^*\lambda^*+q\exp(Ct^*)-q)\\
&\leq \exp\bp{\frac{-\frac{C}{2}(p-q)+q(p/q)^C-q}{(\sqrt{p}-\sqrt{q})^2}(1-o(1))\cdot I}\\
&\leq \exp\bp{C'\cdot I},
\end{align*}
for some constant $C'$ depending on $C$. The other cases follow by a similar argument.
\end{proof}

\begin{lemma}\label{lm:techError}
For any $a,b\in (0,1)$ and $x,y\in \mathbb{R}$, we have
    \begin{align*}
    \abs{x\log\frac{a}{b} + (y-x)\log \frac{1-a}{1-b}}\leq \abs{x-by}\cdot \abs{\log \frac{a(1-b)}{b(1-a)}}+\abs{y}\cdot\KL{b}{a}.
    \end{align*}    
\end{lemma}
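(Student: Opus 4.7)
My plan is to show the inequality via an exact algebraic identity followed by the triangle inequality. The key observation is that the left-hand side can be decomposed into two pieces: a piece linear in $x-by$ (the ``centered'' version of $x$) with log-odds coefficient $\log\frac{a(1-b)}{b(1-a)}$, plus a multiple of the KL divergence $\KL{b}{a}$.

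First, I would rearrange the LHS by collecting terms involving $x$ and $y$ separately:
\begin{align*}
x\log\frac{a}{b}+(y-x)\log\frac{1-a}{1-b}=x\log\frac{a(1-b)}{b(1-a)}+y\log\frac{1-a}{1-b}.
\end{align*}
Next, I would introduce the pivot $by$: writing $x=(x-by)+by$ in the first term on the right gives
\begin{align*}
x\log\frac{a(1-b)}{b(1-a)}+y\log\frac{1-a}{1-b}=(x-by)\log\frac{a(1-b)}{b(1-a)}+by\log\frac{a}{b}+(1-b)y\log\frac{1-a}{1-b}.
\end{align*}
The last two terms combine as $-y\bigl[b\log\tfrac{b}{a}+(1-b)\log\tfrac{1-b}{1-a}\bigr]=-y\KL{b}{a}$, yielding the exact identity
\begin{align*}
x\log\frac{a}{b}+(y-x)\log\frac{1-a}{1-b}=(x-by)\log\frac{a(1-b)}{b(1-a)}-y\KL{b}{a}.
\end{align*}
Finally, applying the triangle inequality (and noting $\KL{b}{a}\geq 0$, so $|y\KL{b}{a}|=|y|\KL{b}{a}$) gives exactly the claimed bound.

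There is no real obstacle here; the only minor point to get right is the choice of pivot $by$ (rather than, say, $ay$ or $y/2$), which is dictated by the requirement that the leftover term be a nonnegative multiple of $\KL{b}{a}$. All other steps are mechanical.
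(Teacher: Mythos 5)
Your proof is correct and reaches exactly the same identity as the paper: both rewrite $x$ as $(x-by)+by$ and simplify to $(x-by)\log\frac{a(1-b)}{b(1-a)}-y\,\KL{b}{a}$, then invoke the triangle inequality. The intermediate rearrangement you do before introducing the pivot is purely cosmetic; the argument is the same.
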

\begin{proof}
    It follows that
    \begin{align*}
        x\log \frac{a}{b}+ (y-x)\log\frac{1-a}{1-b} &= (x-by+by)\log \frac{a}{b}+ (y-by+by-x)\log\frac{1-a}{1-b}\\
        &=(x-by)\log\frac{a(1-b)}{b(1-a)} - y\cdot \KL{b}{a}.
    \end{align*}
    Thus, the result directly follows.
    \ignore{
    Thus, by $\abs{\log\frac{s}{t}}\leq \frac{|s-t|}{\min\{s,t\}}$ for $s,t>0$ and bounding Kullback-Leibler divergence by $\mathcal X^2$-divergence, we have
    \begin{align*}
        \abs{x\log \frac{a}{b}+ (y-x)\log\frac{1-a}{1-b}}&\leq \abs{x-by}\cdot \abs{\log \frac{a(1-b)}{b(1-a)}}+\abs{y}\cdot\KL{b}{a}\\
        &\leq \abs{x-by}\frac{|a-b|}{\min\{a(1-b), b(1-a)\}}+\abs{y} \cdot \frac{|a-b|^2}{a(1-a)}
    \end{align*}}
\end{proof}

\ignore{
\begin{lemma}\label{lm:boundExpKnowP}
Suppose $x\sim \text{Bernoulli}(q)$ and $y\sim \text{Bernoulli}(p)$ with $p,q\goto 0,p\asymp q$. Then, there exists some constant $C$ such that 
\[
    \max\bb{\EE{e^{t^*(x-\lambda^*)}},\EE{e^{-t^*(x-\lambda^*)}},\EE{e^{t^*(y-\lambda^*)}},\EE{e^{-t^*(y-\lambda^*)}}}\leq e^{CI},    
\]
for some constant $C$, where $t^*,\lambda^*$ are defined in \eqref{eq:deflambda_t}.
\end{lemma}
\begin{proof}
    \begin{align*}
\EE{\exp\bp{t^*(x-\lambda^*)}} &= \exp(-t^*\lambda^*)\bp{q\exp(t^*)+1-q}\\
&\leq \exp(-t^*\lambda^*+q\exp(t^*)-q)\\
&\leq \exp\bp{\frac{-\frac{1}{2}(p-q)+q\sqrt{p/q}-q}{(\sqrt{p}-\sqrt{q})^2}(1-o(1))\cdot I}\\
&\leq \exp\bp{-(1/2-o(1))\cdot I},\\[10pt]
\EE{\exp\bp{-t^*(x-\lambda^*)}} &= \exp(t^*\lambda^*)\bp{q\exp(-t^*)+1-q}\\
&\leq \exp\bp{\frac{\frac{1}{2}(p-q)+q\sqrt{q/p}-q}{(\sqrt{p}-\sqrt{q})^2}(1-o(1))\cdot I}\\
&\leq \exp((1/2-o(1))\cdot I),\\[10pt]
\EE{\exp\bp{t^*(y-\lambda^*)}} &= \exp(-t^*\lambda^*)\bp{p\exp(t^*)+1-p}\\
&\leq \exp\bp{\frac{-\frac{1}{2}(p-q)+p\sqrt{p/q}-p}{(\sqrt{p}-\sqrt{q})^2}(1+o(1))\cdot I}\\
&\leq \exp(O(1)\cdot I),\\[10pt]
\EE{\exp\bp{-t^*(y-\lambda^*)}} &= \exp(t^*\lambda^*)\bp{p\exp(-t^*)+1-p}\\
&\leq \exp\bp{\frac{\frac{1}{2}(p-q)+p\sqrt{q/p}-p}{(\sqrt{p}-\sqrt{q})^2}(1+o(1))\cdot I}\\
&\leq \exp(-(1/2-o(1))\cdot I).\\
\end{align*}
\end{proof}
}
\clearpage

\bibliographystyle{plainnat}
\bibliography{MCMC}


\end{document}